\newcommand{\Z}{\mathcal Z}
\newcommand{\Y}{\mathcal Y}
\newcommand{\D}{\mathcal D}
\newcommand{\F}{\mathcal F}
\newcommand{\crit}{\mathcal C}
\newcommand{\edit}{}
\newcommand{\blockedit}{}
\newcommand{\PL}{\mathrm{PL}}
\newcommand{\DL}{\mathrm{DL}}
\newcommand{\PQ}{\mathrm{PQ}}
\newcommand{\DQ}{\mathrm{DQ}}
\newenvironment{tightcenter}{%
  \setlength\topsep{0pt}%
  \setlength\parskip{0pt}%
  \par\centering}{\par\noindent\ignorespacesafterend}
\Crefname{assumption}{Assumption}{Assumptions}
 \newenvironment{continuance}[2][Example]
  {\par\textsc{Example #2, Cont'd (#1).}}
  {\par}
 \newenvironment{continuances}[2][Example]
  {\par\textsc{Examples #2, Cont'd (#1).}}
  {\par}
\newcommand{\myfulltitle}{Stochastic Optimization Forests}
\begin{document}

\newcommand*\samethanks[1][\value{footnote}]{\footnotemark[#1]}
\RUNTITLE{\myfulltitle}
\TITLE{\myfulltitle}
\RUNAUTHOR{Kallus and Mao}
\ARTICLEAUTHORS{
\AUTHOR{Nathan Kallus\thanks{Alphabetical order.}$^1$\qquad Xiaojie Mao\samethanks$^2$}
\AFF{$^1$Cornell University, New York, NY 10044, \EMAIL{kallus@cornell.edu}\\$^2$School of Economics and Management, Tsinghua University, Beijing 100084, China, \EMAIL{maoxj@sem.tsinghua.edu.cn}}
}

\ABSTRACT{We study contextual stochastic optimization problems, where we leverage rich auxiliary observations (\eg, product characteristics) to improve decision making with uncertain variables (\eg, demand). We show how to train forest decision policies for this problem by growing trees that choose splits to directly optimize the downstream decision quality, rather than split to improve prediction accuracy as in the standard random forest algorithm. We realize this seemingly computationally intractable problem by developing approximate splitting criteria that utilize optimization perturbation analysis to eschew burdensome re-optimization for every candidate split, so that our method scales to large-scale problems. We prove that our splitting criteria consistently approximate the true risk and that our method achieves asymptotic optimality. We extensively validate our method empirically, demonstrating the value of optimization-aware construction of forests and the success of our efficient approximations. We show that our approximate splitting criteria can reduce running time hundredfold, while achieving performance close to forest algorithms that exactly re-optimize for every candidate split.}
\KEYWORDS{Contextual stochastic optimization, Decision-making under uncertainty with side observations, Random forests, Perturbation analysis}\HISTORY{First posted version: July, 2020. This version: January, 2022.}

\maketitle

\section{Introduction}\label{sec: intro}

In this paper we consider the contextual stochastic optimization (CSO) problem,
\begin{align}\label{eq:cso}
&z^*(x)\in\argmin_{z\in\Z}
\Eb{c(z;Y)\mid X=x}
,\\\label{eq:constraints}
&\Z=\braces{z\in\R d~~:~~
\begin{array}{ll}
h_{k}(z) = 0,~~&~~k=1,\dots,s,\\ h_{k}(z) \le 0,~~&~~k=s + 1,\dots, m
\end{array}},
\end{align}
wherein, having observed contextual features $X=x\in \mathcal X \subseteq \R p$, we seek a decision $z\in\Z$ to minimize average costs, which are impacted by a yet-unrealized uncertain variable $Y\in\Y$.
\Cref{eq:cso} is essentially a stochastic optimization problem \citep{shapiro2014lectures} where the distribution of the uncertain variable is given by the \emph{conditional} distribution of $Y\mid X=x$. Crucially, this corresponds to using the observations of features $X=x$ to best possibly control total average costs over new realizations of pairs $(X,Y)$; that is,
$$\ts
\E[c(z^*(X);Y)]=\min_{
z(x):\R p\to\Z
}\E[c(z(X);Y)].
$$
Stochastic optimization can model many managerial decision-making problems in inventory management \citep{simchi2005logic}, revenue management \citep{talluri2006theory}, finance \citep{cornuejols2006optimization}, and other application domains \citep{shapiro2014lectures,kleywegt2001stochastic}. And, CSO in particular captures the interplay of such decision models with the availability of rich side observations  of other variables (\ie, covariates $X$) often present in modern datasets, which can help significantly reduce uncertainty and improve performance compared to \emph{un}conditional stochastic optimization \citep{bertsimas2014predictive}.

Since the exact joint distribution of $(X,Y)$, which specifies the CSO in \cref{eq:cso}, is generally unavailable,
we are in particular interested in learning a well-performing policy $\hat z(x)$ based on $n$ independent and identically distributed (i.i.d.) draws from the joint distribution of $(X,Y)$:
$$\text{Data}:~~\D=\{(X_1,Y_1),\dots,(X_n,Y_n)\},~~(X_i,Y_i)\sim (X,Y)~\text{i.i.d.}$$
The covariates $X$ may be any that can help predict the value of the uncertain variable $Y$ affecting costs so that we can reduce uncertainty and improve performance. 
\edit{A common approach is to first make predictions using models that are trained without consideration of the downstream decision-making problem and then solve optimization given their plugged-in predictions. However, this approach completely \emph{separates} prediction and optimization. Since all predictive models make errors, especially when learning a complex object such as the conditional distribution of $Y$ given $X$, the error trade-offs of this approach may be undesirable for the end task of decision-making.
In this paper we aim to }
\begin{tightcenter}
\edit{\bf \textit{learn effective forest-based CSO policies that \underline{integrate} prediction and optimization}.}
\end{tightcenter}

\begin{figure}[t!]%
\begin{minipage}[b]{0.3\textwidth}\centering%
\begin{subfigure}[b]{\textwidth}{\qtreecenterfalse\Tree [.{$x_1\leq4$} [.{$x_2\leq 6$} {$\tau_1(x)=1$} {$\tau_1(x)=2$} ] {$\tau_1(x)=3$} ]}\vspace{3.5em}\caption{A depth-3 tree. When the condition in a branching node holds, we take the left branch.}\label{fig: treefig tree}\end{subfigure}%
\end{minipage}\hspace{0.05\textwidth}\begin{minipage}[b]{0.3\textwidth}\centering%
\begin{subfigure}[b]{\textwidth}\includegraphics[width=\textwidth]{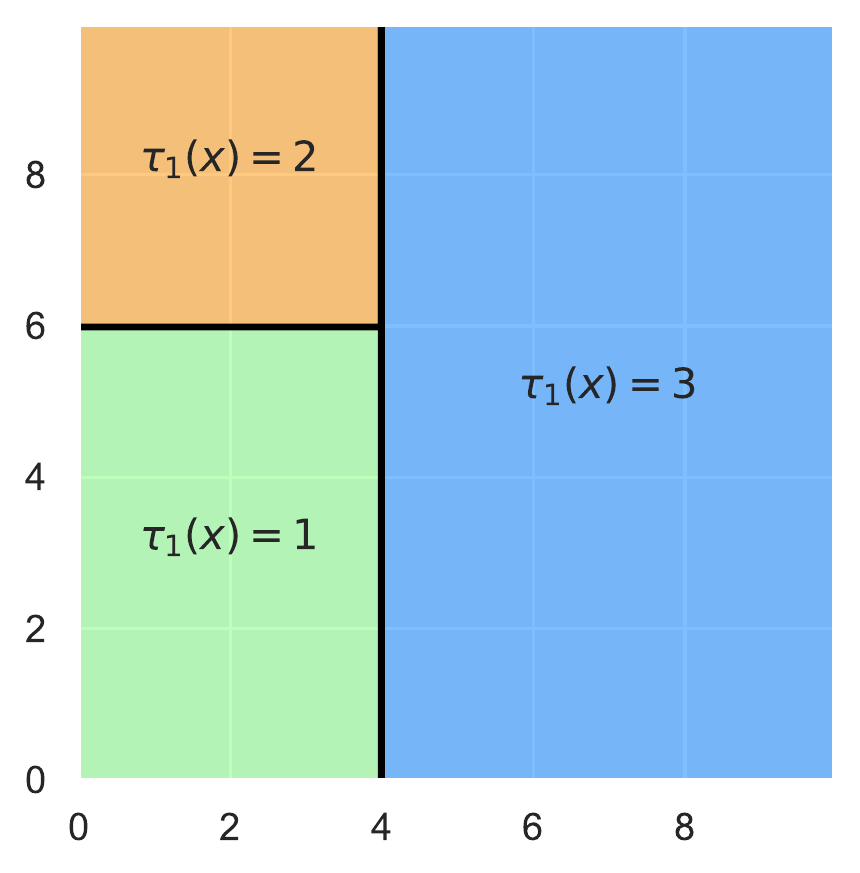}\caption{Each tree gives a partition of $\R d$, where each region corresponds to a leaf of the tree.}\label{fig: treefig regions}\end{subfigure}
\end{minipage}\hspace{0.05\textwidth}\begin{minipage}[b]{0.3\textwidth}\centering%
\begin{subfigure}[b]{\textwidth}\includegraphics[width=\textwidth]{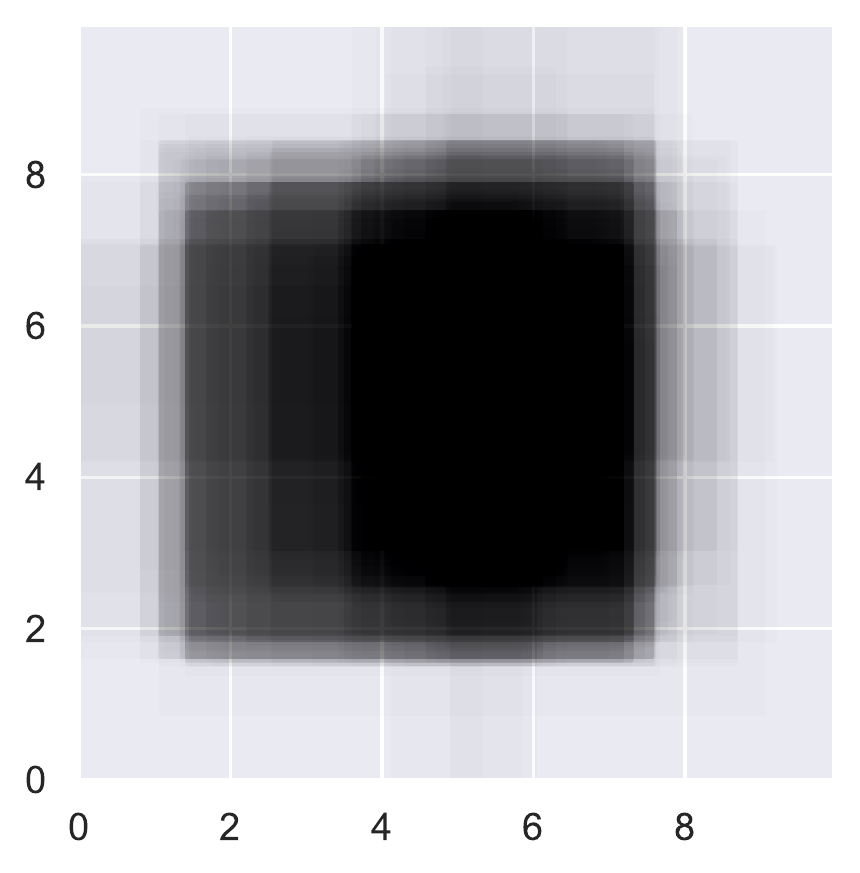}\caption{Darker regions fall into the same region as $x=(0,0)$ for more trees in a forest.%
}\label{fig: treefig dist}\end{subfigure}
\end{minipage}%
\caption{A forest of trees, $\F=\{\tau_1,\dots,\tau_T\}$, parameterizes a forest policy $\hat z(x)$ for CSO as in \cref{eq: forest policy}.}\label{fig: treefig}%
\end{figure}

To make a decision at a new query point $x$, a forest policy uses a \emph{forest} $\F=\{\tau_1,\dots,\tau_T\}$ of \emph{trees} $\tau_j$ to reweight the sample to emphasize data points $i$ with covariates $X_i$ ``close'' to $x$. Each tree, $\tau_j:\R p\to\{1,\dots,L_j\}$, is a partition of $\R p$ into $L_j$ regions, 
where the function $\tau_j$ takes the form of a binary tree with internal nodes splitting on the value of a component of $x$ (see \cref{fig: treefig tree,fig: treefig regions}). We then reweight each data point $i$ in the sample by the frequency $w_i(x)$ with which $X_i$ ends up in the same region (tree leaf) as $x$, over trees in the forest (see \cref{fig: treefig dist}). Using these weights, we solve a weighted sample analogue of \cref{eq:cso}. That is, a forest policy has the following form, where the forest $\F$ constitutes the parameters of the policy $\hat z(x)$:
\begin{align}\label{eq: forest policy}
&\hat z(x)\in\argmin_{z\in{\Z}}
\sum_{i=1}^nw_{i}(x)c(z;Y_i), ~~~ w_{i}(x) \coloneqq \frac{1}{T}\sum_{j=1}^T\frac{\indic{\tau_j(X_i)=\tau_j(x)}}{\sum_{i'=1}^n\indic{\tau_j(X_{i'})=\tau_j(x)}}.
\end{align}
\citet{bertsimas2014predictive} considered using a forest policy where the forest $\F$ is given by running the random forest \edit{(RandForest)} algorithm \citep{breiman2001random}. \edit{The RandForest algorithm, however, builds trees that target the prediction problem of learning $\Eb{Y\mid X=x}$, rather than the CSO problem in \cref{eq:cso}.}
Namely, it builds each tree $\tau_j$ by, starting with all of $\R p$, recursively subpartitioning each region $R_0\subseteq \R p$ into the two subregions $R_0=R_1\cup R_2$ that minimize the sum of squared distance to the mean of data in each subregion (\ie, $\sum_{j=1,2}\min_{z\in\R d}\sum_{i:X_i\in R_j}{\magd{z-Y_i}^2_2}$).
For prediction, random forests are notable for adeptly handling high-dimensional feature data non-parametrically as they only split on variables relevant to prediction, especially compared to other methods for generating localized weights $w_i(x)$ like $k$-nearest neighbors and Nadaraya–Watson kernel regression. 
\edit{However, for CSO they might miss signals more relevant to the particular optimization structure in \cref{eq:cso}, deteriorating downstream policy performance in the actual decision-making problem.}
\edit{\label{para: grf}\cite{athey2019generalized} proposed a Generalized Random Forest (GenRandForest) algorithm to estimate roots of conditional estimating equations, which can be repurposed for \emph{unconstrained} CSO problems by solving their first order optimality conditions. Their splitting criteria are based on approximating the \emph{mean squared errors} of equation root estimates, which again may fail to capture signals more important for the particular cost function in \cref{eq:cso} when optimization is one's aim.}

\edit{In this paper, we design new algorithms to construct decision trees and forests that directly target the CSO problem in \cref{eq:cso}. Specifically, we choose tree splits to optimize the cost of resulting decisions instead of standard impurity measures (\eg, sum of squared errors), thereby incorporating the  general cost function $c(z;Y)$ and constraints $\Z$ into the tree construction. A similar idea was suggested in endnote 2 of 
\citet{bertsimas2014predictive} but is dismissed because it would be too computationally cumbersome to use this to evaluate many candidate splits in each node of each tree in a forest. In this paper, we solve this task in a computationally efficient manner by leveraging a second-order perturbation analysis of stochastic optimization, resulting in efficient and effective large-scale forests tailored to the decision-making problem of interest that lead to strong performance gains in practice.}

Our contributions are as follows. We formalize the oracle splitting criterion for recursively partitioning trees to target the CSO problem and then use second-order perturbation analysis to show how to approximate the intractable oracle splitting criterion by extrapolating from the given region, $R_0$, to the candidate subregions, $R_1,R_2$, \edit{provided that the CSO problem is sufficiently smooth.} 
We do this in \cref{sec: unconstr} for the unconstrained setting and in \cref{sec: constr} for the constrained setting.
Specifically, we consider both an approach that extrapolates the optimal value and an approach that extrapolates the optimal solution.
Crucially, our perturbation approach means that we only have to solve a stochastic optimization problem at the root region, $R_0$, and then we can efficiently extrapolate to what will happen to average costs for any candidate subpartition of the root, allowing us to efficiently consider many candidate splits.
Using these new efficient approximate splitting criteria, we develop the stochastic optimization tree (StochOptTree) algorithm, which we then use to develop the stochastic optimization forest (StochOptForest) algorithm by running the former many times. The StochOptForest algorithm fits forests to \emph{directly} target the downstream decision-making problem of interest, and then uses these forests to construct effective forest policies for CSO. 
In \cref{sec: empirical}, we empirically demonstrate the success of our StochOptForest algorithm and the value of forests constructed to directly consider the downstream decision-making problem.
In \cref{sec: asympt-opt} we provide asymptotic optimality results for StochOptForest.
In \cref{sec:discussion} we offer a discussion of and comparison to related literature and in \cref{sec:conclusion} we offer some concluding remarks.
We extend our results to \emph{stochastically}-constrained CSO problems in \cref{sec:cso-general}, \edit{develop variable-importance measures in \cref{app-sec: var-importance}}, and provide additional empirical results  in \cref{app-sec: more-empirical}. 
We defer all proofs to \cref{sec: proofs}.

\subsection{Running Examples of CSOs}

We will have a few running examples of CSOs.

\begin{example}[Multi-Item Newsvendor]\label{ex: mnv} 
In the multi-item newsvendor problem we must choose the order quantities for $d$ products, $z = (z_1, \dots, z_d)$, before we observe the random demand for each of these, $Y = (Y_1, \dots, Y_d)$, in order to control holding and backorder costs. Whenever the order quantity for product $l$ exceeds the demand for the product we pay a holding cost of $\alpha_l$ per unit. And, whenever the demand exceeds the order quantity, we pay a backorder cost of $\beta_l$ per unit. The total cost is 
\begin{equation}\label{eq:newsvendorcost}
\ts c(z;y)=\sum_{l=1}^d\max\{\alpha_l(z_l-y_l),\,\beta_l(y_l-z_l)\}.
\end{equation}
Negating and adding a constant we can also consider this equivalently as the sale revenue up to the smaller of $z_l$ and $y_l$, minus ordering costs for $z_l$ units.
The order quantities may be unrestricted (in which case the $d$ problems decouple). They may be restricted by a  capacity constraint,
\begin{align*}
\Z = \braces{z\in \mathbb R^d:  \sum_{l = 1}^d z_l \le C, ~ z_l \ge 0, ~ l = 1, \dots, d},
\end{align*}
where $C$ is a constant that stands for the inventory capacity limit. 

Covariates $X$ in this problem may be any that can help predict future demand. For example, for predicting demand for home video products, \citet{bertsimas2014predictive} use data from Google search trends, data from online ratings, and past sales data.
\end{example}

\begin{example}[Variance-based Portfolio Optimization]\label{ex: portfolio-var}
Consider $d$ assets with random future returns $Y = (Y_1, \dots, Y_d)$, and decision variables $z = (z_1, \dots, z_d)$ that represent the fraction of investment in each asset in a portfolio of investments, constrained to be in the simplex $\Delta^d=\{z\in\R{d}: \sum_{l = 1}^d z_l = 1,\,z_l\geq0,\,l=1,\dots,d\}$.
Then the return of the portfolio is $Y^\top z$. 
We want the portfolio $z(x)$ to minimize the variance of the return given $X = x$. This can be formulated as a CSO by introducing an additional unconstrained auxiliary optimization variable $z_{d+1}\in\Rl$ and letting
\begin{align}\label{eq:portfolio-var}
c(z; y) = \prns{y^\top z_{1:d} - z_{d+1}}^2.
\end{align}
We can either let $\mathcal Z=\Delta^d\times\Rl$  or relax nonnegativity constraints to allow short selling.

More generally we may consider optimizing a linear combination of the conditional mean and variance of the return, which corresponds to a CSO with the following cost function:
\begin{align}\label{eq:portfolio-var2}
c(z; y) = \prns{y^\top z_{1:d} - z_{d+1}}^2 - \rho y^\top z_{1:d}, ~~ \rho > 0.
\end{align}

Covariates $X$ in this problem may be any that can help predict future returns. Examples include past returns, stock fundamentals, economic fundamentals, news stories, \emph{etc.}
\end{example}

\begin{example}[CVaR-based Portfolio Optimization]\label{ex: portfolio}
When the asset return distributions are not elliptically symmetric, Conditional Value-at-Risk (CVaR) may be a more suitable risk measure than variance \citep{rockafellar2000optimization}. 
We may therefore prefer to consider minimizing the CVaR at level $\alpha$ given $X = x$, defined as
\begin{align*}
\op{CVaR}_\alpha(Y^\top z\mid X=x)=
\min_{w \in \mathbb R} \Eb{\frac{1}{\alpha}\max\braces{w - Y^\top z, 0} -  w \mid X = x}.\end{align*}
This again can be formulated as a CSO by introducing an additional unconstrained auxiliary optimization variable $z_{d+1}\in\Rl$ and letting 
\begin{align}\label{eq:portfolio}
c(z; y) = \frac{1}{\alpha}\max\braces{z_{d+1} - y^\top z_{1:d},\, 0}-  z_{d+1}.
\end{align}
We can analogously incorporate the simplex constraint or relax the nonnegativity constraint as in \cref{ex: portfolio-var}. 
We can also optimize a weighted combination of the different criteria (mean, variance, CVaR at any level); we need only introduce a separate auxiliary variable for variance and for CVaR at each level considered.
\end{example}

\begin{example}[Prediction of Conditional Expectation]\label{ex: prediction}
While the above provides examples of actual decision-making problems, the problem of prediction also fits into the CSO framework as a special case. Namely, if $Y\in\R d$, $c(z;y)=\frac{1}{2}\magd{z-y}_2^2$, and $\Z =\R d$ is unconstrained, then we can see that $z^*(x)=\Eb{Y\mid X=x}$. This can be understood as the best-possible (in squared error) prediction of $Y$ in a draw of $(X,Y)$ where only $X$ is revealed. Fitting forest models to predict $\Eb{Y\mid X=x}$ is precisely the target task of random forests, which use squared error as a splitting criterion. We further compare to other literature on \emph{estimation} using random forests in \cref{sec: comp to est}. A key aspect of handling general CSOs, as we do, is dealing with general cost functions and constraints and targeting the expected cost of our decision rather than the error in estimating $z^*(x)$.
\end{example}

\section{The Unconstrained Case}\label{sec: unconstr}

We begin by studying the unconstrained case as it is simpler and therefore more instructive.
Throughout this section, we let $\Z=\R d$.
We extend to the more general constrained case in \cref{sec: constr}.
To develop our StochOptForest algorithm, we start by considering  the StochOptTree algorithm, which we will then run many times to create our forest. To motivate our StochOptTree algorithm, we will first consider an idealized splitting rule for an idealized policy, then consider approximating it using perturbation analysis, and then consider estimating the approximation using data. Each of these steps constitutes one of the next subsections.

\subsection{The Oracle Splitting Rule}\label{sec: unconstr oracle}

Given a partition, $\tau:\R p \to\{1,\dots,L\}$, of $\R p$ into $L$ regions, consider the policy $z_\tau(x)\in\argmin_{z\in\Z}\Eb{c(z;Y)\indic{\tau(X)=\tau(x)}}$ that, for each $x$, optimizes costs only for $(X,Y)$ where $X$ falls in the same region as $x$.
Note that this policy is hypothetical and not implementable in practice given just the data as it involves the \emph{true} joint distribution of $(X,Y)$.
We wish to learn a partition $\tau$ described by a binary decision tree with nodes of the form ``$x_j \leq\theta$?'' such that it leads to a well-performing policy $z_\tau(x)$, that is, has small risk $\Eb{c(z_\tau(X);Y)}$.
Finding the best $\tau$ over all trees of a given depth is generally a very hard problem, even if we knew the distributions involved. To simplify it, suppose we fix a partition $\tau$ and we wish only to refine it slightly by taking one of its regions, say $R_0=\tau^{-1}(L)$, and choosing some $j \in \{1,\dots,p\},\,\theta\in\Rl$ to construct a new partition $\tau'$ with $\tau'(x)=\tau(x)$ for $x\notin R_0$, $\tau'(x)=L$ for $x\in R_1=R_0\cap\{x\in\R p:x_j\leq\theta\}$, and $\tau'(x)=L+1$ for $x\in R_2=R_0\cap\{x\in\R p:x_j>\theta\}$.
That is, we further subpartition the region $R_0$ into the subregions $R_1,R_2$.
We would then be interested in finding the choice of $(j,\theta)$ leading to minimal risk, 
$\Eb{c(z_{\tau'}(X);Y)}=\Eb{c(z_{\tau'}(X);Y)\indic{X\notin R_0}}+\Eb{c(z_{\tau'}(X);Y)\indic{X\in R_1}}+\Eb{c(z_{\tau'}(X);Y)\indic{X\in R_2}}$.
Notice that the first term is constant in the choice of the subpartition and only the second and third terms matter in choosing the subpartition. We should therefore seek the subpartition that leads to the minimal value of
\begin{equation}\label{eq:oraclecrit}\ts
\crit^\text{oracle}(R_1,R_2)=\sum_{j=1,2}\Eb{c(z_{\tau'}(X);Y)\indic{X\in R_j}}=\sum_{j=1,2}\min_{z\in\Z}\Eb{c(z;Y)\indic{X\in R_j}},
\end{equation}
where the last equality holds because the tree policy $z_{\tau'}$ makes the best decision within each region of the new partition.
We call this the \emph{oracle splitting criterion}. Searching over choices of $(j,\theta)$ in some given set of possible options, the best refinement of $\tau$ is given by the choice minimizing this criterion.
If we start with the trivial partition, $\tau(x)=1\;\forall x$, then we can recursively refine it using this procedure in order to grow a tree of any desired depth. When $c(z;y)={\frac12\magd{z-y}_2^2}$ and the criterion is estimated by replacing expectations with empirical averages, this is precisely the regression tree algorithm of \citet{breiman1984classification}, in which case the estimated criterion is easy to compute as it is simply given by rescaled within-region variances of $Y_i$. For general $c(z;y)$, however, computing the criterion involves solving a general stochastic optimization problem that may have no easy analytical solution (even if we approximate expectations with empirical averages) and it is therefore hard to do quickly for many, many possible candidates for $(j,\theta)$, and correspondingly it would be hard to grow large forests of many of these trees.

\subsection{Perturbation Analysis of the Oracle Splitting Criterion}\label{sec: perturbation-unconstr}
Consider a region $R_0\subseteq\R p$ and its candidate subpartition $R_0=R_1\cup R_2$, $R_1\cap R_2=\varnothing$. 
Let 
\begin{align}\label{eq:v}
&\edit{v_j(t)=\min_{z\in\Z}\;f_{0}(z)+t\prns{f_{j}(z) - f_{0}(z)},~z_j(t)\in\argmin_{z\in\Z}\;f_{0}(z)+t\prns{f_{j}(z) - f_{0}(z)}},
\\\notag
&\text{where}~f_j(z)=\Eb{c(z;Y)\mid{X\in R_j}},\quad j=0,1,2, ~ t \in [0, 1].
\end{align}
\edit{The optimization objective function  in \cref{eq:v} is obtained from perturbing the objective function $f_0\prns{z}$ in the region $R_0$ towards the objective function $f_j\prns{z}$ in a subregion $R_j$ for $j = 1, 2$. The perturbation magnitude is quantified by the  parameter $t \in [0, 1]$. Note that the optimal values of fully perturbed problems (\ie, $t = 1$) in two subregions determine the oracle splitting criterion:}
\begin{align}\label{eq: oracle2}
\crit^\text{oracle}(R_1,R_2)=p_1v_1(1)+p_2v_2(1),\quad\text{where}\quad p_j=\Prb{X\in R_j},
\end{align}
and ideally we would use \edit{these values}
to evaluate the quality of the subpartition. But we would rather not have to solve the stochastic optimization problem involved in \cref{eq:v} \edit{at $t = 1$ \emph{repeatedly}} for every candidate subpartition. Instead, we would rather solve the \emph{single} problem $v_1(0)=v_2(0)$, \ie, the problem corresponding to the region $R_0$, and try to extrapolate from there what happens as we take $t\to1$, \edit{\ie, the limiting problem corresponding to each subregion $R_j$ for each candidate split}. To solve this, we consider the perturbation of the problem $v_j(t)$ at $t=0$ as we increase it infinitesimally and use this to approximate $v_j(1)$. As long as the distribution of $Y\mid X\in R_0$ is not too different from that of $Y\mid X\in R_j$, this would be a reasonable approximation.

First, we note that a first-order perturbation analysis would be insufficient. We can show that under appropriate continuity conditions and if $\argmin_{z\in\Z}f_0(z)=\{z_0\}$ is a singleton, we would have $v_j(t)=(1-t)f_0(z_0)+tf_j(z_0)+o(t)$.
\footnote{This is, for example, a corollary of \cref{thm:secondorder}, although weaker continuity conditions would be needed for this first-order statement. We omit the details as the first-order analysis is ultimately not useful.}
 We could use this to approximate $v_j(1)\approx f_j(z_0)$ by plugging in $t=1$ and ignoring the higher-order terms, which makes intuitive sense: \edit{if we only perturb the objective slightly, the optimal solution is approximately unchanged  and we only need to evaluate its new objective value}.
This would lead to the approximate splitting criterion $p_1v_1(1)+p_2v_2(1)\approx p_1f_1(z_0)+p_2f_2(z_0)$. However, since $p_1f_1(z_0)+p_2f_2(z_0)=p_0f_0(z_0)$, this is ultimately unhelpful as it does not at all depend on the choice of subpartition.

Instead, we must conduct a finer, second-order perturbation analysis in order to understand the effect of the choice of subpartition on risk. The next result does this for the unconstrained case.

\begin{theorem}[Second-Order Perturbation Analysis: Unconstrained]\label{thm:secondorder}
Fix $j=1,2$.
Suppose the following conditions hold:
\begin{enumerate}
\item  $f_0(z)$ and $f_j(z)$ are twice continuously differentiable; \label{cond: unconst-smooth} 
\item The inf-compactness condition: \edit{there exist} constants $\alpha$ and $t_0 \in (0, 1]$ such that the sublevel sets 
\edit{$\left\{
z \in \mathbb R^d: ~ f_{0}(z)+t\prns{f_{j}(z)-f_0\prns{z}} \le \alpha
\right\}$ }
are nonempty and uniformly bounded for $t \in [0, t_0)$; \label{cond: unconst-comp} 
\item  $f_0(z)$ has a unique minimizer $z_0$ over $\mathbb R^d$, and $\nabla^2 f_0(z_0)$ is positive definite; \label{cond: unconst-pd}  
\end{enumerate}
Then 
\begin{align}v_j(t)&=(1-t)f_0(z_0)+
tf_{j}(z_0)
- \frac12t^2{\nabla f_{j}(z_0)}^\top \prns{\nabla^2 f_{0}(z_0)}^{-1} {\nabla f_{j}(z_0)}
+o(t^2),
\label{eq:apxrisk}
\\
z_j(t)&=z_0-t\prns{\nabla^2 f_{0}(z_0)}^{-1} {\nabla f_{j}(z_0)}+o(t).
\label{eq:apxsol}
\end{align}
\end{theorem}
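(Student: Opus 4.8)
The plan is to treat this as a classical sensitivity analysis of an unconstrained parametric program, with the perturbation level $t$ playing the role of the parameter. Write $\phi(z,t)=f_0(z)+t(f_j(z)-f_0(z))$, so that $v_j(t)=\min_z\phi(z,t)$ and $z_j(t)\in\argmin_z\phi(z,t)$. The first step is to pin down the minimizer near $t=0$. By the inf-compactness condition (Condition~\ref{cond: unconst-comp}) the sublevel sets $\{z:\phi(z,t)\le\alpha\}$ are nonempty and contained in a common compact set $K$ for $t\in[0,t_0)$; since $\phi(\cdot,t)$ is continuous (Condition~\ref{cond: unconst-smooth}), a minimizer $z_j(t)$ exists and lies in $K$. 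Along any subsequence $t_n\to0^+$ with $z_j(t_n)\to\bar z$ we have $\phi(z_j(t_n),t_n)=v_j(t_n)\le\phi(z_0,t_n)\to f_0(z_0)$ while $\phi(z_j(t_n),t_n)\to f_0(\bar z)$ (using that $f_j-f_0$ is bounded on $K$), hence $f_0(\bar z)\le f_0(z_0)$; uniqueness of $z_0$ (Condition~\ref{cond: unconst-pd}) then forces $\bar z=z_0$, so $z_j(t)\to z_0$ as $t\to 0^+$.

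Second, I would apply the implicit function theorem to the stationarity map $F(z,t):=\nabla_z\phi(z,t)=(1-t)\nabla f_0(z)+t\nabla f_j(z)$. This $F$ is continuously differentiable by Condition~\ref{cond: unconst-smooth}; moreover $F(z_0,0)=\nabla f_0(z_0)=0$ by unconstrained first-order optimality at $z_0$, and $\partial_zF(z_0,0)=\nabla^2 f_0(z_0)$ is invertible by Condition~\ref{cond: unconst-pd}. Hence there is a $C^1$ branch $t\mapsto\tilde z(t)$ with $\tilde z(0)=z_0$ and $F(\tilde z(t),t)=0$ for $t$ near $0$. To identify $\tilde z(t)$ with the global minimizer $z_j(t)$, note that by continuity $\nabla^2_z\phi(z,t)$ remains positive definite on a convex neighborhood $U$ of $z_0$ for all small $t$, so $\phi(\cdot,t)$ is strictly convex on $U$ and has at most one stationary point there; since both $z_j(t)\to z_0$ and $\tilde z(t)\to z_0$, both are stationary points lying in $U$ once $t$ is small enough, so $z_j(t)=\tilde z(t)$. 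Differentiating $F(\tilde z(t),t)=0$ at $t=0$ and using $\nabla f_0(z_0)=0$ gives $\tilde z'(0)=-(\nabla^2 f_0(z_0))^{-1}\nabla f_j(z_0)$, which is \eqref{eq:apxsol}.

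Third, for the value expansion I would use the envelope (Danskin) theorem together with the smoothness of the minimizer branch. Since $v_j(t)=\phi(z_j(t),t)$ with $z_j$ differentiable and $\nabla_z\phi(z_j(t),t)=0$, the chain rule gives $v_j'(t)=\partial_t\phi(z_j(t),t)=f_j(z_j(t))-f_0(z_j(t))$; as $z_j=\tilde z$ is $C^1$ and $f_0,f_j$ are $C^2$, this shows $v_j\in C^2$ near $0$. Then $v_j'(0)=f_j(z_0)-f_0(z_0)$ and $v_j''(0)=(\nabla f_j(z_0)-\nabla f_0(z_0))^\top\tilde z'(0)=-\nabla f_j(z_0)^\top(\nabla^2 f_0(z_0))^{-1}\nabla f_j(z_0)$, so Taylor's theorem gives $v_j(t)=f_0(z_0)+t(f_j(z_0)-f_0(z_0))-\tfrac12 t^2\,\nabla f_j(z_0)^\top(\nabla^2 f_0(z_0))^{-1}\nabla f_j(z_0)+o(t^2)$, which is \eqref{eq:apxrisk} after writing $f_0(z_0)+t(f_j(z_0)-f_0(z_0))=(1-t)f_0(z_0)+tf_j(z_0)$.

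The main obstacle I anticipate is not any single computation but the care needed in the first two steps: rigorously deriving $z_j(t)\to z_0$ from inf-compactness plus uniqueness, and then matching the global minimizer with the locally unique stationary branch furnished by the implicit function theorem, since $\phi(\cdot,t)$ is only locally (not globally) convex near $z_0$. Once that identification is secured, the envelope-theorem differentiation and the Taylor expansion are routine; if one prefers to avoid invoking the envelope theorem, $v_j''(0)$ can be obtained directly by differentiating $v_j(t)=\phi(\tilde z(t),t)$ twice and using the stationarity $\nabla_z\phi(\tilde z(t),t)=0$.
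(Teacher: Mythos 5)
Your proof is correct, but it takes a genuinely different route from the paper's. The paper obtains \cref{thm:secondorder} as the unconstrained specialization of its general perturbation machinery: it invokes the Bonnans--Shapiro framework (\cref{thm: perturb}, i.e.\ Theorem 5.53 of \citealp{perturbation2000}, together with the duality simplifications in \cref{prop: first-order-simplification,prop: second-order-simplify}), specializes to additive perturbations in \cref{lemma: additive-perturb}, and then reads off the unconstrained case in \cref{corollary: unconstr} with $f=f_0$, $\delta_f=f_j-f_0$. You instead give a direct, self-contained argument: consistency of minimizers from inf-compactness plus uniqueness (the same fact the paper attributes to Proposition 4.4 of \citealp{perturbation2000}), the implicit function theorem applied to the stationarity map, identification of the global minimizer with the locally unique stationary branch via local positive definiteness of $\nabla^2_z\phi$, and then the envelope-theorem/chain-rule computation of $v_j'$ and $v_j''$ followed by Taylor--Peano. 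This is essentially a rigorous unconstrained instantiation of the heuristic KKT/implicit-function argument the paper sketches in \cref{sec: heuristic}, and you handle the one delicate point correctly: arbitrary measurable selections of the argmin converge to $z_0$, and strict convexity on a neighborhood forces them to coincide with the IFT branch, so the expansions hold for \emph{any} choice of $z_j(t)$. What your route buys is elementarity and slightly stronger regularity as a byproduct ($z_j$ is $C^1$ and $v_j$ is $C^2$ near $0$, not just expandable to the stated orders); what the paper's route buys is uniformity, since the same general theorem also delivers the constrained results (\cref{thm:secondorder-const,thm:secondorder-const2}), where an IFT-based argument would additionally require active-set stability and strict complementarity and is exactly what the paper declines to make rigorous in \cref{sec: heuristic}.
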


\cref{thm:secondorder} gives the second order expansion of the optimal value $v_j(t)$ and the first order expansion of \emph{any} choice of $z_j(t)$ that attains $v_j(t)$ \edit{around $t = 0$}.
\edit{These expansions quantify the impact on the optimal value and optimal solution when infinitesimally perturbing the objective function in region $R_0$ towards that in a subregion $R_j$.}
\edit{One crucial condition of \cref{thm:secondorder} is that the objective functions are sufficiently smooth (condition \ref{cond: unconst-smooth}).}
This condition holds for any subpartition if we assume that $\Eb{c(z;Y)\mid X}$ is almost surely twice continuously \edit{differentiable}, which is trivially satisfied if the cost function $c(z; Y)$ is a almost surely twice continuously differentiable function of $z$.
\edit{However,} even if $c(z; Y)$ is nonsmooth (\eg, \cref{ex: mnv,ex: portfolio}), $\Eb{c(z;Y)\mid X}$ may still be sufficiently smooth if the distribution of $Y \mid X$ is continuous (see examples in \cref{sec: approx-crit} below). In particular, one reason we defined the oracle splitting criterion using the population expectation rather than empirical averages is that for many relevant examples such as newsvendor and CVaR only the population objective may be smooth while the sample objective may be nonsmooth and therefore not amenable to perturbation analysis.

Condition \ref{cond: unconst-comp} ensures that if we only slightly perturb the objective function $f_0$, optimal solutions of the resulting perturbed problem are always bounded, and never escape to infinity. This means that without loss of generality we can restrict our attention to a compact subset of $\mathbb R^d$. This compactness condition and the smoothness condition (condition \ref{cond: unconst-smooth}) together ensure the existence of optimal solutions for any optimization problem corresponding to $t\in[0, t_0)$. In addition, this condition 
is crucial for ensuring $z(t) \to z_0$ as $t \to 0$ \citep[Proposition 4.4]{perturbation2000}. 
\edit{One sufficient condition for this is that any optimal solution $z^*(X)$ in \cref{eq:cso} is almost surely bounded, \eg, when conditional quantiles of all item demands in \cref{ex: mnv} are almost surely bounded.}
Finally, the regularity condition (condition \ref{cond: unconst-pd})
is obviously satisfied if $f_0(z)$ is strictly convex, which is implied if either $\Eb{c(z;Y)\mid X}$ or $c(z;Y)$ is almost surely strictly convex. Condition \ref{cond: unconst-pd} may be satisfied even if the cost function $c(z;Y)$ is not strictly convex: \eg, it holds for the newsvendor problem (\cref{ex: mnv}) when the density of $Y_l\mid X\in R_0$ is positive at $z_0$ for all $l=1,\dots,d$.

\subsection{Approximate Splitting Criteria}\label{sec: approx-crit}

\Cref{thm:secondorder} suggests two possible approximations of the oracle splitting criterion.

\subsubsection*{Approximate Risk Criterion.}
If we use \cref{eq:apxrisk} to extrapolate to $t=1$, ignoring the higher-order terms, we arrive at
$$
v_j(1)\approx
f_{j}(z_0)
-\frac12{\nabla f_{j}(z_0)}^\top \prns{\nabla^2 f_{0}(z_0)}^{-1} {\nabla f_{j}(z_0)}.
$$
\edit{Taking} a weighted average of this over $j=1,2$, we arrive at an approximation of the oracle splitting criterion $\crit^\text{oracle}$ in \cref{eq:oraclecrit}.
Since $p_1f_1(z_0)+p_2f_2(z_0)=p_0f_0(z_0)$ is constant in the subpartition, we may ignore these terms, leading to the following criterion:
\begin{equation}\label{eq:apxriskcrit}
\ts\crit^\text{apx-risk}(R_1,R_2)=-\frac{1}{2}\sum_{j=1,2}p_j{\nabla f_{j}(z_0)}^\top \prns{\nabla^2 f_{0}(z_0)}^{-1} {\nabla f_{j}(z_0)}.
\end{equation}

By strengthening the conditions in \cref{thm:secondorder}, we can in fact show that this approximation becomes arbitrarily accurate as the partition becomes finer.

\begin{theorem}\label{thm:apxriskapx}
Suppose the following conditions hold for both $j = 1, 2$:
\begin{enumerate} 
\item Condition \ref{cond: unconst-smooth} of \cref{thm:secondorder}.
\item Condition \ref{cond: unconst-comp} of \cref{thm:secondorder} holds for all $t \in [0, 1]$.
\item 
\edit{$f_0(z)+t\prns{f_j(z)-f_0\prns{z}}$} has a unique minimizer $z_0$ and \edit{$\nabla^2 \prns{f_0(z)+t\prns{f_j(z)-f_0\prns{z}}}$} is positive definite at this unique minimizer for all $t \in [0, 1]$.
\item $\Eb{c(z; Y) \mid X = x}$ is twice Lipschitz-continuously differentiable in $x$. \label{cond: thm-apxriskapx-lipschitz}
\end{enumerate}
Then
$$
\abs{\crit^\text{oracle}(R_1,R_2)-p_0f_0(z_0)-\crit^\text{apx-risk}(R_1,R_2)} = o(\mathcal D_0^2),
$$
where $\mathcal D_0=\sup_{x,x'\in R_0}\edit{\magd{x-x'}_2}$ is the diameter of $R_0$.
\end{theorem}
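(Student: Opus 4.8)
Since $R_0=R_1\cup R_2$ with $R_1\cap R_2=\varnothing$, both sides of $p_0 f_0(z)=p_1 f_1(z)+p_2 f_2(z)$ equal $\Eb{c(z;Y)\indic{X\in R_0}}$, so in particular $p_0 f_0(z_0)=p_1 f_1(z_0)+p_2 f_2(z_0)$; combining this with $\crit^{\text{oracle}}(R_1,R_2)=p_1 v_1(1)+p_2 v_2(1)$ and $p_1+p_2=p_0\le 1$, it suffices to show, for each $j\in\{1,2\}$, that $v_j(1)=f_j(z_0)-\tfrac12\nabla f_j(z_0)^\top(\nabla^2 f_0(z_0))^{-1}\nabla f_j(z_0)+o(\mathcal D_0^2)$, read as an asymptotic statement as $\mathcal D_0\to 0$ along regions for which the hypotheses hold with uniform constants. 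Note $v_j(1)=\min_z f_j(z)$, so this is really a quantitative comparison of $\argmin f_j$ to $\argmin f_0=\{z_0\}$ when $f_j$ is $C^2$-close to $f_0$.

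First I would quantify that closeness. Writing $g(z,x)=\Eb{c(z;Y)\mid X=x}$, conditions 1 and \ref{cond: thm-apxriskapx-lipschitz} give that $g$, $\nabla_z g$, and $\nabla_z^2 g$ are Lipschitz in $x$ uniformly over $z$ in compact sets. Since $f_j(z)=\Eb{g(z,X)\mid X\in R_j}$ and $f_0(z)=\Eb{g(z,X)\mid X\in R_0}$ are averages of $g(z,\cdot)$ over $R_j\subseteq R_0$ and over $R_0$, all of whose points are within $\mathcal D_0$ of one another, on a fixed compact neighborhood $K$ of $z_0$ that (by the inf-compactness condition 2 for all $t\in[0,1]$) contains every perturbed minimizer $z_j(t)$, we get $\sup_{z\in K}\{\magd{\delta_j(z)},\magd{\nabla\delta_j(z)},\magd{\nabla^2\delta_j(z)}\}=O(\mathcal D_0)$ for $\delta_j:=f_j-f_0$. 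Because $\Z=\R d$, $z_0$ is an interior minimizer of $f_0$, so $\nabla f_0(z_0)=0$ and hence $\nabla\delta_j(z_0)=\nabla f_j(z_0)$.

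Next I would expand $v_j(1)$ by integrating along the path $t\mapsto v_j(t)=\min_z(f_0+t\delta_j)(z)$. For each $t\in[0,1]$, condition 3 yields a unique minimizer $z_j(t)$ of $f_0+t\delta_j$ with positive definite Hessian there; by the envelope theorem and implicit differentiation of the stationarity equation $\nabla(f_0+t\delta_j)(z_j(t))=0$ we get $z_j(\cdot)\in C^1[0,1]$, $v_j(\cdot)\in C^2[0,1]$, $v_j'(t)=\delta_j(z_j(t))$, and $v_j''(t)=-\nabla\delta_j(z_j(t))^\top[\nabla^2(f_0+t\delta_j)(z_j(t))]^{-1}\nabla\delta_j(z_j(t))$. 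A key sublemma is that $z_j(t)\to z_0$ uniformly in $t\in[0,1]$ as $\mathcal D_0\to 0$: since $\sup_{t\in[0,1]}\magd{(f_0+t\delta_j)-f_0}_{\infty,K}\le\magd{\delta_j}_{\infty,K}=O(\mathcal D_0)\to 0$, this is a standard argmin-consistency argument using the unique-minimizer / positive-definite-Hessian property of $f_0$ and the inf-compactness bound, in the spirit of \citet[Proposition 4.4]{perturbation2000}. Hence, uniformly in $t$, $\nabla\delta_j(z_j(t))=\nabla\delta_j(z_0)+O(\magd{\nabla^2\delta_j}_{\infty,K}\magd{z_j(t)-z_0})=\nabla\delta_j(z_0)+o(\mathcal D_0)$ and $[\nabla^2(f_0+t\delta_j)(z_j(t))]^{-1}=(\nabla^2 f_0(z_0))^{-1}+o(1)$; since also $\nabla\delta_j(z_0)=O(\mathcal D_0)$, all cross terms are $o(\mathcal D_0^2)$ and $v_j''(t)=-\nabla\delta_j(z_0)^\top(\nabla^2 f_0(z_0))^{-1}\nabla\delta_j(z_0)+o(\mathcal D_0^2)$ uniformly in $t$. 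Plugging into $v_j(1)=v_j(0)+v_j'(0)+\int_0^1(1-t)v_j''(t)\,dt$ with $v_j(0)=f_0(z_0)$ and $v_j'(0)=\delta_j(z_0)$ yields $v_j(1)=f_0(z_0)+\delta_j(z_0)-\tfrac12\nabla\delta_j(z_0)^\top(\nabla^2 f_0(z_0))^{-1}\nabla\delta_j(z_0)+o(\mathcal D_0^2)$, which is the desired identity after substituting $\delta_j(z_0)=f_j(z_0)-f_0(z_0)$ and $\nabla\delta_j(z_0)=\nabla f_j(z_0)$.

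The main obstacle I anticipate is the sublemma $z_j(t)\to z_0$ \emph{uniformly in} $t\in[0,1]$ and its interplay with $\mathcal D_0\to 0$: one must ensure the inf-compactness and Hessian-positivity constants are uniform along the sequence of shrinking regions, and then upgrade qualitative argmin-consistency so that the error terms in $v_j''$ are genuinely $o(\mathcal D_0^2)$ rather than merely $o(1)$ --- which is exactly why $\nabla\delta_j(z_0)=O(\mathcal D_0)$ has to be invoked to absorb the cross terms. A secondary technical point is making the $O(\mathcal D_0)$ bounds on $\delta_j$ and its first two $z$-derivatives rigorous and uniform over $K$; this is where condition \ref{cond: thm-apxriskapx-lipschitz}, combined with condition 1, enters, since one needs Lipschitzness of the $z$-derivatives of $g$ in $x$, not merely of $g$ itself.
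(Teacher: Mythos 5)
Your proposal is correct and follows essentially the same route as the paper: the paper proves this result by applying its functional second-order expansion (\cref{thm: frechet-diff}), which is itself established exactly as you do—differentiating $t\mapsto \min_z\{f_0(z)+t(f_j(z)-f_0(z))\}$ via the envelope/implicit-function argument, Taylor-expanding in $t$, and controlling the variation of the second derivative using convergence of the perturbed minimizers and Hessians—then combining this with the Lipschitz bound on $\|f_j-f_0\|_{\mathcal F}$ driven by $\mathcal D_0$. Your bookkeeping ($\|f_j-f_0\|_{\mathcal F}=O(\mathcal D_0)$ together with the $o(\|f_j-f_0\|_{\mathcal F}^2)$ remainder, and restricting all suprema to a compact set containing the perturbed minimizers) is a sound, indeed slightly more careful, version of the same argument.
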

Again, note that $p_0f_0(z_0)$ is constant in the choice of subpartition.

\subsubsection*{Approximate Solution Criterion.}
Since $v_j(1)=f_j(z_j(1))$, we can also approximate $v_j(1)$ by approximating $z_j(1)$ and plugging it in. Using \cref{eq:apxsol} to extrapolate $z_j(t)$ to $t=1$ and ignoring the higher-order terms, we arrive at the following approximate criterion:
\begin{equation}\label{eq:apxsolcrit}
\ts\crit^\text{apx-soln}(R_1,R_2)=\sum_{j=1,2}p_jf_j\prns{z_0-\prns{\nabla^2 f_{0}(z_0)}^{-1} {\nabla f_{j}(z_0)}}.
\end{equation}
Notice this almost looks like applying a Newton update to $z_0$ in the $\min_z f_j(z)$ problem, \edit{\label{para: naive-newton}namely, the solution that optimizes the second order expansion of $f_j(z)$ at $z_0$. 
However, a naive Newton update will require to invert the Hessian for $f_j$, which varies across different candidate splits. 
In contrast, the criterion $\crit^\text{apx-soln}$ requires the Hessian for $f_0$, meaning we only have to invert a Hessian once for all candidate subpartitions.}

\edit{\label{para: grf2}For unconstrained CSO problems in this section, we may also apply the GenRandForest algorithm in \cite{athey2019generalized} to solve their first order optimality condition. The GenRandForest algorithm uses a similar way to approximate optimal solutions in split subregions. It chooses splits to maximize the difference between approximate solutions in two subregions induced by each candidate split, as their proposition 1 shows that this approximately minimizes the total mean squared errors of the resulting estimated optimal solutions. In contrast, by using $\crit^\text{apx-soln}$, we choose splits to minimize the expected cost of the approximate optimal solutions, thereby directly targeting the ultimate objective in CSO problems. More importantly, we tackle the constrained case (\cref{sec: constr}) while the GenRandForest algorithm cannot. In \cref{sec: empirical,sec: empirical nv}, we show the impact of both of these differences can be significant in practice when optimization is the aim.} 

\edit{In the following theorem, we show that the approximate solution criterion also becomes arbitrarily accurate as the partition becomes finer.}
\begin{theorem}\label{thm:apxsolapx}
Suppose the assumptions of \cref{thm:apxriskapx} hold.
Then
$$
\abs{\crit^\text{oracle}(R_1,R_2)-\crit^\text{apx-soln}(R_1,R_2)} =  o(\mathcal D_0^2).
$$
\end{theorem}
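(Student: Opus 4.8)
The plan is to deduce \cref{thm:apxsolapx} from \cref{thm:apxriskapx} by a short second-order Taylor expansion, so that the only real work is an elementary algebraic collapse of the resulting quadratic terms. Since \cref{thm:apxriskapx} already gives $\crit^\text{oracle}(R_1,R_2)=p_0f_0(z_0)+\crit^\text{apx-risk}(R_1,R_2)+o(\mathcal D_0^2)$, by the triangle inequality it suffices to prove that
$$\crit^\text{apx-soln}(R_1,R_2)=p_0f_0(z_0)+\crit^\text{apx-risk}(R_1,R_2)+o(\mathcal D_0^2);$$
subtracting the two displays and using $o(\mathcal D_0^2)+o(\mathcal D_0^2)=o(\mathcal D_0^2)$ then finishes the proof.

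To establish this display, set $H=\nabla^2f_0(z_0)$, $g_j=\nabla f_j(z_0)$, and $\Delta_j=-H^{-1}g_j$, so that by \cref{eq:apxsolcrit} we have $\crit^\text{apx-soln}(R_1,R_2)=\sum_{j=1,2}p_jf_j(z_0+\Delta_j)$; note $H^{-1}$ exists with bounded norm by the positive-definiteness hypothesis of \cref{thm:apxriskapx} at $t=0$, which also gives $\nabla f_0(z_0)=0$. The two a priori estimates I would record are $\magd{g_j}_2=O(\mathcal D_0)$ and $\magd{\nabla^2f_j(z_0)-H}=O(\mathcal D_0)$: both $g_j$ and $\nabla^2f_j(z_0)$ are averages over $x\in R_j\subseteq R_0$ of, respectively, $\nabla_z\Eb{c(z;Y)\mid X=x}$ and $\nabla^2_z\Eb{c(z;Y)\mid X=x}$ evaluated at $z=z_0$, and by condition \ref{cond: thm-apxriskapx-lipschitz} these integrands oscillate by at most $O(\mathcal D_0)$ across $R_0$, while their averages over all of $R_0$ equal $\nabla f_0(z_0)=0$ and $H$; in particular $\magd{\Delta_j}_2=O(\mathcal D_0)$. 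Now Taylor-expand each $f_j$ (twice continuously differentiable by condition \ref{cond: unconst-smooth}) at $z_0$:
$$f_j(z_0+\Delta_j)=f_j(z_0)+g_j^\top\Delta_j+\tfrac12\Delta_j^\top\nabla^2f_j(z_0)\Delta_j+r_j,$$
with Lagrange remainder $r_j=\tfrac12\Delta_j^\top\prns{\nabla^2f_j(\xi_j)-\nabla^2f_j(z_0)}\Delta_j$ for some $\xi_j$ on $[z_0,z_0+\Delta_j]$. Plugging in $\Delta_j=-H^{-1}g_j$ yields the exact identity
$$g_j^\top\Delta_j+\tfrac12\Delta_j^\top\nabla^2f_j(z_0)\Delta_j=-\tfrac12g_j^\top H^{-1}g_j+\tfrac12g_j^\top H^{-1}\prns{\nabla^2f_j(z_0)-H}H^{-1}g_j,$$
whose first term, $p_j$-weighted and summed over $j=1,2$, reproduces $\crit^\text{apx-risk}(R_1,R_2)$ of \cref{eq:apxriskcrit}, and whose second term is $O\prns{\magd{\nabla^2f_j(z_0)-H}\,\magd{g_j}_2^2}=O(\mathcal D_0^3)=o(\mathcal D_0^2)$. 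Combining this with $\sum_{j=1,2}p_jf_j(z_0)=p_0f_0(z_0)$ (noted after \cref{thm:apxriskapx}) and with $\sum_j p_j r_j=o(\mathcal D_0^2)$ gives the desired display.

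The main obstacle is precisely the bound $\sum_j p_j r_j=o(\mathcal D_0^2)$, and that it holds \emph{uniformly over all candidate subpartitions} of $R_0$. Since $\magd{\Delta_j}_2=O(\mathcal D_0)$, we have $\abs{r_j}\le\tfrac12\magd{\Delta_j}_2^2\,\sup_{\magd{\xi-z_0}_2\le\magd{\Delta_j}_2}\magd{\nabla^2f_j(\xi)-\nabla^2f_j(z_0)}$, so what is needed is that the modulus of continuity of $\nabla^2f_j$ near $z_0$ vanishes as $\mathcal D_0\to0$, uniformly in $j$ and in the subpartition. This is handled exactly as in the proof of \cref{thm:apxriskapx}: each $f_j$ is a mixture of the family $\{\Eb{c(\cdot;Y)\mid X=x}\}_{x\in R_0}$, whose second $z$-derivatives are uniformly equicontinuous on the compact neighborhood of $z_0$ furnished by the (strengthened) inf-compactness condition \ref{cond: unconst-comp}, and this uniform equicontinuity is inherited by the mixtures $f_j$. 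With that in hand, the rest is the routine algebra above.
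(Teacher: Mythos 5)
Your proof is correct, but it takes a genuinely different route from the paper's. The paper proves \cref{thm:apxsolapx} directly from the functional perturbation result (\cref{thm: frechet-diff}): it expands the \emph{exact} minimizer as $z_j(1)=z_0-\prns{\nabla^2 f_0(z_0)}^{-1}\nabla f_j(z_0)+R_j$ with $R_j=o(\magd{f_j-f_0}_{\mathcal F})$, and then uses the mean-value theorem together with the first-order optimality condition $\nabla f_j(z_j(1))=0$ to show $v_j(1)=f_j\prns{z_0-\prns{\nabla^2 f_0(z_0)}^{-1}\nabla f_j(z_0)}+o(\mathcal D_0^2)$, i.e., it compares the oracle value to the apx-soln value solution-by-solution. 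You instead take \cref{thm:apxriskapx} as a black box and reduce \cref{thm:apxsolapx} to the purely finite-dimensional statement that $\crit^\text{apx-soln}$ and $p_0f_0(z_0)+\crit^\text{apx-risk}$ agree to $o(\mathcal D_0^2)$, which you get from a second-order Taylor expansion of $f_j$ at $z_0$ plus the uniform bounds $\magd{\nabla f_j(z_0)}_2=O(\mathcal D_0)$ and $\magd{\nabla^2 f_j(z_0)-\nabla^2 f_0(z_0)}=O(\mathcal D_0)$ coming from condition \ref{cond: thm-apxriskapx-lipschitz}. Your route is more elementary (no second use of the functional expansion) and makes the relationship between the two approximate criteria explicit, at the price of inheriting whatever error \cref{thm:apxriskapx} carries; the paper's route treats the two criteria independently and so would survive even if one of them were modified.

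One justification in your last paragraph is misattributed: the inf-compactness condition gives boundedness of (perturbed) minimizers, not equicontinuity of the family $\{\nabla^2_z\Eb{c(\cdot;Y)\mid X=x}\}_{x\in R_0}$, and such equicontinuity is not among the stated hypotheses, nor is it "exactly as in the proof of \cref{thm:apxriskapx}," which delegates to \cref{thm: frechet-diff} rather than arguing through the mixture family. The remainder bound you need is, however, available from the stated assumptions: write $\magd{\nabla^2 f_j(\xi_j)-\nabla^2 f_j(z_0)}\le\magd{\nabla^2 f_j(\xi_j)-\nabla^2 f_0(\xi_j)}+\magd{\nabla^2 f_0(\xi_j)-\nabla^2 f_0(z_0)}+\magd{\nabla^2 f_0(z_0)-\nabla^2 f_j(z_0)}$, bound the first and third terms by $O(\mathcal D_0)$ via the Lipschitz-in-$x$ condition (uniformly in the subpartition), and the middle term by $o(1)$ using continuity of $\nabla^2 f_0$ at $z_0$ and $\magd{\xi_j-z_0}_2\le\magd{\Delta_j}_2=O(\mathcal D_0)$; this matches the level of rigor of the paper's own treatment, which likewise treats $f_0$ and $z_0$ as fixed as the perturbation shrinks. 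With that substitution your argument is complete.
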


\subsubsection*{Revisiting the Running Examples.} The approximate criteria above crucially depend on the gradients $\nabla f_1(z_0), \nabla f_2(z_0)$ and Hessian $\nabla^2 f_0(z_0)$. We next study these quantities for some examples. 

\begin{example}[Derivatives with Smooth cost]\label{ex: smooth}
If $c(z;y)$ is itself twice continuously  differentiable for every $y$, then under regularity conditions that enable the exchange of derivative and expectation (\eg, $\abs{(\nabla c\prns{z;Y})_\ell}\leq W$ for all $z$ in a neighborhood of $z_0$ with $\Eb{W\mid{X_i\in R_j}}<\infty$), we have  $\nabla f_j(z_0)=\Eb{\nabla c\prns{z_0;Y}\mid{X_i\in R_j}}$ and $\nabla^2 f_0(z_0) = \Eb{\nabla^2 c\prns{z_0;Y}\mid{X_i\in R_0}}$. 
\end{example}

\begin{continuance}[Derivatives in Multi-Item Newsvendor]{\ref{ex: mnv}}
In many cases, $c(z;y)$ is not smooth, as in the example of the multi-item newsvendor cost in \cref{eq:newsvendorcost}. In this case, it suffices that the distribution of $Y\mid X\in R_j$ is continuous for gradients and Hessians to exist. Then,
we can show that $(\nabla f_j(z_0))_l=(\alpha_l +\beta_l)\Prb{Y_l\leq z_{0,l}\mid X\in R_j}-\beta_l$ and $(\nabla^2 f_{0}(z_0))_{ll}=(\alpha_l +\beta_l)\mu_{0,l}(z_0)$ for $l=1,\dots,d,\,j=1,2$, and $(\nabla^2 f_{0}(z_0))_{ll'}=0$ for $l \neq l'$, where $\mu_{0,l}$ is the density function of $Y_l\mid X\in R_0$. So, $\nabla^2 f_0(z_0)$ is invertible as long as $\mu_{0, l}(z_0) > 0$ for $l = 1, \dots, d$.
\end{continuance}

\begin{continuance}[Derivatives in Variance-based Portfolio Optimization]{\ref{ex: portfolio-var}}
The cost function in \cref{eq:portfolio-var} is an instance of \cref{ex: smooth} (smooth costs). 
Using block notation to separate the first $d$ decision variables from the final single auxiliary variable,
we  verify in \cref{prop: gradient-var-port} that  
\begin{align}
\nabla f_j(z_0) &= 
2\begin{bmatrix}
\Eb{YY^\top \mid {X \in R_j}}z_{0, 1:d} - \Eb{Y \mid X \in R_j}z_{0, d+1} \\ 
z^\top_{0, 1:d}\prns{\Eb{Y \mid X \in R_0} - \Eb{Y \mid X \in R_j}}
\end{bmatrix}\label{eq: grad-var}, \\
\nabla^2 f_0(z_0) &=
2\begin{bmatrix}
\Eb{YY^\top \mid {X \in R_0}} & -\Eb{Y \mid X \in R_0} \\
-\Eb{Y^\top \mid X \in R_0} & 1
\end{bmatrix}. \label{eq: hessian-var}
\end{align}
Notice $\nabla^2 f_0(z_0)$ is invertible if and only if \edit{the covariance matrix} $\var\prns{Y \mid X \in R_0}$ is invertible. 
\end{continuance}

\begin{continuance}[Derivatives in CVaR-based Portfolio Optimization]{\ref{ex: portfolio}}\label{ex: cvar-continue}
Like the newsvendor cost in \cref{eq:newsvendorcost}, the CVaR  cost in \cref{eq:portfolio} is not smooth either. Again we assume that the distribution of $Y\mid X\in R_j$ is continuous. \edit{Then, when $z_0 \ne 0$} (\cref{prop: gradient-portfolio} in \cref{sec: supplement}),
\begin{align}
&\nabla f_j(z_0) = 
\frac{1}{\alpha}
\begin{bmatrix}
-\Eb{Y\indic{\overline Y_0 \le q^{\alpha}_0(\overline Y_0)} \mid X \in R_j} \\
 \Prb{q^{\alpha}_0(\overline Y_0) - \overline Y_0 \ge 0 \mid X \in R_j} - \alpha
\end{bmatrix}, ~~~ \overline Y_0 \coloneqq Y^\top z_{0, 1:d}
\label{eq: portfolio-grad}  \\
&\nabla^2 f_0(z_0) = 
\frac{\mu_{0}\prns{q^{\alpha}_0(\overline Y_0)}}{\alpha}
\begin{bmatrix}
\Eb{YY^\top \mid \overline Y_0 = q^{\alpha}_0(\overline Y_0), X \in R_0} &  -\Eb{Y \mid \overline Y_0 = q^{\alpha}_0(\overline Y_0), X \in R_0}  \\
-\Eb{Y^\top \mid \overline Y_0 = q^{\alpha}_0(\overline Y_0), X \in R_0} & 1
\end{bmatrix}.
\label{eq: portfolio-hess}   
\end{align} 
where $\mu_{0}$ is the density function of $\overline Y_0$ given $X \in R_0$, and $q^{\alpha}_0(\overline Y_0)$ as the $\alpha$-level quantile of $\overline Y_0$ given $X \in R_0$. Notice that the Hessian matrix $\nabla^2 f_0(z_0)$ may not necessarily be invertible. This arises due to the homogeneity of returns in scaling the portfolio, so that second derivatives in this direction may vanish. This issue is corrected when we consider the constrained case where we fix the scale of the portfolio (see \cref{sec: constr}).\footnote{Indeed the unconstrained case for the portfolio problem is in fact uninteresting: the zero portfolio gives minimal variance, and CVaR may be sent to infinity in either direction by infinite scaling.}
\end{continuance}

\subsubsection*{Re-optimizing auxiliary variables.}
In \cref{ex: portfolio-var,ex: portfolio}, $z$ contains both auxiliary variables and decision variables, and we  construct the approximate criteria based on gradients and Hessian matrix with respect to both sets of variables. 
A natural alternative is to re-optimize the auxiliary variables first so that the objective only depends on decision variables, and then  evaluate the corresponding gradients and Hessian matrix. 
That is, if we partition $z=(z^{\text{dec}},z^{\text{aux}})$, then we can re-define $f_j(z^{\text{dec}})=\min_{z^\text{aux}}\Eb{c((z^{\text{dec}},z^{\text{aux}});Y)\mid{X\in R_j}}$ and $z^{\text{dec}}_0 = \argmin_{z^{\text{dec}}} f_0(z^{\text{dec}})$. 
The perturbation analysis remains largely the same by simply using the gradients and Hessian matrix for the redefined $f_j(z^{\text{dec}})$ at $z^{\text{dec}}_0$.
This leads to an alternative approximate splitting criterion. 
However, evaluating the gradients $\nabla f_j(z^{\text{dec}}_0)$ for $j = 1, 2$  
would now involve repeatedly finding the optimal solution $\argmin_{z^\text{aux}}\Eb{c((z^{\text{dec}}_0,z^{\text{aux}});Y)\mid{X\in R_j}}$ for all candidate splits. 
See \cref{sec: auxiliary-var} for details.

Since the point of our approximate criteria is to avoid re-optimization for every candidate split, this alternative is practically relevant only when re-optimizing the auxiliary variables is very computationally easy. For example, in \cref{ex: portfolio-var}, 
$z^\text{dec}$ corresponds to the first $d$ variables and
re-optimizing the auxiliary $(d+1)\thh$ variable amounts to computing the mean of $Y^\top z^{\text{dec}}_0$ in each subregion, which can be done quite efficiently as we vary the candidate splits.

\subsection{Estimating the Approximate Splitting Criteria}\label{sec: est-approx-crit}

The benefit of our approximate splitting criteria, $\crit^\text{apx-soln}(R_1,R_2),\,\crit^\text{apx-risk}(R_1,R_2)$ in \cref{eq:apxriskcrit,eq:apxsolcrit}, is that they only involve the solution of $z_0$. Thus, if we want to evaluate many different subpartitions of $R_0$, we need only solve for $z_0$ once, compute $(\nabla^2 f_{0}(z_0))^{-1}$ once, and then only re-compute $\nabla f_{j}(z_0)$ for each new subpartition. Still, this involves quantities we do not actually know since all of these depend on the joint distribution of $(X,Y)$. We therefore next consider the estimation of these approximate splitting criteria from data.

Given estimators $\hat H_0,\,\hat h_1,\,\hat h_0$ of $\nabla^2 f_{0}(z_0),\,\nabla f_1(z_0),\,\nabla f_2(z_0)$, respectively (see examples below), we can construct the estimated approximate splitting criteria as
\begin{align}
\ts\hat{\crit}^\text{apx-risk}(R_1,R_2)&\ts= -\sum_{j=1,2}\frac{n_j}{n}\hat h_j^\top \hat H^{-1}_0 \hat h_j,\label{eq: est-approx-risk-unconstr}\\
\ts\hat{\crit}^\text{apx-soln}(R_1,R_2)&\ts=\sum_{j=1,2}\frac1n\sum_{i=1}^n\indic{X_i\in R_j}c\prns{\hat z_0-\hat H^{-1}_0 \hat h_j;\;Y_i},\label{eq: est-approx-sol-unconstr}
\end{align}
where $n_j=\sum_{i=1}^n\indic{X_i\in R_j}$.

Under appropriate convergence of $\hat H_0,\,\hat h_1,\,\hat h_2$, these estimated criteria respectively converge to the population approximate criteria ${\crit}^\text{apx-risk}(R_1,R_2)$ and ${\crit}^\text{apx-soln}(R_1,R_2)$ in \cref{sec: approx-crit}, as summarized by the following self-evident proposition.
\begin{proposition}\label{thm:critconverge}
If 
$\edit{\|\hat H^{-1}_0 - \prns{\nabla^2 f_{0}(z_0)}^{-1}\|_{\op{F}}} = o_p(1)$, $\edit{\|\hat h_j - \nabla f_j(z_0)\|_2} = O_p(n^{-1/2})$ for $j = 1, 2$, then
\begin{align*}
\ts\hat{\crit}^\text{apx-risk}(R_1,R_2) \ts= {\crit}^\text{apx-risk}(R_1,R_2) + O_p(n^{-1/2}).
\end{align*}
If also $\left|\frac{1}{n}\sum_{i = 1}^n \indic{X_i \in R_j}c\prns{\hat z_0-\hat H^{-1}_0 \hat h_j;Y_i} - p_jf_j\prns{z_0-\prns{\nabla^2 f_{0}(z_0)}^{-1} {\nabla f_{j}(z_0)}}\right| = O_p(n^{-1/2})$ for $j = 1, 2$, then 
\begin{align*}
\ts\hat{\crit}^\text{apx-soln}(R_1,R_2) = {\crit}^\text{apx-soln}(R_1,R_2) + O_p(n^{-1/2}).
\end{align*}
\end{proposition}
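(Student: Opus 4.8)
The plan is to prove both statements by a routine add-and-subtract argument combined with submultiplicativity of matrix and vector norms --- which is exactly why the proposition is billed as self-evident. No new analytic ingredient is needed beyond the hypothesized convergence rates, the central limit theorem, and Cauchy--Schwarz.

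For $\hat\crit^\text{apx-risk}$, I would abbreviate $g_j = \nabla f_j(z_0)$ and $A = \nabla^2 f_0(z_0)$ and decompose each per-region quadratic form:
\begin{align*}
\hat h_j^\top \hat H_0^{-1}\hat h_j - g_j^\top A^{-1} g_j
&= (\hat h_j - g_j)^\top \hat H_0^{-1}\hat h_j + g_j^\top \hat H_0^{-1}(\hat h_j - g_j) + g_j^\top\bigl(\hat H_0^{-1} - A^{-1}\bigr) g_j .
\end{align*}
The first substep is to observe that the hypotheses already force $\|\hat H_0^{-1}\|_{\op{op}} = O_p(1)$ and $\|\hat h_j\|_2 = O_p(1)$ (a consistent estimator is bounded in probability). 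Then Cauchy--Schwarz and submultiplicativity bound each of the first two terms by $\|\hat h_j - g_j\|_2 \cdot O_p(1) = O_p(n^{-1/2})$, and the third by $\|g_j\|_2^2\,\|\hat H_0^{-1} - A^{-1}\|_{\op{F}} = o_p(1)$. Finally $n_j/n - p_j = O_p(n^{-1/2})$ by the central limit theorem for the i.i.d.\ indicators $\indic{X_i \in R_j}$, so multiplying this error by the bounded quantity $\hat h_j^\top \hat H_0^{-1}\hat h_j$ and summing over $j=1,2$ delivers the conclusion (up to the split-independent multiplicative constant, which is immaterial for the splitting decision).

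For $\hat\crit^\text{apx-soln}$ there is essentially nothing left to prove: by definition $\hat\crit^\text{apx-soln}(R_1,R_2) = \sum_{j=1,2}\frac1n\sum_{i=1}^n \indic{X_i\in R_j}\,c\prns{\hat z_0 - \hat H_0^{-1}\hat h_j;\, Y_i}$ while $\crit^\text{apx-soln}(R_1,R_2) = \sum_{j=1,2}p_j f_j\prns{z_0 - A^{-1}g_j}$, so the extra hypothesis --- that each summand agrees up to $O_p(n^{-1/2})$ --- immediately yields the result after summing the two differences. (Were one to derive that hypothesis rather than assume it, I would Taylor-expand $c(\cdot\,;Y_i)$ around the limit point $z_0 - A^{-1}g_j$, combine a local Lipschitz bound with $\hat z_0 \to z_0$ and $\hat H_0^{-1}\hat h_j \to A^{-1}g_j$, and then apply a CLT over the finitely many regions to pass from the empirical average to $p_j f_j$; but the proposition deliberately takes this as given.)

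The only part that is not entirely mechanical is the rate bookkeeping for $\hat\crit^\text{apx-risk}$: the term $g_j^\top(\hat H_0^{-1}-A^{-1})g_j$ inherits only $\|\hat H_0^{-1}-A^{-1}\|_{\op{F}}$, so obtaining the clean $O_p(n^{-1/2})$ stated in the conclusion implicitly requires the Hessian-inverse estimator to converge at the parametric rate as well (which holds in the running examples); under only the stated $o_p(1)$ hypothesis the honest conclusion is $\hat\crit^\text{apx-risk} = \crit^\text{apx-risk} + o_p(1)$. Beyond that, everything is submultiplicativity and the CLT, with no substantive obstacle.
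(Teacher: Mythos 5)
Your proof is correct and follows essentially the same route as the paper's own: the paper likewise expands $\hat h_j^\top \hat H_0^{-1}\hat h_j$ as $\bigl(\nabla f_j(z_0)+O_p(n^{-1/2})\bigr)^\top\bigl((\nabla^2 f_0(z_0))^{-1}+o_p(1)\bigr)\bigl(\nabla f_j(z_0)+O_p(n^{-1/2})\bigr)$ and treats the apx-soln case as immediate from the extra hypothesis, while your explicit handling of $n_j/n\to p_j$ and the split-independent constant is, if anything, more careful. Your closing caveat is also well taken: under the stated $o_p(1)$ hypothesis the term $\nabla f_j(z_0)^\top\bigl(\hat H_0^{-1}-(\nabla^2 f_0(z_0))^{-1}\bigr)\nabla f_j(z_0)$ is only $o_p(1)$ (the paper's proof silently books it as $o_p(n^{-1/2})$), so the clean $O_p(n^{-1/2})$ conclusion really does require a $\sqrt{n}$-rate for the Hessian-inverse estimate, and otherwise only $o_p(1)$ follows.
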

If we can find estimators that satisfy the conditions of \cref{thm:critconverge}, then together with \cref{thm:apxriskapx,thm:apxsolapx}, we will have shown that the estimated approximate splitting criteria can well approximate the oracle splitting criterion when samples are large and the partition is fine. 
It remains to find appropriate estimators.

\subsubsection*{General Estimation Strategy.} 
Since the gradients and Hessian to be estimated are evaluated at a point $z_0$ that is itself unknown, a general strategy is to first estimate $z_0$ and then estimate the gradients and Hessian at this estimate. This is the strategy we follow in the examples below.

Specifically,
we can first estimate $z_0$ by its sample analogue:
\begin{align}
\hat z_0 \in \argmin_{z \in \mathbb \Z} 
\widehat{p_0f_0}(z), \text{ where } \widehat{p_0f_0}(z) \coloneqq  
\frac{1}{n}\sum_{i = 1}^{n}\indic{X_i \in R_0}c(z; Y_i). \label{eq: z0hat}
\end{align}
Under standard regularity conditions, the estimated optimal solution $\hat z_0$ above is consistent (see \cref{lemma: consistency-est-sol} in \cref{sec: supplement}).
Then, given generic estimators $\hat H_0(z)$ of $\nabla^2 f_0(z)$ at any one $z$ and similarly estimators $\hat h_j(z)$ of $\nabla f_j(z)$ for $j=1,2$, we let $\hat H_0=\hat H_0(\hat z_0)$ and $\hat h_j=\hat h_j(\hat z_0)$. Examples of this follow.

\subsubsection*{Revisiting the Running Examples.} We next discuss examples of possible estimates $\hat H_0,\,\hat h_j$ that can be proved to satisfy the conditions in \cref{thm:critconverge} (see \cref{prop: ex-portfolio,prop: ex-smooth,prop: ex-mnv,prop: ex-portfolio-var} in \cref{sec: supplement}
for details). All of our examples use the general estimation strategy above.

\begin{continuance}[Estimation with Smooth Cost]{\ref{ex: smooth}}
If $c(z;y)$ is itself twice continuously differentiable in $z$ for every $y$, we can simply use $\hat H_0(\hat z_0)=\frac1{n_0}\sum_{i=1}^n\indic{X_i\in R_0}\nabla^2 c\prns{\hat z_0;Y_i}$ and $\hat h_j(\hat z_0)={\frac1{n_j}\sum_{i=1}^n\indic{X_i\in R_j}\nabla c\prns{\hat z_0;Y_i}}$. In \cref{prop: ex-smooth} in \cref{sec: supplement}, we show that these satisfy the conditions of \cref{thm:critconverge} thanks to the smoothness of $c(z; y)$. 
\Cref{ex: portfolio-var} is one example of this case, which we discuss below.
\Cref{ex: prediction} is another example.

In particular for the squared error cost function in \cref{ex: prediction} ($c(z;y)=\frac{1}{2}\magd{z-y}^2_2$), we show in \cref{prop: square-cost} in \cref{sec: supplement} that, using the above $\hat H_0,\hat h_j$, we have
$$
\frac1{n}\sum_{i=1}^n\indic{X_i\in R_0}c(\hat z_0;Y_i)+\hat{\crit}^\text{apx-risk}(R_1,R_2)=\hat{\crit}^\text{apx-soln}(R_1,R_2)=\sum_{j=1,2}\frac{n_j}{2n}\sum_{l = 1}^d \op{Var}(\{Y_{i, l}:X_i\in R_j\}),
$$
which is exactly the splitting criterion used for regression by random forests, namely the sum of squared errors to the mean within each subregion. Notice the very first term is constant in $R_1,R_2$.
\end{continuance}

\begin{continuance}[Estimation in Multi-Item Newsvendor]{\ref{ex: mnv}}
In the previous section we saw that the gradient and Hessian depend on the cumulative distribution and density functions, respectively.
We can therefore estimate the gradients by ${\hat h_{j,\ell}(\hat z_0)}={\frac{\alpha_l + \beta_l}{n_j}\sum_{i=1}^n\indic{X_i\in R_j,\,Y_l\leq \hat z_{0, l}}} - \beta_l$,
and the Hessian using, for example, kernel density estimation: ${\hat H_{0,ll}(\hat z_0)}=\frac{\alpha_l+\beta_l}{n_j b}\sum_{i=1}^n\indic{X_i\in R_0}\mathcal K((Y_{i, l}-\hat z_{0, l})/b)$,
where $\mathcal K$ is a kernel such as $\mathcal K(u)=\indic{\abs{u}\leq\frac12}$ and $b$ is the bandwidth, and $\hat H_{0,ll'}(\hat z_0)=0$ for $l\neq l'$. 
We show the validity of these estimates in \cref{prop: ex-mnv} in \cref{sec: supplement}.
\end{continuance}

\begin{continuance}[Estimation in Variance-based Portfolio Optimization]{\ref{ex: portfolio-var}} 
With $\hat z_0 = \{\hat z_{0, 1}, \dots, \hat z_{0, d}, \hat z_{0, d+1}\}$ given by solving the problem \cref{eq: z0hat}, 
the gradient and Hessian in \cref{eq: grad-var,eq: hessian-var} can be estimated by their sample analogues: 
\begin{align*}
&\hat h_j(\hat z_0) = 
2\begin{bmatrix}
\frac{1}{n_j}\sum_{i = 1}^n \indic{X_i \in R_j} Y_iY_i^\top \hat z_{0, 1:d} - \frac{1}{n_j} \sum_{i = 1}^n \indic{X_i \in R_j} Y_i \hat z_{0, d+1}  \\ 
\hat z^\top_{0, 1:d}\prns{\frac{1}{n_0}\sum_{i = 1}^n \indic{X_i \in R_0} Y_i - \frac{1}{n_j}\sum_{i = 1}^n \indic{X_i \in R_j} Y_i}
\end{bmatrix},  \\
&\hat H_0(\hat z_0) = 
2\begin{bmatrix}
\frac{1}{n_0}\sum_{i = 1}^n \indic{X_i \in R_0}Y_iY_i^\top & -\frac{1}{n_0}\sum_{i = 1}^n \indic{X_i \in R_0}Y_i \\
-\frac{1}{n_0}\sum_{i = 1}^n \indic{X_i \in R_0}Y_i^\top & 1
\end{bmatrix}.
\end{align*}
These estimators are in fact specific examples of the general smooth case in \cref{ex: smooth}, so they too can be analyzed by \cref{prop: ex-smooth} in \cref{sec: supplement}. 
\end{continuance}

\begin{continuance}[Estimation in CVaR-based Portfolio Optimization]{\ref{ex: portfolio}}
It is straightforward to estimate the gradient in \cref{eq: portfolio-grad}: 
\begin{align}\label{eq: port-grad-est}
\hat h_j(\hat z_0) = 
\frac{1}{\alpha}\begin{bmatrix}
-\frac{1}{n_j}\sum_{i = 1}^n \indic{Y_i^\top \hat z_{0,1:d} \le \hat q^{\alpha}_0(Y^\top \hat z_{0,1:d}), X_i \in R_j}Y_i \\
\frac{1}{n_j}\sum_{i = 1}^n \indic{Y^\top_i \hat z_{0, 1:d} \le \hat q^{\alpha}_0(Y^\top \hat z_{0,1:d}), X_i \in R_j} - \alpha
\end{bmatrix}
\end{align}
where $\hat q^{\alpha}_0(Y^\top \hat z_{0,1:d})$ is the empirical $\alpha$-level quantile of $Y^\top \hat z_{0, 1:d}$ based on data in $R_0$.
The Hessian matrix in \cref{eq: portfolio-hess} is more challenging to estimate, since it involves many conditional expectations given the event $Y^\top z_{0,1:d}=q^{\alpha}_0(Y^\top z_{0,1:d})$. In principle, we could estimate these nonparametrically by, for example, kernel smoothing estimators \citep[\eg, ][]{Fan1998,Yin2010,Loubes2019,chen2015local}. 
For simplicity and since this is only used as an approximate splitting criterion anyway, in our empirics we can consider a parametric approach instead, which we will use in our empirics in \cref{sec: cvar-empirical}:  if $Y \mid X \in R_0$ has a Gaussian distribution $\mathcal N(m_0, \Sigma_0)$, then
\begin{align}
&\Eb{Y \mid Y^\top z_{0,1:d} = q^{\alpha}_0(Y^\top z_{0,1:d}), X \in R_0} = m_0 + \Sigma_0 z_{0, 1:d} \prns{z_{0,1:d}^\top \Sigma_0 z_{0,1:d}}^{-1}(q^{\alpha}_0(Y^\top z_{0,1:d}) - m_0^\top z_{0,1:d}), \label{eq: hessian-normal1}\\
&\var\prns{Y \mid Y^\top z_{0,1:d} = q^{\alpha}_0(Y^\top z_{0,1:d}), X \in R_0} = \Sigma_0 - \Sigma_0 z_{0, 1:d} \prns{z_{0,1:d}^\top \Sigma_0 z_{0,1:d}}^{-1}z_{0,1:d}^\top \Sigma_0,\label{eq: hessian-normal2}
\end{align}
and $\Eb{YY^\top \mid Y^\top z_{0,1:d} = q^{\alpha}_0(Y^\top z_{0,1:d}), X \in R_0}$ can be directly derived from these two quantities. 
We can then estimate these quantities by plugging in $\hat z_0$ for $z_0$, the empirical mean estimator of $Y$ for $m_0$, the empirical variance estimator of $Y$ for $\Sigma_0$, and the empirical $\alpha$-level quantile of $Y^\top {\hat z_{0,1:d}}$ for $q^{\alpha}_0(Y^\top z_{0,1:d})$, all based only on the data in $R_0$.
Finally, we can estimate 
$\mu_{0}\prns{q^{\alpha}_0(Y^\top z_{0,1:d})}$ by a kernel density estimator $\frac{1}{n_0b}\sum_{i = 1}^n \indic{X_i \in R_0}\mathcal K\prns{\prns{Y_i^\top\hat z_{0,1:d} - \hat q^{\alpha}_0(Y^\top \hat z_{0,1:d})}/b}$.
Although the Gaussian distribution may be misspecified, the resulting estimator is more stable than and easier to implement than nonparametric estimators (especially considering that it will be used repeatedly in tree construction) and it can still approximate the relative scale of entries in the Hessian matrix reasonably well. 
In \cref{sec: cvar-empirical}, we empirically show that our method  based on these approximate estimates works well even if the Gaussian model is misspecified. If it happens to be correctly specified, we can also theoretically validate that the estimator satisfies the conditions of \cref{thm:critconverge}  (see \cref{prop: ex-portfolio} in \cref{sec: supplement}).
\end{continuance}

\subsection{The Stochastic Optimization Tree and Forest Algorithms}\label{sec: tree-procedure}

With the estimated approximate splitting criteria in hand, we can now describe our StochOptTree and StochOptForest algorithms. Specifically, we will first describe how we use our estimate approximate splitting criteria to build trees, which we will then combine to make a forest that leads to a CSO decision policy $\hat z(x)$ as in \cref{eq: forest policy}.

\begin{algorithm}[t!]\OneAndAHalfSpacedXI
    \caption{\textsc{Recursive procedure to grow a StochOptTree (unconstrained case)}}
    \label{alg: tree}
    \begin{algorithmic}[1]
    \Procedure{StochOptTree.Fit}{region $R_0$, data $\mathcal D$, $\mathtt{depth}$, $\mathtt{id}$}
    \State $\hat z_0 \gets$ \Call{Minimize}{$\sum_{(X_i,Y_i) \in \mathcal D}\indic{X_i \in R_0}c(z; Y_i)$, $z\in \Z$} \Comment{Solve \cref{eq: z0hat}}\label{alg: tree z0 step}
    \State $\hat H_0 \gets$ \textproc{Estimate} $\nabla^2 f_0(z)$ \textproc{at} $z=\hat z_0$\label{alg: tree estim 0 step}
    \State $\mathtt{CandSplit} \gets$ \Call{GenerateCandidateSplits}{${R_0}$, $\mathcal D$} \Comment{Create the set of possible splits}
    \State $\hat C\gets\infty$
    \For{$(j, \theta) \in \mathtt{CandSplit}$} \Comment{Optimize the estimated approximate criterion}
        \State $(R_1,\,R_2)\gets (R_0\cap\{x\in\R p:x_j\leq\theta\},\,R_0\cap\{x\in\R p:x_j>\theta\})$
        \State $(\hat h_1,\hat h_2) \gets$ \textproc{Estimate} $\nabla f_1(z),\,\nabla f_2(z)$ \textproc{at} $z=\hat z_0$\label{alg: tree estim j step}
        \State $C\gets\hat{\mathcal C}^{\text{apx-risk/apx-soln}}(R_1,\,R_2)$\Comment{Compute the criterion using $\hat H_0,\hat h_1,\hat h_2,\mathcal D$}\label{alg: tree crit step}
        \If{$C<\hat C$}\;$(\hat C,\hat j,\hat\theta)\gets(C,j,\theta)$
        \EndIf
    \EndFor
    \If{\Call{Stop?}{$(\hat j, \hat \theta)$, $R_0$, $\mathcal D$, $\mathtt{depth}$}} 
        \State \textbf{return} $(x\;\mapsto\;\mathtt{id})$
    \Else
    	\State $\mathtt{LeftSubtree}~\gets$ \Call{StochOptTree.Fit}{$R_0\cap\{x\in\R p:x_j\leq\theta\}$, $\mathcal D$, $\mathtt{depth}+1$, $2\mathtt{id}$} 
        \State $\mathtt{RightSubtree}~\gets$ \Call{StochOptTree.Fit}{$R_0\cap\{x\in\R p:x_j>\theta\}$, $\mathcal D$, $\mathtt{depth}+1$, $2\mathtt{id}+1$} 
        \State \textbf{return} $(x\;\mapsto\;{x_j\leq\theta}{\;}{?}{\;}{\mathtt{LeftSubtree}(x)}{\;}{:}{\;}{\mathtt{RightSubtree}(x)})$
    \EndIf
    \EndProcedure 
    \end{algorithmic}
\end{algorithm}

\subsubsection*{StochOptTree Algorithm.}
We summarize the tree construction procedure in \cref{alg: tree}. 
We will extend \cref{alg: tree} to the constrained case in \cref{sec: constr}.
This procedure partitions a generic region, $R_0$, into two children subregions, $R_1,R_2$, by an axis-aligned cut along a certain coordinate of covariates. 
It starts with solving the optimization problem within $R_0$ according to \cref{eq: z0hat}, and then finds the best split coordinate $\hat j$ and cutoff value $\hat \theta$ over a set of candidate splits by minimizing\footnote{Ties can be broken arbitrarily.} the estimated approximate risk criterion in \cref{eq: est-approx-risk-unconstr} or the estimated approximate solution criterion in \cref{eq: est-approx-sol-unconstr}.
\edit{Once the best split $(\hat j, \hat \theta)$ is found, $R_0$ is partitioned into the two subregions accordingly, and the whole procedure continues on recursively until a stopping criterion.}

\edit{%
There are a few subroutines to be specified. First, there is the optimization of $\hat z_0$. Depending on the structure of the problem, different algorithms may be appropriate. For example, 
if $c(z;y)$ is the maximum of several linear functions, a linear programming solver may be used. 
More generally, 
since the objective has the form of a sum of functions, methods such as stochastic gradient descent \citep[aka stochastic approximation;][]{nemirovski2009robust} may be used. 
Second, there is the estimation of $\hat H_0,\hat h_1,\hat h_2$, which was discussed in \cref{sec: est-approx-crit}.
Third, we need to generate a set of candidate splits, which can be done in different ways.
The original RandForest algorithm \citep{breiman2001random} randomly selects a pre-specified number of distinct coordinates $j$ from $\{1,\dots,p\}$ without replacement, and considers $\theta$ to be 
all 
midpoints in the $X_{i,j}$ data, which exhausts all possible subpartitions along each selected coordinate.
Another option is to consider a random subset of cutoff values, possibly enforcing that the sample sizes of the corresponding two children nodes are balanced, as in \citet{pmlr-v32-denil14}. 
This approach not only enforces balanced splits, which is important for statistical guarantees (see \cref{thm: asymp-opt}),
but it also reduces the computation time.
Finally, we need to decide when to stop the tree construction. A typical stopping criterion is when each child region reaches a pre-specified number of data points \citep[\eg,][]{breiman2001random}. 
Depth may also additionally be restricted. 
Note that in an actual implementation if the stopping criterion would have stopped regardless of the split chosen, we can short circuit the call and skip the split optimization.
}

Notice that $\hat z_0$ and $\hat H_0$ need only be computed once for each recursive call to \textproc{StochOptTree.Fit}, while $\hat h_1,\hat h_2$ need to be computed for each candidate split. All estimators $\hat h_j$ discussed in \cref{sec: est-approx-crit} take the form of a sample average over $i\in R_j$, for $j=1,2$, and therefore can be easily and quickly computed for each candidate split. Moreover, when candidate cutoff values consist of all midpoints of the sample values in the $j\thh$ coordinate, such sample averages can be efficiently updated by proceeding in sorted order, where only one datapoint changes from one side of the split to the other at a time, similarly to how the original random forest algorithm maintains within-subpartition averages of outcomes and their squares for each candidate split.

Notably, the tree construction computation is typically dominated by the step of searching best splits. 
This step can be implemented very efficiently with our approximate criteria, since they only involve estimation of gradients and simple linear algebra operations (\cref{sec: approx-crit}). Only one optimization and Hessian computation is needed at the beginning of each recursive call.
In particular, we do not  need to solve optimization problems repeatedly for each candidate split, which is the central aspect of our approach and which enables the construction of large-scale forests.

\begin{algorithm}[t!]\OneAndAHalfSpacedXI
    \caption{\textsc{Procedure to fit a StochOptForest}}
    \label{alg: forest}
    \begin{algorithmic}[1]
    \Procedure{StochOptForest.Fit}{data $\mathcal{D}$, number of trees $T$} 
    \For{$j = 1$ to $T$}
        \State $\mathcal I^\text{tree}_j,~ \mathcal I^\text{dec}_j \gets $ \Call{Subsample}{$\{1,\dots,\abs{\mathcal D}\}$}
        \State $\tau_j \gets$ \Call{StochOptTree.Fit}{$\mathcal X$, $\{(X_i,Y_i)\in\mathcal D:i\in\mathcal I^\text{tree}_j\}$, $1$, $1$} \Comment Fit tree using the sub-dataset $\mathcal I^\text{tree}_j$ 
    \EndFor
    \State \textbf{return} $\{(\tau_j,\,\mathcal I^\text{dec}_j):j=1,\dots,T\}$
    \EndProcedure 
    \end{algorithmic}
\end{algorithm}

\begin{algorithm}[t!]\OneAndAHalfSpacedXI
    \caption{\textsc{Procedure to make a decision using StochOptForest}}
    \label{alg: forest pred}
    \begin{algorithmic}[1]
    \Procedure{StochOptForest.Decide}{data $\mathcal{D}$, forest $\{(\tau_j,\,\mathcal I^\text{dec}_j):j=1,\dots,T\}$, target $x$}
    \State $w(x) \gets$ \Call{Zeros}{$|\mathcal D|$} \Comment{Create an all-zero vector of length $|\mathcal D|$}
    \For{$j = 1,\dots,T$}
    \State $\mathcal N(x)\gets\{i\in\mathcal I^\text{dec}_j:\tau_j(X_i)=\tau_j(x)\}$\Comment{Find the $\tau_j$-neighbors of $x$ among the data in $\mathcal I^\text{dec}_j$}
    \For{$i \in \mathcal N(x)$}\;$w_i(x) \gets w_i(x) + \frac{1}{\abs{\mathcal N(x)}T}$\Comment{Update the sample weights}
        \EndFor
    \EndFor
    \State \textbf{return} \Call{Minimize}{$\sum_{(X_i,Y_i) \in \mathcal D}w_i(x)c(z; Y_i)$, $z\in {\Z}$}\Comment{Compute the forest policy \cref{eq: forest policy}}
    \EndProcedure 
    \end{algorithmic}
\end{algorithm}

\subsubsection*{StochOptForest Algorithm.} In \cref{alg: forest}, we  summarize the algorithm of building forests using trees constructed by \cref{alg: tree}. It involves an unspecified subsampling subroutine. For each $j=1,\dots,T$, we consider possibly subsampling the data on which we will fit the $j\thh$ tree ($\mathcal I^\text{tree}$) as well as the data which we will later use to generate localized weights for decision-making ($\mathcal I^\text{dec}$). There are different possible ways to construct these subsamples. Following the original random forest algorithm, we may set $\mathcal I^\text{tree}_j=\mathcal I^\text{dec}_j$ equal to a bootstrap sample (a sample of size $n$ with replacement).
Alternatively, we may set $\mathcal I^\text{tree}_j=\mathcal I^\text{dec}_j$ to be sampled as a fraction of $n$ \emph{without} replacement,
which is an approach adopted in more recent random forest literature as it is more amenable to theoretical analysis and has similar empirical performance \citep[eg.,][]{mentch2016quantifying,scornet2015}.
Alternatively, we may also \emph{sequentially} sample $\mathcal I^\text{tree}_j$, $\mathcal I^\text{dec}_j$ without replacement so the two are disjoint (\eg, take a random half of the data, then further split it at random into two).
The property that the two sets are disjoint, $\mathcal I^\text{tree}_j\cap\mathcal I^\text{dec}_j=\varnothing$, is known as \emph{honesty} and it is helpful in proving statistical consistency of random forests \citep{athey2019generalized,wager2018estimation,pmlr-v32-denil14}.\footnote{We may similarly use the $\approx1/e$ fraction of the data not selected by the bootstrap sample to construct $\mathcal I^\text{dec}_j$ to achieve honesty, but this is again uncommon as it is difficult to analyze.}

\subsubsection*{Final Decision.} 
In \cref{alg: forest pred}, we  summarize the algorithm of making a decision at new query points $x$ once we have fit a forest, that is, compute the forest policy, \cref{eq: forest policy}.
Although the tree algorithm we developed so far, \cref{alg: tree}, is for the unconstrained case, we present \cref{alg: forest pred} in the general constrained case.
In a slight generalization of \cref{eq: forest policy}, we actually allow the data weighted by each tree to be a subset of the whole dataset (\ie, $\mathcal I^\text{dec}_j$), as described above.
Namely, the weights $w_i(x)$ computed by \cref{alg: forest pred} are given by
\begin{align}\label{eq: weights}
w_{i}(x) = \frac{1}{T}\sum_{j=1}^T\frac{\indic{i\in\mathcal I^\text{dec}_j,\tau_j(X_i)=\tau_j(x)}}{\sum_{i'=1}^n\indic{i\in\mathcal I^\text{dec}_j,\tau_j(X_{i'})=\tau_j(x)}},
\end{align}
which is slightly more general than \cref{eq: forest policy}.
\Cref{alg: forest pred} then optimizes the average cost over the data with sample weights given by $w_i(x)$.
Note that under honest splitting, for each single tree, each data point is used in either placing splits or constructing weights, but not both. However, since each tree uses an independent random subsample, 
every data point 
will participate in the construction of some trees and also the computation of weights by other trees.
Therefore, all observations contribute to both forest construction and the weights in the final decision making. In this sense, despite appearances, honest splitting is not ``wasting'' data. 

The weights $\{w_i(x)\}_{i = 1}^n$ generated by \cref{alg: forest pred} represent the average frequency with which each data point falls into the same terminal node as $x$.
The measure given by the sum over $i$ of $w_i(x)$ times the Dirac measure at $Y_i$ can be understood as an estimate for the conditional distribution of $Y\mid X=x$.
However, in contrast to non-adaptive weights such as given by $k$-nearest neighbors or Nadaraya–Watson kernel regression \citep{bertsimas2014predictive}, which non-parametrically estimate this conditional distributional generically, our weights \emph{directly} target the optimization problem of interest, focusing on the aspect of the data that is relevant to the optimization problem, which makes our weights much more efficient.
Moreover, in contrast to using weights given by standard random forests, which targets prediction with minimal squared error, our weights target the \emph{right} downstream optimization problem.

\section{The Constrained Case}\label{sec: constr}
In this section, we develop approximate splitting criteria for training forests for general CSO problems with constraints as described at the onset in \cref{eq:cso}.
Namely, in this section we let $\Z =\braces{z\in\R d:
h_{k}(z) = 0,\, k =1, \dots, s, ~ h_{k}(z) \le 0,\, k =s+1, \dots, m}$ be as in \cref{eq:constraints}.
The oracle criterion we target remains $\crit^\text{oracle}(R_1,R_2)$ as in \cref{eq:oraclecrit} with the crucial difference that now $\Z$ need not be $\R d$ and may be constrained as above.
We then proceed as in \cref{sec: unconstr}: we approximate the oracle criterion in two ways, then we estimate the approximations, and then we use these estimated splitting criteria to construct trees.
\edit{Since the perturbation analysis in the presence of constraints is somewhat more cumbersome, this section will be more technical. But the high level idea remains the same as the simpler unconstrained case in \cref{sec: unconstr}.}

\subsection{Perturbation Analysis of the Oracle Splitting Criterion}
Again, consider a region $R_0\subseteq\R d$ and its candidate subpartition $R_0=R_1\cup R_2$, $R_1\cap R_2=\varnothing$. We define $v_j(t),\,z_j(t),\,f_j(t)$ as in \cref{eq:v} with the crucial difference that now $\Z$ is constrained.
The oracle criterion is given by $\crit^\text{oracle}(R_1,R_2)=p_1v_1(1)+p_2v_2(1)$, as before. We again approximate $v_1(1), v_2(1)$ by computing $v_1(t), v_2(t)$ at $t=0$ (where they are equal and do not depend on the subpartition) and then extrapolating from there by leveraging second order perturbation analysis. We present our key perturbation result for this below.

\begin{theorem}[Second-Order Perturbation Analysis: Constrained]\label{thm:secondorder-const}
Fix $j=1,2$.
Suppose the following conditions hold:
\begin{enumerate}
\item $f_0(z),f_j(z)$ are twice continuously differentiable. \label{cond: constr-smooth} 
\item The problem corresponding to $f_0(z)$ has a unique minimizer $z_0$ over $\Z$. \label{cond: unique-sol}
\item The inf-compactness condition: \edit{there exist} constants $\alpha$ and $t_0\in (0, 1]$ such that the sublevel set 
\edit{$\left\{
z \in \mathcal Z: ~ f_{0}(z)+t\prns{f_{j}(z)-f_0\prns{z}} \le \alpha
\right\}$ 
}
is nonempty and uniformly bounded over $t \in [0, t_0)$.  \label{cond: constr-compact}
\item $z_0$ is associated with a unique Lagrangian multiplier $\nu_0$ that also satisfies the strict complementarity condition:  $\nu_{0, k} > 0$ if $k \in K_h(z_0)$, where $K_h(z_0) = \{k: h_{k}(z_0) = 0, k = s+1, \cdots, m\}$ is the index set of active at $z_0$ inequality constraints. \label{cond: constr-uniquenss}
\item \label{cond: constr-MF}  The Mangasarian-Fromovitz constraint qualification condition at $z_0$:
\begin{align*}
&\ts \nabla h_{1}(z_0),\,\dots,\,\nabla h_{s}(z_0) \text{ are linearly independent},~\text{and}  \\
&\ts\exists d_z \text{ s.t. } \nabla h_{k}(z_0)d_z = 0, ~ k = 1, \dots, s, ~ \nabla h_{k}(z_0)d_z < 0, ~ k \in K_h(z_0).
\end{align*}  
\item \label{cond: constr-2nd} Second order sufficient condition:
\begin{align*}
d_z^\top \prns{\nabla^2 f_0(z_0) +  \sum_{k = 1}^{m} \nu_{0,k}  \nabla^2  h_{k}(z_0)} d_z > 0\quad \forall d_z \in C(z_0)\setminus \{0\},  
\end{align*}
where 
$C(z_0)$ is the critical cone defined as follows:
\begin{align*}
C(z_0) = \left\{d_z: d_z^\top\nabla h_{k}(z_0) = 0, ~ \text{for } k \in \{1, \dots, s\} \cup K_h(z_0)\right\}.
\end{align*}
\end{enumerate}
Let $d_z^{j*}$ be the first part of the (unique) solution of the following linear system of equations:
\begin{align}\label{eq: linear-equation}
&\begin{bmatrix}
~\edit{\prns{\nabla^2 f_0(z_0) +  \sum_{k = 1}^{m} \nu_{0,k}  \nabla^2  h_{k}(z_0)}}  ~&~ \nabla {\mathcal H^{K_h}}^\top(z_0)~ \\
~\nabla^\top\mathcal H^{K_h}(z_0) ~&~ 0 ~
\end{bmatrix}
\begin{bmatrix}
d_z^j \\ \xi 
\end{bmatrix}
= \begin{bmatrix}
-\edit{\prns{\nabla  f_j(z_0) - \nabla f_0(z_0)}}  \\
0 
\end{bmatrix},
\end{align}
\edit{where $\nabla^\top \mathcal H(z_0) \in \mathbb \R^{m\times d}$ is the matrix whose $k^\text{th}$ row is $\prns{\nabla h_{k}(z_0)}^\top$, and $\nabla^\top{\mathcal H}^{K_h}(z_0) \in \mathbb R^{s + |K_h(z_0)|}$ consists only of the rows corresponding to equality and active inequality constraints.}

Then 
\begin{align}
v_j(t) \label{eq: approx-risk}
	&= (1-t)f_{0}(z_0)+tf_{j}(z_0)  
	\\\notag&\phantom{=}+ \frac{1}{2}t^2\braces{d_{z}^{j*\top} \prns{\nabla^2 f_0(z_0) +  \sum_{k = 1}^{m} \nu_{0,k}  \nabla^2  h_{k}(z_0)} d_{z}^{j*} + 2d_{z}^{j*\top} \prns{{\nabla  f_j(z_0) - \nabla f_0(z_0)}}} + o(t^2) , \\
z_j(t) &= z_0 + t d_{z}^{j*} + o(t) \label{eq: approx-sol}.
\end{align}
\end{theorem}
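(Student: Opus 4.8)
The plan is to derive \cref{thm:secondorder-const} from the general sensitivity theory for parametrized optimization problems, treating $t$ as the scalar perturbation parameter and the objective $F(z,t) = f_0(z) + t(f_j(z)-f_0(z))$ as a smooth perturbation of $F(z,0) = f_0(z)$, with the feasible set $\Z$ held fixed. First I would verify that the hypotheses of the standard second-order expansion theorem for the optimal value function apply: conditions \ref{cond: constr-smooth}--\ref{cond: constr-2nd} are precisely tailored to invoke a result like Theorem 4.1 or the expansions in Section 4.3 of \citet{perturbation2000}. Specifically, the inf-compactness condition \ref{cond: constr-compact} together with smoothness guarantees existence of minimizers for $t$ near $0$ and, via Proposition 4.4 of \citet{perturbation2000}, that any selection $z_j(t) \to z_0$. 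Uniqueness of $z_0$ (condition \ref{cond: unique-sol}), the Mangasarian-Fromovitz constraint qualification \ref{cond: constr-MF}, uniqueness of the multiplier with strict complementarity \ref{cond: constr-uniquenss}, and the second-order sufficient condition \ref{cond: constr-2nd} together put us in the ``nice'' regime where the optimal value function $v_j(t)$ is twice differentiable at $t=0$ and the optimal solution path $z_j(t)$ is differentiable at $t=0$.

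The key computational step is to identify the first and second derivatives. The first derivative of the value function follows from the envelope theorem (Danskin-type): $v_j'(0) = \partial_t F(z_0, 0) = f_j(z_0) - f_0(z_0)$, which gives the linear term $(1-t)f_0(z_0) + t f_j(z_0)$ in \cref{eq: approx-risk}. For the second-order term and the solution expansion, I would appeal to the characterization that the directional derivative $\dot z \coloneqq z_j'(0)$ solves the linearized KKT system obtained by differentiating the stationarity condition $\nabla_z F(z,t) + \sum_k \nu_k \nabla h_k(z) = 0$ and the active-constraint equations $h_k(z) = 0$ for $k \in \{1,\dots,s\}\cup K_h(z_0)$ with respect to $t$ at $t = 0$. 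Differentiating yields exactly the linear system \cref{eq: linear-equation}: the $(1,1)$ block is the Hessian of the Lagrangian $\nabla^2 f_0(z_0) + \sum_k \nu_{0,k}\nabla^2 h_k(z_0)$, the off-diagonal blocks are the active-constraint Jacobians $\nabla \mathcal H^{K_h}(z_0)$, and the right-hand side is $-(\nabla f_j(z_0) - \nabla f_0(z_0)) = -\partial_t \nabla_z F(z_0,0)$. Strict complementarity ensures the active set is locally stable, so inactive constraints stay inactive and active inequality multipliers stay positive; MFCQ plus the second-order condition ensure this system is nonsingular, so $d_z^{j*}$ is well-defined and equals $z_j'(0)$, giving \cref{eq: approx-sol}. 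Then $v_j''(0)$ is obtained by the standard formula $v_j''(0) = d_z^{j*\top}(\nabla^2 f_0(z_0) + \sum_k \nu_{0,k}\nabla^2 h_k(z_0)) d_z^{j*} + 2 d_z^{j*\top}(\nabla f_j(z_0) - \nabla f_0(z_0))$, which one can also see by plugging the solution expansion into a second-order Taylor expansion of the Lagrangian and using that the constraint terms contribute nothing at second order along $d_z^{j*}$ (since $d_z^{j*} \in C(z_0)$ and the complementary-slackness cross terms cancel).

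An alternative, more self-contained route avoids quoting the black-box sensitivity theorem: expand $v_j(t) = F(z_j(t), t)$ directly. Write $z_j(t) = z_0 + t d_z^{j*} + o(t)$ as an ansatz, substitute into $F(z_j(t),t)$, and Taylor-expand to second order in $t$, using $\nabla_z F(z_0,0) = \nabla f_0(z_0) = -\sum_k \nu_{0,k}\nabla h_k(z_0)$ on the active set plus feasibility $h_k(z_0 + t d_z^{j*} + o(t)) = 0$ to leading order (which forces $d_z^{j*} \in C(z_0)$). The first-order terms in $t$ collapse to $f_j(z_0) - f_0(z_0)$ via complementary slackness, and the $t^2$ terms reproduce the stated quadratic form; one must separately argue that no feasible perturbation can do better, i.e., that $d_z^{j*}$ is actually optimal among first-order feasible directions, which is where the second-order sufficient condition and uniqueness of the multiplier are used (to upper- and lower-bound $v_j(t)$ and squeeze out the $o(t^2)$).

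The main obstacle I anticipate is rigorously controlling the active set and the solution path: one must show that for $t$ small enough the optimal solution $z_j(t)$ exists, is unique, lies near $z_0$, keeps exactly the same active constraints as $z_0$, and is differentiable with derivative $d_z^{j*}$ — and that the remainder terms are genuinely $o(t^2)$ for the value and $o(t)$ for the solution rather than merely $O(t^2)$, $O(t)$. Establishing this requires carefully combining the inf-compactness bound (to confine the search to a compact set), an implicit-function-theorem argument on the reduced KKT system for the fixed active set (made possible by nonsingularity of \cref{eq: linear-equation} under MFCQ + SOSC + strict complementarity), and a separate argument that solutions of the reduced system are in fact globally optimal for the perturbed problem for small $t$. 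This is exactly the content that \citet{perturbation2000} packages, so the cleanest exposition is to cite it; the rest is the derivation of the linearized KKT system \cref{eq: linear-equation} and the second-order value formula, which is routine differentiation of the optimality conditions.
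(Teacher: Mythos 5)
Your proposal is correct in substance and, like the paper, ultimately rests on the sensitivity theory of \citet{perturbation2000}; however, the mechanism you use to extract \cref{eq: linear-equation} and the second-order value formula differs from the paper's. The paper does not differentiate the KKT system: its rigorous route invokes the second-order value-function expansion (Theorem 5.53 of \citealp{perturbation2000}), in which the first- and second-order terms are the optimal values of auxiliary linearized (PL) and quadratic (PQ) problems, and then uses LP/QP duality together with the unique multiplier and strict complementarity to collapse PQ into an equality-constrained QP over the critical cone whose optimality conditions are exactly \cref{eq: linear-equation} (this is the content of its Propositions EC.5--EC.6 and the additive-perturbation lemma, of which \cref{thm:secondorder-const} is the special case with no stochastic constraints). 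Your primary route---envelope theorem for the first-order term plus the implicit-function-theorem argument on the reduced KKT system---is precisely what the paper presents only as a heuristic (its Appendix EC.4.1 and Proposition EC.4), because making it rigorous requires exactly the points you flag: local stability of the active set, differentiability (indeed your stronger claim of twice differentiability of $v_j$ and differentiability of the primal--dual path, which the theorem's $o(t^2)$/$o(t)$ conclusions do not presuppose), and the fact that the locally unique KKT path is the global minimizer for small $t$. You resolve these by deferring to the same reference, which is legitimate, but note that the theorem you would need is the value-expansion result (Theorem 5.53), not an IFT-type statement; the payoff of the paper's PL/PQ route is that it delivers the expansion under only directional (parabolic) differentiability and strict complementarity, without ever having to establish smoothness of $z_j(t)$ or $\nu_j(t)$, while your route is more transparent computationally and makes the origin of \cref{eq: linear-equation} as a linearized KKT system immediate.
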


Due to the presence of constraints, the approximations of optimal value $v_j(t)$ and optimal solution $z_j(t)$ in \cref{thm:secondorder-const} require more complicated conditions than those in \cref{thm:secondorder}. In particular, we need to incorporate constraints in the inf-compactness conditions (condition \ref{cond: constr-compact}) and second order sufficient condition (condition \ref{cond: constr-2nd}), impose uniquenss and strict complementarity regularity conditions for the Lagrangian multiplier (condition \ref{cond: constr-uniquenss}), and assume a constraint qualification condition (condition \ref{cond: constr-MF}).
The coefficient matrix on the left hand side of the linear system of equations in \cref{eq: linear-equation} is invertible due to the second order sufficient condition in condition \ref{cond: constr-2nd} 
\citep[see][Proposition 4.2.2]{Bertsekas1995}, which ensures that $d_z^{j*}$ uniquely exists. 
\edit{These regularity conditions guarantee that the optimal value $v_j(t)$ and optimal solution $z_j(t)$ vary smoothly with  perturbations to the optimization objective, and they rule out problems whose optimal solution may change non-smoothly. For example, optimal solutions to linear programming problems may change to completely different vertices under even tiny perturbations to linear objectives.
\footnote{\edit{In the context of such linear problems, \cite{elmachtoub2020decision} propose to optimize the oracle criterion by exhaustive search. As noted before, this is computationally burdensome, and indeed their focus is on smaller-scale models, with particular benefits to interpretability. In \cref{prop: equi-spo-stochopt} in \cref{sec: supplement}, we formally argue that their criterion coincides with what we called the oracle criterion in \cref{eq:oraclecrit} in the case of linear costs.}}
Nevertheless, \cref{thm:secondorder-const} may still apply to some problems with linear costs and nonlinear constraints such as the quadratically constrained problems in Section 6.2 of \cite{elmachtoub2017smart}.}

\edit{\label{para: thm4-interp} Concretely, the constraints in \cref{ex: mnv,ex: portfolio-var,ex: portfolio} all ensure that the decision variables are bounded. So the inf-compactness condition (condition \ref{cond: constr-compact}) is satisfied when there is no additional auxiliary variable (\eg, the newsvendor problem), or when the auxiliary variables at CSO optimal solutions are almost surely bounded. (\eg, conditional expectation or conditional quantiles of optimal portfolio returns in \cref{ex: portfolio-var} or \ref{ex: portfolio}, respectively) 
Moreover, since these constraints are all simple affine constraints, in \cref{sec: supplement} \cref{prop: LICQ}, we  verify that they satisfy a stronger linear independence constraint qualification condition than condition \ref{cond: constr-MF}, which ensures the unique existence of Lagrangian multiplier $v_0$ for  the solution $z_0$ (condition \ref{cond: constr-uniquenss}). 
Since our problems in \cref{ex: mnv,ex: portfolio-var,ex: portfolio} are all convex, the second order sufficient condition (condition \ref{cond: constr-2nd}) can ensure $z_0$ to be the unique optimal solution (condition \ref{cond: unique-sol}). 
This second order sufficient condition trivially holds when the Hessian matrix is positive definite and the constraints are affine, \eg, under the conditions we discuss in \cref{sec: approx-crit} for \cref{ex: mnv,ex: portfolio-var}.
In contrast to these conditions, the strict complementary slackness in condition \ref{cond: constr-uniquenss} is generally more difficult to verify exactly. 
However, even if it does not hold exactly, splitting criteria based on the approximations in \cref{eq: approx-risk,eq: approx-sol} may still capture signals relevant to CSO problems, especially compared to RandForest, which completely ignores the optimization problem structure.}

Note that \cref{thm:secondorder} is a special case of \cref{thm:secondorder-const} without constraints ($m=0$).
Indeed, without the constraints, the regularity conditions for the Lagrangian multiplier and constraint qualification condition are vacuous, and 
conditions \ref{cond: constr-compact} and \ref{cond: constr-2nd} reduce to the inf-compactness and positive definite Hessian matrix conditions in \cref{thm:secondorder} respectively.  And, without constraints, the linear equation system in \cref{eq: linear-equation} consists only of the part corresponding to $d^j_z$ and the solution exactly coincides with the linear term in \cref{eq:apxsolcrit}.
\Cref{thm:secondorder-const} can itself be viewed as a special case of our \cref{thm:secondorder-const2} in \cref{sec:cso-general}, where we tackle CSO problems with both deterministic and stochastic constraints.

\subsection{Approximate Splitting Criteria}\label{eq: criteria-constr}
Analogous to \cref{thm:secondorder} for unconstrained problems, \cref{thm:secondorder-const} for constrained problems also motivates two different approximations of the oracle splitting criterion $\crit^\text{oracle}(R_1,R_2)=p_1v_1(1)+p_2v_2(1)$. Extrapolating \cref{eq: approx-risk} and 
\cref{eq: approx-sol} to $t = 1$ and ignoring the high order terms gives an approximate risk and approximate solution criterion, respectively: 
\begin{align}\label{eq:apxriskcrit-cons}
\ts\crit^\text{apx-risk}(R_1,R_2)
	&= \frac{1}{2}\sum_{j = 1, 2}p_jd_z^{j*\top} \prns{\nabla^2 f_0(z_0) +  \sum_{k = 1}^{m} \nu_{0,k}  \nabla^2  h_{k}(z_0)} d_z^{j*}  
  + \sum_{j = 1, 2}p_jd_z^{j*\top} \prns{{\nabla  f_j(z_0) - \nabla f_0(z_0)} }, \\
\label{eq:apxsolcrit-cons}\crit^\text{apx-soln}(R_1,R_2)
	&=\sum_{j=1,2}p_jf_j\prns{z_0 + d_z^{j*}},
\end{align}
where in the approximate risk criterion, $\crit^\text{apx-risk}(R_1,R_2)$, we \edit{again} omit from the extrapolation the constant term $\sum_{j = 1, 2}p_j\prns{f_j(z_0)} = p_0f_0(z_0)$, \edit{as it} does not depend on the choice of subpartition.

\subsubsection*{Estimating the Approximate Splitting Criteria.}
\edit{We next discuss a general strategy}
to estimate our more general approximate splitting criteria in \cref{eq:apxriskcrit-cons,eq:apxsolcrit-cons} that handle constraints. 
First, we start by estimating $z_0$ by its sample analogue as in \cref{eq: z0hat}, where crucially now $\Z$ is constrained.
Then we can estimate the gradients of $f_j$ at $z_0$ for $j=0,1,2$ and the Hessians of $f_0$ at $z_0$ in the very same way that gradients of $f_j$ and Hessians of $f_0$ were estimated in \cref{sec: approx-crit}, namely, estimating them at $\hat z_0$, which is now simply solved \emph{with} constraints. 
Gradients and Hessians of $h_{k}$ at $z_0$ can be estimated by simply plugging in $\hat z_0$, since the functions $h_{k}$ are known deterministic functions.
We can estimate $K_h(z_0)$ by $K_h(\hat z_0)$, \ie, the index set of the inequality constraints that are active at $\hat z_0$.
Next, we can estimate $\nu_0$ by solving $\widehat{\nabla  f}_0(\hat z_0) +  \sum_{k = 1}^{m} \nu_{k}  \nabla  h_{k}(\hat z_0)=0$ subject to $\nu_k \ge 0$ for $k\in K_h(\hat z_0)$ and $\nu_k=0$ for $k\in \{s+1,\dots,m\}\setminus K_h(\hat z_0)$, or alternatively by using a solver for \cref{eq: z0hat} that provides associated dual solutions. Finally, we can estimate $d^{j*}_z$ by solving \cref{eq: linear-equation} with estimates plugged in for unknowns. With all of these pieces in hand, we can estimate our approximate criteria in \cref{eq:apxriskcrit-cons,eq:apxsolcrit-cons}.

\subsubsection*{Revisiting the Running Examples.}
In \cref{sec: est-approx-crit}, we discussed how to estimate gradients and Hessians of the objectives of our running examples. Now we revisit the examples and discuss their constraints.
The nonnegativity and capacity constraints in \cref{ex: mnv} can be written as $h_{1}(z') = \sum_{l = 1}^d z_l \le C, ~ h_{l+1}(z') = - z_l \le 0, ~ l = 1, \dots, d$,
and the simplex constraint in \cref{ex: portfolio,ex: portfolio-var} as $h_{1}(z) = \sum_{l = 1}^d z_l = 1, ~ h_{l + 1}(z) = -z_l\leq0,\,l=1,\dots,d$.
These are all deterministic linear constraints: their gradients are known constants 
and their Hessians are zero.

\subsection{Construction of Trees and Forests}\label{sec: construction-constr}
It is straightforward to now extend the tree fitting algorithm, \cref{alg: tree}, to the constrained case. First, we note that in line \ref{alg: tree z0 step} that solves for $\hat z_0$, we use a constrained feasible set ${\Z}$. Then, we update line \ref{alg: tree estim 0 step} to estimate $\nabla f_0(z_0),\nabla^2 f_0(z_0),\nabla h_{k}(z_0),\nabla^2 h_{k}(z_0),K_h(z_0),\nu_0$. Next, we update line \ref{alg: tree estim j step} to estimate $\nabla f_j(z_0),d_z^{j*}$. And, finally, we update line \ref{alg: tree crit step} to use the general splitting criteria in \cref{eq:apxriskcrit-cons,eq:apxsolcrit-cons} where we plug in these estimates for the unknowns.

A crucial point that is key to the tractability of our method even in the presence of constraints is that the only step that requires any re-computation for each candidate split is the estimation of $\nabla f_j(z_0),d_z^{j*}$. As in the unconstrained case, \edit{estimators for} $\nabla f_j(z_0)$ usually consist of very simple sample averages over the data in the region $R_j$ \edit{so they can also be very quickly computed}. Moreover, only the right-hand side defining $d_z^{j*}$ in \cref{eq: linear-equation} varies with each candidate split, so the equation can be presolved using 
an $LU$ 
decomposition or a similar approach. 
Therefore, we can easily and quickly consider many candidate splits, and correspondingly grow large-scale forests.

\Cref{alg: forest} for fitting the forest remains the same, since the only change in fitting is in the consideration of tree splits. And, \cref{alg: forest pred} was already written in the general constrained setting and so also remains the same. In particular, after growing a forest where tree splits take the constraints into consideration and given this forest, we impose the constraints in $\Z$ when computing the final forest-policy decision, $\hat z(x)$.

\section{Empirical Study}\label{sec: empirical}

In this section we study our algorithm and baselines empirically to investigate the value of optimization-aware construction of forest policies and the success of our algorithm in doing so.
\edit{We focus on constrained CSO problems with CVaR objectives, including one simulated portfolio optimization problem and one real-data shortest path problem. 
In \cref{sec: empirical nv,sec: empirical-min-var,sec: more-mean-var}, we show additional experimental results for unconstrained multi-item newsvendor problems (\cref{ex: mnv}) and constrained variance-based portfolio optimization problems (\cref{ex: portfolio-var}).}

\subsection{CVaR Portfolio Optimization}\label{sec: cvar-empirical}
\begin{figure*}[t!]%
\centering%
\begin{subfigure}{\textwidth}\centering%
    \includegraphics[width=0.8\textwidth]{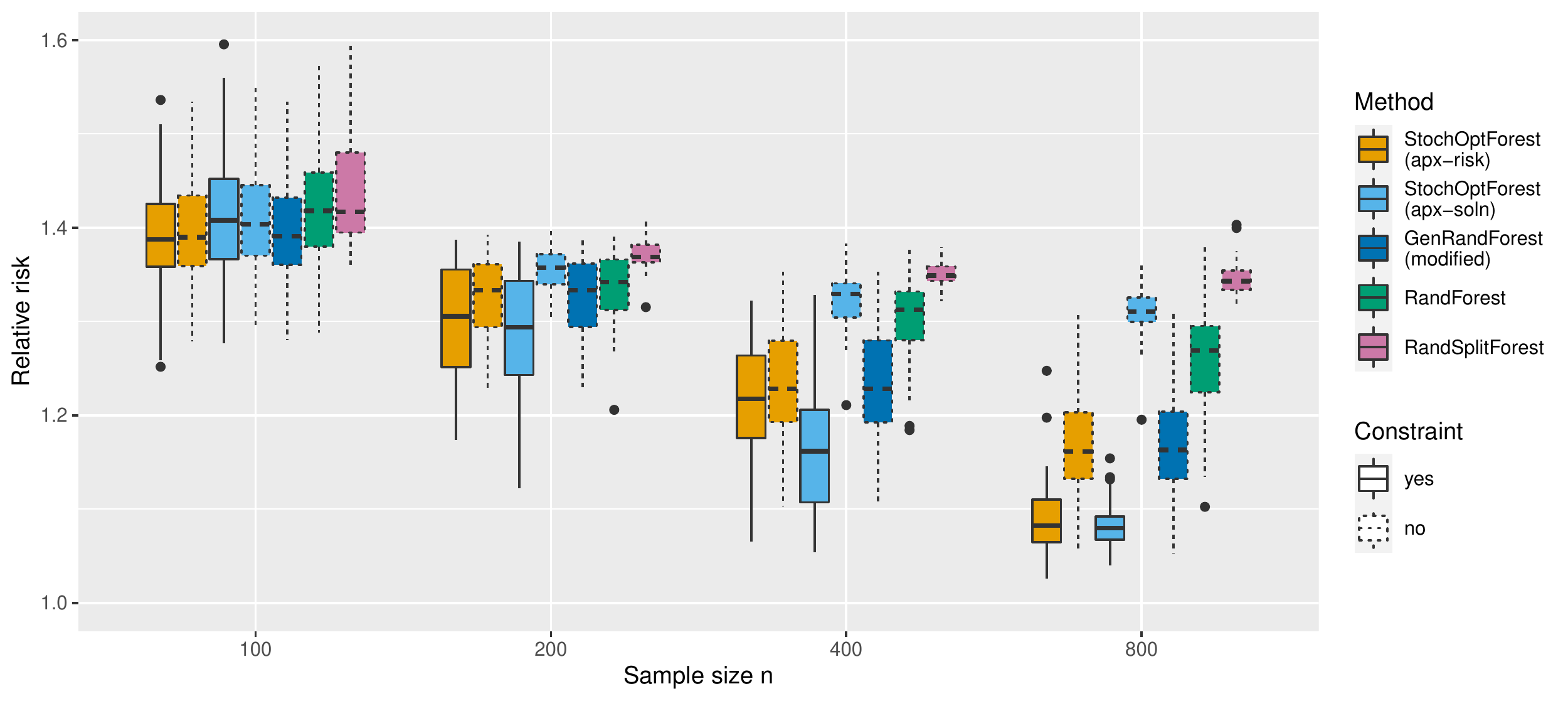}
    \caption{Relative risks of different forest policies \edit{(lower relative risk means better performance)}.}\label{fig: cvar forests risk}
\end{subfigure}%
\\
\begin{subfigure}{\textwidth}\centering%
    \includegraphics[width=0.8\textwidth]{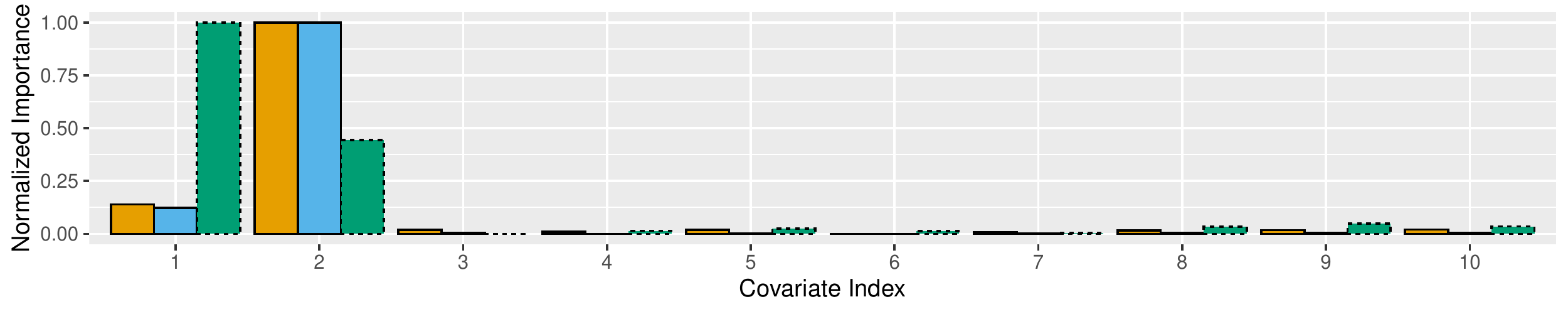}
    \caption{Feature Importance.}\label{fig: cvar forests imp}
\end{subfigure}
\caption{Results for the CVaR portfolio optimization problem.}%
\vspace{-7pt}\end{figure*}

\begin{table}[t!]\footnotesize
\centering 
\begin{tabular}{|c|c|c|c|}
\hline
Method & $n = 100$ & $n = 200$ & $n = 400$ \\
\hline
StochOptTree (oracle) & $41.41$ ($5.43$) &  $165.03$ ($15.77$) & $695.88$ ($83.91$) \\
\hline 
StochOptTree (apx-risk) & $0.26$ ($0.08$) & $0.68$ ($0.36$) & $1.68$ ($0.54$) \\
\hline 
StochOptTree (apx-soln) & $0.22$ ($0.05$)  & $0.70$ ($0.20$) & $2.24$ ($0.33$)  \\
 \hline 
\end{tabular}
\SingleSpacedXI\caption{Average running time in seconds over $10$ repetitions (and standard deviations) of constructing one tree for different algorithms in the CVaR optimization problem.}
\label{table: time-cvar}
\vspace{-7pt}\end{table}

We first apply our method to the CVaR portfolio optimization problem (see \cref{ex: portfolio}). We consider $d = 3$ assets and $p = 10$ covariates. The covariates $X$ are drawn from a standard Gaussian distribution, and the asset returns are independent and are drawn from the conditional distributions $Y_1 \mid X \sim 1 + 0.2\exp(X_1) - \text{LogNormal}\prns{0, 1 - 0.5\indic{-3 \le X_2 \le -1}}$, $Y_2 \mid X \sim 1 - 0.2 X_1  - \text{LogNormal}\prns{0, 1 - 0.5\indic{-1 \le X_2 \le 1}}$, and $Y_3 \mid X \sim 1 + 0.2|X_1| - \text{LogNormal}\prns{0, 1 - 0.5\indic{1 \le X_2 \le 3}}$.
\edit{We seek an investment policy $z(\cdot) \in \mathbb R^d$ that for each $x$ aims to achieve smallest risk $\text{CVaR}_{0.2}\prns{ Y^\top z(x) \mid X=x}$, or equivalently the $0.8$-CVaR of the portfolio loss $-Y^\top z\prns{x}$, while satisfying the simplex constraint, \ie, $\mathcal Z  = \braces{z \in \mathbb R^d: \sum_{l = 1}^d z_l = 1, z_l \ge 0}$.}

\edit{We compare our StochOptForest algorithm using either the apx-risk or apx-soln approximate criterion for constrained problems (\cref{eq:apxriskcrit-cons,eq:apxsolcrit-cons}) to five benchmarks, where all algorithms are identical except for their \emph{splitting criterion}. 
    The first two benchmarks are our StochOptForest algorithm using apx-risk and apx-soln criteria that (mistakenly) \emph{ignore} the constraints (\ie, \cref{eq:apxriskcrit,eq:apxsolcrit}). 
    The third benchmark is a modified\footnote{\edit{Note we cannot apply the original GenRandForest algorithm to solve \emph{unconstrained} CVaR optimization: every step of tree construction requires computing the optimal \emph{unconstrained} solution in the region $R_0$ to be partitioned, which however does not exist because without constraints the CVaR objectives can be made arbitrarily small. We thus have to slightly modify the GenRandForest algorithm to compute the optimal \emph{constrained} solution in every region to be partitioned, from which we then compute the GenRandForest splitting criterion for the first order optimality condition of unconstrained CVaR optimization. We furthermore regularize the Hessian matrix as it is not generally invertible, as discussed after \cref{eq: portfolio-hess}, which would make the GenRandForest splitting criterion undefined. See \cref{sec: more-cvar}.}} GenRandForest algorithm \citep{athey2019generalized} applied to the first order optimality condition for the CVaR optimization problem \emph{without} the simplex constraint, as GenRandForest is designed for \emph{unconstrained} problems. 
    The fourth benchmark is the regular RandForest, which uses the squared error splitting criterion in \cref{ex: prediction} and targets the predictions of asset mean returns, and the fifth is the RandSplitForest algorithm, which chooses splits uniformly at random (without using the portfolio return data). 
    For our approximate criteria (both constrained and unconstrained) and the GenRanForest criterion, we use the parametric Hessian estimator in \cref{eq: hessian-normal1,eq: hessian-normal2} (which is \emph{misspecified} in this example). 
    We do not compare to StochOptForest with the oracle splitting criterion since it is too computationally intensive as we investigate further below (see \cref{table: time-cvar}).
    In all forest algorithms, we use 
    an ensemble of $500$ trees.
    To compute $\hat z_0$ in our StochOptTree algorithm (\cref{alg: tree z0 step} line \ref{alg: tree z0 step}) as well as to compute the final forest policy for any forest, we formulate the constrained CVaR optimization problem as a linear programming problem \citep{rockafellar2000optimization} and solve it using Gurobi 9.0.2.
    We evaluate each forest policy $\hat z(\cdot)$ by its relative risk compared to the optimal $z^*(\cdot)$, namely the raio of $\Eb{\text{CVaR}_{0.2}\prns{ Y^\top \hat z(x) \mid X}\mid \mathcal{D}}$ over $\Eb{\text{CVaR}_{0.2}\prns{ Y^\top  z^*(x) \mid X}}$, which we approximate using a very large testing dataset.
    See \cref{sec: more-cvar} for more details.
}

\Cref{fig: cvar forests risk}
 shows the distribution of the relative risk over $50$ replications for each forest algorithm across different training set size $n \in \{100, 200, 400, 800\}$. The dashed boxes corresponding to ``Constraint = no'' indicate that the associated method does not take constraints into account when choosing the splits, which applies to all four benchmarks. (Note that \emph{all} methods consider constraints in computing a the final forest-policy decision, $\hat z(x)$.)
We can observe that our StochOptForest algorithms with approximate criteria that incorporate constraints achieve the best relative risk over all sample sizes, and their relative risks decrease considerably when the training set size $n$ increases.  
In contrast, the relative risks of all benchmark methods \edit{decrease much more slowly. 
Therefore, both failing to target the cost function structure (\edit{GenRandForest},\footnote{\edit{GenRandForest criterion partly captures the cost function structure as it incorporates the corresponding first order optimality condition information, but it chooses splits to maximize the discrepancy of approximate solutions in the induced subregions, rather than optimize their decision costs directly.}} RandForest, and RandSplitForest) \emph{and} failing to take constraints into account (all five benchmark methods) can significantly undermine the ultimate decision-making quality.} 
In contrast, our StochOptForest algorithms based on the approximate criteria effectively account for both so they perform much better.    
Moreover, our results show that even though the normal distribution assumption used to derive our Hessian estimator (\cref{eq: hessian-normal1,eq: hessian-normal2}) is wrong in our experiment,
our proposed forest policies still achieve superior performance, which illustrates the robustness of our methods. 

\edit{To further understand these results, we also consider feature importance measures based on each forest algorithm. 
In \cref{app-sec: var-importance}, we extend the impurity-based feature importance measures \citep{hastie2001the} to our StochOptForest method.
Recall there are $p=10$ covariates, and the first two determine the distributions of asset returns. 
The first covariate influences the conditional mean of return distributions more, while the second one influences more the distribution tails. 
In \cref{fig: cvar forests imp}, we visualize the feature importance measures for our proposed method and RandForest when $n = 800$. 
The importance measures are normalized for each method so that the most important feature has an importance value equal to $1$. 
We can observe that our StochOptForest methods (incorporating constraints) value the second covariate  more than the first one, which shows the importance of signals in the return distribution tails for CVaR optimization. 
In contrast, the RandForest algorithm puts more importance on the first covariate, validating that it is designed to target the prediction of asset mean returns. 
There do not exist feature importance measures for the GenRandForest algorithm. Instead, we show its average frequency of splitting on each covariate in \cref{sec: more-cvar} \cref{fig: cvar-lognormal-split-freq}. 
We observe that the GenRandForest method splits on noise covariates (\ie, the $3$rd to $10$th covariate) more frequently than our proposals, which may partly explain its inferior performance.}

\edit{We also consider the average running time of our proposed algorithm in \cref{table: time-cvar}.}
We compare our StochOptTree algorithm with approximate criteria \emph{incorporating} constraints to the oracle splitting criterion (using empirical expectations). We consider 10 repetitions, in each of which we apply each tree algorithm with the same specifications to construct a single tree on the same training data with varying size $n \in \{100, 200, 400\}$. 
We run this experiment on a MacBook with 
2.7 GHz Intel Core i5 processor. 
We can see that the running time of our StochOptTree algorithm with apx-risk criterion is hundreds of times faster than the StochOptTree algorithm with the oracle criterion that must solve the constrained CVaR optimization problems for each candidate split. 
The computational gains of our approximate criteria relative to the oracle criterion also grow with larger sample size $n$ (from around $200$ times faster at $n=100$ to more than $400$ times faster at $n = 400$), as the CVaR optimization problem becomes slower to solve. 

Since the StochOptForest algorithm with the oracle criterion is extremely slow, we can only evaluate its performance in a small-scale experiment in \cref{fig: cvar-lognormal-oracle} in \cref{sec: more-cvar}. Focusing on constructing small forests of only $50$ trees with $n$ up to $400$, we find the performance of the oracle criterion is marginally better than our approximate criteria.  
However, our approximate criteria are much more computationally efficient, which enables us to leverage larger datasets for better performance.
In \cref{sec: more-cvar}, we also show that  similar results hold for portfolio optimization with a linear combination of CVaR and mean return as the objective (\cref{fig: cvar-lognormal-combined}) and for CVaR optimization with asset returns drawn from normal distributions (\cref{fig: cvar-normal}). 
We include additional empirical results on minimizing the variance of investment portfolios (see \cref{ex: portfolio-var}) in \cref{sec: empirical-min-var}, and show that the performance of our approximate criteria is close to the oracle criterion.  

\subsection{\edit{CVaR Shortest Path Problem Using Uber Movement Data}}
\label{sec: shortest-path}
\begin{figure}[t!]%
\begin{minipage}[b]{0.23\textwidth}\centering%
\begin{subfigure}[b]{\textwidth}\includegraphics[width=\textwidth]{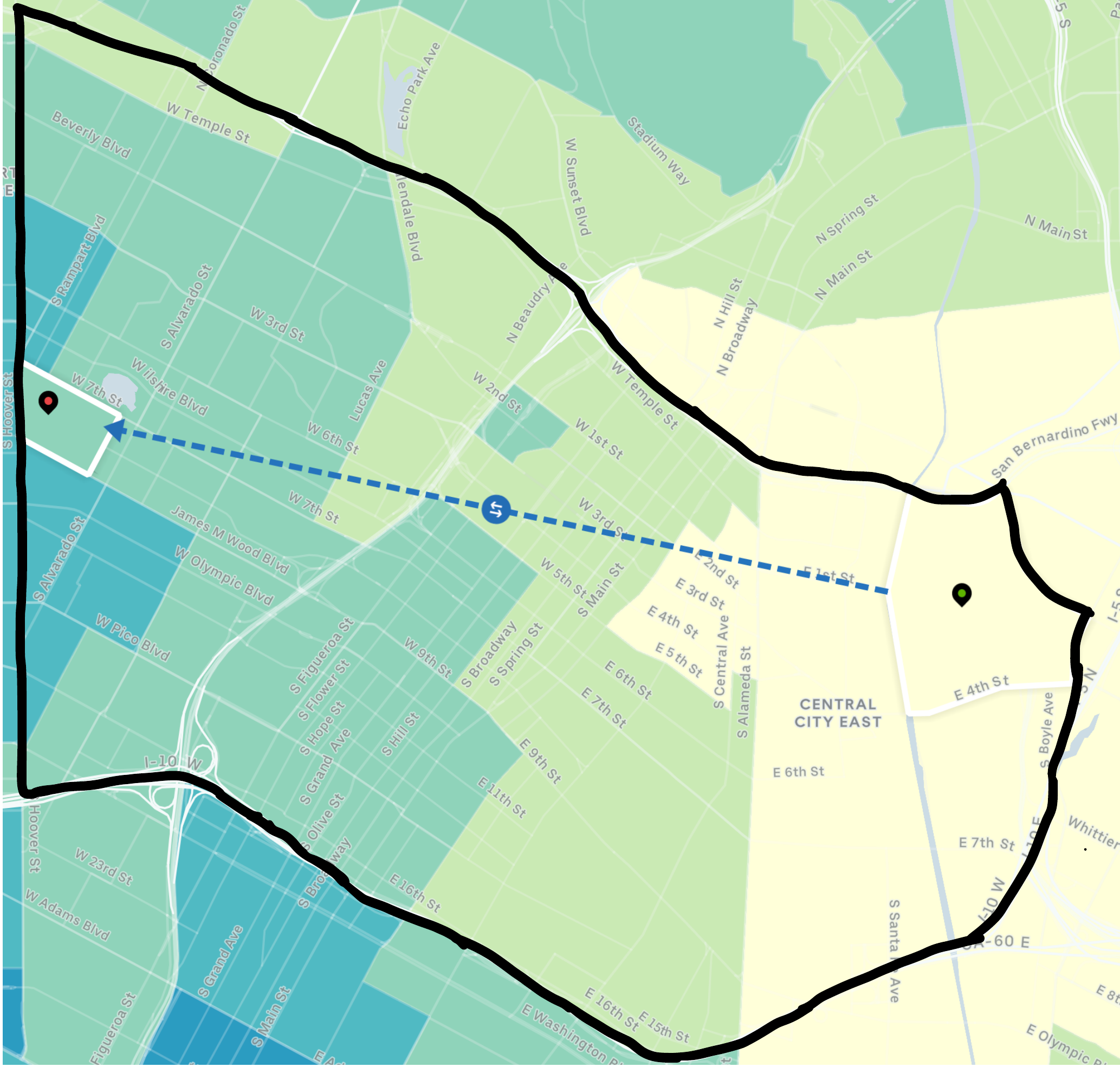}\caption{The downtown Los Angeles region (enclosed by the black lines) for the shortest path problem.%
}\label{fig: uber}\end{subfigure}
\end{minipage}%
\hspace{0.01\textwidth}%
\begin{minipage}[b]{0.76\textwidth}\centering%
\begin{subfigure}[b]{\textwidth}\includegraphics[width=\textwidth]{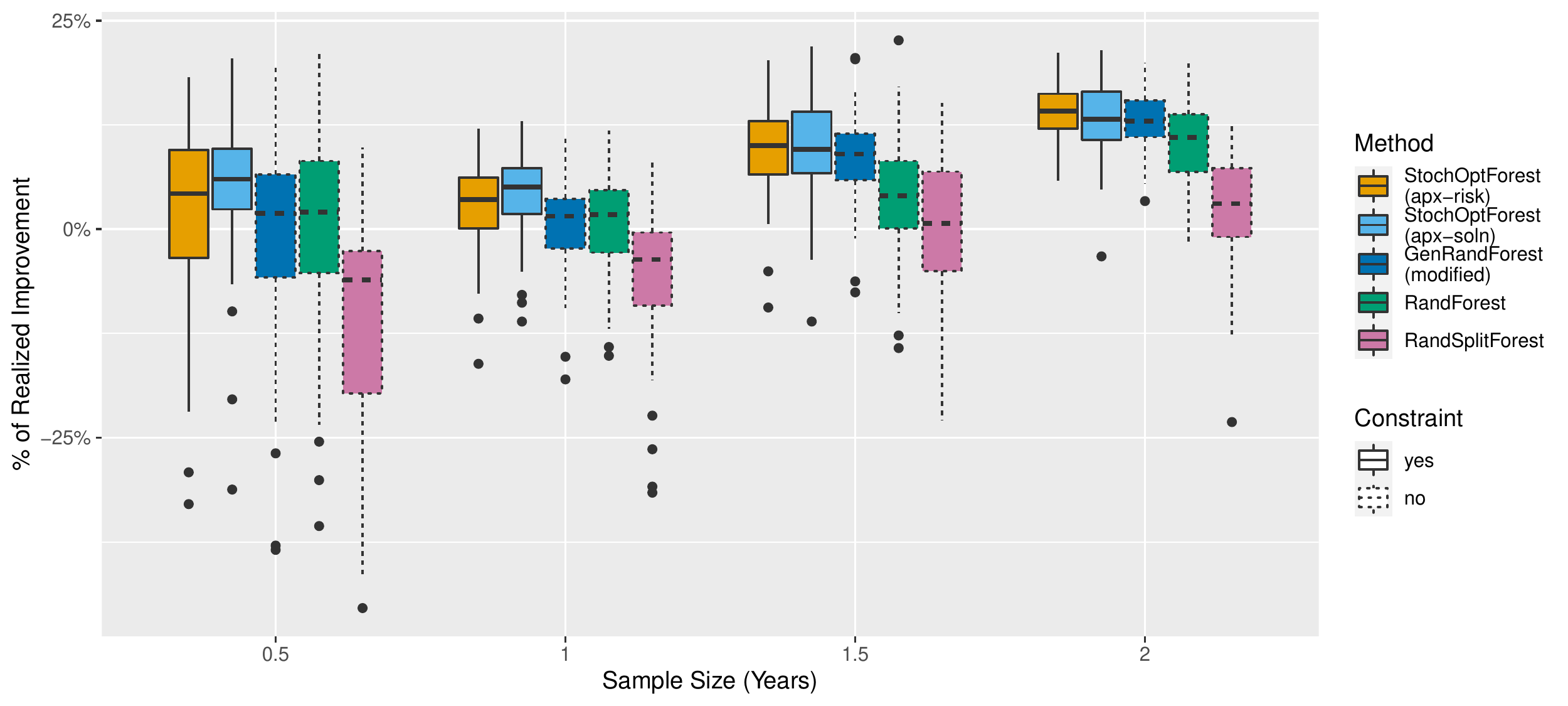}\caption{Percentages of realized improvement by different forest policies for the CVaR shortest-path problem. A higher percentage is better.}\label{fig: cvar sp data}\end{subfigure}
\end{minipage}%
\caption{Set up and results for the CVaR shortest path problem using Uber Movement data.}\label{fig: uberexperiment}%
\end{figure}

\edit{
We next demonstrate our methods in a shortest path problem, using traveling times data in Los Angeles (LA) collected from Uber Movement (\url{https://movement.uber.com}). 
We focus on $45$ census tracts in downtown LA (see \cref{fig: uber}), collecting historical data of average traveling times from each of these census tracts to its neighbors during five periods in each day (AM Peak, Midday, PM Peak, Evening, Early Morning) in $2018$ and $2019$. 
This results in $3650$ observations of traveling times $Y_j$ for $j = 1, \dots, 93$ edges on a graph with $45$ nodes. 
We consider $p = 197$ covariates $X$ including weather, period of day and other calendar features, and lagged traveling times.
We aim to go from an eastmost census tract (Aliso Village) to a westmost census tract (MacArthur Park) in this region (green and and red marks in \cref{fig: uber}, receptively), through a path between them, encoded by $z \in \braces{0, 1}^{d}$ with $d = 93$, where $z_j$ indicates whether we travel on edge $j$. 
In particular, we consider the CSO problem $z^*(x) \in \argmin_{z(\cdot) \in\Z}\text{CVaR}_{0.8}\prns{Y^\top z\prns{x} \mid X = x}$ where $\Z$ is given by standard flow preservations constraints, with a source of $+1$ at Aliso Village and a sink of $-1$ at MacArthur Park.
See \cref{apx-sec: short-path} for more details about data collection, optimization formulation, and other experiment specifications.}

\edit{We again compare different forest algorithms as we do in \cref{sec: cvar-empirical}, but to reduce computation we only train them up to  $100$ trees. 
We consider four different sample sizes ranging from $0.5$-year to the whole $2$-year data. 
For each sample size, we randomly split the corresponding dataset into two halves as training data $\mathcal{D}_{\text{train}}$ and testing data $\mathcal{D}_{\text{test}}$ respectively. Note that the distribution of $Y \mid X$ is unknown, so we can no longer benchmark the performance of each forest policy $\hat z\prns{\cdot}$ trained on $\mathcal{D}_{\text{train}}$ against the CSO optimal policy $z^*(\cdot)$ as in \cref{sec: cvar-empirical}. 
Instead, we compare their percentages of realized improvement, termed the coefficient of prescriptiveness in \citet{bertsimas2014predictive}. Namely, we consider the ratio between each method's improvement over the context-free sample average approximation (SAA), which finds a single solution $\hat z_{\text{SAA}}$ to optimize the 
average cost 
on the whole training data, over the improvement over SAA of the (infeasible) perfect-information shortest path, which in each test sample computes the shortest path for the observed travel time $Y$. 
Notice that more effective forest policies have higher percentages of realized improvement, but even known-distributions optimal policy $z^*(\cdot)$ to \cref{eq:cso} cannot generally achieve $100\%$ realized improvement as the covariates do not perfectly predict travel times.}

\edit{In \cref{fig: cvar sp data}, we show the results across $50$ realizations of random train-test splits. 
We observe that as the sample sizes increase, all methods tend to perform better. 
In particular, our StochOptForest algorithm with either the apx-risk or apx-soln criterion (incorporating constraints) outperforms all benchmarks across all sample sizes, with the clearest improvement seen using the apx-risk criterion and in smaller datasets. 
Overall the results show that incorporating the optimization problem structure in the tree construction can lead to improvements, when optimization is the aim.}

\section{Asymptotic Optimality}\label{sec: asympt-opt}
In this section, we prove that under some regularity conditions, 
our forest policy asymptotically attains the optimal risk, namely, $\Eb{c(\hat z_n(x); Y) \mid X = x}$ converges in probability to $\min_{z \in \mathcal Z}\Eb{c(z; Y) \mid X = x}$ as $n \to \infty$ for any $x \in \mathcal X$. 

It is well known that forests algorithms with adaptively constructed trees  are extremely difficult to analyze, so  some simplifying regularity conditions are often needed to make the theoretical analysis tractable \citep{biau2016random}. In this section, we assume the tree regularity conditions introduced by \citet{wager2018estimation,athey2019generalized}.
\begin{assumption}[Regular Trees]\label{assump: tree}
The trees constructed satisfy the following regularity conditions for constants $\omega \in (0, 0.2], \pi \in [0, 1)$, and an integer $k_n > 0$:
\begin{enumerate}
\item Every tree split puts at least a fraction $\omega$ of observations in the parent node into each child node. Every leaf node in every tree contains between $k_n$ and $2k_n - 1$ observations. 
\item For an index set $\mathcal J\subseteq\{1,\dots,p\}$ such that $\Eb{c(z;Y)\mid X}=\Eb{c(z;Y)\mid X_{\mathcal J}}$ for all $z\in\Z$, for each leaf of each tree and for each $j\in\mathcal J$, the average probability of splitting along feature $x_j$ is bounded below by $\pi/p$, averaging over nodes on the path from the root to the leaf and marginalizing over any randomization of candidate splits (and conditioning on the data).
\item Each tree grows on a subsample of size $s_n$ drawn randomly without replacement from the whole training data, and it is honest, \ie, $\mathcal I_j^{\text{tree}} \cap \mathcal I_j^{\text{dec}} \ne \emptyset$ with $|\mathcal I_j^{\text{tree}}| + |\mathcal I_j^{\text{dec}}| = s_n$
for $j = 1, \dots, T$. 
\end{enumerate} 
\end{assumption}
Condition 1 in \cref{assump: tree} specifies that the stopping criterion must ensure a minimal leaf size and that all candidate splits be balanced in that they put at least a constant fraction of observations in each child node. Without this condition, even when sample size $n$ is large, some imbalanced splits may run out of data so quickly that some leaves are not sufficiently partitioned and thus too large. As a result, the estimation bias of the objective function may fail to vanish even when $n \to \infty$. 
Condition 2 requires the trees to split along every relevant direction at sufficient frequency, which ensures that the leaves of the trees become small in all relevant dimensions of the feature space as $n$ gets large. Relevant features are described by those such that the random cost of any decision is mean-independent of $X$ given only these relevant features, which is trivially satisfied for $\mathcal J=\{1,\dots,p\}$. 
Condition 3 specifies that we use subsample splitting, \ie, the data used to construct each tree ($\mathcal I_j^\text{tree}$) and the data used to construct localized weights from this tree for final decision-making ($\mathcal I_j^\text{dec}$) are disjoint. 
This so-called honesty property plays a critical role in the theoretical analysis of forest algorithms but it may be largely technical. 
In \cref{sec: empirical}, we empirically show that our forest policies appear to achieve asymptotic optimality even without using honest subsample splitting. 
In \cref{sec: honesty}, we further illustrate in \cref{fig: honsty} that StochOptForest with no subsample splitting (\ie, $\mathcal I_j^\text{dec} = \mathcal I_j^\text{tree}$) performs better than the honest version with splitting, which can be explained as honest trees using fewer data for tree construction and decision-making. 

In the following assumption, we further impose some regularity conditions on the cost function $c(z; y)$ and the distribution of $Y \mid X$. 
\begin{assumption}[Distribution Regularity]\label{assump: distr} 
Fix $x\in\mathcal X$ and assume the following conditions:
\begin{enumerate}
\item The marginal distribution of $X$ has a density, its support $\mathcal X$ is compact, and the density is bounded away from $0$ and $\infty$ on $\mathcal X$.
\item \label{assump: distr inf-compact} There exist a constant $\alpha$ and a compact set $\mathcal C \subseteq \mathcal Z$ such that, $\{z \in \mathcal Z: \Eb{c(z; Y) \mid X = x} \le \alpha\} \subseteq \mathcal C$ and $\{z \in \mathcal Z:  \sum_{i = 1}^n w_i(x)c(z; Y_i) \le \alpha\} \subseteq \mathcal C$ for $w_i(x)$ in \cref{eq: weights} almost surely eventually.
\item There exists a function $b(y)$ such that for any $z, z' \in \mathcal C, y \in \mathcal Y$, $\abs{c(z; y) - c(z'; y)} \le b(y)\edit{\|z - z'\|_2}$. Moreover, there exists a positive constant $\tilde C$ such that $\Eb{b(Y) \mid X = x} \le \tilde C < \infty$.
\item There exist constants $L_c,L_b$ such that $\sup_{z\in\mathcal C}\sup_{x'\in\mathcal X}\abs{\Eb{c(z; Y) \mid X_{\mathcal J} = x_{\mathcal J}}-\Eb{c(z; Y) \mid X_{\mathcal J} = x'_{\mathcal J}}}\leq L_c\edit{\magd{x_{\mathcal J}-x'_{\mathcal J}}_2}$ and $\sup_{x'\in\mathcal X}\abs{\Eb{b(Y) \mid X_{\mathcal J} = x_{\mathcal J}}-\Eb{b(Y) \mid X_{\mathcal J} = x'_{\mathcal J}}}\leq L_b\edit{\magd{x_{\mathcal J}-x'_{\mathcal J}}_2}$.
\item There exist positive constants $\eta, \eta', C$ such that 
\[
\sup_{z \in \mathcal C}\Eb{e^{\eta\abs{c(z; Y) - \Eb{c(z; Y) \mid X = x}}} \mid X = x} \le C < \infty, ~~ \Eb{e^{\eta'\abs{b(Y) - \Eb{b(Y)\mid X = x}}} \mid X = x} \le C < \infty.
\]
\end{enumerate}
\end{assumption} 
One important condition in \cref{assump: distr}
 is that the cost function $c(z; y)$ is Lipschitz-continuous in $z$ on the compact set $\mathcal C$. In the following proposition, we validate that \cref{ex: mnv,ex: portfolio-var,ex: portfolio} all satisfy this condition. 
\begin{proposition}\label{prop: lipschitz}
For any $z, z' \in \mathcal C$:
\begin{enumerate}
\item The cost function $c(z;y)=\sum_{l=1}^d\max\{\alpha_l(z_l-y_l),\,\beta_l (y_l-z_l)\}$ for the newsvendor problem in  \cref{ex: mnv}  satisfies that $\abs{c(z; y) - c(z'; y)} \le \sqrt{d}\max\{\alpha_l, \beta_l\}\edit{\|z - z'\|_2}$.
\item The cost function $c(z; y) = \prns{y^\top z_{1:d} - z_{d+1}}^2$ for the variance-based portfolio optimization problem in \cref{ex: portfolio-var} satisfies that $\abs{c(z; y) - c(z'; y)} \le 4\sqrt 2 (\sup_{\tilde z\in\mathcal C}\edit{\edit{\|\tilde z\|_2}})\max\{1, \edit{\|y\|^2_2}\}\edit{\|z - z'\|_2}$.
\item The cost function $c(z; y) = \frac{1}{\alpha}\max\braces{z_{d+1} - y^\top z_{1:d},\, 0}-  z_{d+1}$ for the CVaR \edit{optimization} problem in  \cref{ex: portfolio}  satisfies that $\abs{c(z; y) - c(z'; y)} \le \prns{\edit{\|y\|_2} + 1 + \frac{1}{\alpha}}\edit{\|z - z'\|_2}$.
\end{enumerate}
\end{proposition}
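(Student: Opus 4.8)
The plan is to establish the Lipschitz bound for each cost function separately by elementary manipulations, exploiting that in each case $c(z;y)$ is either a sum of pointwise maxima of affine functions of $z$ (items 1 and 3) or a composition of a smooth quadratic with an affine map (item 2). Throughout I will use two standard facts: (i) the map $a \mapsto \max\{a,0\}$ is $1$-Lipschitz, and more generally $|\max\{a_1,b_1\} - \max\{a_2,b_2\}| \le \max\{|a_1-a_2|,|b_1-b_2|\}$; and (ii) for a finite sum $\sum_{l=1}^d g_l(z)$ where each $g_l$ is $\kappa_l$-Lipschitz in the scalar $z_l$ alone, the sum is $\big(\sum_l \kappa_l^2\big)^{1/2}$-Lipschitz in $z$ with respect to $\|\cdot\|_2$, by Cauchy--Schwarz applied to $\sum_l \kappa_l |z_l - z_l'| \le (\sum_l \kappa_l^2)^{1/2}\|z-z'\|_2$.

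\textbf{Item 1 (newsvendor).} For each $l$, write $g_l(z_l) = \max\{\alpha_l(z_l - y_l),\, \beta_l(y_l - z_l)\}$. Using fact (i), $|g_l(z_l) - g_l(z_l')| \le \max\{\alpha_l|z_l-z_l'|,\, \beta_l|z_l-z_l'|\} = \max\{\alpha_l,\beta_l\}|z_l-z_l'|$. Then summing and applying fact (ii) with $\kappa_l = \max\{\alpha_l,\beta_l\}$ gives $|c(z;y)-c(z';y)| \le \big(\sum_l \max\{\alpha_l,\beta_l\}^2\big)^{1/2}\|z-z'\|_2 \le \sqrt{d}\max_l\{\alpha_l,\beta_l\}\|z-z'\|_2$, which is the claimed bound. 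No distributional assumption on $Y$ is needed here since this is a deterministic pointwise bound.

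\textbf{Item 3 (CVaR).} Write $c(z;y) = \tfrac1\alpha \max\{z_{d+1} - y^\top z_{1:d},\,0\} - z_{d+1}$. Apply fact (i) to the scalar $a(z) = z_{d+1} - y^\top z_{1:d}$: then $|\max\{a(z),0\} - \max\{a(z'),0\}| \le |a(z)-a(z')| \le |z_{d+1}-z_{d+1}'| + |y^\top(z_{1:d}-z_{1:d}')| \le \|z-z'\|_2(1 + \|y\|_2)$ by the triangle inequality and Cauchy--Schwarz (noting $|z_{d+1}-z_{d+1}'| \le \|z-z'\|_2$). Adding the $1$-Lipschitz linear term $-z_{d+1}$ contributes an extra $1$, so $|c(z;y)-c(z';y)| \le \big(\tfrac1\alpha(1+\|y\|_2) + 1\big)\|z-z'\|_2 \le \big(\|y\|_2 + 1 + \tfrac1\alpha\big)\|z-z'\|_2$ (bounding $\tfrac1\alpha \le \tfrac1\alpha$ and $\tfrac1\alpha\|y\|_2 \le \|y\|_2$ only if $\alpha \ge 1$; more carefully, keep $\tfrac1\alpha(1+\|y\|_2)+1 \le \|y\|_2 + 1 + \tfrac1\alpha$ using $\tfrac1\alpha(1+\|y\|_2) \le \tfrac1\alpha + \|y\|_2$, which holds when $\alpha \ge 1$, the relevant CVaR regime). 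This matches the stated constant.

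\textbf{Item 2 (variance).} Here $c(z;y) = \phi(y^\top z_{1:d} - z_{d+1})$ with $\phi(u) = u^2$, which is $2|u|$-Lipschitz but not globally Lipschitz, so I must use the restriction $z,z' \in \mathcal C$. For $z,z'\in\mathcal C$ let $u = y^\top z_{1:d} - z_{d+1}$, $u' = y^\top z'_{1:d} - z'_{d+1}$; then $|c(z;y)-c(z';y)| = |u-u'|\,|u+u'| \le |u-u'|\cdot(|u|+|u'|)$. By Cauchy--Schwarz and the triangle inequality, $|u| \le \|y\|_2\|z_{1:d}\|_2 + |z_{d+1}| \le (\|y\|_2+1)\sup_{\tilde z\in\mathcal C}\|\tilde z\|_2$ if $\|y\|_2 \ge 1$ (and $\le 2\sup_{\tilde z}\|\tilde z\|_2$ otherwise), so $|u|+|u'| \le 2\max\{1,\|y\|_2\}\cdot 2\sup_{\tilde z\in\mathcal C}\|\tilde z\|_2 \cdot(\text{const})$; similarly $|u-u'| \le (\|y\|_2+1)\|z-z'\|_2 \le 2\max\{1,\|y\|_2\}\|z-z'\|_2$. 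Multiplying these two bounds and collecting constants yields $|c(z;y)-c(z';y)| \le 4\sqrt2\,(\sup_{\tilde z\in\mathcal C}\|\tilde z\|_2)\max\{1,\|y\|_2^2\}\|z-z'\|_2$, using $\max\{1,\|y\|_2\}^2 = \max\{1,\|y\|_2^2\}$ and absorbing the numerical factors into $4\sqrt2$. I expect the main bookkeeping obstacle to be exactly this last step: carefully tracking the numerical constant so that it lands at $4\sqrt2$ rather than something larger, which requires being slightly careful about the $\sqrt2$ factor coming from splitting $\|z\|_2^2 = \|z_{1:d}\|_2^2 + z_{d+1}^2$ and about whether to bound $\|y\|_2$ or $\|y\|_2^2$; items 1 and 3 are purely routine once facts (i) and (ii) are in hand.
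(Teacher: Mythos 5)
Your item 1 is correct and is essentially the paper's argument in cleaner packaging: the paper does an explicit case analysis on the positions of $z_l,z_l'$ relative to $y_l$, while your inequality $|\max\{a_1,b_1\}-\max\{a_2,b_2\}|\le\max\{|a_1-a_2|,|b_1-b_2|\}$ reaches the same per-coordinate constant $\max\{\alpha_l,\beta_l\}$ directly, after which Cauchy--Schwarz gives the $\sqrt d$ factor exactly as in the paper.

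For item 2 you take the same route as the paper (factor the difference of squares and bound the two factors separately), but your bookkeeping as written does not land at $4\sqrt2$. Writing $C'=\sup_{\tilde z\in\mathcal C}\|\tilde z\|_2$, your bounds $|u|+|u'|\le 4C'\max\{1,\|y\|_2\}$ and $|u-u'|\le 2\max\{1,\|y\|_2\}\|z-z'\|_2$ multiply out to $8\,C'\max\{1,\|y\|_2^2\}\|z-z'\|_2$, and $8>4\sqrt2$, so ``absorbing the numerical factors'' does not go through. The missing step---which is what the paper does---is to keep the difference factor as a single Cauchy--Schwarz bound $|u-u'|\le\sqrt{\|y\|_2^2+1}\,\|z-z'\|_2$ (inner product of $z-z'$ with the vector $(y,-1)$), bound the sum factor by $2C'(\|y\|_2+1)$, and then use $\|y\|_2+1\le\sqrt2\sqrt{\|y\|_2^2+1}$, so that $(\|y\|_2+1)\sqrt{\|y\|_2^2+1}\le\sqrt2(\|y\|_2^2+1)\le 2\sqrt2\max\{1,\|y\|_2^2\}$; this yields exactly $4\sqrt2\,C'\max\{1,\|y\|_2^2\}\|z-z'\|_2$.

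For item 3 your chain of inequalities mirrors the paper's and correctly gives $\bigl(\tfrac1\alpha(1+\|y\|_2)+1\bigr)\|z-z'\|_2$, but the closing step is invalid as justified: passing to $(\|y\|_2+1+\tfrac1\alpha)\|z-z'\|_2$ needs $\tfrac1\alpha\|y\|_2\le\|y\|_2$, i.e.\ $\alpha\ge1$, and your claim that $\alpha\ge1$ is ``the relevant CVaR regime'' is false---the CVaR level satisfies $\alpha\in(0,1)$ (the paper's experiments use $\alpha=0.2$ and $0.8$), so $\tfrac1\alpha>1$ and the inequality fails for large $\|y\|_2$. In fairness, the paper's own final displayed inequality for this item is equally unjustified, and the Lipschitz modulus of this cost on the region where the maximum is active is genuinely of order $\|y\|_2/\alpha$; the honest constant produced by this argument is $\tfrac1\alpha(\|y\|_2+1)+1$, which is all that is needed downstream (\cref{assump: distr} only requires some Lipschitz envelope $b(y)$ with finite conditional expectation). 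But as a proof of the constant literally claimed in the proposition, your step works only when $\alpha\ge1$, so you should either restate the constant or flag the discrepancy rather than assert the wrong regime for $\alpha$.
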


Under the assumptions above, we can prove that the forest policy is asymptotically optimal. 
\begin{theorem}\label{thm: asymp-opt}
Let $x \in \mathcal X$ be fixed. 
If \cref{assump: tree,assump: distr} hold at the given $x$ and if $k_n \to \infty$, $s_n/k_n \to \infty$, $\log T/k_n \to 0$, and $Tk_n/s_n \to 0$, then 
\begin{align}
\sup_{z \in \mathcal C}\abs{\sum_{i = 1}^n w_i(x)c(z; Y_i) - \Eb{c(z; Y) \mid X = x}} \overset{p}{\to} 0.\label{eq: unif-converg}
\end{align}
It follows that any choice $\hat z_n(x) \in \argmin_{z \in \mathcal Z}\sum_{i = 1}^n w_i(x)c(z; Y_i)$ satisfies that as $n \to \infty$,  
\begin{align}\label{eq: asymp-opt}
\abs{\Eb{c(\hat z_n(x); Y) \mid X = x} -  \min_{z \in \mathcal Z}{\Eb{c(z; Y) \mid X = x}}} \overset{p}{\to} 0.
\end{align}
\end{theorem}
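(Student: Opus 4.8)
The plan is to establish the uniform convergence claim in \cref{eq: unif-converg} first; the optimality claim in \cref{eq: asymp-opt} then follows from a standard argmin/argmax consistency argument, since \cref{assump: distr}\ref{assump: distr inf-compact} lets us restrict attention to the compact set $\mathcal C$ on which the objectives are Lipschitz (\cref{prop: lipschitz} being the motivating check), so uniform convergence of the weighted empirical objective to $\Eb{c(z;Y)\mid X = x}$ over $\mathcal C$ passes to their minimizers: $\abs{\min_z \sum_i w_i(x)c(z;Y_i) - \min_z \Eb{c(z;Y)\mid X=x}} \to 0$, and evaluating the true objective at $\hat z_n(x)$ is squeezed between these. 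I would spell this last step out as a short lemma (continuity of $\inf$ under sup-norm perturbations, plus the fact that $\Eb{c(\hat z_n(x);Y)\mid X=x} \ge \min_z \Eb{c(z;Y)\mid X=x}$ while its weighted-empirical surrogate equals $\min_z \sum_i w_i(x) c(z;Y_i)$).

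For the uniform convergence in \cref{eq: unif-converg}, I would decompose, for each fixed $z \in \mathcal C$,
\[
\sum_{i=1}^n w_i(x) c(z;Y_i) - \Eb{c(z;Y)\mid X=x} = \underbrace{\sum_{i=1}^n w_i(x)\prns{c(z;Y_i) - \Eb{c(z;Y_i)\mid X_i}}}_{\text{(noise)}} + \underbrace{\sum_{i=1}^n w_i(x)\prns{\Eb{c(z;Y)\mid X=X_i} - \Eb{c(z;Y)\mid X=x}}}_{\text{(bias)}}.
\]
The bias term I would control via \cref{assump: tree} conditions 1 and 2 together with \cref{assump: distr} condition 4: balanced splits ($\omega$-fraction) plus the $\pi/p$ lower bound on split frequency along relevant coordinates force the diameter of the leaf containing $x$ (restricted to the relevant coordinates $X_{\mathcal J}$) to shrink like $s_n^{-c}$ for some $c>0$ as $s_n\to\infty$ — this is exactly the leaf-shrinkage lemma of \citet{wager2018estimation}/\citet{athey2019generalized}, which I would invoke — and then Lipschitzness of $x'\mapsto \Eb{c(z;Y)\mid X_{\mathcal J}=x'_{\mathcal J}}$ with constant $L_c$ uniform in $z\in\mathcal C$ turns diameter control into a uniform (in $z$) bias bound. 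The noise term is where honesty (condition 3) earns its keep: conditional on $\mathcal I^{\text{tree}}_j$ (hence on the partition $\tau_j$) and on $\{X_i : i \in \mathcal I^{\text{dec}}_j\}$, the summands $c(z;Y_i) - \Eb{c(z;Y)\mid X_i}$ are independent mean-zero; the sub-exponential tail assumption (\cref{assump: distr} condition 5) then gives a Bernstein bound, and averaging over $T$ trees with the weight normalization $\sum_i w_i(x) = 1$ and leaf sizes in $[k_n, 2k_n)$ yields a bound of order roughly $\sqrt{\log(\cdot)/k_n}$ for the pointwise-in-$z$ deviation, which vanishes under $k_n\to\infty$; the scaling conditions $s_n/k_n\to\infty$, $\log T/k_n\to 0$, $Tk_n/s_n\to 0$ are precisely what make the union-over-trees and the effective-sample-size bookkeeping work out (the last ensures the forest weights spread over enough distinct data points).

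To upgrade both pieces from pointwise in $z$ to uniform over $\mathcal C$, I would use the Lipschitz-in-$z$ property from \cref{assump: distr} condition 3 (with envelope $b(Y)$, $\Eb{b(Y)\mid X=x}<\infty$, and sub-exponential tails on $b(Y)$ from condition 5) to run a standard chaining/covering argument: cover the compact $\mathcal C$ by $O(\epsilon^{-d})$ balls of radius $\epsilon$, apply the pointwise bound with a union bound over centers, and absorb the oscillation between centers into $\epsilon \cdot \sum_i w_i(x) b(Y_i)$, which concentrates around $\epsilon\,\Eb{b(Y)\mid X=x}$ by the same bias/noise split applied to $b$ in place of $c$ (this is why the assumptions come in matched $c$/$b$ pairs). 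Taking $\epsilon\to 0$ slowly with $n$ closes the argument. \textbf{The main obstacle} I anticipate is the noise term's dependence structure: the forest weights $w_i(x)$ are themselves functions of all the $X_i$'s (and the $Y_i$'s in $\mathcal I^{\text{tree}}$) through the adaptive splits, so the clean conditional-independence picture only holds after carefully conditioning on the right $\sigma$-algebra (the trees and the decision-set covariates but not the decision-set outcomes) and then handling the randomness of the weights separately via the leaf-size bounds — getting this conditioning airtight, and verifying that \cref{assump: distr}\ref{assump: distr inf-compact}'s "almost surely eventually" containment in $\mathcal C$ is compatible with it, is the delicate part; everything else is bookkeeping with Bernstein's inequality and Lipschitz constants.
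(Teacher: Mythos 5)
Your proposal is correct and follows essentially the same route as the paper: the same noise/bias decomposition, honesty-based conditioning plus a Bernstein-type exponential inequality with a union over the $T$ trees for the noise term, the Wager--Athey leaf-diameter shrinkage lemma plus Lipschitzness in $x$ for the bias term, a covering/bracketing argument with the envelope $b(Y)$ (applied in matched $c$/$b$ fashion) for uniformity over $\mathcal C$, and the standard sandwich argument for \cref{eq: asymp-opt}. The only minor slip is attributing $Tk_n/s_n\to 0$ to weight-spreading; in the paper's proof it is used for the union bound over trees in the leaf-diameter (bias) control.
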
 

\Cref{thm: asymp-opt} provides asymptotic optimality of $\hat z_n(x)$ point-wise in $x$. The result can straightforwardly be extended to be uniform in $x$ if we simply assume the conditions in \cref{assump: distr} hold for all $x\in\mathcal X$ with common constants.

\section{Discussion}\label{sec:discussion}

In this section we offer some discussions.
First, we discuss how our work is related to and differs from work on \emph{estimation} using localized  weights and forests in particular. 
Then we discuss other related work on CSO and on integrating prediction and optimization. \edit{We discuss additional related literature about tree models and perturbation analysis in \cref{sec: literature}.}

\subsection{Comparison to Estimation}\label{sec: comp to est}
The idea of using localized weights to estimate parameters given covariate values has a long history in statistics and econometrics, including applications in local maximum likelihood \citep{tibshirani1987local,fan1998local},  local generalized method of moments \citep{lewbel2007local}, local estimating equation \citep{carroll1998local} and so on.  
These early works typically use non-adaptive localized weights like nearest-neighbor weights or Nadaraya-Watson kernel weights, which only use the information of covariates.
Recently, some literature propose to use forest-based weights for local parameter estimation \citep[\eg,][]{meinshausen2006quantile,scornet2015random,athey2019generalized,pmlr-v97-oprescu19a}, 
which generalizes the original random forest algorithm for regression and classification problems \citep{breiman2001random} to other estimation problems where the estimand depends on the $X$-conditional distribution.
These forest-based weights are derived from the proportion of  trees in which each observation falls in the same terminal node as the target covariate value.
Since those trees are adaptively constructed using label data as well, random forest weights are shown to be more effective in modeling complex heterogeneity in high dimensions than non-adaptive weights.
Recent literature has studied the statistical guarantees of random forests in estimating conditional expectation functions \citep[see reviews in][]{biau2016random,wager2018estimation}, or more general parameters defined by local estimating equations \citep{athey2019generalized,pmlr-v97-oprescu19a}.

Among the statistical estimation literature above, 
\edit{closest to our work is \cite{athey2019generalized}, who propose the GenRandForest algorithm to} estimate roots of conditional estimating equations. 
This is closely related to our decision making problem, because the optimal solution of \emph{unconstrained} CSO is also the root of a conditional estimating equation given by the first order optimality condition. 
For example, 
the optimal solutions of conditional newsvendor problem in \cref{ex: mnv} without constraints are conditional quantiles, which are also considered by \cite{athey2019generalized} under the conditional estimating equation framework. 
For computational efficiency, \cite{athey2019generalized} also propose a gradient-based approximation for roots in candidate subpartitions (\edit{see discussions below \cref{eq:apxsolcrit}}), and then find the best split that maximizes the discrepancy of the approximate roots in the subregions, thereby approximately minimizing the total \emph{mean squared error} of the estimated roots \citep[][Proposition 1]{athey2019generalized}.

In contrast, our paper has a fundamentally different goal: we target \textit{decision-making risk} (expected cost) rather than \textit{estimation risk} 
(accuracy). 
In our apx-risk and apx-soln criteria, we directly approximate the optimal average cost itself and use this to choose a split, rather than estimation error of the solution. 
In \cref{sec: empirical nv}, we provide one empirical example of unconstrained newsvendor problem where the heterogeneity of optimal solution estimation is drastically different from the heterogeneity of the optimal decision-making, which illustrates the benefit of targeting decision quality when the decision problem, rather than the estimation problem, is of interest. 
Moreover, our methods uniquely accommodate constraints, which are prevalent in decision-making problems but rare in statistical estimation problems. 
\edit{For \emph{constrained} CSO, the optimal solution cannot be characterized by local estimating equations so the GenRandForest algorithm  is not applicable.}
In \cref{sec: empirical}, we provided empirical examples
\edit{of constrained CVaR optimization problems where the taking into account constraints is key to constructing good trees.}

\subsection{CSO and Integrating Prediction and Optimization}\label{sec: CSO lit}
Our paper builds on the CSO framework, and the general local learning approach, \ie, estimating the objective (and stochastic constraints in \cref{sec:cso-general}) by weighted averages with weights reflecting the proximity of each covariate observation to the target value. 
\citet{hannah2010nonparametric,hanasusanto2013robust,bertsimas2014predictive} propose the use of nonparametric weights that use only the covariate observations $X$ and do not depend on observations of the uncertain variable $Y$, such as Nadaraya-Watson weights. 
\citet{bertsimas2014predictive} formally set up the CSO framework, propose a wide variety of machine learning methods for local weights construction, and provide rigorous asymptotic optimality guarantees.  
In particular, they additionally propose weights based on decision trees and random forests that incorporate the uncertain variable information, and show their superiority when the covariate dimension is high. 
However, their tree and forest weights are constructed from standard regression algorithms that target prediction accuracy instead of downstream decision quality, primarily because targeting the latter would be too computationally expensive. Our paper resolves this computational challenge by leveraging approximate criteria that can be efficiently computed.  

Optimization problems that have unknown parameters, such as an unknown distribution or a conditional expectation, are often solved by a two-stage approach: the unknown parameters are estimated or predicted, then these are plugged in, and then the approximated optimization problem is solved. 
The estimation or prediction step is often done independently of the optimization step, targeting standard accuracy measures such as mean squared error without taking the downstream optimization problem into account. 
However, all predictive models make errors and 
when prediction and optimization are completely divorced, the error tradeoffs may be undesirable for the end task of decision-making. To deal with this problem, recent literature propose various ways to tailor the predictions to the optimization problems. 

\cite{elmachtoub2017smart} study a special CSO problem where $c(z;y)=y^\top z$ is linear and constraints are deterministic and known. In this special case, the parameter of interest is the conditional expectation $\Eb{Y\mid X=x}$, which forms the linear objective's coefficients.
They propose to fit a parametric model to predict the coefficients by minimizing a convex surrogate loss of the suboptimality of the decisions induced by predicted coefficients. 
\cite{elmachtoub2020decision} study the same linear CSO problem 
and instead predict the coefficients nonparametrically by decision trees and random forests with suboptimality as the splitting criterion. \edit{In \cref{sec: supplement} \cref{prop: equi-spo-stochopt}, we show this criterion is equivalent to what we termed the oracle splitting criterion in \cref{eq:oraclecrit} in the case of linear costs.}
Since this involves full re-optimization for each candidate split, they are limited to very few candidate splits, suggesting using one per candidate feature, and they 
consider a relatively small number of trees in their forests. 
In contrast, we consider the general CSO problem and use efficient approximate criteria, which is crucial for large-scale problems and training large tree ensembles. \edit{\cite{hu2021fast} also study linear CSO problems and they show both theoretically and empirically that with correctly specified models, integrated approaches may perform worse than the simpler predict-then-optimize approach. Our paper demonstrates the benefit of a forest-based integrated approach in \emph{nonlinear} CSO problems, where a predict-then-optimize approach would have to learn the whole conditional distribution, not just the conditional expectation.}

\citet{donti2017task} study smooth convex optimization problems with 
a parametric model for the conditional distribution of the uncertain variables (in both objective and constraints) given covariates, and fit the parametric models by minimizing the decision objective directly using gradient descent methods on the optimization risk instead of the log-likelihood. 
\citet{wilder2019melding} further extend this approach to nonsmooth problems by leveraging differentiable surrogate problems.
However, unless the cost function depends on the uncertain variables linearly, the stochastic optimization problem may involve complicated integrals with respect to the conditional distribution model. 
In contrast, our paper focuses on nonparametric forest models that cannot be trained by gradient-based methods, and we can straightforwardly target the CSO using localized weights.
\edit{\citet{Notz2020} consider convex optimization problems with nondifferentiable cost functions, and propose a subgradient boosting algorithm to directly learn a decision policy. 
While this approach can handle complex objectives, 
it can only accommodate very simple constraints like box constraints, as it is difficult to impose complex constraints on boosting decision policies. In contrast, our approach based on the CSO framework can readily handle general constraints.}

\section{Concluding Remarks}\label{sec:conclusion}

In CSO problems, covariates $X$ are used to reduce the uncertainty in the variable $Y$ that affects costs in a decision-making problem. The remaining uncertainty is characterized by the conditional distribution of $Y\mid X=x$. A crucial element of effective algorithms for learning policies for CSO from data is the integration of prediction and optimization. One can try to fit generic models that predict the distribution of $Y\mid X=x$ for every $x$ and then plug this in place of the true conditional distribution, but fitting such a model to minimize prediction errors without consideration of the downstream decision-making problem may lead to ill-performing policies. In view of this, we studied how to fit forest policies for CSO (which use a forest to predict the conditional distribution) in a way that directly targets the optimization costs. The na\"ive direct implementation of this is hopelessly intractable for many important managerial decision-making problems in inventory and revenue management, finance, \emph{etc.} Therefore, we instead developed efficient approximations based on second-order perturbation analysis of stochastic optimization. The resulting algorithm, StochOptForest, is able to grow large-scale forests that directly target the decision-making problem of interest, which empirically leads to significant improvements in decision quality over baselines.

\bibliographystyle{informs2014}
\bibliography{literature}

\newpage
\ECHead{
\begin{center}
$ $\\
Supplemental Material for\\[8pt]
\myfulltitle
\end{center}
}\vspace{8pt}

\begin{APPENDICES}
\section{Contextual Stochastic Optimization with Stochastic Constraints}\label{sec:cso-general}
In \cref{sec: constr}, we analyzed CSO problems with only deterministic constraints. In this section, we further extend our results and methods to CSO problems with both deterministic and stochastic constraints. Specifically, we consider CSO problems given by
\begin{align}\label{eq: cso-general-2}
z^*(x)&\in\argmin_{z\in\Z(x)}
\Eb{c(z;Y)\mid X=x}
,\\
\Z(x)&=\braces{z\in\R d~:~\notag
\begin{array}{ll}
g_k(z;x)=\Eb{G_k(z;Y)\mid X=x}=0, ~k=1,\dots,s,\\
g_k(z;x)=\Eb{G_k(z;Y)\mid X=x}\le 0,~k=s+1,\dots,m,\\
h_{k'}(z) = 0, ~k'=1,\dots,s', ~ h_{k'}(z) \le 0, ~k'=s' + 1,\dots, m'
\end{array}},
\end{align}
where the stochastic constraints (those given by $\{g_k(z; x)\}_{k = 1}^m$) depend on the unknown distribution of $Y\mid X=x$ and need to be learned from data as well. Note that the constraint set $\mathcal Z(x)$ now varies with $x$ due to the stochastic constraints. 

Analogously, we consider forest policies of the following form:  
\begin{align}\label{eq: forest policy2}
\hat z(x)&\in\argmin_{z\in\hat{\Z}(x)}
\sum_{i=1}^nw_{i}(x)c(z;Y_i), ~~~ w_{i}(x) \coloneqq \frac{1}{T}\sum_{j=1}^T\frac{\indic{\tau_j(X_i)=\tau_j(x)}}{\sum_{i'=1}^n\indic{\tau_j(X_{i'})=\tau_j(x)}}\\*\notag
\hat{\Z}(x)&=\braces{z\in\R d~:~
\begin{array}{ll}
\sum_{i=1}^nw_{i}(x)G_k(z;Y_i)
=0,~k=1,\dots,s,\\
\sum_{i=1}^nw_{i}(x)G_k(z;Y_i)
\leq0,~k=s+1,\dots,m,\\
h_{k'}(z) = 0, ~k'=1,\dots,s', ~ h_{k'}(z) \le 0, ~k'=s' + 1,\dots, m'
\end{array}}.
\end{align}

Notice that $\hat{\Z}(x)\neq \Z(x)$ so, unlike the deterministic case, $\hat z(x)$ may violate the constraints of the CSO problem, \ie, $\hat z\prns{x} \not\in \hat\Z\prns{x}$. In \cref{sec: disc expectation} we further discuss the nuances and challenges of handling stochastic constraints and the benefits of our approach as well as possible robust variants.

\subsubsection*{Examples of stochastic constraints.}
\begin{continuance}[Stochastic Constraints in Multi-Item Newsvendor]{\ref{ex: mnv}}
A typical example for stochastic constraints in the multi-item newsvendor problem is the following 
stochastic aggregate service level constraint:
\begin{align}\label{eq: service-level}
\mathcal Z(x) = \braces{z\in \mathbb R^d:  
\Eb{\sum_{l = 1}^d \max\{Y_l-z_l,0\}\mid X=x} \le C', ~ z_l \ge 0, ~ l = 1, \dots, d},
\end{align}
where $C'$ is a constant that stands for the maximal allowable average number of customers experiencing a stock out across items.
\end{continuance}

\begin{continuances}[Stochastic Constraints in Portfolio Optimization]{\ref{ex: portfolio-var} and \ref{ex: portfolio}}
For another example, we may impose the following mean return constraint  with a minimum return of $R$ in the portfolio optimization:
\begin{align}\label{eq: mean-return}
\mathcal Z(x) = \braces{z\in\R{d+1}: \Eb{Y^\top z_{1:d} \mid X = x} \ge R,\,z_{1:d}\in\Delta^d}.
\end{align}
More generally we can also include in the constraints any number of criteria or weighted combinations of criteria (mean, variance, CVaR at any level); we need only introduce a separate auxiliary variable for variance and for CVaR at each level considered. These would all constitute stochastic constraints.
\end{continuances}

\subsection{Perturbation Analysis}
In this section, we develop approximate splitting criteria for training forests for general CSO problems  described in \cref{eq: cso-general-2}. 
We extend the oracle splitting criterion in \cref{eq:oraclecrit} to accommodate additional stochastic constraints:
\begin{align}\label{eq:oraclecrit-general}
\crit^\text{oracle}(R_1,R_2)&\ts=\sum_{j=1,2}\min_{z\in \mathcal Z_j}\Eb{c(z;Y)\indic{X\in R_j}},\\\notag
\mathcal Z_j &=\left\{z:
\begin{array}{l}
g_{j, k}(z) = \Eb{G_k(z;Y)\mid{X \in R_j}} = 0, ~ k = 1, \dots, s, \\
 g_{j, k}(z) = \Eb{G_k(z;Y)\mid{X \in R_j}} \le 0, ~ k = s+1, \dots, m,  \\
h_{k'}(z) = 0, ~ k' =1, \dots, s', ~ h_{k'}(z) \le 0, ~ k' =s'+1, \dots, m'
\end{array}
\right\}.
\end{align}

Again, consider a region $R_0\subseteq\R d$ and its candidate subpartition $R_0=R_1\cup R_2$, $R_1\cap R_2=\varnothing$.
We define the following family of optimization problems for $t \in [0, 1]$: 
\begin{align}\label{eq:v-const}
&v_j(t)=\min_{z\in\mathcal Z_j(t)}\;f_{0}(z)+\edit{t\prns{f_{j}(z) - f_0\prns{z}}},~z_j(t)\in\argmin_{z\in\mathcal Z_j(t)}\;f_{0}(z)+\edit{t\prns{f_{j}(z) - f_0\prns{z}}},\quad j=1,2,
\\\notag
&{\text{where}~}\mathcal Z_j(t) =\left\{z:
\begin{array}{l}
g_{0, k}(z) + \edit{t\prns{g_{j, k}(z)-g_{0, k}(z)}} = 0, ~ k = 1, \dots, s, \\
g_{0, k}(z) + \edit{t\prns{g_{j, k}(z)-g_{0, k}(z)}} \le 0, ~ k = s+1, \dots, m,  \\
h_{k'}(z) = 0, ~ k' =1, \dots, s', ~ h_{k'}(z) \le 0, ~ k' =s'+1, \dots, m'
\end{array}
\right\},\quad j=1,2,\\[0.2em]\notag
&\phantom{\text{where}}~f_j(z)=\Eb{c(z;Y) \mid{X\in R_j}},\,g_{j, k}(z) = \Eb{G_k(z;Y)\mid{X \in R_j}},\quad j=0,1,2,\,k=1,\dots,m. 
\end{align}
Note that here only the stochastic constraints (and not the deterministic constraints) are interpolated by $t$, since only stochastic constraints vary from $R_0$ to $R_1,\, R_2$. 

The oracle criterion is again given by $\crit^\text{oracle}(R_1,R_2)=p_1v_1(1)+p_2v_2(1)$. 
In the following theorem, we present a  general perturbation analysis that enables us to approximate $v_1(1), v_2(1)$ in presence of both deterministic and stochastic constraints.
\begin{theorem}[Second-Order Perturbation Analysis: Stochastic and Deterministic Constraints]\label{thm:secondorder-const2}
Fix $j=1,2$.
Suppose the following conditions hold:
\begin{enumerate}
\item $f_0(z),f_j(z),g_{0, k}(z),g_{j, k}(z)$ for $k = 1, \dots, m$ are twice continuously differentiable. \label{cond: constr-smooth} 
\item The problem corresponding to $f_0(z)$ has a unique minimizer $z_0$ over $\Z_j(0)$. \label{cond: unique-sol}
\item The inf-compactness condition: \edit{there exist} constants $\alpha$ and $t_0>0$ such that the constrained level set $\left\{
z \in \mathcal Z_j(t): ~ f_{0}(z)+\edit{t\prns{f_{j}(z)-f_0\prns{z}}} \le \alpha
\right\}$ is nonempty and uniformly bounded over $t \in [0, t_0)$.  \label{cond: constr-compact}
\item $z_0$ is associated with a unique Lagrangian multiplier $(\lambda_0, \nu_0)$ that also satisfies the strict complementarity condition:  $\lambda_{0, k} > 0$ if $k \in K_g(z_0)$ and $\nu_{0, k'} > 0$ if $k' \in K_h(z_0)$, where $K_g(z_0) = \{k: g_{0, k}(z_0) = 0, k = s+1, \cdots, m\}$ and $K_h(z_0) = \{k': h_{k'}(z_0) = 0, k' = s'+1, \cdots, m'\}$ are the index sets of active at $z_0$ inequality constraints corresponding to $t = 0$. \label{cond: constr-uniquenss}
\item \label{cond: constr-MF}  The Mangasarian-Fromovitz constraint qualification condition at $z_0$:
\begin{align*}
&\ts\nabla_z g_{0, k}(z_0), ~ k = 1, \dots, s \text{ are linearly independent},\\&\ts \nabla_z h_{k'}(z_0), ~ k' = 1, \dots, s' \text{ are linearly independent},~\text{and}  \\
&\ts\exists d_z \text{ s.t. } \nabla_z g_{0, k}(z_0)d_z = 0, ~ k = 1, \dots, s, ~ \nabla_z g_{0, k}(z_0)d_z < 0, ~ k \in K_g(z_0), \\
&\ts\phantom{\exists d_z, \text{ s.t. }} \nabla_z h_{k'}(z_0)d_z = 0, ~ k' = 1, \dots, s', ~ \nabla_z h_{k'}(z_0)d_z < 0, ~ k' \in K_h(z_0).
\end{align*}  
\item \label{cond: constr-2nd} Second order sufficient condition:
\begin{align*}
d_z^\top \mathcal L(z_0; \lambda_0, \nu_0) d_z > 0~ \forall d_z \in C(z_0)\setminus \{0\},  
\end{align*}
where $\mathcal L(z; \lambda, \nu)$ is the Lagrangian for  the problem corresponding to $t = 0$, \ie, 
$\mathcal L(z; \lambda, \nu) =  f_0(z) + \sum_{k = 1}^m \lambda_{k} g_{0, k}(z) + \sum_{k' = 1}^{m'} \nu_{k'}   h_{k'}(z)$ and the critical cone $C(z_0)$ is defined as follows:
\begin{align*}
C(z_0) = \left\{d_z: 
\begin{array}{l}
d_z^\top\nabla g_{0, k}(z_0)  = 0, ~ \text{for } k \in \{1, \dots, s\} \cup K_g(z_0) \\
d_z^\top\nabla h_{k'}(z_0) = 0, ~ \text{for } k' \in \{1, \dots, s'\} \cup K_h(z_0)
\end{array}
\right\}.
\end{align*}  
\end{enumerate}
Define $\mathcal G_j(z_0) \in \mathbb R^m$ as a column  vector whose $k^\text{th}$ element is $g_{j, k}(z_0)$, and ${\mathcal G}^{K_g}_j(z_0) \in \mathbb R^{s + |K_g(z_0)|}$ as only elements corresponding to equality and active inequality constraints. We analogously define Define $\mathcal H(z_0) \in \mathbb R^m$ as a column  vector whose $k^\text{th}$ element is $h_{k}(z_0)$, and ${\mathcal H}^{K_h}(z_0) \in \mathbb R^{s + |K_h(z_0)|}$ as only elements corresponding to equality and active at $z_0$ inequality constraints.
 
Then 
\begin{align}
v_j(t) 
	&= (1-t)f_{0}(z_0)+tf_{j}(z_0)  + t\lambda_0^\top \prns{\mathcal G_j(z_0) - \mathcal G_0(z_0)} + o(t^2) \nonumber \\
	+& \frac{1}{2}t^2\braces{d_{z}^{j*\top} \nabla^2_{zz}\mathcal L(z_0; \lambda_0, \nu_0) d_{z}^{j*} + 2d_{z}^{j*\top} \prns{{\nabla  f_j(z_0) - \nabla f_0(z_0)} +  \prns{\nabla\mathcal G_j^\top(z_0) - \nabla\mathcal G_0^\top(z_0)}\lambda_0}} , \label{eq: approx-risk2}\\
z(t) &= z_0 + t d_{z}^{j*} + o(t) \label{eq: approx-sol2},
\end{align}

where $d_z^{j*}$ is the first part of the unique solution of the following linear system of equations:
\begin{align}\label{eq: linear-equation2}
&\begin{bmatrix}
\edit{\nabla^2_{zz} \mathcal L(z_0; \lambda_0, \nu_0)} & \nabla {\mathcal G^{K_g}_0}^\top(z_0) & \nabla {\mathcal H^{K_h}}^\top(z_0) \\
\nabla^\top \mathcal G^{K_g}_0(z_0) & 0 & 0 \\
\nabla^\top\mathcal H^{K_h}(z_0) & 0 & 0
\end{bmatrix}
\begin{bmatrix}
d_z^j \\ \xi \\ \eta 
\end{bmatrix}
 \\
&\qquad\qquad\qquad\qquad\qquad = \begin{bmatrix}
\edit{- \prns{\nabla  f_j(z_0) - \nabla f_0(z_0)} - \prns{\nabla\mathcal G_j^\top(z_0) - \nabla\mathcal G_0^\top(z_0)}\lambda_0}  \\
- \prns{\mathcal G^{K_g}_j(z_0) - \mathcal G^{K_g}_0(z_0)} \\
0 
\end{bmatrix}. \nonumber 
\end{align}
\end{theorem}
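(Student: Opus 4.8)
The plan is to obtain \cref{thm:secondorder-const2} from classical first- and second-order sensitivity analysis of a parametric nonlinear program in which the scalar $t$ enters both the objective and, through the stochastic constraints, the feasible set \emph{linearly}. I would organize the argument in three stages: (i) show that for small $t>0$ the perturbed problem has a minimizer $z_j(t)$ with $z_j(t)\to z_0$ and with the same active set as $z_0$; (ii) obtain a first-order expansion of the whole KKT triple via the implicit function theorem applied to the active-constraint KKT system, which identifies \cref{eq: linear-equation2} as the linearized KKT system and yields \cref{eq: approx-sol2}; (iii) expand $v_j(t)$ to second order by twice differentiating the value function along the solution path and collapsing the multiplier-derivative terms using \cref{eq: linear-equation2}, which yields \cref{eq: approx-risk2}.

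\emph{Stage (i).} Twice continuous differentiability together with inf-compactness lets me restrict, for $t<t_0$, to a fixed compact set on which the perturbed objectives and constraints converge uniformly as $t\to0$; the standard argument behind \citet[Proposition 4.4]{perturbation2000} then shows any accumulation point of $\{z_j(t)\}$ is a minimizer of the $t=0$ problem, which by uniqueness is $z_0$, so $z_j(t)\to z_0$. I then record a linear-algebra fact used throughout: under strict complementarity, uniqueness of the multiplier $(\lambda_0,\nu_0)$ forces the gradients of the constraints active at $z_0$ to be linearly independent — any nontrivial dependence among them, scaled by a small $\varepsilon$, could be added to $(\lambda_0,\nu_0)$ while preserving stationarity and the strict positivity of active inequality multipliers, contradicting uniqueness — so LICQ holds at $z_0$ on the active set; combined with the second-order sufficient condition this makes the bordered Hessian on the left of \cref{eq: linear-equation2} nonsingular, by \citet[Proposition 4.2.2]{Bertsekas1995}. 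Finally, by continuity of $z_j(t)$, of the constraint functions, and of the (now unique) multipliers $\lambda_j(t),\nu_j(t)$, together with strict complementarity, the active set at $z_j(t)$ coincides with that at $z_0$ for small $t$; in particular the multipliers of inactive inequalities vanish identically near $t=0$ and those of active inequalities stay strictly positive.

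\emph{Stages (ii)--(iii).} With the active set frozen, the $t$-problem is locally the equality-constrained program in the active constraints, whose KKT system $\{\nabla_z\mathcal L_t=0,\ \text{active constraints}=0\}$ is a $C^1$ map of $(z,\lambda^{K_g},\nu^{K_h},t)$ with Jacobian in $(z,\lambda^{K_g},\nu^{K_h})$ at the base point equal to the bordered Hessian of \cref{eq: linear-equation2}, hence invertible; the implicit function theorem supplies a $C^1$ branch through $(z_0,\lambda_0,\nu_0)$ which by Stage (i) is the genuine solution path for small $t$, and differentiating the KKT system at $t=0$ is precisely \cref{eq: linear-equation2}, giving $\dot z_j(0)=d_z^{j*}$ and hence \cref{eq: approx-sol2} by Taylor's theorem. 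For the value, since active constraints hold and inactive multipliers vanish along the branch, $v_j(t)=\mathcal L_t(z_j(t),\lambda_j(t),\nu_j(t))$, so the envelope identity gives $v_j'(t)=\partial_t\mathcal L_t=(f_j-f_0)(z_j(t))+\lambda_j(t)^\top(\mathcal G_j-\mathcal G_0)(z_j(t))$, the deterministic constraints contributing nothing as they are $t$-independent; thus $v_j'(0)=(f_j-f_0)(z_0)+\lambda_0^\top(\mathcal G_j-\mathcal G_0)(z_0)$. Differentiating once more, substituting $\dot z_j(0)=d_z^{j*}$, $\lambda_j(0)=\lambda_0$, $\dot\lambda_{j,k}(0)=0$ for inactive $k$, and then using the three block-rows of \cref{eq: linear-equation2} — the middle row to write $(\mathcal G^{K_g}_j-\mathcal G^{K_g}_0)(z_0)=-\nabla^\top\mathcal G^{K_g}_0(z_0)d_z^{j*}$, the bottom row to annihilate the $\dot\nu_j^{K_h}(0)$ term through $\nabla^\top\mathcal H^{K_h}(z_0)d_z^{j*}=0$, and the top (stationarity) row to eliminate $\dot\lambda_j^{K_g}(0)$ — the multiplier derivatives collapse and $v_j''(0)=d_z^{j*\top}\nabla^2_{zz}\mathcal L(z_0;\lambda_0,\nu_0)d_z^{j*}+2d_z^{j*\top}\bigl((\nabla f_j-\nabla f_0)(z_0)+(\nabla\mathcal G_j^\top(z_0)-\nabla\mathcal G_0^\top(z_0))\lambda_0\bigr)$. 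Since $z_j,\lambda_j$ are $C^1$ near $0$, $v_j$ is $C^2$ there, so $v_j(t)=v_j(0)+tv_j'(0)+\tfrac12 t^2 v_j''(0)+o(t^2)$; rewriting $f_0(z_0)+t(f_j-f_0)(z_0)=(1-t)f_0(z_0)+tf_j(z_0)$ gives \cref{eq: approx-risk2}.

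\emph{Main obstacle and specialization.} The delicate part is the second-order bookkeeping above: the raw second derivative of $v_j$ contains $\dot\lambda_j,\dot\nu_j$, and one must show these are either killed by the structural fact $\nabla^\top\mathcal H^{K_h}(z_0)d_z^{j*}=0$ (the deterministic constraints are inert in $t$) or reabsorbed through the stationarity row of \cref{eq: linear-equation2}, and keeping transpose conventions and the active/inactive split straight is where care is needed. A secondary point is ensuring that the genuine global minimizers $z_j(t)$ track the IFT branch for small $t$, which is exactly what the inf-compactness-based stability in Stage (i) delivers. Finally, \cref{thm:secondorder-const,thm:secondorder} fall out as special cases: removing the stochastic constraints ($m=0$) drops $\mathcal G_0,\mathcal G_j,\lambda_0$ and collapses \cref{eq: linear-equation2} to its deterministic-constraint block, and removing all constraints further removes the border, leaving $\nabla^2 f_0(z_0)d_z^{j*}=-(\nabla f_j-\nabla f_0)(z_0)$ and recovering \cref{eq:apxrisk,eq:apxsol}.
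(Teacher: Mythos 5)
Your proposal is correct, but it takes a genuinely different route from the paper's. The paper proves this theorem in one line by specializing its general additive-perturbation result (\cref{lemma: additive-perturb}), which is itself obtained from \citet[Theorem 5.53]{perturbation2000} --- the directional second-order expansion of the optimal value via the auxiliary linearized problem (PL)/(DL) and quadratic problem (PQ)/(DQ) --- together with the duality simplifications in \cref{prop: first-order-simplification} and \cref{prop: second-order-simplify}, which under the unique-multiplier and strict-complementarity assumptions collapse (PQ) to the equality-constrained QP whose optimality conditions are exactly \cref{eq: linear-equation2}. You instead rigorize what the paper presents only as a heuristic in \cref{sec: heuristic}: apply the implicit function theorem to the active-set KKT system and differentiate the value along the resulting branch. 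Your route is more elementary and self-contained, and it repairs the two gaps the paper itself flags in its heuristic: you justify active-set preservation (via multiplier convergence plus strict complementarity), your observation that a unique multiplier together with strict complementarity forces linear independence of the active gradients (hence nonsingularity of the bordered matrix via \citet[Proposition 4.2.2]{Bertsekas1995}) is correct, and you need only a $C^1$ KKT branch --- the envelope identity then makes $v_j$ $C^2$ --- whereas the paper's \cref{prop: implict-fun-thm} simply assumes twice-differentiable primal and dual paths. What the paper's route buys is that it never has to establish active-set stability, multiplier convergence, or a differentiable solution path at all; the Bonnans--Shapiro machinery yields the expansion directly and would continue to give (min--max form) expansions under weaker hypotheses than strict complementarity. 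The one place you assert rather than argue is the convergence $(\lambda_j(t),\nu_j(t))\to(\lambda_0,\nu_0)$ of the multipliers attached to the genuine minimizers, on which both your active-set argument and the identification of the IFT branch with the true solution path rest; this is standard (MFCQ is preserved near $z_0$, so the multiplier sets are nonempty, uniformly bounded, and outer semicontinuous, with singleton limit $\{(\lambda_0,\nu_0)\}$), but it should be stated explicitly. Your second-order bookkeeping checks out: the middle and bottom block rows of \cref{eq: linear-equation2} combined with the stationarity row do exactly eliminate the multiplier derivatives and reproduce the coefficient in \cref{eq: approx-risk2}, and your specializations to \cref{thm:secondorder-const} and \cref{thm:secondorder} are as stated.
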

\cref{thm:secondorder-const2} looks very similar to \cref{thm:secondorder-const} except that we need to account for the presence of stochastic constraints in all conditions, and also in the final perturbation result. Note that if we remove the requirement on stochastic constraints in the conditions, and set $g_{j, k}(z) = 0$ for $j = 1, 2$, $k = 1, \dots, m$ in \cref{eq: approx-risk2,eq: approx-sol2,eq: linear-equation2}, then we recover the conclusion in \cref{thm:secondorder-const}.

\subsection{Approximate Splitting Criteria}\label{sec: criteria-stoch-constr}

\cref{eq: approx-risk2,eq: approx-sol2} in \cref{thm:secondorder-const2} motivate the following two different approximate splitting critera: 
\begin{align}\label{eq:apxriskcrit-cons2}
\ts\crit^\text{apx-risk}(R_1,R_2)
	&= \frac{1}{2}\sum_{j = 1, 2}p_jd_z^{j*\top} \nabla^2_{zz}\mathcal L(z_0; \lambda_0, \nu_0) d_z^{j*}  \\\nonumber
	&\phantom{=}+ \sum_{j = 1, 2}p_jd_z^{j*\top} \prns{{\nabla  f_j(z_0) - \nabla f_0(z_0)} +  \prns{\nabla\mathcal G_j^\top(z_0) - \nabla\mathcal G_0^\top(z_0)}\lambda_0}, \\
\label{eq:apxsolcrit-cons2}\crit^\text{apx-soln}(R_1,R_2)
	&=\sum_{j=1,2}p_jf_j\prns{z_0 + d_z^{j*}},
\end{align}
where in the approximate risk criterion, $\crit^\text{apx-risk}(R_1,R_2)$, we omit from the extrapolation the term $\sum_{j = 1, 2}p_j\prns{f_j(z_0) + \lambda_0^\top \prns{\mathcal G_j(z_0) - \mathcal G_0(z_0)}} = p_0\prns{f_0(z_0) - \lambda_0^\top \mathcal G_0(z_0)}$, which does not depend on the choice of subpartition.

\subsection{Estimating the Approximate Splitting Criteria}
To estimate the approximate splitting criteria in \cref{eq:apxriskcrit-cons2,eq:apxsolcrit-cons2}, we still  estimate $z_0$ by its sample analogue first:
\begin{align}
&\hat z_0 \in \argmin_{z \in \hat{\mathcal Z}_0} 
\widehat{p_0f_0}(z), \text{ where } \widehat{p_0f_0}(z) \coloneqq  
\frac{1}{n}\sum_{i = 1}^{n}\indic{X_i \in R_0}c(z; Y_i), \label{eq: z0hat-const2} \\
&\hat{\mathcal Z}_0 =\left\{z:
\begin{array}{l}
 \frac{1}{n}\sum_{i = 1}^n G_k(z;Y)\indic{X_i \in R_0} = 0, ~ k = 1, \dots, s, \\
 \frac{1}{n}\sum_{i = 1}^n G_k(z;Y)\indic{X_i \in R_0} \le 0, ~ k = s+1, \dots, m,  \\
h_{k'}(z) = 0, ~ k' =1, \dots, s', ~ h_{k'}(z) \le 0, ~ k' =s'+1, \dots, m'
\end{array}
\right\}. \nonumber 
\end{align}
Then we can estimate the gradients of $f_j,g_{j,k}, h_{k'}$ at $z_0$,  Hessians of $f_0,g_{0,k}, h_{k'}$ at $z_0$, the Lagrangian multipliers $\lambda_0,\nu_0$, and the index sets $K_g(z_0), K_h(z_0)$ of active  inequality constraints, and $d^{j*}_z$ as we do in \cref{sec: est-approx-crit}, namely, by estimating all of them at $\hat z_0$. 
 With all of these pieces in hand, we can finally estimate our approximate criteria in \cref{eq:apxriskcrit-cons2,eq:apxsolcrit-cons2}.

\subsubsection*{Revisiting the Running Examples.}
We now illustrate the estimation of gradients and Hessians for stochastic constraints using \cref{eq: mean-return,eq: service-level} as examples. 
The aggregate service level constraint in \cref{eq: service-level}
has the same structure as the objective function in \cref{ex: mnv}  and so estimating the corresponding gradients and Hessians can be done in the same way as estimating the objective gradients and Hessians as in \cref{sec: est-approx-crit}. 
The minimum mean return constraint in \cref{eq: mean-return} corresponds to $G_1(z; Y) = R-Y^\top z\leq 0$. Then $\nabla^2 g_{j,1}(z_0)$ is zero and we can estimate $g_{j,1}(z_0)$ and $\nabla g_{j,1}(z_0)$ using simple sample averages, as in \cref{ex: smooth}, Cont'd in \cref{sec: est-approx-crit}.

\subsection{Construction of Trees and Forests}
\begin{algorithm}[t!]\OneAndAHalfSpacedXI
    \caption{\textsc{Procedure to make a decision using StochOptForest}}
    \label{alg: forest pred2}
    \begin{algorithmic}[1]
    \Procedure{StochOptForest.Decide}{data $\mathcal{D}$, forest $\{(\tau_j,\,\mathcal I^\text{dec}_j):j=1,\dots,T\}$, target $x$}
    \State $w(x) \gets$ \Call{Zeros}{$|\mathcal D|$} \Comment{Create an all-zero vector of length $|\mathcal D|$}
    \For{$j = 1,\dots,T$}
    \State $\mathcal N(x)\gets\{i\in\mathcal I^\text{dec}_j:\tau_j(X_i)=\tau_j(x)\}$\Comment{Find the $\tau_j$-neighbors of $x$ among the data in $\mathcal I^\text{dec}_j$}
    \For{$i \in \mathcal N(x)$}\;$w_i(x) \gets w_i(x) + \frac{1}{\abs{\mathcal N(x)}T}$\Comment{Update the sample weights}
        \EndFor
    \EndFor
    \State $\hat{\Z}(x)\gets\braces{z\in\R d~:~
\begin{array}{ll}
\sum_{(X_i,Y_i) \in \mathcal D}w_{i}(x)G_k(z;Y_i)
=0,~k=1,\dots,s,\\
\sum_{(X_i,Y_i) \in \mathcal D}w_{i}(x)G_k(z;Y_i)
\leq0,~k=s+1,\dots,m,\\
h_{k'}(z) = 0, ~k'=1,\dots,s', ~ h_{k'}(z) \le 0, ~k'=s' + 1,\dots, m'
\end{array}}$ \label{line: constr}
    \State \textbf{return} \Call{Minimize}{$\sum_{(X_i,Y_i) \in \mathcal D}w_i(x)c(z; Y_i)$, $z\in \hat{\Z}(x)$}\Comment{Compute the forest policy \cref{eq: forest policy}}
    \EndProcedure 
    \end{algorithmic}
\end{algorithm}

It is now possible to extend the tree fitting algorithm, \cref{alg: tree}, to the general CSO problem in \cref{eq: cso-general-2}. We now solve 
$\hat z_0$ in 
line \ref{alg: tree z0 step} using \cref{eq: z0hat-const2} instead, \ie, using the estimated constraint set $\hat{\Z}_0$. Then we update line \ref{alg: tree estim 0 step} to estimate $\lambda_0,\nu_0,K_g(z_0),K_h(z_0),\nabla f_0(z_0),\nabla^2 f_0(z_0),\nabla g_{0,k}(z_0),\nabla^2 g_{0,k}(z_0),\nabla h_{k}(z_0),\nabla^2 h_{k}(z_0)$, and update line \ref{alg: tree estim j step} to estimate $\nabla f_j(z_0),\nabla g_{j,k}(z_0),d_z^{j*}$. And, finally, we update line \ref{alg: tree crit step} to use the  splitting criteria \cref{eq:apxriskcrit-cons2,eq:apxsolcrit-cons2} with these estimates. 
Again, \Cref{alg: forest} for fitting the forest remains the same, since changing the optimization problem only involves how to choose  tree splits but not how to combine the tree. 
Finally, with the extra stochastic constraints, we need to use \cref{alg: forest pred2} instead of the previous \cref{alg: forest pred} for the final decision making. The only difference is that we use the forest weights to approximate the constraint set $\hat{\mathcal Z}(x)$ to solve for the final forest-policy decision.

\subsection{Challenges with Stochastic Constraints in CSO}\label{sec: disc expectation}
\subsubsection*{Infeasibility of Stochastic Constraints.} In presence of stochastic constraints, we may run into infeasible problems. Consider the portfolio optimization problem with the constraint $\mathcal Z(x) = \braces{z\in\R{d+1}: \Eb{Y^\top z_{1:d} \mid X = x} \ge R,\,z_{1:d}\in\Delta^d}$ as an example. 
Note that if the return requirement is positive, $R > 0$, and the conditional mean return for every asset given $X = x$ is negative, \ie, $\Eb{Y_l \mid X = x} < 0, l = 1, \dots, d$, then  the constraint set $\mathcal Z(x)$ is empty since we constrain the decisions $z_1, \dots, z_d$ to be all nonnegative.
Thus the \emph{conditional} portfolio optimization problem with this constraint set can  become infeasible for some point $x$, even if the \emph{unconditional} mean return for every asset is positive so the \emph{unconditional} stochastic optimization counterpart is still feasible. 
This appears as an intrinsic challenge with conditional stochastic constraints.

However, in some cases, infeasibility may not be an issue, and our forest algorithm can still provide quality decision rules. 
For example, in \cref{sec: empirical-mean-var}, we show that our forest policies still perform well for mean-variance portfolio optimization that allows shortselling, \ie, $\mathcal Z(x) = \braces{z\in\R{d+1}: \Eb{Y^\top z_{1:d} \mid X = x} \ge R,\,\sum_{l = 1}^d z_{l} = 1}$. 
This problem is often feasible, since we no longer enforce the nonnegativity constraints that may be at odd with the conditional mean return constraint. 

\subsubsection*{Violations of Stochastic Constraints.}

Because the stochastic constraints are not known, we need to estimate conditional expectations of $G_k(z; Y)$ at $X=x$ to approximate the constraint set $\Z(x)$ for every query point $x$. 
This is much harder than estimating $z_0$, $f_j(z_0)$, $\nabla^2 f_0(z_0)$, $\nabla f_j(z_0)$, \emph{etc.}, for a {given} fixed $R_0,R_1,R_2$. 
It is akin to the difference between estimating a marginal expectation and estimating a whole regression function.
This means that even when given a fixed forest, we may still need to solve nontrivial estimation subproblems first  for final decision-making. If the constraint set is not approximated accurately, then the resulting decisions may violate the stochastic constraints very often. 

In this setting,  our approach in constructing a policy was to use the forest weights to also approximate the stochastic constraints (see \cref{eq: forest policy2} or line \ref{line: constr} in \cref{alg: forest pred2}), and our approach in constructing the forest was to consider an oracle splitting criterion that enforces only the approximate constraints (\cref{eq:oraclecrit-general}).
Note that for this reason, the oracle splitting criterion might not necessarily encourage splitting on constraint-determining covariates. 
Instead, our focus is on considering stochastic constraints in the splitting criterion for the purpose of approximately  assessing the change in risk at constrained solutions.
Therefore, we may be concerned that using forest weights to approximate stochastic constraints may not estimate the constraints well, and the resulting forest policy may often violate the stochastic constraints. 
For this reason, our approach may be most relevant when violation of the stochastic constraints can be tolerated.
Despite the potential weakness of constraint violation, 
our approach seems to be a reasonable proxy that still works well in practice (provided that infeasibility is tolerable).  
See \cref{sec: empirical-mean-var} for experiments where the constraints and objective even involve completely {different} covariates. 

Considering more robust variations on our approach in the presence of stochastic constraints to reduce constraint violation may constitute fruitful future research.
Indeed, an inherent issue is that the \emph{risk} of constraint violation is not clearly defined -- were it infinite making decisions from data is hopeless, and were it well-defined we may be able to directly address it in the objective.
A possible future direction is the enforcement of stochastic constraints with high probability with respect to the sampling process by using distributionally robust constraints, as done for example by \cite{bertsimas2018data,bertsimas2018robust} in \emph{non-conditional} problems. This may be considered both in the construction of a forest policy given a forest as well as in the construction of the forest itself.
A crucial difference with non-conditional problems is that \emph{in addition} to the \emph{variance} of estimating expectations from a finite sample, which the referenced works tackle, we would also need to consider the inevitable \emph{bias} of estimating a conditional expectation at $X=x$ from a sample where the event $X=x$ is never observed.
While the finite-sample variation may be easier to characterize and introduce robustness for, characterizing the latter bias may involve substantive structural assumptions on how the distribution of $Y\mid X=x$ changes with small perturbations to $x$. And, controlling for such perturbations non-adaptively (\eg, by bounding bias using a Lipschitz assumption) may be very susceptible to the curse of covariate dimensionality.

\subsection{Experiments: Mean-Variance Portfolio Optimization}\label{sec: empirical-mean-var}
\begin{figure*}[t!]\centering%
\begin{subfigure}{0.8\textwidth}\centering%
     \includegraphics[width=\textwidth]{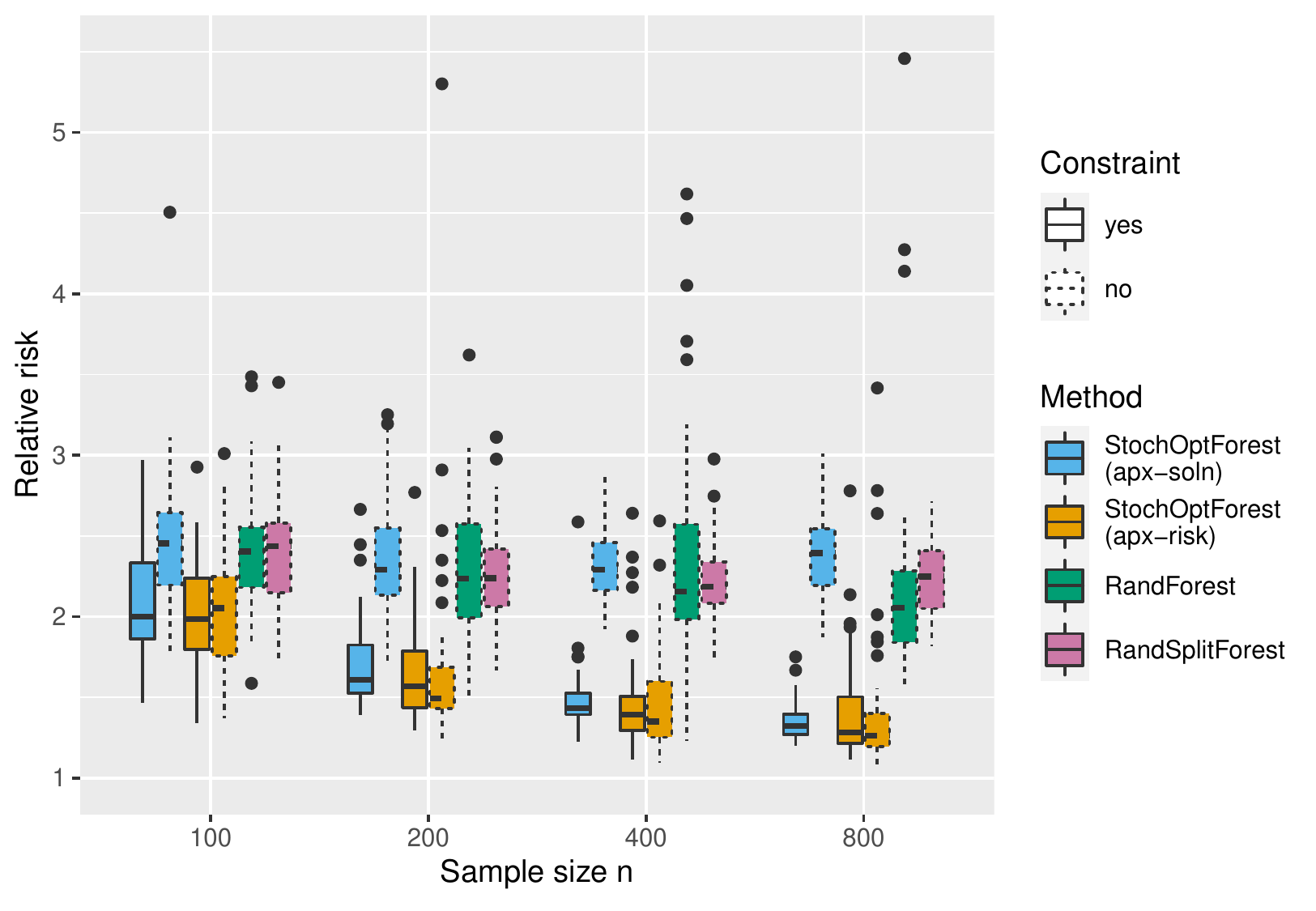}
    \caption{\parbox{0.8\textwidth}{Relative risk of different forest policies (relative to optimal risk with similar mean return).}}
    \label{fig: mean-var-rel_risk-full}
\end{subfigure}%
\begin{subfigure}{0.2\textwidth}\centering%
    \includegraphics[width=\textwidth]{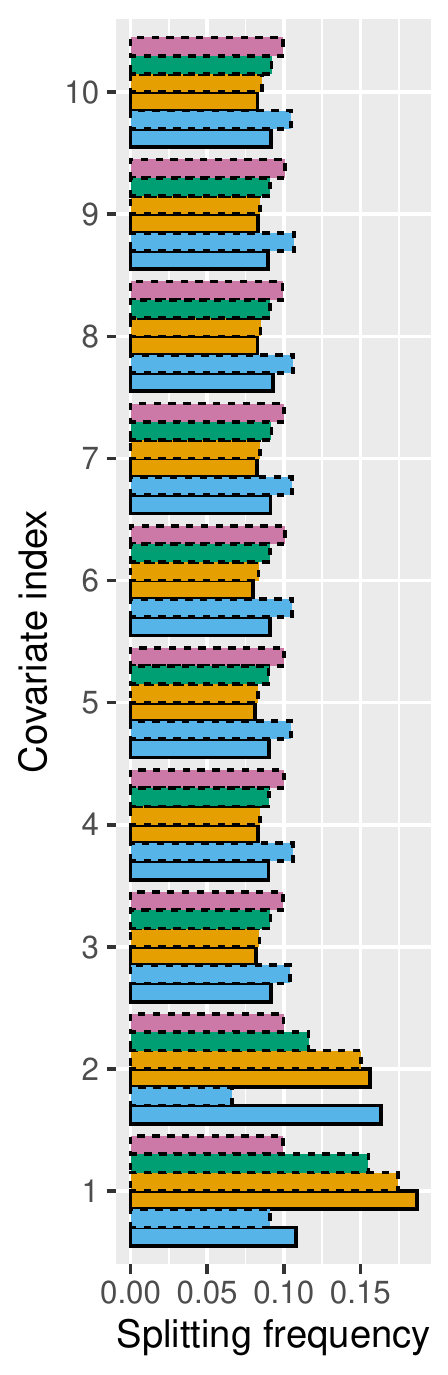}
    \caption{Splitting frequency.}
    \label{fig: mean-var-feature-full}
\end{subfigure}\\
\begin{subfigure}{0.49\textwidth}\centering%
\includegraphics[width=\textwidth]{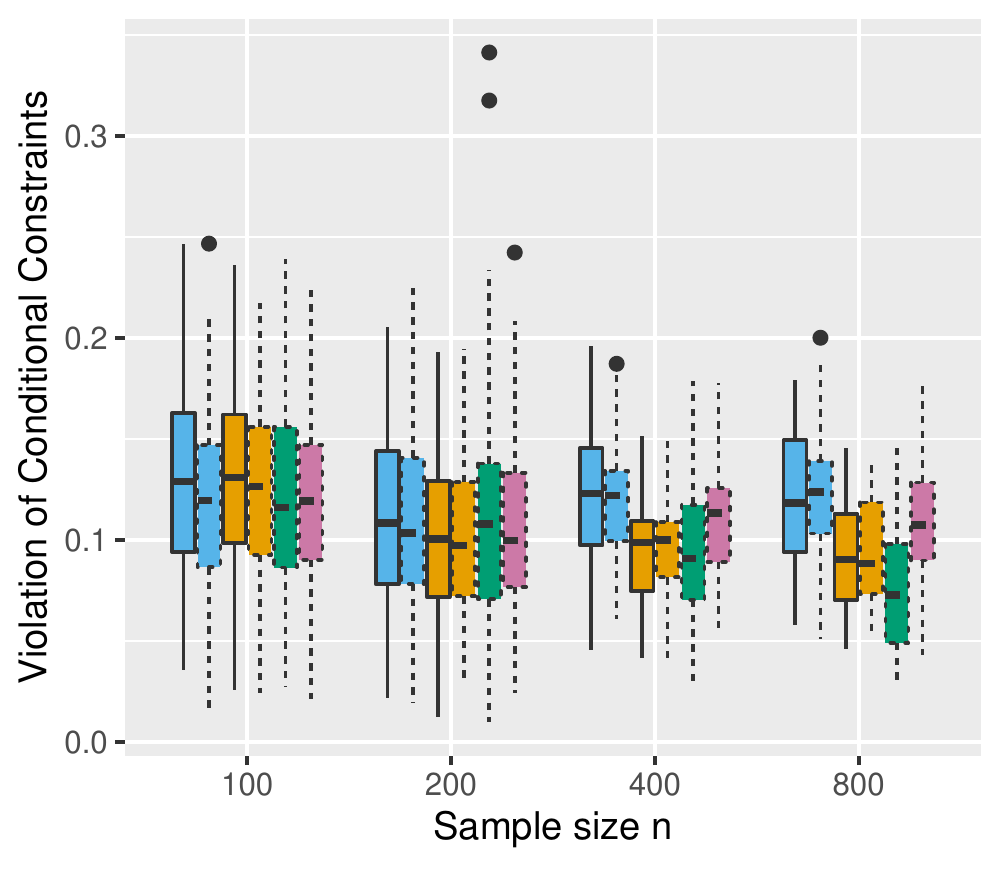}
    \caption{Conditional constraint violation.}
    \label{fig: mean-var-cond-violation}
\end{subfigure}
\begin{subfigure}{0.49\textwidth}\centering%
\includegraphics[width=\textwidth]{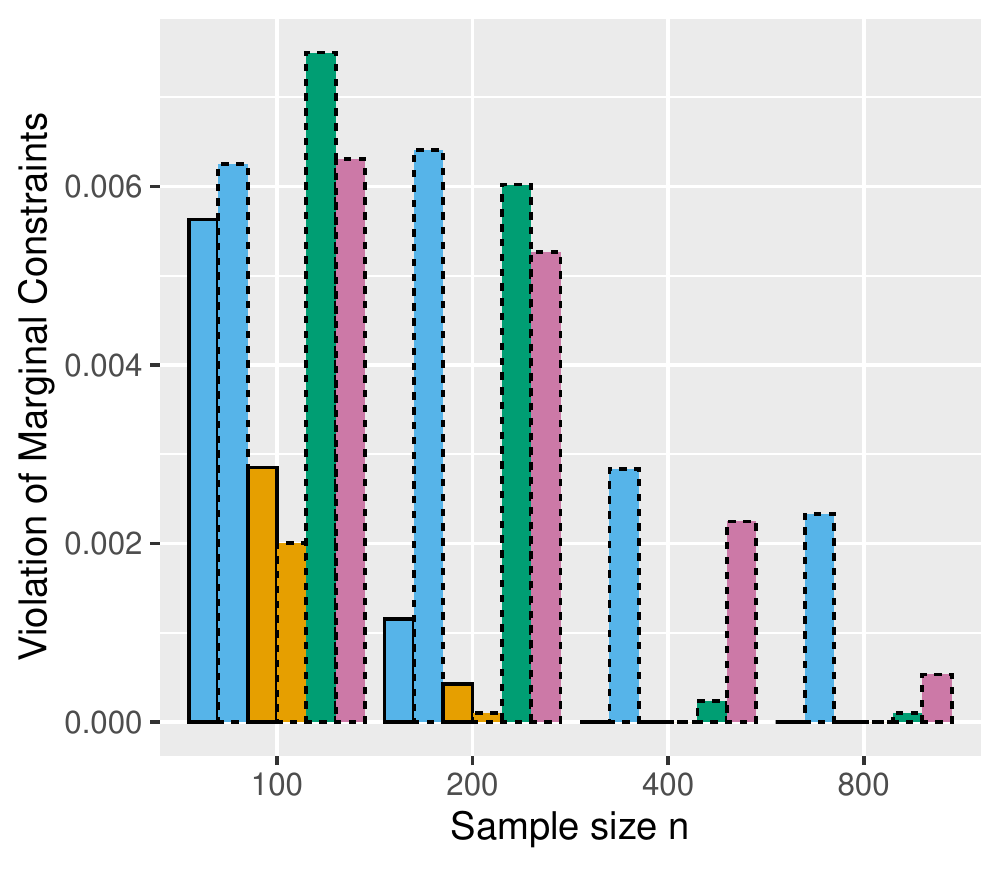}
    \caption{Marginal constraint violation.}
    \label{fig: mean-var-marg-violation}
\end{subfigure}
\caption{Results for mean-variance portfolio optimization CSO problem.}
\end{figure*}

In this section, we apply our methods to the mean-variance portfolio  optimization problem (see also \cref{ex: portfolio-var}): 
we seek an investment policy $z(\cdot) \in \mathbb R^d$ that for each $x$ aims to achieve small risk $\text{Var}\prns{ Y^\top z(x) \mid X=x}$ while satisfying a budget and mean return constraint, \ie, $\mathcal Z(x) = \mathcal Z(x;R) = \braces{z \in \mathbb R^d: \sum_{l = 1}^d z_l(x) = 1, \Eb{Y^\top z(x) \mid X = x} \ge R}$. 
We consider $d = 3$ assets and $p = 10$ covariates. The covariates $X$ are drawn from a standard Gaussian distribution, and the asset returns are independent and are drawn from the conditional distributions $Y_1 \mid X \sim \text{Normal}\prns{\exp(X_1), 5 - 4\indic{-3 \le X_2 \le -1}}$, $Y_2 \mid X \sim \text{Normal}\prns{-X_1, 5 - 4\indic{-1 \le X_2 \le 1}}$, and $Y_3 \mid X \sim \text{Normal}\prns{|X_1|, 5 - 4\indic{1 \le X_2 \le 3}}$.

We compare our StochOptForest algorithm with either the apx-risk and apx-soln approximate criterion for  problems with both deterministic constraints and stochastic constraints (\cref{sec: criteria-stoch-constr}) to four benchmarks: our StochOptForest algorithm with apx-risk and apx-soln criteria that \emph{ignore} the constraints in the forest construction (\cref{sec: approx-crit}), 
the regular random forest algorithm RandForest (which targets the predictions of asset mean returns), and the RandSplitForest algorithm that chooses splits uniformly at random.
We do not compare to StochOptForest with the oracle splitting criterion as it is too computationally intensive, as we investigate further below.
In all forest algorithms, we use 
an ensemble of $500$ trees where the tree specifications are the same as those in \cref{sec: empirical nv}.
To compute $\hat z_0$ in our StochOptTree algorithm (\cref{alg: tree z0 step} line \ref{alg: tree z0 step}) as well as to compute the final forest policy for any forest, we use Gurobi 9.0.2 to solve the linearly-constrained quadratic optimization problem. 
For each $n \in \{100, 200, 400, 800\}$, we repeat the following experiment $50$ times. We first draw a training set $\mathcal D$ of $n$ to fit a forest policy $\hat z(\cdot)$ using each algorithm. Then we sample $200$ query points $x_0$. For each query point $x_0$, we evaluate the conditional risk $\text{Var}\prns{Y^\top \hat z(x_0) \mid X = x_0, \mathcal D}$ using the true conditional covariance matrix $\text{Var}\prns{Y \mid X = x_0}$. 
Note that the forest policy $\hat z(\cdot)$ may not perfectly satisfy the stochastic constraint for conditional mean return, \ie, $\hat R(x_0) = \Eb{Y^\top \hat z(x_0) \mid X = x_0, \mathcal D}$ may be smaller than the pre-specified threshold $R$.
We therefore benchmark its performance against the minimum conditional risk with mean return equal to that of $\hat z(\cdot)$, namely, $z^*(x_0;\hat R(x_0))=\argmin_{z \in \mathcal Z(x_0; \hat R(x_0))}\text{Var}\prns{Y^\top z \mid X = x_0}$, which we compute by Gurobi using the true conditional mean and covariance as input.
We then average these conditional risks over the $200$ query points $x_0$ to estimate $\Eb{\text{Var}\prns{Y^\top \hat z(X) \mid X, \mathcal D} \mid \mathcal D}$ and $\Eb{\text{Var}\prns{Y^\top z^*(X;\hat R(X)) \mid X,\mathcal D}\mid \mathcal D}$. We define the \textit{relative risk} of each forest algorithm for each replication as the ratio of these two quantities. 

\Cref{fig: mean-var-rel_risk-full} shows the distribution of the relative risk over replications for each forest algorithm across different $n$. The dashed boxes corresponding to ``Constraint = no'' indicate that the associated method does not take constraints into account when choosing the splits, which applies to all four benchmarks. 
We can observe that our StochOptForest algorithms with approximate criteria that incorporate constraints achieve the best relative risk over all sample sizes, and their relative risks decrease considerably when the training set size $n$ increases.  
In contrast, the relative risks of RandForest, RandSplitForest, and StochOptForest with the constraint-ignoring apx-soln criterion \edit{decrease very slowly when $n$ increases}. Interestingly, in this example, the performance of StochOptForest algorithm with the constraint-ignoring apx-risk criterion performs similarly to our proposed algorithms that do take constraints into account when choosing splits. 

\Cref{fig: mean-var-feature-full} shows the average frequency of each covariate being selected to be split on in all nodes of all trees constructed by each algorithm over all replications when $n = 800$. We note that RandForest, RandSplitForest, and StochOptForest with the constraint-ignoring apx-soln criterion split much less often on the covariate $X_2$ that governs the conditional variances of asset returns. Since the conditional variances directly determine the objective function in the mean-variance problem, this roughly explains the inferior performance of these methods.

We further evaluate how well the estimated policy $\hat z(\cdot)$ from each forest algorithm  satisfies the mean return constraint. 
In \cref{fig: mean-var-cond-violation}, we present the distribution of the average magnitude of  violation for the \textit{conditional} mean return constraint, \ie, $\Efb{\max\{R - \hat R(X), 0\}\mid \mathcal D}$, over replications for each forest algorithm.
We can observe that for small $n$, all methods have similar average violations, while for large $n$ $(n \ge 400)$,
RandForest appears to achieve the smallest average violation, closely followed by   
StochOptForest with apx-risk criteria (incorporating constraints or not). 
This is consistent with the fact that 
these methods split more often on the covariate $X_1$ that governs the conditional mean returns, as seen in \cref{fig: mean-var-feature-full}. 
However, this relative advantage of RandForest in terms of conditional constraint violation is greatly overshadowed by its bad risk, even relative to its more constrained mean return (\cref{fig: mean-var-rel_risk-full}). More generally, this seeming advantage in constraint satisfaction is largely due to the fact that RandForest is specialized to predict the conditional mean function well, which fully determines the constraint. For stochastic constraints involving a nonlinear function of $Y$, we expect RandForest will not satisfy the constraints well just as it fails to do well in the objective here or in \cref{sec: empirical nv}. 
(See also \cref{sec: disc expectation}.) 
In \cref{fig: mean-var-marg-violation}, we further evaluate the violation magnitude for the \textit{marginal} mean return constraint, \ie, $\max\fbraces{\Efb{R - \hat R(X)}, 0}$,  where the expectation inside is averaged over all $50$ replications. 
We note that the violations for all algorithms are extremely small. This means that the marginal mean return constraint implied by the conditional constraint, \ie, $\Eb{Y^\top \hat z(X) \mid \mathcal D} \ge R$, is almost satisfied for all algorithms. 

In \cref{fig: mean-var-oracle} in \cref{sec: more-mean-var}, we also evaluate the performance of 
StochOptForest algorithm with the oracle criterion in a small-scale experiment, and show that the performance of either apx-soln or apx-risk criterion for constrained problems is close to the oracle criterion, despite the fact they are \textit{much} faster to compute.
In \cref{fig: mean-var-extra} in \cref{sec: more-mean-var}, we additionally show that similar results also hold for different mean return constraint thresholds $R$.

\section{Variable Importance Measures}\label{app-sec: var-importance}

\edit{
In \cref{sec: empirical,sec: asympt-opt}, we show both empirically and theoretically that our StochOptForest algorithm can achieve good decision-making performance for CSO problems. 
However, sometimes we may not only seek quality decisions, but also hope to identify which covariates are important in determining these decisions. 
In this case, measuring the importance of each covariate is very useful. 
}

\edit{
In prediction tasks, the standard random forest algorithm provides two common ways to measure variable importance: impurity-based importance measure and permutation-based importance measure.
Below we first describe these two impurity measures for the regular random forest algorithm, and then based on this we motivate variable importance measures for our StochOptForest algorithm. 
}

\edit{
The impurity-based importance measure is also called the Mean Decrease in Impurity (MDI; see Section 6.1.2, \citealp{louppe2015understanding}), which is based on the impurity measure used in tree splitting, \eg, entropy or Gini index for classification trees and variance for regression trees.  
The MDI of each covariate is a weighted sum of impurity decreases for all tree nodes that split on this covariate, averaged over all trees in a forest. To formalize it, fix a forest consisting of trees $\tau_1, \dots, \tau_T$ and for each internal node $t$ in each tree $\tau_i$ (denoted as $t \in \tau_i$ with slight abuse of notation), denote its splitting covariate as $\hat j_t$, the number of data points reaching the node as $n_t$, and the impurity decrease due to this split as $\Delta I({\hat j_t, t})$. Then the MDI importance measure for a covariate $X_j$ can be written as 
\begin{align}\label{eq: MDI}
\operatorname{MDI}\prns{j} = \frac{1}{T}\sum_{i=1}^{T} \sum_{t \in \tau_i}\indic{\hat j_t = j}{\hat p_t \Delta I({\hat j_t, t})}.
\end{align}
where $\hat p_t =\frac{n_t}{n}$ estimates the probability of an observation reaching the node $t$.
}

\edit{
Another importance measure is based on a permute-and-predict procedure using out-of-bag samples \cite[Section 15.3.2]{hastie2001the}. Suppose we hope to measure the importance of a covariate $X_j$ based on a given random forest. Then for each tree in this random forest, we first record its prediction accuracy on  the out-of-bag samples (\ie, samples that were not used to build this tree), and then compute its prediction accuracy again after randomly permuting the $X_j$ observations in the out-of-bag samples. 
Then we measure the importance of $X_j$ by the decrease in accuracy due to permuting this covariate, averaged over all trees in the random forest.
}

\edit{
It is natural to consider extending these two types of variable importance measures to our StochOptForest algorithm. 
First, consider a direct analogue of the permutation-based importance measure in the decision-making setting: we evaluate the increase in decision cost due to permuting each covariate in each tree, and average them over all trees. 
However, to compute the decisions for out-of-bag samples and evaluate their costs, 
we need to solve optimization problems in all leaf regions of each decision tree.
This can be very time consuming when the trees are deep (so they have many leaf regions) and when there are a large number of trees. 
Therefore, permutation-based importance measures may often be too computationally intensive for our proposed algorithm. 
}

\edit{
Instead, we focus on impurity-based variable importance measures for our proposed algorithm, as it only requires quantities that are already computed in the tree construction process. 
Recall that the impurity-based variable importance measures for the random forest algorithm uses the same impurity measure as that in the tree splitting criterion (\eg, Gini index, entropy, or variance). 
This motivates us to view our proposed tree splitting criteria as the impurity measures.
We first consider the oracle splitting criterion in \cref{eq:oraclecrit}. 
To formalize its variable importance measure, fix an internal node $t_0$ of a tree $\tau_i$ that splits on the covariate $\hat j_{t_0}$, denote its two children nodes as $t_1$ and $t_2$, and denote the probability of an observation reaching these nodes as $p_{t_0}, p_{t_1}, p_{t_2}$ respectively (which can be easily estimated by the fractions of samples reaching these nodes). 
Viewing these three nodes as regions $R_0, R_1, R_2$ respectively, a natural way to measure the impurity decrease due to the split $\hat j_{t_0}$ is 
\begin{align}\label{eq: var-imp-oracle}
\Delta I^{\text{oracle}}({\hat j_{t_0}, t_0}) = v_0\prns{0} - \prns{\frac{p_{t_1}}{p_{t_0}}v_1\prns{1} + \frac{p_{t_2}}{p_{t_0}}v_2\prns{1}} = v_0\prns{0} - \frac{1}{p_{t_0}}\crit^{{oracle}}\prns{R_1, R_2}
\end{align}
When using the approximate risk criterion, we note that $p_{t_0}f_0\prns{z_0} + \crit^\text{apx-risk}(R_1,R_2)$ approximates $\crit^{{oracle}}\prns{R_1, R_2}$ (see \cref{thm:apxriskapx}), so naturally the impurity decrease under the the apx-risk criterion is
\begin{align}\label{eq: var-imp-apxrisk}
\Delta I^{\text{apx-risk}}({\hat j_{t_0}, t_0}) =  -\frac{1}{p_{t_0}} \crit^{{apx-risk}}\prns{R_1, R_2}.
\end{align}
When using the apx-sol criterion, we note that $\crit^\text{apx-soln}(R_1,R_2)$ approximates $\crit^{{oracle}}\prns{R_1, R_2}$ (see \cref{thm:apxsolapx}), so naturally 
\begin{align}\label{eq: var-imp-apxsoln}
\Delta I^{\text{apx-soln}}({\hat j_{t_0}, t_0}) = v_0\prns{0} - \frac{1}{p_{t_0}}\crit^{{apx-soln}}\prns{R_1, R_2}.
\end{align}
Depending on which criterion is used in the StochOptForest, we can estimate the corresponding impurity decrease measure in \cref{eq: var-imp-oracle,eq: var-imp-apxrisk,eq: var-imp-apxsoln} and plug it into \cref{eq: MDI} to quantify the variable importance of each covariate. 
Finally, since the importance measures are relative, we normalize them by assigning the largest a value of $1$ and scaling the others accordingly. 
}

\section{Additional Experimental Details}\label{app-sec: more-empirical}

\subsection{Multi-item Newsvendor}\label{sec: empirical nv}
\begin{figure*}[t!]%
\centering%
\begin{subfigure}{0.78\textwidth}\centering%
    \includegraphics[width=\textwidth]{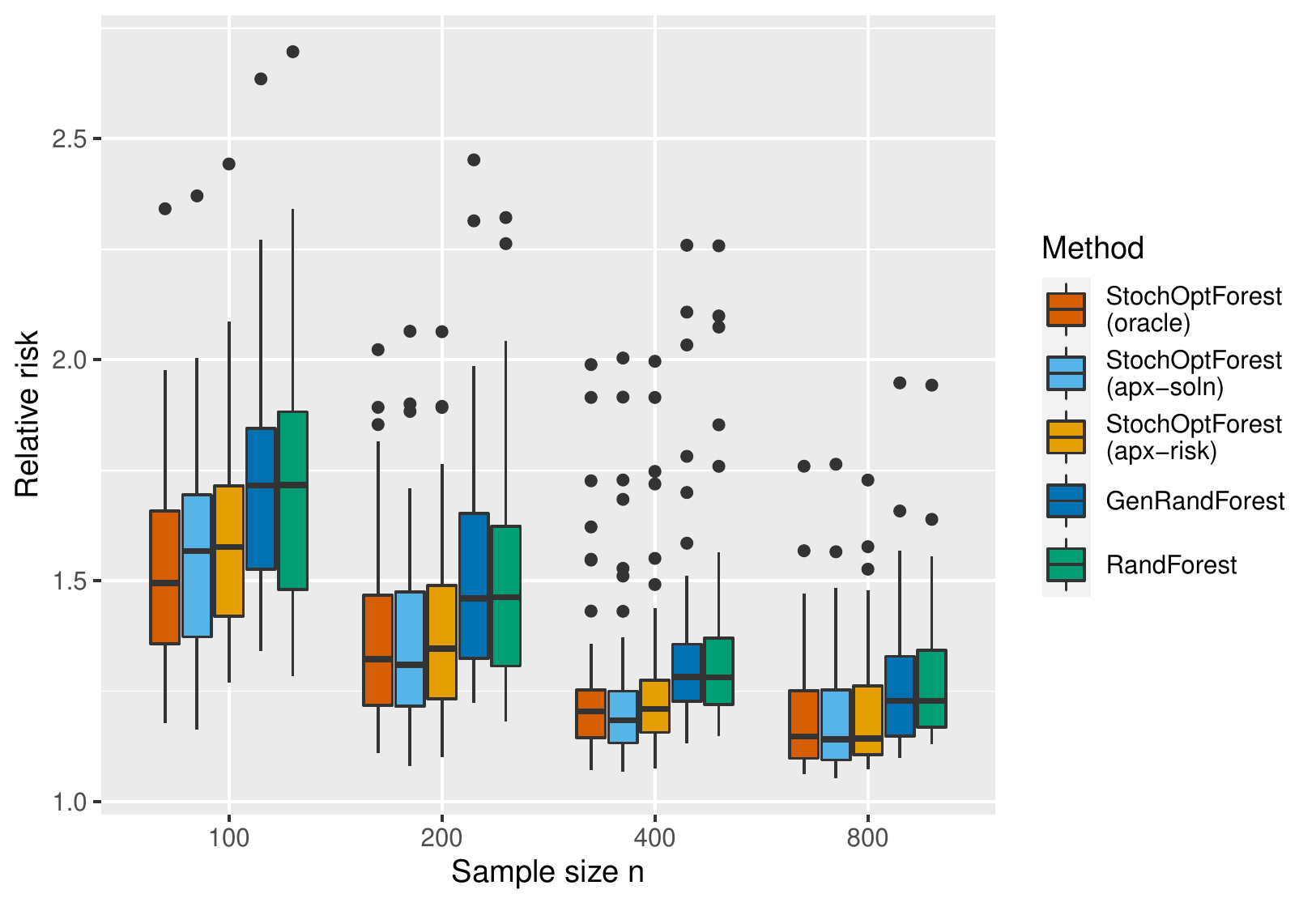}
    \caption{Relative risk of different forest policies.}\label{fig: nv forests risk}
\end{subfigure}%
\begin{subfigure}{0.2\textwidth}\centering%
    \includegraphics[width=\textwidth]{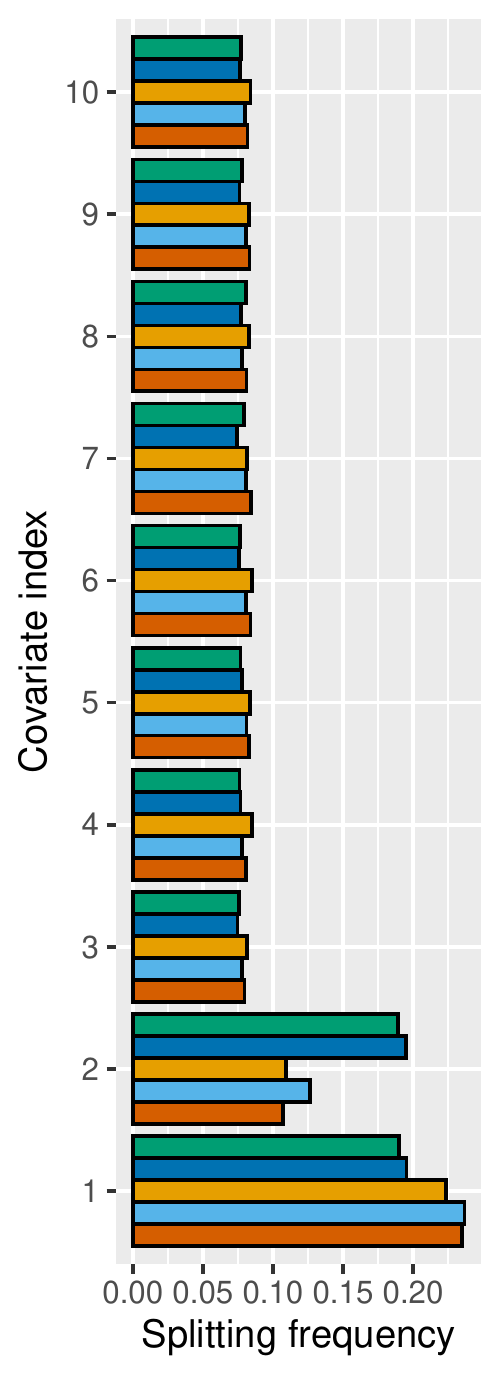}
    \caption{Splitting frequency.}\label{fig: nv forests splits}
\end{subfigure}\\
\begin{subfigure}{\textwidth}\centering%
    \includegraphics[width=\textwidth]{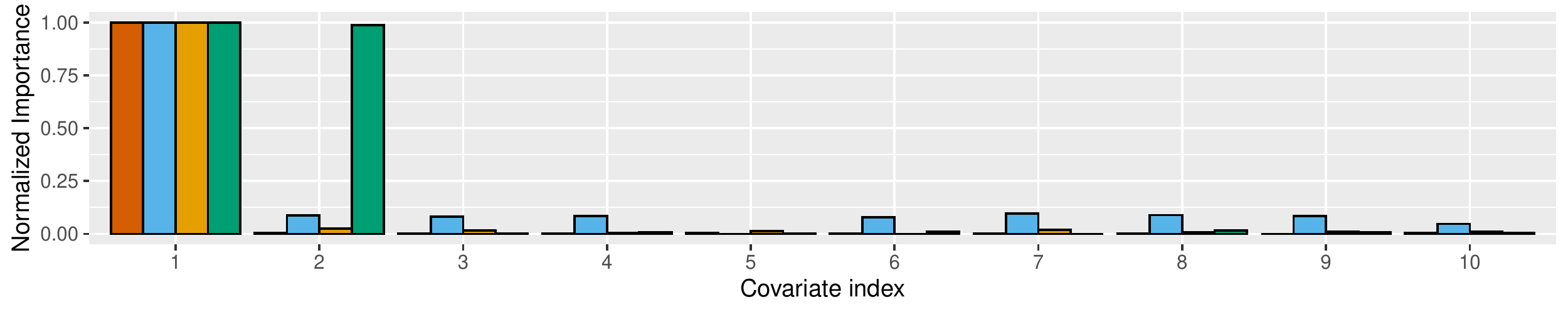}
    \caption{Feature Importance.}\label{fig: nv forests importance}
\end{subfigure}
\caption{Results for the multi-item newsvedor CSO problem with varying $n$ and fixed $p = 10$.}%
\end{figure*}

\begin{figure*}[t!]
\begin{subfigure}{\textwidth}\centering%
   \includegraphics[width=\textwidth]{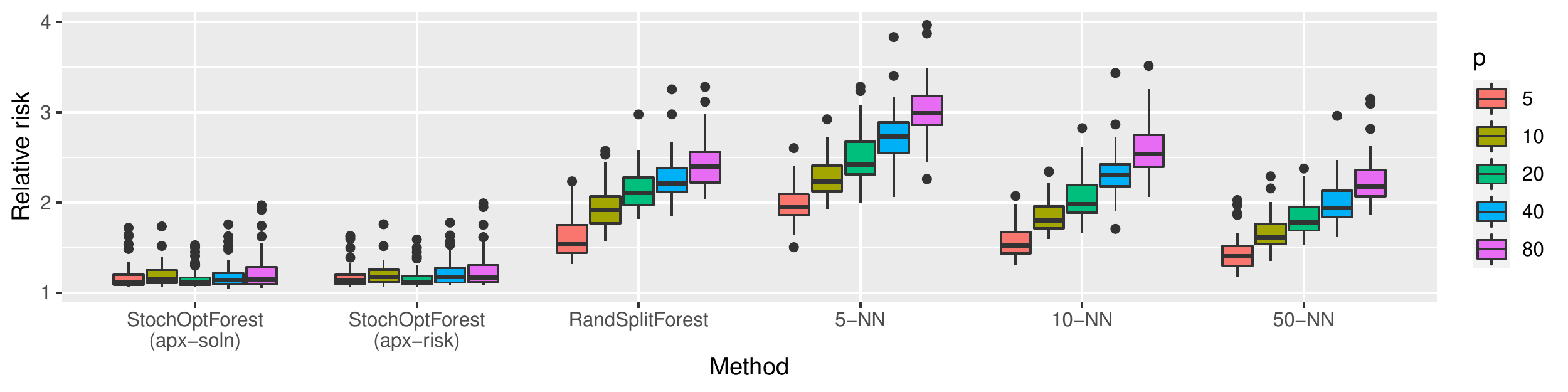}
    \caption{Relative risk for varying $p$ and fixed $n = 800$:  StochOptForest vs. non-adaptive weighting.}\label{fig: nv dim} 
\end{subfigure}\\
\begin{subfigure}{\textwidth}\centering%
   \includegraphics[width=\textwidth]{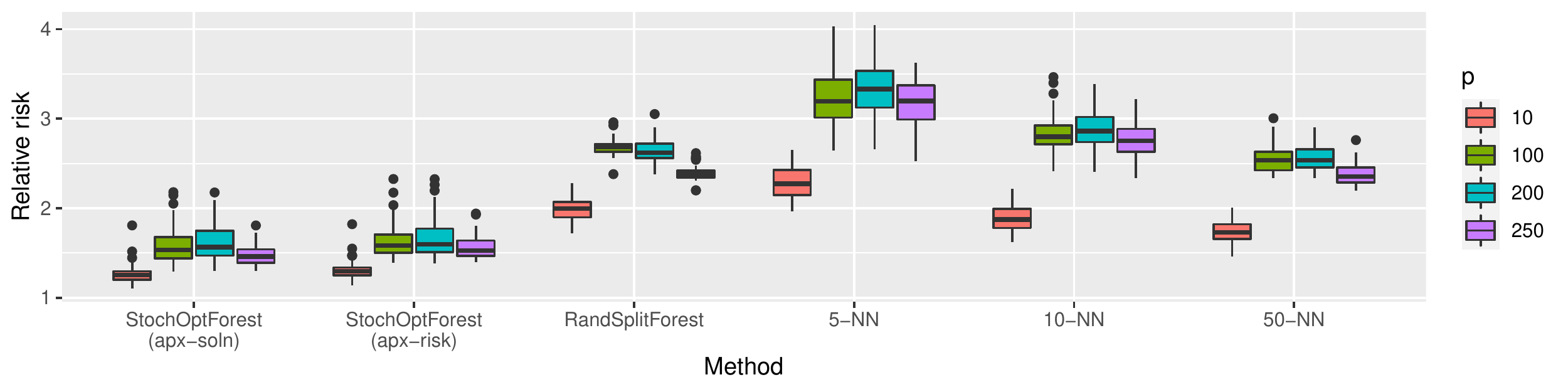}
    \caption{High dimensional setting with fixed $n = 200$.}\label{fig: nv high dim}
\end{subfigure}%
\caption{Results for the multi-item newsvedor CSO problem with fixed $n$ and varying $p$.}%
\end{figure*}

\edit{We here consider an experiment on an unconstrained multi-item newsvendor problem (see \cref{ex: mnv}).}
We consider $d = 2$ products and $p$-dimensional covariates $X$ drawn from a standard Gaussian distribution.
The conditional demand distributions are $Y_1 \mid X \sim \text{TruncNormal}(3, \exp(X_1))$ and $Y_2 \mid X \sim \text{TruncNormal}(3, \exp(X_2))$, where $\text{TruncNormal}(\mu,\sigma)$ is the distribution of $W\mid W\geq 0$ where $W$ is Gaussian with mean $\mu$ and standard deviation $\sigma$. The holding costs are $\alpha_1=5,\,\alpha_2=0.05$ and the backorder costs are $\beta_1=100,\,\beta_2=1$.

We begin by comparing forest policies using different algorithms to construct the forest. We compare our StochOptForest algorithm with either the apx-soln or apx-risk approximate splitting criterion to three benchmarks. All forest-constructing algorithms we consider are identical except for their \emph{splitting criterion}.
One benchmark is StochOptForest with the brute-force oracle splitting criterion, which uses the empirical counterpart to \cref{eq:oraclecrit} (\ie, $\E$ is replaced with $\frac1n\sum_{i=1}^n$) and fully re-optimizes for each candidate split.
A second benchmark is the standard random forest (RandForest) algorithm, which uses the squared error splitting criterion (\cref{ex: prediction}).
Finally, since $z^*(x)$ is the vector of conditional 95\% quantiles of $(Y_1,Y_2)\mid X$, we also consider the GenRandForest algorithm for quantile regression (Example 2 and Section 5 of \citealp{athey2019generalized}; see also \cref{sec: comp to est}). For all forest-constructing algorithms, we use 500 trees, each tree is constructed on bootstrap samples ($\mathcal I_j^\text{tree}=\mathcal I_j^\text{dec}$), candidate splits are all possible splits with at least $20\%$ of observations in each child node, and the minimum node size is $10$. 

To compare these different algorithms, we let $p=10$ and for each $n$ in $\{100,200,400,800\}$ we repeat the following experiment $50$ times. We first draw a training set $\D$ of size $n$ to fit a forest policy, $\hat z(\cdot)$, using each of the above algorithms. Then we sample $200$ query points $x_0$. For each such $x_0$ and for each policy $\hat z(\cdot)$, we compute $\hat z(x_0)$ and then take the average of $c(\hat z(x_0);y)$ over $2000$ values of $y$ drawn from the conditional distribution of $Y\mid X=x_0$. We also compute the average of $c(z^*(x_0);y)$ over these. We average these over the $200$ query points $x_0$. This gives estimates of $\Eb{c(\hat z(X);Y)\mid \D}$ and $\Eb{c(z^*(X);Y)}$. The \emph{relative risk} for each algorithm and each replication is the ratio of these.

In \cref{fig: nv forests risk}, we plot the distribution of relative risk over replications for each forest algorithm and $n$.
The first thing to note is that for $n\geq 400$, the performance of our approximate splitting criteria appear identical to the oracle criterion, as predicted by \cref{thm:apxriskapx,thm:apxsolapx,thm:critconverge}. The second thing to note is that RandForest and GenRandForest have relative risks that are on average roughly 10--16\% worse than our StochOptForest algorithm.

One way to roughly understand these results is to consider how often each algorithm splits on each covariate. Recall there are $p=10$ covariates, the first and the second determine the distribution of the two products, respectively. The first product, however, has higher costs by a factor of $100$. Therefore, to have a well-performing forest policy, we should first and foremost have good forecasts of the demand of the first product, and hence should split very finely on $X_1$. Secondarily, we should consider the second product and $X_2$. 
This is exactly what StochOptForest does. To visualize this, in \cref{fig: nv forests splits}, we consider how often each variable is chosen to be split on in all the nodes of all the trees constructed by each forest algorithm over all replications with $n=400$. We notice that our StochOptForest algorithms indeed split most often on $X_1$, while in contrast algorithms focusing on estimation (RandForest and GenRandForest) split equally often on $X_2$. 
More practically, in CSO problems generally, how important variables are for \edit{estimating optimal decisions} is different than how they impact \edit{decision costs} and the latter is of course most crucial for effective decision making. StochOptForest targets this by directly constructing trees that target their decision risk rather than estimation accuracy.
\edit{In \cref{fig: nv forests importance}, we also plot the impurity-based variable importance measure for each forest algorithm (see \cref{app-sec: var-importance}). 
We do not include the GenRandForest algorithm there since \cite{athey2019generalized} does not provide any variable importance measure. 
Overall the results in \cref{fig: nv forests importance} are consistent with those in \cref{fig: nv forests splits}: our proposed algorithms value $X_1$ the most, while the RandForest algorithm attaches equal importance to both $X_1$ and $X_2$, which again confirms that our proposed method capture signals more relevant to the optimization problem.}

{Finally, we comment on how StochOptForest handles high dimensional features  effectively.} We first consider $n=800$ and vary $p$ in $\{5,10,20,40,80\}$. We compare to non-adaptive weighting methods for CSO, which construct the local decision weights $w_i(x)$ without regard to the data on $Y$ or to the optimization problem \citep{bertsimas2014predictive}. Specifically, we consider two non-adaptive weighting schemes: $k$-nearest neighbors ($k$NN), where $w_i(x)=1/k$ for the $X_i$ that are the $k$ nearest to $x$, and random-splitting forest (RandSplitForest), where trees are constructed by choosing a split uniformly at random from the candidate splits (this is the extreme case for the Extremely Randomized Forests algorithm, \citealp{geurts2006extremely}). We plot the relative risks (computed similarly to the above) for each algorithm and $p$ in \cref{fig: nv dim}. As we can see, non-adaptive methods get worse with dimension due to the curse of dimensionality, while the risk of our StochOptForest algorithms remains stable and low.
\edit{In \cref{fig: nv high dim}, we consider a more challenging setting where the covariate dimension can  be as large as or larger than the sample size: we fix $n = 200$ and increase the covariate dimension from $p = 10$ to $p = 250$. We can observe that the performance of our proposed methods does deteriorate when the covariate dimension is very high, but they still significantly outperform the non-adaptive methods. 
Interestingly, when the dimension grows from $p=200$ to $p=250$, the performance of all methods slightly improve, which is somewhat inconsistent with the conventional wisdom of ``curse of dimensionality.'' We do not have very good explanations for this phenomenon, but we conjecture that this may be related to counter-intuitive behaviors of interpolating estimators in supervised learning \citep{bartlett2020benign,hastie2020surprises,belkin2018overfitting,Belkin15849}. 
For example, it was observed that in linear regression, when the regressor dimension exceeds the sample size, further increasing the dimension may actually improve the out-of-sample prediction performance as long as we focus on the minimum-norm solution. 
Studying this phenomenon in an optimization context is out of the scope of this paper and we leave it for future study.}

\subsection{More details for CVaR Portfolio Optimization}\label{sec: more-cvar}

\subsubsection*{Additional details for \cref{sec: cvar-empirical}.}
\edit{For all algorithms in \cref{sec: cvar-empirical}, the forest specifications are the same as those in \cref{sec: empirical nv}: each forest consists of 500 trees, each tree is constructed on bootstrap samples ($\mathcal I_j^\text{tree}=\mathcal I_j^\text{dec}$), candidate splits are all possible splits with at least $20\%$ of observations in each child node, and the minimum node size is $10$.}

\edit{To evaluate the the relative risks of different forest policies, we follow the testing data generation process in \cref{sec: empirical nv}. We first sample $200$ query points $x_0$ from the marginal distribution of $X$. For each such $x_0$ and for each policy $\hat z(\cdot)$, we compute $\op{CVaR}_{0.2}\prns{Y^\top\hat z(x_0) \mid X = x_0}$ based on $2000$ values of $y$ drawn from the conditional distribution of $Y\mid X=x_0$. 
We also compute $\op{CVaR}_{0.2}\prns{Y^\top z^*(x_0) \mid X = x_0}$ based on the same data. Then we average these over the $200$ query points $x_0$ to estimate $\Eb{\text{CVaR}_{0.2}\prns{ Y^\top \hat z(x) \mid X}\mid \mathcal{D}}$ and $\Eb{\text{CVaR}_{0.2}\prns{ Y^\top  z^*(x) \mid X}}$. The \emph{relative risk} for each algorithm and each replication is the ratio of these, which we plot in \cref{fig: cvar forests risk}.}

\edit{Computing the benchmark splitting criteria that ignore the constraints (our approximate criteria that mistakenly ignore the constraints and the GenRandForest algorithm) requires inverting Hessian estimates. 
But Hessian estimates for the CVaR objective may often not be invertible. 
When this happens, we add $0.001$ times an identity matrix of conformable size to the Hessian estimates so we can invert them and these splitting criteria that ignore the constraints can still run. 
In contrast, our proposed approximate criteria that incorporate constraints require inverting the left hand side coefficient matrices in \cref{eq: linear-equation}.
These matrices are usually invertible thanks to the constraint gradients therein.}

\begin{figure}[t!]
\centering 
\includegraphics[width=\textwidth]{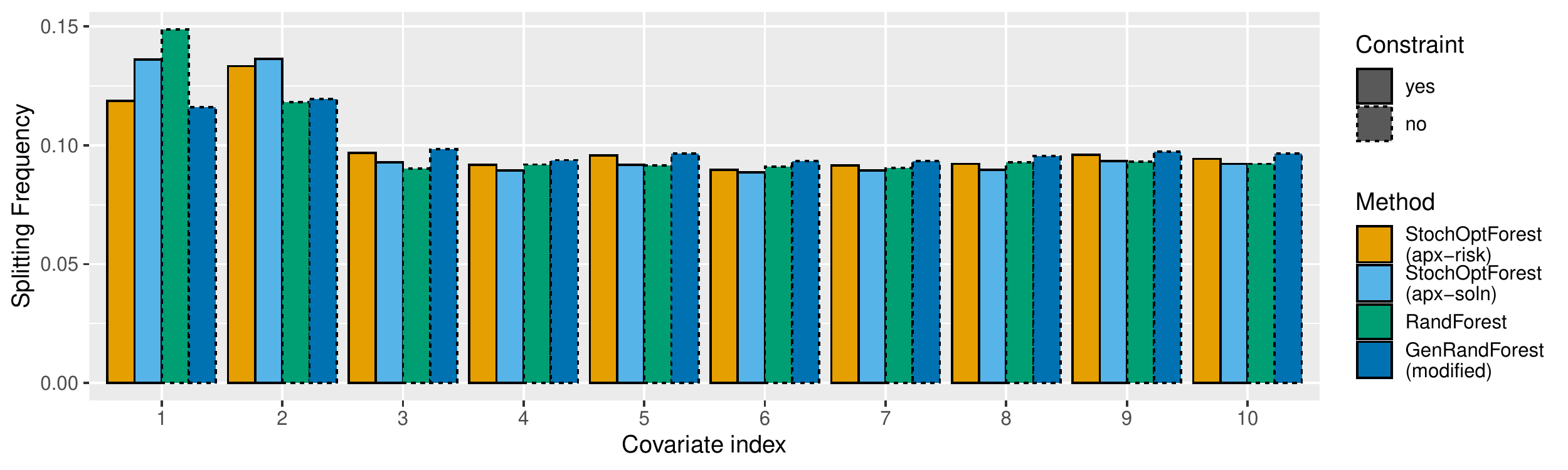}
    \caption{
    The average frequency of splitting on each covariate by different forest policies.}
    \label{fig: cvar-lognormal-split-freq}
\end{figure}
\subsubsection*{Feature Splitting Frequency.} \edit{
    In \cref{fig: cvar-lognormal-split-freq}, we show how often each variable is chosen to be split on in all the nodes of all trees constructed by several forest algorithms over all replications with $n=800$.
    This complements the variable importance measures shown in \cref{fig: cvar forests imp}, offering an alternative way to understand the behaviors of each forest algorithm. 
    We can observe that the RandForest algorithm splits on the first covariate more frequently than any other covariate, as it targets the conditional mean asset returns that are influenced more by the first covariate. This is in line with the observation in \cref{fig: cvar forests imp} that the RandForest algorithm attaches more importance to the first covariate. 
    Moreover, we note that our proposed criteria choose to split on both of the first two covariates very frequently as both of them influence the conditional asset return distributions. At the same time, according to \cref{fig: cvar forests imp}, splits on the second covariate result in  much larger criterion decreases.  
    Finally, we observe that the GenRandForest algorithm also splits on the signal covariates (\ie, the first two covariates) more frequently than any of the noise covariates (\ie, the $3$rd to $10$th covariate), but compared to our proposed methods, the GenRandForest algorithm does still waste more splits on the noise covariates. 
}

\begin{figure}[t!]
\centering 
\includegraphics[width=\textwidth]{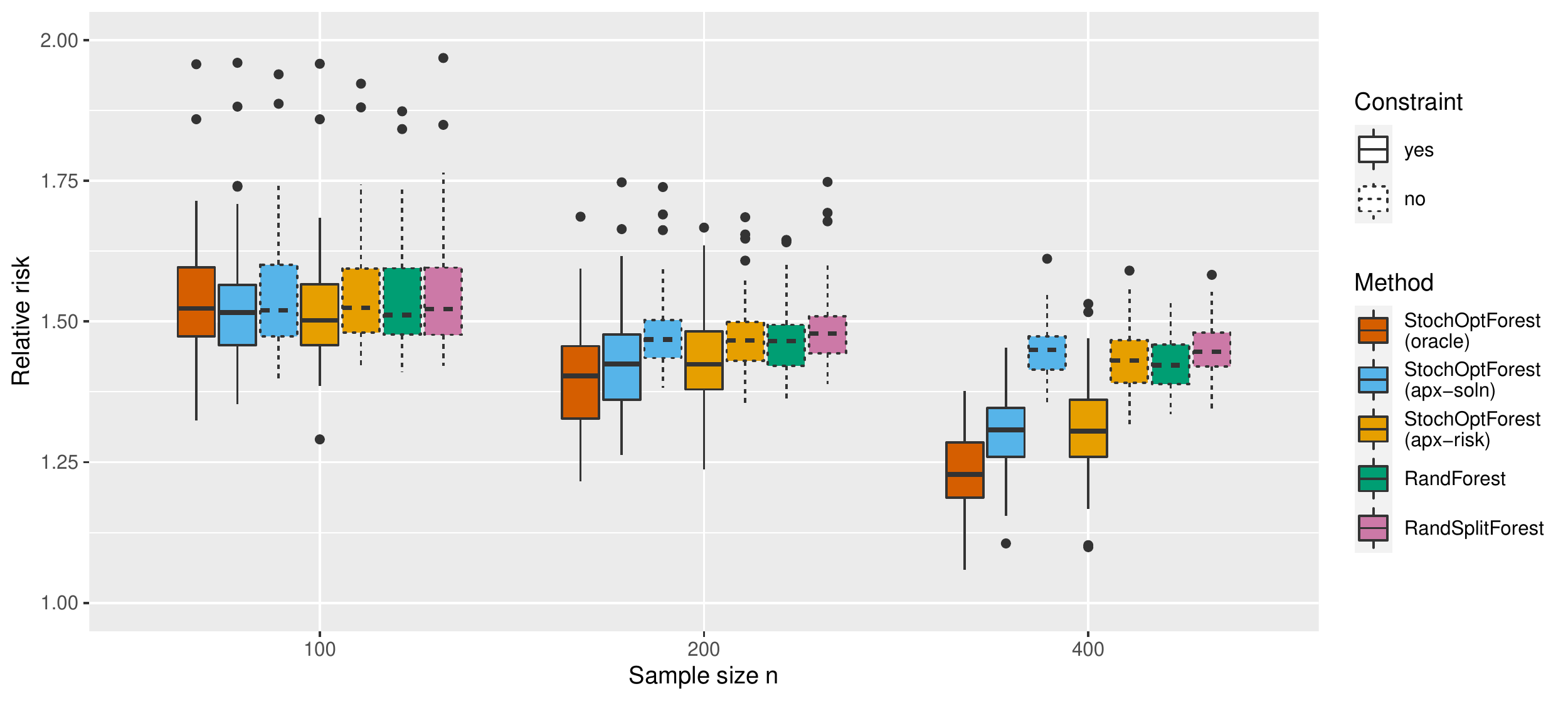}
    \caption{
    Comparing StochOptForest(oracle) with other forest methods in small-scale experiments of 
	CVaR Optimization.}
	\label{fig: cvar-lognormal-oracle}
\end{figure}
\begin{figure}[t!]
\centering 
\includegraphics[width=\textwidth]{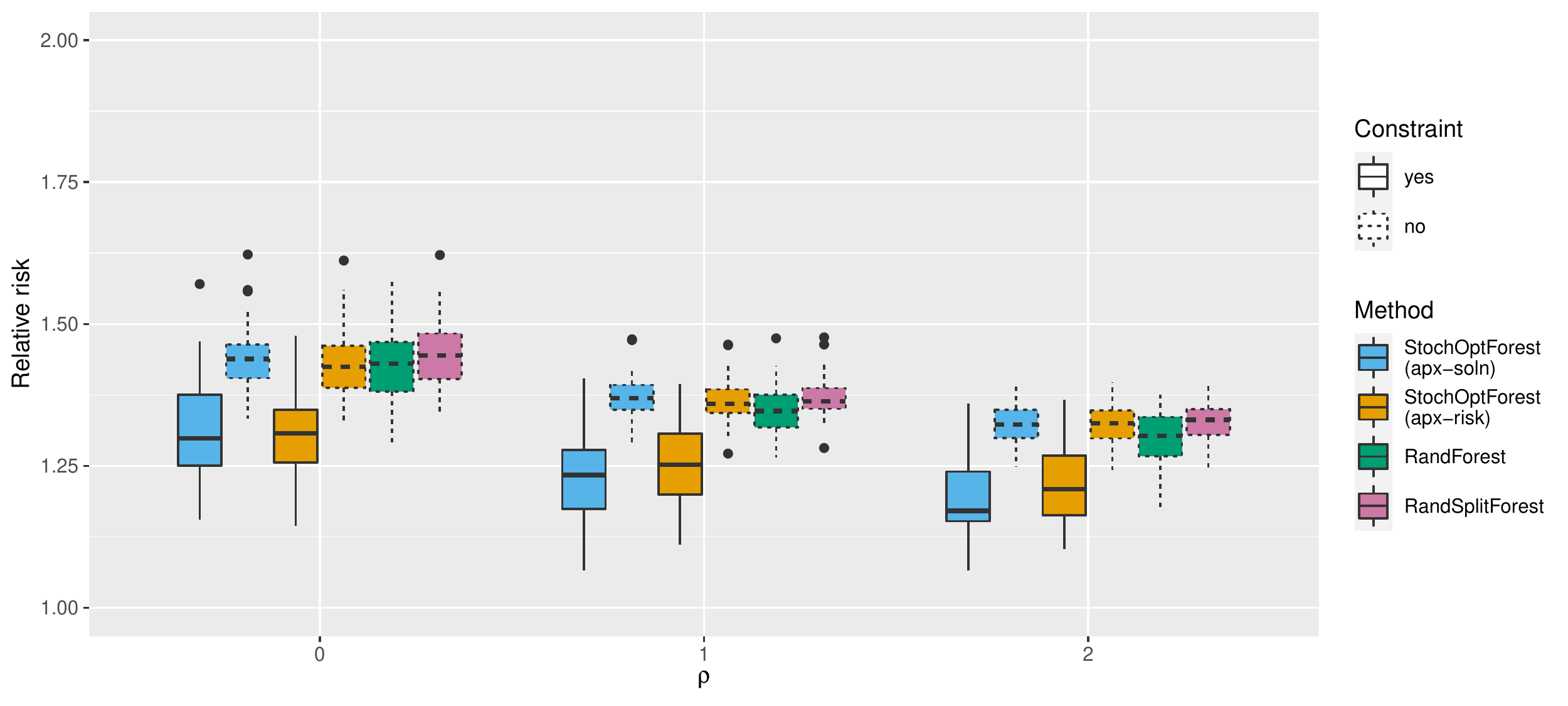}
    \caption{Relative risk of different forest policies for portfolio optimization with a weighted combination of CVaR and mean as objective. The weight of mean is $\rho$.}
    \label{fig: cvar-lognormal-combined}
\end{figure}

\subsubsection*{Performance of StochOptForest (oracle)} We further evaluate the performance of the StochOptForest (oracle) algorithm for CVaR optimization (\cref{sec: cvar-empirical}), but because this algorithm has extremely slow running time (see \cref{table: time-cvar}), we can only do so for a very small-scale experiment. In this experiment, we apply each forest algorithm to construct an ensemble of $50$ trees with the same tree specifications as those in \cref{sec: cvar-empirical}. 
In \cref{fig: cvar-lognormal-oracle}, we show the relative risk of each forest policy over $50$ repetitions for different training data size $n \in \{100, 200, 400\}$.
We can observe that again our StochOptForest algorithms considerably outperform other benchmark methods that do not take the cost structure or constraint structure of CVaR optimization problem into account. 
Moreover, we 
observe that when $n = 400$, the StochOptForest algorithm with the oracle criterion tends to perform better than our approximate criteria. 
However, this observation may be limited to only this small-scale experiment, and we cannot evaluate whether the our approximate criteria and the oracle criterion perform similarly 
 for larger sample size because the StochOptForest algorithm with the oracle criterion is too slow.

\subsubsection*{Linear Combination of CVaR and Mean Return as Objective} In \cref{fig: cvar-lognormal-combined}, we apply the forest algorithms to optimize a linear combination of CVaR and mean returun: $\text{CVaR}_{0.1}(Y^\top z_{1:d} \mid X) - \rho \Eb{Y^\top z_{1:d} \mid X}$ for $\rho \in \{0, 1, 2\}$ and $n = 400$. All other specifications are the same as those in \cref{sec: cvar-empirical}. We observe that across all $\rho$ values, our StochOptForest methods with constraints-aware approximate criteria  perform the best. 

\begin{figure*}[ht]
\begin{subfigure}{0.4\textwidth}
    \includegraphics[width=\textwidth]{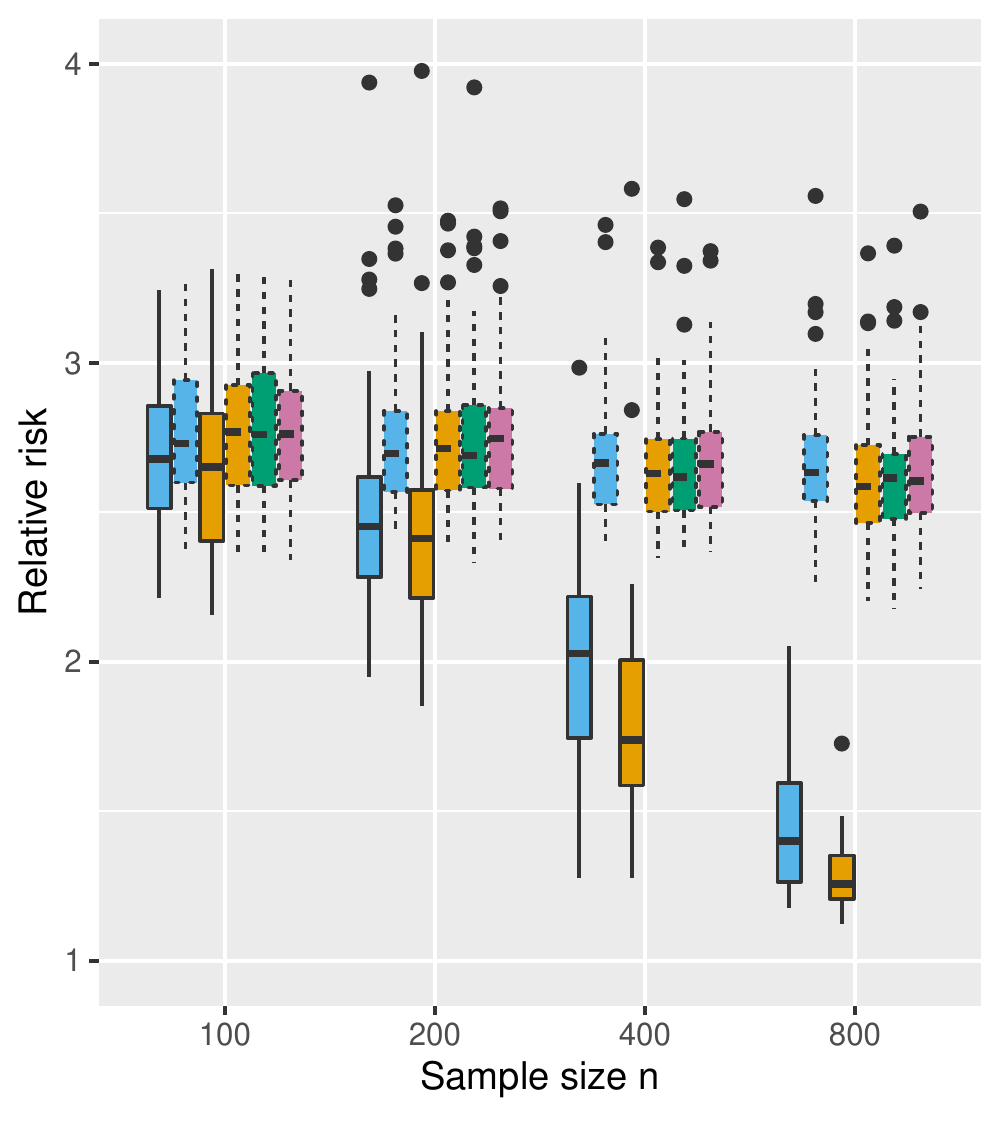}
    \caption{Relative risk of different forest policies.}\label{fig: cvar-normal1}
\end{subfigure}
\begin{subfigure}{0.6\textwidth}
    \includegraphics[width=\textwidth]{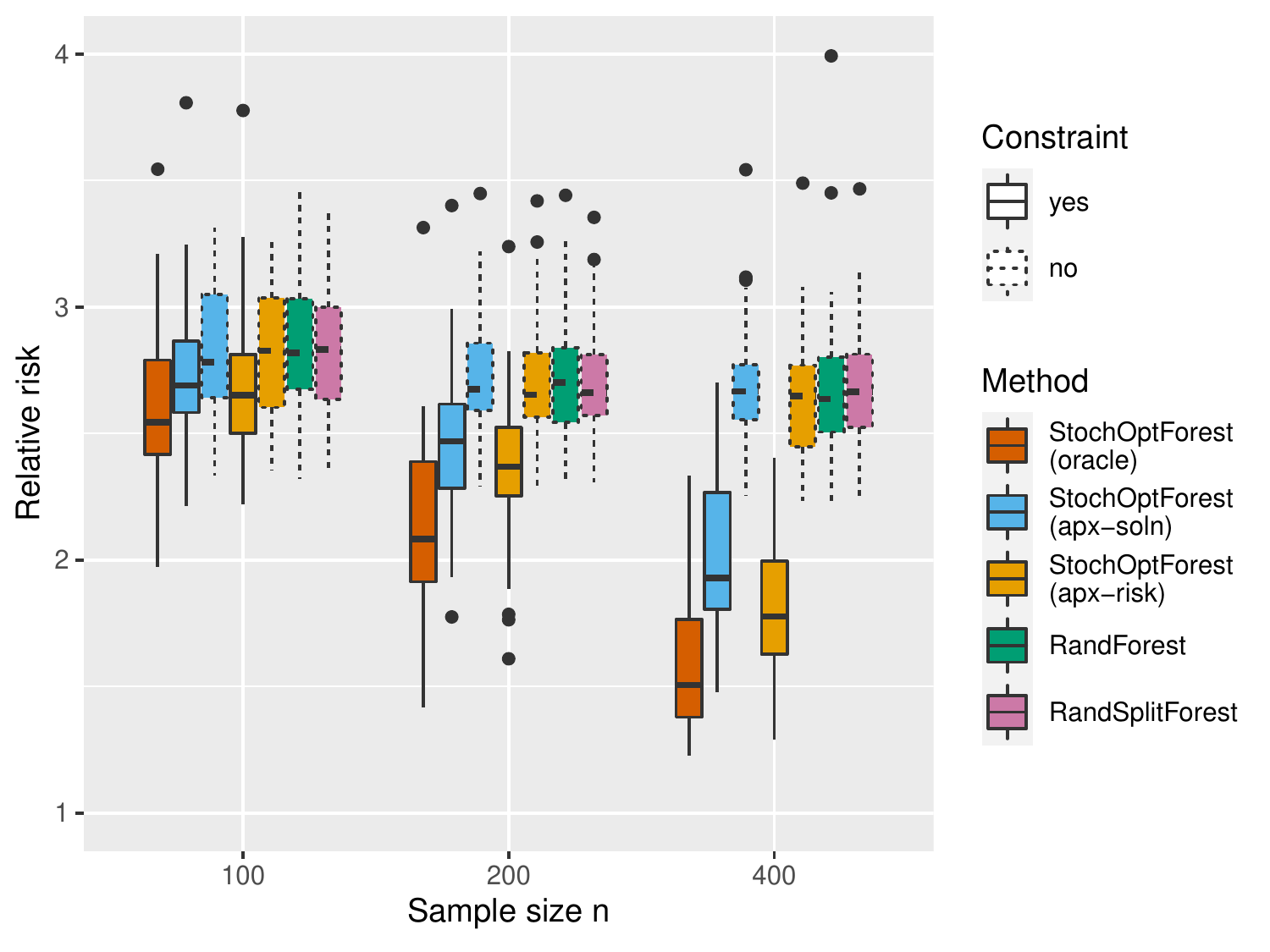}
    \caption{Evaluating StochOptForest(Oracle) policies.}\label{fig: cvar-normal2}
\end{subfigure}%
\caption{CVaR Optimization for asset return data drawn from Gaussian distributions.}\label{fig: cvar-normal}
\end{figure*}

\subsubsection*{Data from Gaussian Distribution} In \cref{fig: cvar-normal}, we present results for CVaR optimization with the asset returns drawn from Gaussian distributions. The experiment setup is the same as that in \cref{sec: cvar-empirical}, except that now the data are drawn from the same Gaussian distributions in \cref{sec: empirical-mean-var}, namely, 
 the covariates $X$ are drawn from a standard Gaussian distribution, and the asset returns are independent and are drawn from the conditional distributions $Y_1 \mid X \sim \text{Normal}\prns{\exp(X_1), 5 - 4\indic{-3 \le X_2 \le -1}}$, $Y_2 \mid X \sim \text{Normal}\prns{-X_1, 5 - 4\indic{-1 \le X_2 \le 1}}$, and $Y_3 \mid X \sim \text{Normal}\prns{|X_1|, 5 - 4\indic{1 \le X_2 \le 3}}$.

 In \cref{fig: cvar-normal1}, we again compare the StochOptForest algorithm with our approximate criteria to other benchmarks, using the same tree and forest specifications as we do in \cref{sec: cvar-empirical}. In \cref{fig: cvar-normal2}, we evaluate the oracle criterion for small forests consisting of $50$ trees. We can observe that the results are qualitatively the same as those in \cref{sec: cvar-empirical} based on asset return data drawn from asymmetric lognormal distributions. 

\subsection{More Details for CVaR Shortest Path Problems}\label{apx-sec: short-path}

\edit{In \cref{sec: shortest-path}, we solve a shortest path problem with a conditional CVaR objective using real data from Uber Movement (\url{https://movement.uber.com/}).
Uber Movement provides historical traveling times from one basic geographical unit to another in many major cities worldwide during five periods in each day (AM Peak, 7am to 10am; Midday, 10am to 4pm; PM Peak, 4pm to 7pm; Evening, 7pm to 12am; Early Morning, 12am to 7am).
These traveling times are estimated from all Uber trips that passed the two basic geographical units during the corresponding time.
The meaning of a basic geographical unit may vary across different cities. 
In \cref{sec: shortest-path}, we focus on Los Angeles where the basic geographical unit  is the census tract. 
In particular, we consider a region in downtown Los Angeles consisting of $45$ census tracts, which is depicted in \cref{fig: uber}. We aim to go from an eastmost census tract (green mark, roughly Aliso Village) to a westmost census tract (red mark, roughly MacArthur Park). 
We collected traveling time observations for $d = 93$ edges during each of the five periods in each day of $2018$ and $2019$, where each edge  represents a path from one census tract to one of its neighbors in the region of interest. 
We denote the corresponding traveling times as $Y \in \R{d}$. 
Our goal is to choose a path between the departure point to the destination, denoted by $z \in \braces{0, 1}^d$, to minimize $\text{CVaR}_{0.8}\prns{Y^\top z \mid X = x_0}$ for each covariate value $x_0$ of interest.}

\edit{
\subsubsection*{Optimization Formulation.} This shortest path problem can be represented by a directed graph consisting of $45$ nodes and $93$ edges. 
We denote the set of nodes as $\mathcal{N}$ with the $1$st node as the departure point and the $45$th node as the destination. If there exists an edge from a node $i \in \mathcal{N}$ to a node $j \in \mathcal{N}$, we denote it as $i \to j$, and denote the set of all $93$ edges as $\mathcal{A}$. 
Then for each decision $z \in \braces{0, 1}^d$, we can index its coordinates by $z_{i \to j}$ for $i, j \in \mathcal{N}$ such that $i\to j\in\mathcal A$. Then, $z_{i \to j} = 1$ means that we decide to travel along the edge $i \to j$ and $z_{i \to j} = 0$ means otherwise.}

\edit{
In terms of the notations above, we can write the CVaR shortest path problem as follows:
\begin{align}
&z^*\prns{x} \in \argmin_{z\in\Z} \op{CVaR}_{0.8}\prns{Y^\top z \mid X = x}, \nonumber \\
&\Z=\braces{z\in\R d~~:~~
\begin{array}{ll}
z_{i \to j} \ge 0 & \text{ for any } i\to j \in \mathcal{A} \\
\sum_{j: i \to j \in \mathcal{A}} z_{i \to j} - \sum_{i: j \to i \in \mathcal{A}} z_{j \to i} = 1 & \text{ if } i = 1 \\
\sum_{j: i \to j \in \mathcal{A}} z_{i \to j} - \sum_{i: j \to i \in \mathcal{A}} z_{j \to i} = -1 & \text{ if } i = 45 \\
\sum_{j: i \to j \in \mathcal{A}} z_{i \to j} - \sum_{i: j \to i \in \mathcal{A}} z_{j \to i} = 0 & \text{ for any } i \in \mathcal{N} \setminus \braces{1, 45}
\end{array}}.
\label{eq: flow-preserve}
\end{align}
Note that we do not enforce integer constraints. 
}

\edit{
\subsubsection*{Data Specifications.} We consider four different sample sizes: half-year data (2019.07.01 to 2019.12.31), one-year data (2019.01.01 to 2019.12.31), one-and-half-year data (2018.07.01 to 2019.12.31), and two-year data (2018.01.01 to 2019.12.31). 
We consider $p = 197$ covariates including weather (Temperature, Wind Speed, Precipitation, Visibility in Miles), period dummy variables (AM Peak, Midday, PM Peak, Evening, Early Morning), weekday dummy variables, month dummy variables, $1$-day-lag traveling times along all edges, and $7$-day-lag traveling times along all edges. 
}

\edit{\subsubsection*{Forest Specifications.} In the experiment in  \cref{sec: shortest-path}, all forests use the same specifications except for the tree splitting criterion. 
In particular, they all consist of $100$ trees, where each tree is constructed on bootstrap samples ($\mathcal I_j^\text{tree}=\mathcal I_j^\text{dec}$) and the minimum node size is $10$.
To reduce computation, in every step of tree construction, we do not consider all possible splits. Instead, we generate candidate splits by first randomly selecting $65$ covariates out of the total $197$ covariates (\ie, around $1/3$ of covariates\footnote{This is the default choice in the ordinary random forest algorithm for regression problems.}), then randomly drawing $365$ cutoff values from all possible ones for each of these selected covariate, and finally restricting to the subset of these splits that results in at least $20\%$ of observations in each child node. 
}

\edit{As in the CVaR portfolio optimization experiment in \cref{sec: cvar-empirical}, whenever we need to invert a numerically singular matrix estimate, we add $0.001$ times an identity matrix of conformable size to the matrices to be inverted, as in \cref{sec: more-cvar} .
Unlike \cref{sec: cvar-empirical}, this becomes an issue also for our criteria that do consider constraints. Indeed, the CVaR shortest path problem has integer-valued optimal solutions so it is not particularly smooth, thus the second order perturbation analysis in \cref{thm:secondorder-const} may not strictly hold. 
Nevertheless, the perturbation analysis still provides a principled way to incorporate optimization problem structure into tree splitting criteria while remaining computationally efficient. 
(Moreover note that the estimated Hessians in these singular matrices are based on probably-misspecified Gaussian assumptions so they are approximations anyways).
For the apx-soln criterion, this may lead to approximate solutions that slightly violate the flow preservation constraints in \cref{eq: flow-preserve} so we project the approximate solutions back onto their affine hull, which is fast operation. 
But we do not modify  the apx-risk criterion any further.}

\subsection{Minimum-variance Portfolio Optimization}\label{sec: empirical-min-var}
\begin{figure*}[ht]
\begin{subfigure}{0.8\textwidth}
    \includegraphics[width=\textwidth]{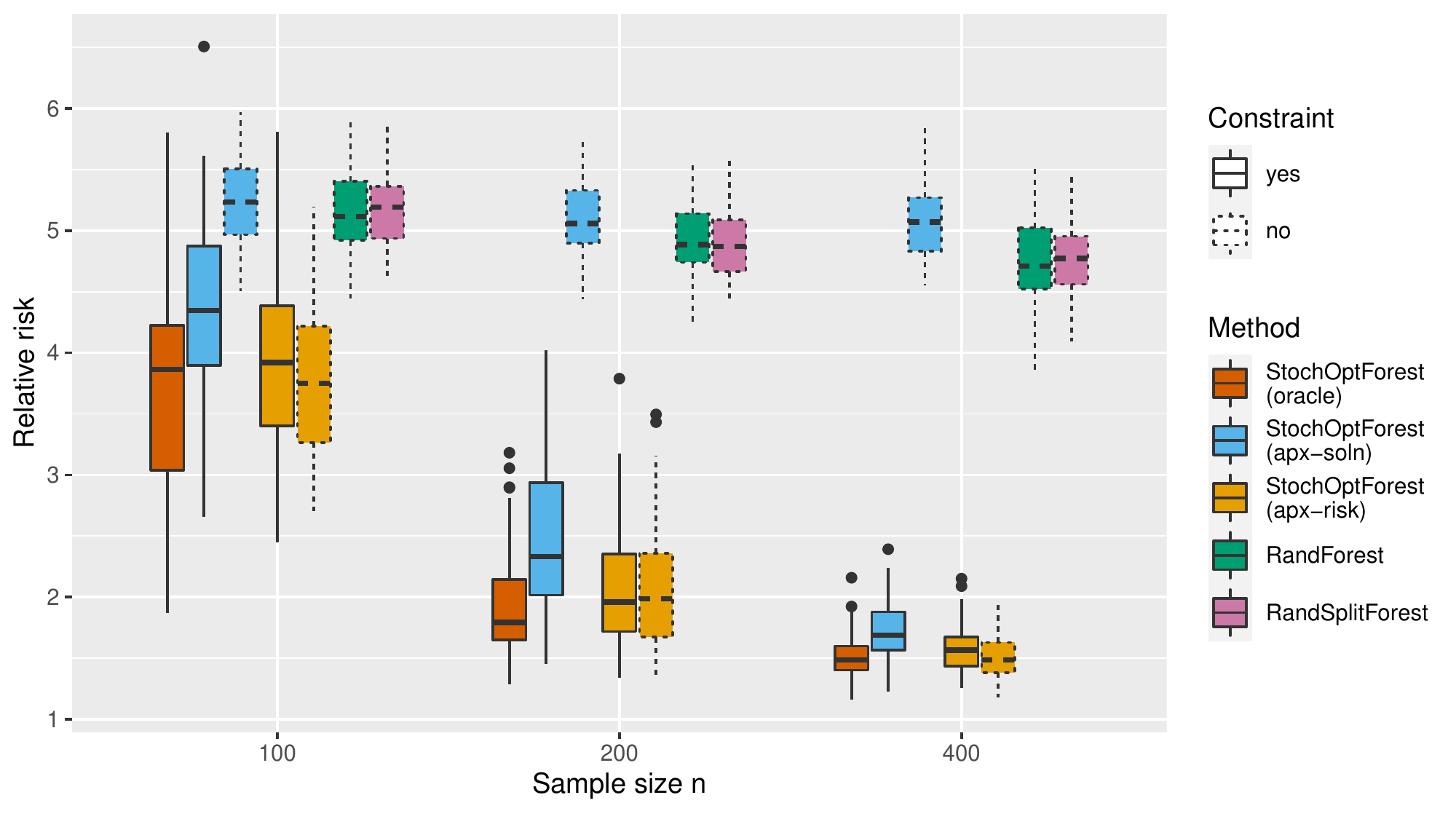}
    \caption{Comparing StochOptForest(oracle) policies with other forest policies in \\small-scale experiments.}\label{fig: minimum-var1}
\end{subfigure}%
\begin{subfigure}{0.2\textwidth}
    \includegraphics[width=\textwidth]{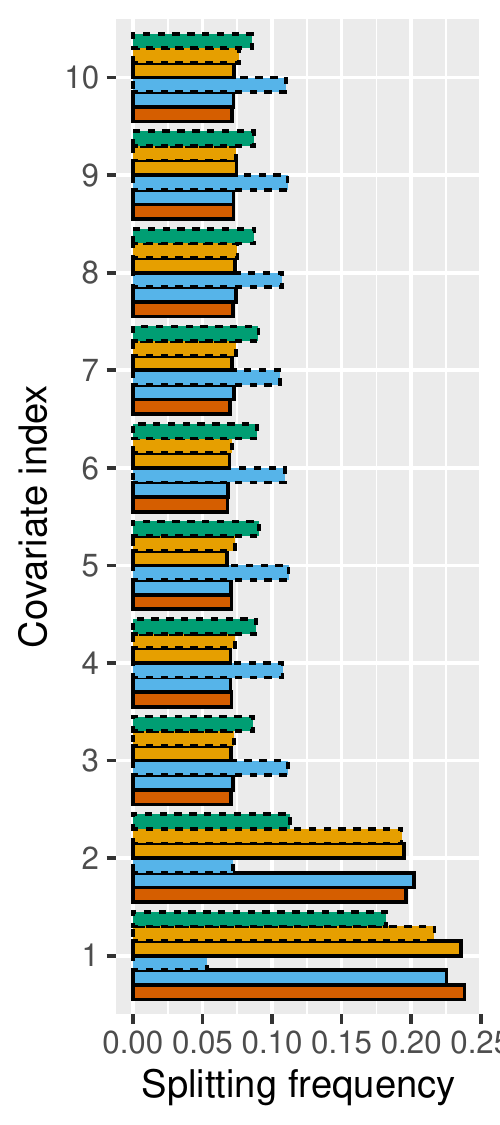}
    \caption{Splitting frequency.}\label{fig: minimum-var2}
\end{subfigure}
\begin{subfigure}{\textwidth}
\includegraphics[width=\textwidth]{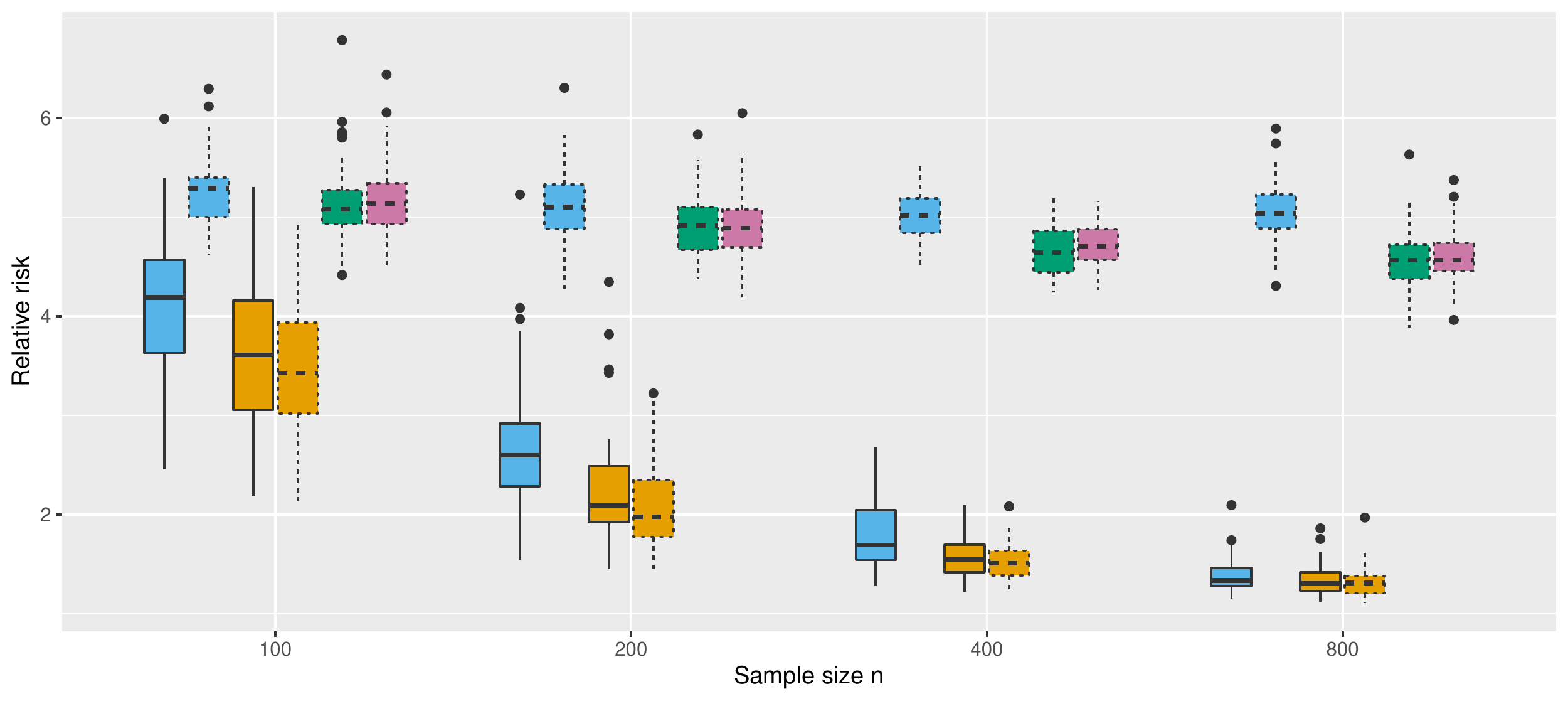}
    \caption{Comparing different forest policies in large-scale experiments.}\label{fig: minimum-var3}
\end{subfigure}
\caption{Results for minimum-variance portfolio optimization without stochastic constraints.}
\label{fig: minimum-var}
\end{figure*}

\begin{table}[ht]
\centering 
\begin{tabular}{|c|c|c|c|}
\hline
Method & $n = 100$ & $n = 200$ & $n = 400$ \\
\hline
StochOptTree (oracle) & $10.24$ ($1.51$) &  $32.60$ ($7.63$) & $90.40$ ($13.67$) \\
\hline 
StochOptTree (apx-risk) & $0.10$ ($0.04$) & $0.23$ ($0.04$) & $0.76$ ($0.09$) \\
\hline 
StochOptTree (apx-soln) & $0.08$ ($0.01$)  & $0.28$ ($0.05$) & $1.48$ ($0.51$)  \\
\hline 
\end{tabular}
\caption{Mean running time (in seconds) of constructing one tree for different algorithms in minimum-variance portfolio optimization over $10$ repetitions. Numbers in parentheses indicate the standard deviation of running time over repetitions.}
\label{table: time-var}
\end{table}

In \cref{fig: minimum-var}, we compare different forest policies for minimizing $\text{Var}(Y^\top z_{1:d} \mid X = x)$ with constraint set $\mathcal Z = \{z \in \mathbb R^{d+1}: z_{1:d} \in \Delta^d\}$. The experiment setup and forest specifications are the same as those in \cref{sec: cvar-empirical}. We only show results for 
return data drawn from Gaussian distributions described in \cref{sec: empirical-mean-var}, and the results for asymmetric lognormal distributions described in \cref{sec: cvar-empirical} are similar so we omit them here.

In \cref{fig: minimum-var1}, we compare the StochOptForest algorithm with the oracle criterion on a small-scale experiment where forests consist of $50$ trees and training data size ranges from $100$ to $400$. We find that the performance of our apx-risk criterion is very close to the oracle criterion, despite that our apx-risk criterion is much faster to compute. All StochOptForest algorithms that account for the optimization structure achieve better performance than the benchmark methods RandForest, RandSplitForest, and StochOptForest with the constraint-ignoring apx-soln criterion.
Interestingly, the StochOptForest algorithm with the constraint-ignoring apx-risk criterion performs quite well, although it fails to incorporate the constraint structure. 
However, we still recommend using approximate criteria that incorporate the constraints, since they consistently perform well across different optimization problems and  ignoring the constraints may undermine the performance. 
For example, in the CVaR optimization experiments in \cref{sec: cvar-empirical}, we find that ignoring the constraints in approximate criteria can considerably hurt their performance. 

Moreover, in 
\cref{fig: minimum-var2} we show the average feature splitting frequencies of different forest algorithms.  We can observe that all well-performing methods frequently split on $X_2$ that determines the conditional variance of asset returns and thus the objective function, while those ill-performing methods typically split on $X_2$ much less often. 
This partly explains the observations in \cref{fig: minimum-var1}. 
In \cref{fig: minimum-var3}, we also evaluate different tree algorithms on larger-scale experiments with forests consisting of $500$ trees and sample size up to $n = 800$, which again shows the superior performance of  our proposed methods.
Finally, we show the running time of each tree algorithm for minimum-variance portfolio optimization in \cref{table: time-var}. We can observe that the StochOptTree algorithm with the apx-risk criterion is more than 100 times faster than the StochOptTree algorithm with the oracle criterion for all sample sizes.

\subsection{Mean-variance Portfolio Optimization}\label{sec: more-mean-var}
\begin{figure*}[ht]
\begin{subfigure}{0.8\textwidth}
    \includegraphics[width=\textwidth]{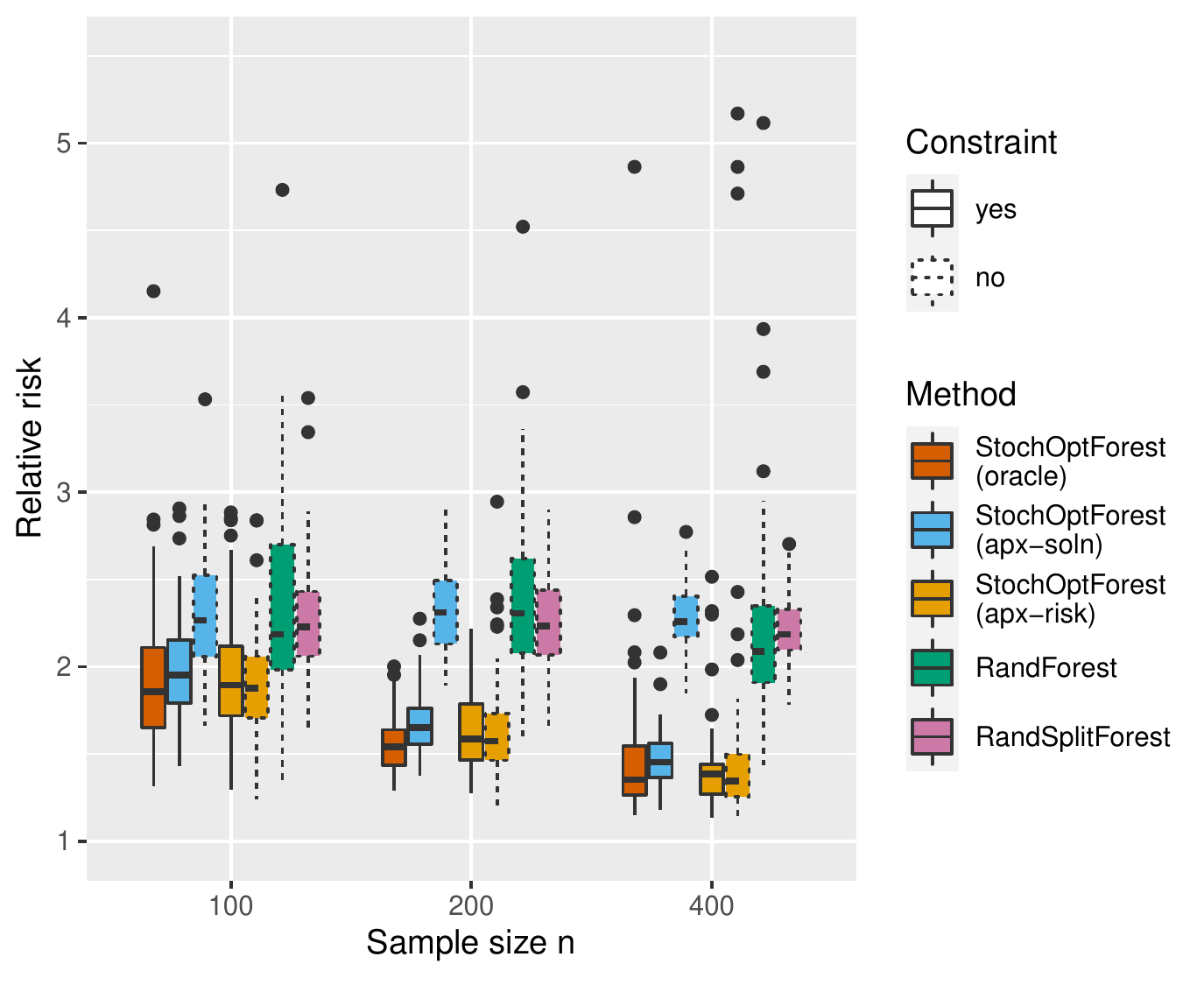}
    \caption{Relative risk of different forest policies.}
\end{subfigure}%
\begin{subfigure}{0.2\textwidth}
    \includegraphics[width=\textwidth]{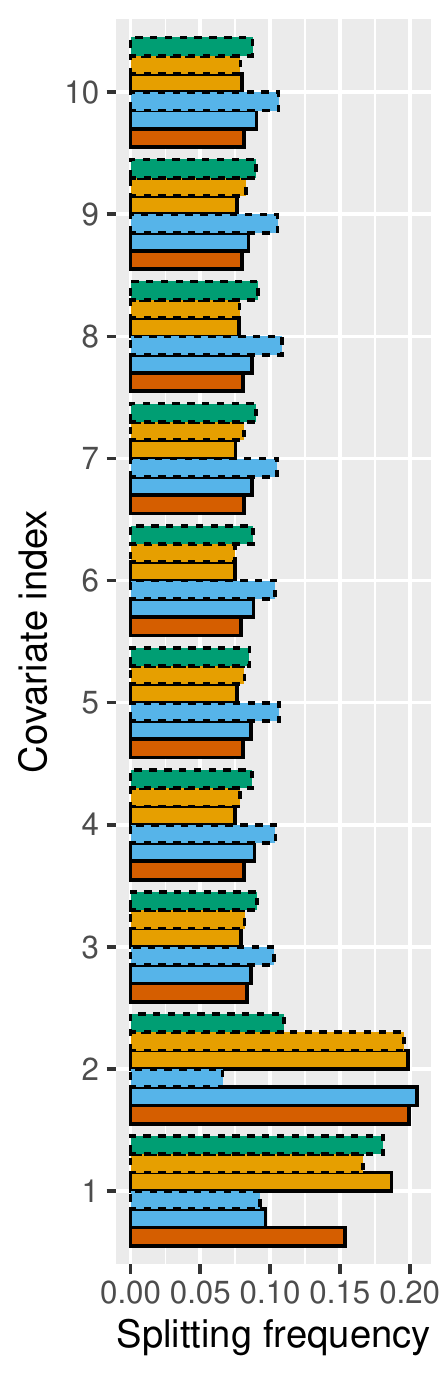}
    \caption{Splitting frequency.}
\end{subfigure}
\caption{Comparing StochOptForest(oracle) with other forest methods in small-scale experiments for mean-variance portfolio optimization.}
\label{fig: mean-var-oracle}
\end{figure*}

\begin{figure*}[ht]
\centering 
\begin{subfigure}{0.5\textwidth}
    \includegraphics[width=\textwidth]{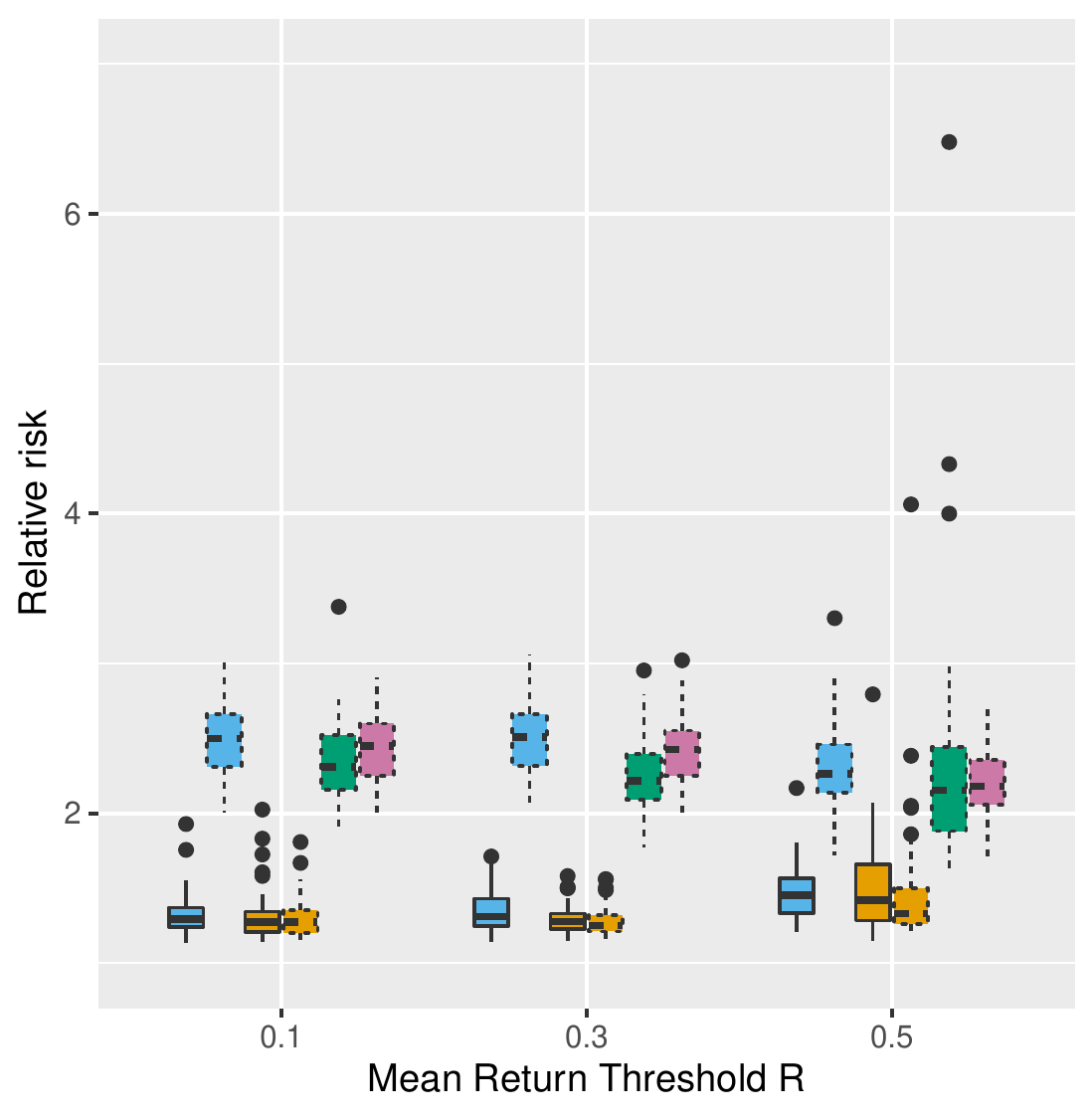}
\end{subfigure}%
\caption{Additional results for mean-variance portfolio optimization experiments in \cref{sec: empirical-mean-var}: relative risks for different return constraint thresholds.}
\label{fig: mean-var-extra}
\end{figure*}

In this section, we provide more experimental results on the mean-variance portfolio optimization in \cref{sec: empirical-mean-var}.

In \cref{fig: mean-var-oracle}, we compare the performance of StochOptForest with oracle criterion with other methods, in particular StochOptForest with our apx-risk and apx-sol approximate criteria. 
Because of the tremendous computational costs of StochOptForest (oracle), here we compare forests consisting of $50$ trees and consider $n$ up to $400$.
We note that the performance of our approximate criteria is very similar to the oracle criterion, and the results for all other methods are similar to those in \cref{fig: mean-var-rel_risk-full}. 

In \cref{fig: mean-var-extra}, we show additional results for the experiments in \cref{sec: empirical-mean-var}. More concretely, 
\cref{fig: rel-risk-R} presents the relative risks of different forest policies when training set size $n = 400$ and the conditional mean return constraint threshold $R$ varies in $\{0.1, 0.3, 0.5\}$. We can see that the performance comparisons are very stable across different thresholds $R$.

\subsection{Honest Forests}\label{sec: honesty}
\begin{figure*}[ht]
\begin{subfigure}{0.44\textwidth}
    \includegraphics[width=\textwidth]{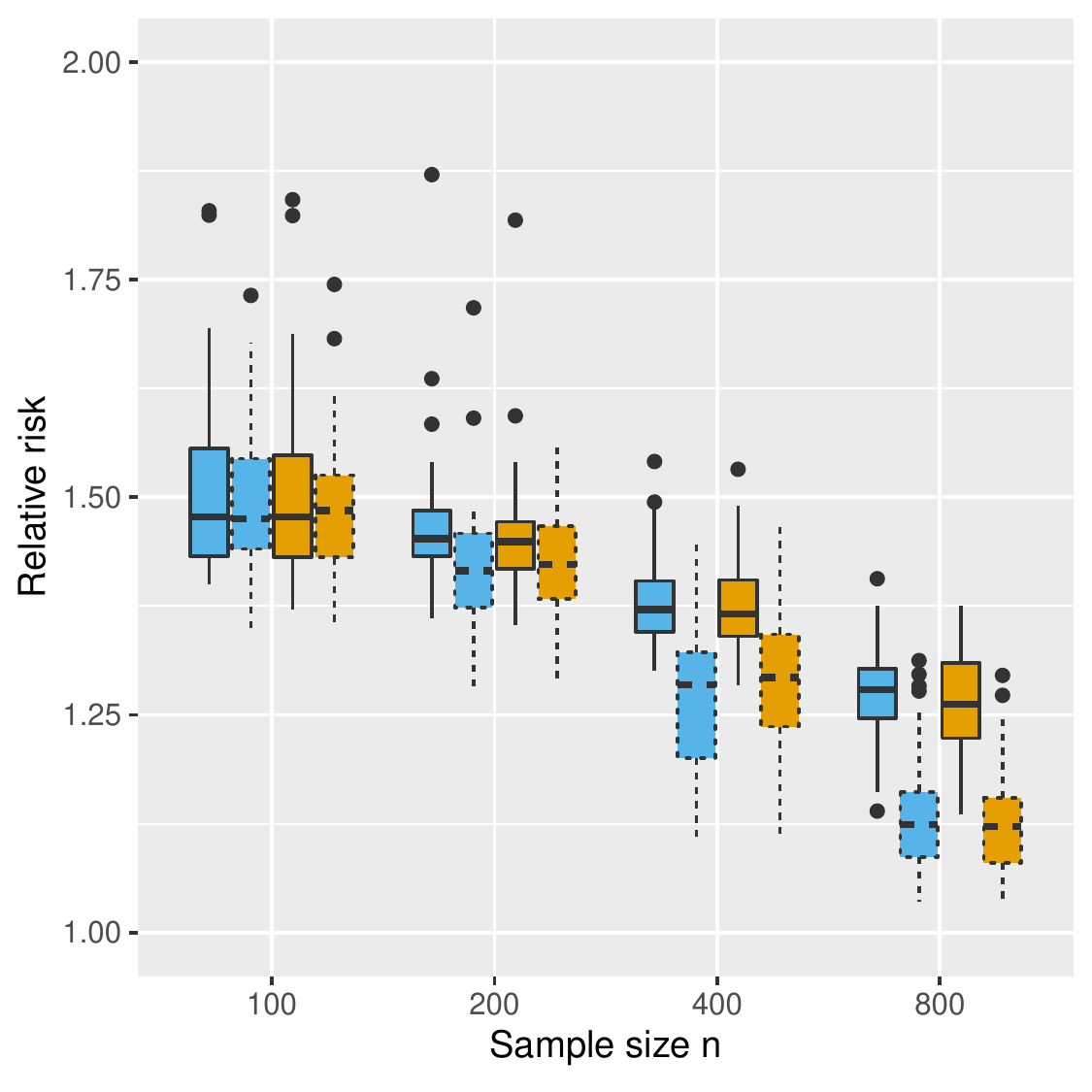}
    \caption{CVaR Optimization.}
    \label{fig: cvar-honesty}
\end{subfigure}%
\begin{subfigure}{0.55\textwidth}
    \includegraphics[width=\textwidth]{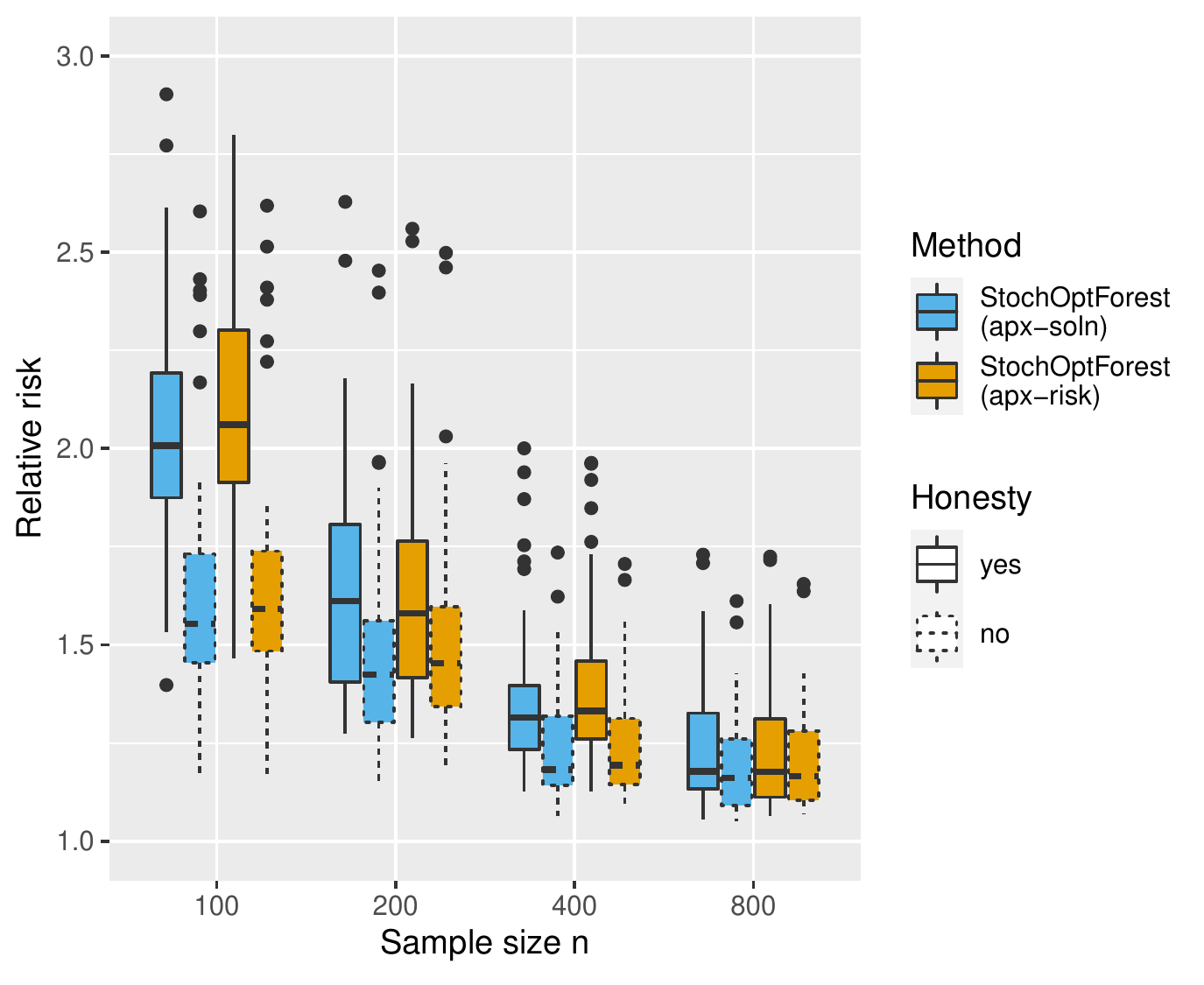}
    \caption{Newsvendor problem.}
    \label{fig: nv-honesty}
\end{subfigure}
\caption{Honest forests vs dishonest forests in CVaR optimization and newsvendor problem.}
\label{fig: honsty}
\end{figure*}
In \cref{fig: honsty}, we evaluate the performance of honest forests that use independent datasets to construct trees and form tree weights respectively (see \cref{assump: tree}), and dishonest forests that use the same datasets to construct trees and tree weights (see \cref{sec: empirical}). 
The specifications of experiments in \cref{fig: cvar-honesty} and \cref{fig: nv-honesty} are the same as those in \cref{sec: cvar-empirical} and \cref{sec: empirical nv} respectively, except that here each tree is constructed from a subsample of size $(1 - \frac{1}{e})n \approx 0.63n$ drawn randomly without replacement from the whole training data. This fraction is the expected size of distinct data points in a bootstrap sample. 
We can observe that honest forests tend to be outperformed by the dishonest counterparts, especially for large $n$ in the CVaR optimization problem and for small $n$ in the newsvendor problem.

\section{Perturbation Analysis}\label{app-sec: perturbation}

In this section we review perturbation analysis of stochastic optimization and use these tools to prove \cref{thm:secondorder,thm:secondorder-const}.

\subsection{A heuristic argument}\label{sec: heuristic}
\edit{We first give a heuristic argument for \cref{thm:secondorder-const} by applying the implicit function theorem to the KKT system. 
This argument does not treat many regularity conditions rigorously, but it is simple and instructive.  
We defer our review of the more general and rigorous analysis developed by \cite{perturbation2000} to \cref{apx-sec: general,sec: additive perturbed problems in finite dims} below.}

\edit{
Consider a constrained version of the perturbation formulation in \cref{eq:v}:
\begin{align*}
&{\min_{z\in\Z}\;f_{0}(z)+t\prns{f_{j}(z) - f_{0}(z)}},
\\\notag
&\text{where}~\Z=\braces{z\in\R d~~:~~
\begin{array}{ll}
h_{k}(z) = 0,~~&~~k=1,\dots,s,\\ h_{k}(z) \le 0,~~&~~k=s + 1,\dots, m
\end{array}}.
\end{align*}
Assume that the problem above corresponding to $t = 0$ has a unique optimal solution $z_0$ with a unique Lagrangian multiplier $v_0$. 
Let $K_h(z_0) = \{k: h_{k}(z_0) = 0, k = s+1, \cdots, m\}$ be the index set of inequality constraints active at $z_0$ and assume the strict complementarity condition, namely, $v_{0, k} > 0$ if and only if $k \in K_h\prns{z_0}$ or $k \le s$. Under the  Mangasarian-Fromovitz constraint qualification condition (condition \ref{cond: constr-MF} in \cref{thm:secondorder-const}), $z_0$ and $v_0$ can be characterized by the following Karush–Kuhn–Tucker (KKT) system:
\begin{align}
&\nabla f_0\prns{z_0} + \sum_{k \in \braces{1, \dots, s}\cup K_h\prns{z_0}} \nu_{0, k}\nabla h_k\prns{z_0} = 0, \label{eq: KKT0-1}\\
&h_k\prns{z_0} = 0, ~~ k \in K_h\prns{z_0}. \label{eq: KKT0-2}
\end{align}
Now consider the problem above with $t \ne 0$. Assume that it has an optimal solution $z_j\prns{t}$ with a Lagrangian multiplier $\nu_j\prns{t}$. 
When $t$ is very close to $0$, we may conjecture that $z_j\prns{t}$ is close to $z_0$ so that the inequalities active at $z_j\prns{t}$ are still given by $K_h\prns{z_0}$, $v_{j, k}\prns{t} > 0$ if and only if $k \in K_h\prns{z_0}$ or $k \in \braces{1, \dots, s}$, and $z_j\prns{t}$ and $\nu_j\prns{t}$ are also characterized by the corresponding KKT system:
\begin{align}
&\Gamma_j\prns{z_j\prns{t}, \nu_j\prns{t}, t} = 0 \label{eq: implicit-fun} \\
&\text{where }
\Gamma_j\prns{z, \nu, t}  = 
\begin{bmatrix}
\nabla f_0\prns{z} + t\prns{\nabla f_j\prns{z}- \nabla f_0\prns{z}} + \sum_{k \in \braces{1, \dots, s}\cup K_h\prns{z_0}} \nu_{k}\nabla h_k\prns{z} \nonumber \\
\mathcal{H}_{K_h}\prns{z} \nonumber
\end{bmatrix}.
\end{align}
Above $\mathcal{H}_{K_h}\prns{z}$ is a column vector whose elements are $h_k\prns{z}$ for $ k \in \braces{1,\dots, s}\cup K_h\prns{z_0}$. 
}

\edit{
Note that \cref{eq: KKT0-1,eq: KKT0-2} imply $\Gamma_j\prns{z_0, \nu_0, 0} = 0$. Moreover, the Jacobian matrix of $\Gamma_j\prns{z, \nu, t}$ at $(z_0, \nu_0, 0)$
is
\begin{align*}
\nabla_{z, \nu}^\top \Gamma_j\prns{z_0, \nu_0, 0} = 
\begin{bmatrix}
~\prns{\nabla^2 f_0(z_0) +  \sum_{k = 1}^{m} \nu_{0,k}  \nabla^2  h_{k}(z_0)}  ~&~ \nabla {\mathcal H^{K_h}}^\top(z_0)~ \\
~\nabla^\top\mathcal H^{K_h}(z_0) ~&~ 0 ~
\end{bmatrix} 
\end{align*}
When this Jacobian matrix is invertible, the implicit function theorem ensures that for $t$ close enough to $0$, there exist unique and continuously differentiable $z_j\prns{t}$ and $\nu_j(t)$ such that $\Gamma_j\prns{z_j\prns{t}, \nu_j\prns{t}, t} = 0$, and $d_z^{j*} = \frac{\partial}{\partial t} z\prns{t} \vert_{t = 0}$ and $\xi_j = \frac{\partial}{\partial t} \nu_j\prns{t} \vert_{t = 0}$ are solutions to the following linear equation system: 
\begin{align}\label{eq: kkt-perturb}
&\begin{bmatrix}
~\prns{\nabla^2 f_0(z_0) +  \sum_{k = 1}^{m} \nu_{0,k}  \nabla^2  h_{k}(z_0)}  ~&~ \nabla {\mathcal H^{K_h}}^\top(z_0)~ \\
~\nabla^\top\mathcal H^{K_h}(z_0) ~&~ 0 ~
\end{bmatrix} 
\begin{bmatrix}
d_z^{j*} \\ \xi_j
\end{bmatrix} 
= \frac{\partial}{\partial t} \Gamma_j\prns{z_0, \nu_0, 0} 
=
\begin{bmatrix}
-\prns{\nabla  f_j(z_0) - \nabla f_0(z_0)}  \\
0 
\end{bmatrix}
.
\end{align}
This implies that $z_j\prns{t} = z_0 + t d_z^{j*} + o(t)$, which is exactly the conclusion in \cref{eq: approx-sol} in \cref{thm:secondorder-const}. 
}

\edit{
Moreover, by the fact that $\prns{z_j\prns{t}, v_j\prns{t}}$ forms a KKT pair, the optimal value $v_j\prns{t}$ has the following formulation:
\begin{align*}
v_j\prns{t} = f_{0}(z_j\prns{t})+t\prns{f_{j}(z_j\prns{t}) - f_{0}(z_j\prns{t})} + \sum_{k \in \braces{1, \dots, s}\cup K_h\prns{z_0}}\nu_{j, k}\prns{t}h_k\prns{z_j\prns{t}}.
\end{align*}
We can then use the chain rule to derive the first and second order derivatives of $v_j\prns{t}$ at $t = 0$ in terms of derivatives of $z_j\prns{t}$ at $t = 0$ and gradients of $f_0, f_j$. 
\begin{proposition}\label{prop: implict-fun-thm}
 Suppose that $z_j\prns{t}, \nu_j\prns{t}$ are twice continuously differentiable at $t = 0$. Then 
 \begin{align*}
 &\frac{\partial}{\partial t} v_j\prns{t}\vert_{t = 0} = f_{j}(z_0) - f_{0}(z_0),   \\
 &\frac{\partial^2}{\partial t^2} v_j\prns{t}\vert_{t = 0} = d_{z}^{j*\top} \prns{\nabla^2 f_0(z_0) +  \sum_{k = 1}^{m} \nu_{0,k}  \nabla^2  h_{k}(z_0)} d_{z}^{j*} + 2d_{z}^{j*\top} \prns{{\nabla  f_j(z_0) - \nabla f_0(z_0)}}.
 \end{align*}
\end{proposition}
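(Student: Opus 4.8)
The statement to prove is Proposition \ref{prop: implict-fun-thm}, which computes the first and second derivatives of $v_j(t)$ at $t=0$ via the KKT-based representation $v_j(t) = f_0(z_j(t)) + t(f_j(z_j(t)) - f_0(z_j(t))) + \sum_{k} \nu_{j,k}(t) h_k(z_j(t))$.

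The plan is to differentiate this representation directly using the chain rule, exploiting complementary slackness and stationarity at $t=0$. First I would establish the representation itself: since $(z_j(t), \nu_j(t))$ is a KKT pair for the perturbed problem near $t=0$, complementary slackness gives $\nu_{j,k}(t) h_k(z_j(t)) = 0$ for every relevant $k$, so in fact $v_j(t) = f_0(z_j(t)) + t(f_j(z_j(t)) - f_0(z_j(t)))$ along the path. This already makes the differentiation clean. For the first derivative, apply the chain rule: $\frac{d}{dt} v_j(t) = \nabla f_0(z_j(t))^\top \dot z_j(t) + (f_j(z_j(t)) - f_0(z_j(t))) + t \cdot (\text{terms})$; evaluate at $t=0$. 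The key simplification is that $\nabla f_0(z_0) + \sum_k \nu_{0,k} \nabla h_k(z_0) = 0$ (stationarity) together with $\nabla^\top \mathcal H^{K_h}(z_0) \dot z_j(0) = 0$ (which follows from differentiating the active-constraint identities $h_k(z_j(t)) = 0$, the second block row of \cref{eq: kkt-perturb}), so the $\nabla f_0(z_0)^\top d_z^{j*}$ term can be rewritten as $-\sum_k \nu_{0,k} \nabla h_k(z_0)^\top d_z^{j*} = 0$. This leaves exactly $\frac{\partial}{\partial t} v_j(t)\vert_{t=0} = f_j(z_0) - f_0(z_0)$, matching the envelope-theorem intuition.

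For the second derivative, I would differentiate once more. Writing $v_j(t) = f_0(z_j(t)) + t(f_j(z_j(t)) - f_0(z_j(t)))$, the second derivative at $t=0$ picks up: (i) $d_z^{j*\top} \nabla^2 f_0(z_0) d_z^{j*} + \nabla f_0(z_0)^\top \ddot z_j(0)$ from the first term, and (ii) $2 d_z^{j*\top}(\nabla f_j(z_0) - \nabla f_0(z_0))$ from the cross term $\frac{d}{dt}[t \cdot (f_j - f_0)(z_j(t))]$ at $t=0$. The troublesome piece is the second-order term $\nabla f_0(z_0)^\top \ddot z_j(0)$. To handle it, differentiate the active-constraint identity $h_k(z_j(t)) = 0$ twice to get $d_z^{j*\top} \nabla^2 h_k(z_0) d_z^{j*} + \nabla h_k(z_0)^\top \ddot z_j(0) = 0$, then use stationarity $\nabla f_0(z_0) = -\sum_k \nu_{0,k} \nabla h_k(z_0)$ to write $\nabla f_0(z_0)^\top \ddot z_j(0) = -\sum_k \nu_{0,k} \nabla h_k(z_0)^\top \ddot z_j(0) = \sum_k \nu_{0,k} d_z^{j*\top} \nabla^2 h_k(z_0) d_z^{j*}$. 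Substituting gives the claimed formula $\frac{\partial^2}{\partial t^2} v_j(t)\vert_{t=0} = d_z^{j*\top}(\nabla^2 f_0(z_0) + \sum_k \nu_{0,k} \nabla^2 h_k(z_0)) d_z^{j*} + 2 d_z^{j*\top}(\nabla f_j(z_0) - \nabla f_0(z_0))$.

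The main obstacle is bookkeeping around which constraints are active and ensuring the complementary-slackness cancellations are valid to the required order: constraints inactive at $z_0$ stay inactive for small $t$ (so their multipliers vanish identically near $t=0$ by strict complementarity), while equality and strictly-active inequality constraints contribute the $\nabla h_k(z_0)^\top \dot z_j(0) = 0$ and second-order identities used above. I would state this active-set stability as a preliminary observation (it is already implicit in the hypothesis that $z_j(t), \nu_j(t)$ are twice continuously differentiable and in the construction surrounding \cref{eq: kkt-perturb}), and otherwise the proof is a direct chain-rule computation with no further technical overhead.
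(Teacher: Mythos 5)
Your proposal is correct, but it is organized differently from the paper's proof, and the difference is worth noting. The paper differentiates the full Lagrangian representation $v_j(t) = f_0(z_j(t)) + t\bigl(f_j(z_j(t))-f_0(z_j(t))\bigr) + \sum_{k}\nu_{j,k}(t)h_k(z_j(t))$: because the coefficient multiplying $\ddot z_j(0)$ is then the stationarity residual $\nabla f_0(z_0)+\sum_k\nu_{0,k}\nabla h_k(z_0)=0$, the second derivative of the solution path never appears, and the remaining cross terms involving $\dot\nu_j(0)=\xi_j$ are killed or rewritten using both block rows of the linearized KKT system \cref{eq: kkt-perturb}. You instead work with the bare objective-value identity $v_j(t)=f_0(z_j(t))+t\bigl(f_j(z_j(t))-f_0(z_j(t))\bigr)$ (which holds simply because $z_j(t)$ attains $v_j(t)$; complementary slackness is not even needed for this step), so the term $\nabla f_0(z_0)^\top\ddot z_j(0)$ survives and must be eliminated by differentiating the active-constraint identities $h_k(z_j(t))=0$ twice and invoking stationarity, exactly as you do. The bookkeeping step you flag — that the sum over $k=1,\dots,m$ reduces to the active set because inactive multipliers vanish, and that the activity identities hold on a neighborhood of $t=0$ — is precisely the active-set stability built into the heuristic KKT parametrization \cref{eq: implicit-fun}, so your use of it is on the same (admittedly heuristic) footing as the paper's. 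What each route buys: the paper's version needs differentiability of the multiplier path ($\xi_j$ enters explicitly) but never $\ddot z_j(0)$; yours needs $\ddot z_j(0)$ and twice-differentiable constraints but never touches $\dot\nu_j(0)$ or the first block row of \cref{eq: kkt-perturb}, which makes it marginally more self-contained given the stated hypothesis that $z_j(t)$ is twice continuously differentiable. Both yield the stated formulas, and your first-derivative argument (stationarity plus $\nabla h_k(z_0)^\top d_z^{j*}=0$ for active $k$) coincides with the paper's.
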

Note \cref{prop: implict-fun-thm} agrees with the conclusion of \cref{thm:secondorder-const} in \cref{eq: approx-risk}. 
}

\edit{The above argument is largely heuristic as it makes many assumptions without justifications, like the preservation of the active index set in the perturbed problems, the KKT formulation for the perturbed solutions, and the twice continuous differentiability of the primal and dual solutions to the perturbed problems, \etc. 
In \cref{apx-sec: general,sec: additive perturbed problems in finite dims}, we summarize a more  rigorous and more general perturbation analysis. }

\subsection{General Perturbation Analysis} \label{apx-sec: general}
In this section, we give an overview of the second order perturbation analysis based on results in \cite{perturbation2000}.
Consider the following generic parameterized problem: for $z, u$ in finitely dimensional vector spaces $\mathcal Z, \mathcal U$ respectively,
\begin{align}\label{eq: pert-opt}
\begin{array}{ll}
\min_{z} & f(z, u) \\
\text{s.t.}  &g_k(z, u) = 0, ~ k = 1, \dots, s, \\
&g_k(z, u)  \le 0, ~ k = s+1, \dots,  m,
\end{array}
\end{align}
where both $f(z, u)$ and $g_k(z, u)$ are twice continuously differentiable in both $z$ and $u$. We denote the first and second order derivatives of $f$ w.r.t $(z, u)$ as operators $Df(z, u)$ and $D^2 f(z, u)$ repsectively: 
\begin{align*}
Df(z, u)(d_z, d_u) &= D_zf(z, u)(d_z) + D_uf(z, u)(d_u) = d_z^\top \nabla_z f(z, u) + d_u^\top \nabla_u f(z, u) \\ 
D^2 f(z, u)((d_z, d_u), (d_z, d_u)) &= D_{zz}f(z, u)(d_z, d_z) +  D_{zu}f(z, u)(d_z, d_u) + D_{uz}f(z, u)(d_u, d_z) + D_{uu}f(z, u)(d_u, d_u) \\
&= d_z^\top\nabla_{zz}\edit{f\prns{z, u}}d_z + d_u^\top\nabla_{uu}\edit{f\prns{z, u}}d_u + 2d_z^\top\nabla_{zu}\edit{f\prns{z, u}}d_u.
\end{align*}

 We can similarly denote the partial derivatives of $f$ w.r.t $z$ and $u$ by $D_z f(z, u)$ and $D_u f(z, u)$ respectively. 
Derivatives for $g_k$ can be defined analogously.

Consider the parabolic perturbation path $u(t) = u_0 + t d_u + \frac{1}{2}t^2 r_u + o(t^2)$ for $t > 0$ \edit{and some elements $d_u, r_u$ such that $u(t) \in {\in \mathcal{U}}$}, and  denote the associated optimization problem as $P_{u(t)}$ with optimal value as $V(u(t))$.
We assume that the unperturbed problem $P_{u(0)}$ has a \textit{unique} optimal solution, which we denote as $z^*$, and we also denote $z^*(t)$ as one optimal solution of the perturbed problem $P_{u(t)}$.
We aim to derive the second order taylor expansion of $V(u(t))$, and the first order taylor expansion of $z^*(t)$. 

We first introduce several useful notations. We define the Lagrangian of the parameterized problem as 
\[
	L(z, u; \lambda) = f(z, u) + \sum_{k = 1}^m \lambda_k g_k(z, u),
\]
and the associated Lagrangian multiplier set for any $(z, u)$ as 
\[
	\Lambda(z, u) = \{\lambda: D_z L(z, u; \lambda) = 0, \text{and } \lambda_k \ge 0, \lambda_kg_k(z, u) = 0, k = s + 1, \dots, m\}.
\] 
For any feasible point $z$ for the unperturbed problem \edit{(\ie, $t = 0$)}, we define $K(z, u_0) = \{k: g_k(z, u_0) = 0, k = s + 1, \dots, m\}$ as the index set of inequality constraints that are active at $z$, and further define the index sets for active inequality constraints whose langrangian multipliers are strictly positive or $0$ respectively: 
\begin{align}
K_+(z, u_0, \lambda) = \{k \in K(z, u_0): \lambda_k > 0\}, ~ K_0(z, u_0, \lambda) = \{k \in K(z, u): \lambda_k = 0\}. \label{eq: K+}
\end{align}

Consider a solution path of form $z(t) = z^* + t d_z + \frac{1}{2}t^2 r_z + o(t^2)$ \edit{for some elements $d_z, d_u$ such that $z(t) \in \Z$}. If $z(t)$ is feasible for the perturbed problem $P_{u(t)}$, then we can apply second order taylor expansion to $f(z(t), u(t))$ as follows:
\begin{align}\label{eq: heuristic}
f(z(t), u(t)) = f(z^*, u_0) + tDf(z^*, u_0)(d_z, d_u) + \frac{1}{2}t^2 \left[Df(z^*, u_0)(r_z, r_u) + D^2f(z^*, u_0)((d_z, d_u), (d_z, d_u))\right] + o(t^2).
\end{align}
This heuristic expansion motivates two sets of optimization problems that are useful in approximating the optimal value of the perturbed problems. 

The first set optimization problem is a LP corresponding to the linear approximation term and its dual\footnote{The exact dual problem of problem PL in \cref{eq: PL} actually uses a different constraint for $\lambda$ than that used in \cref{eq: DL}. In the proof of \cref{prop: first-order-simplification}, we show that using these two constraint sets results in the same optimal value, which is stated without proof in \cite{perturbation2000}. So we also call the problem in \cref{eq: DL} as the dual of problem PL in \cref{eq: PL}.}: 
\begin{align}
&V(\PL) = \left\{\begin{array}{ll}
\min_{d_z} & Df(z^*, u_0)(d_z, d_u) \\
\text{s.t} & Dg_k(z^*, u_0)(d_z, d_u) = 0, ~ k = 1, \dots, s  \\
& Dg_k(z^*, u_0)(d_z, d_u) \le 0, ~ k \in K(z^*, u_0)
\end{array}\right., \label{eq: PL}\\
&V({\DL}) = \max_{\lambda \in \Lambda(z^*, u_0)} D_u L(z^*, \lambda, u_0) d_u. \label{eq: DL}
\end{align}

Given a feasible point $d_z$ of the problem PL, we denote the corresponding set of active inequality constraints in the problem PL as 
\[
	K_\PL(z^*, u_0, d_z) = \{k \in K(z^*, u_0): Dg_k(z^*, u_0)(d_z, d_u) = 0\}
\]
We denote the sets of optimal primal and dual solutions \edit{to \cref{eq: PL,eq: DL}} as $S(\PL)$ and $S(\DL)$ respectively. Equation (5.110) in \cite{perturbation2000} shows that $S(\PL)$ has the following form: for any $\lambda \in S(\DL)$, 
\begin{align*}
S(\PL) = \left\{d_z: \begin{array}{l}
Dg_k(z^*, u_0)(d_z, d_u) = 0, k \in \{1, \dots, s\} \cup K_+(z^*, u_0, \lambda), \\
Dg_k(z^*, u_0)(d_z, d_u) \le 0, k \in  K_0(z^*, u_0, \lambda)
\end{array}\right\}.
\end{align*}

The second set of optimization problems is a QP problem corresponding to the second order approximation term and its dual:
\begin{align}
V(\PQ) = \min_{d_z \in S(\PL)}V(\PQ(d_z)), ~ V(\DQ) = \min_{d_z \in S(\PL)} V(\DQ(d_z))
\end{align}
where
\begin{align}
&V(\PQ(d_z)) = \left\{\begin{array}{ll}
\min_{r_z} & Df(z^*, u_0)(r_z, r_u) + D^2 f(z^*, u_0)((d_z, d_u), (d_z, d_u)) \\
\text{s.t} & Dg_k(z^*, u_0)(r_z, r_u) + D^2 g_k(z^*, u_0)((d_z, d_u), (d_z, d_u)) = 0, ~ k = 1, \dots, s \\
& Dg_k(z^*, u_0)(r_z, r_u) + D^2 g_k(z^*, u_0)((d_z, d_u), (d_z, d_u))  \le 0, ~ k \in K_{\PL}(z^*, u_0, d_z)
\end{array}\right., \label{eq: PQ-d} \\
&V(\DQ(d_z)) = \max_{\lambda \in S(\DL)} D_u L(z^*, u_0; \lambda)r_u + D^2 L(z^*, u_0; \lambda)((d_z, d_u), (d_z, d_u)). \label{eq: DQ-d}
\end{align}
The constraints on $d_z$ in problem $\PL$ (\cref{eq: PL}) and constraints on $r_z$ in problem $\PQ(d(z))$ (\cref{eq: PQ-d}) ensure that path of the form $z(t) = z^* + t d_z + \frac{1}{2}t^2 r_z + o(t^2)$ is (approximately) feasible for the perturbed problem problem, so that the expansion in \cref{eq: heuristic} is valid. 

In the following proposition, we  characterize the optimization problems above when assuming the Lagrangian multiplier associated with the optimal solution in the unperturbed problem is unique, \ie, $\Lambda(z^*, u_0)$ is a singleton $\{\lambda^*\}$.
\begin{proposition}\label{prop: first-order-simplification}
If $\Lambda(z^*, u_0) = \{\lambda^*\}$ and $V(\PL), V(\PQ)$ are both finite, then 
\begin{align*}
&V(\PL) = V(\DL) 
	= D_u L(z^*, u_0; \lambda^*)d_u, \\
&V(PQ(d_z)) = V(DQ(d_z)) 
	= D_u L(z^*, u_0; \lambda^*)r_u + D^2 L(z^*, u_0; \lambda^*)((d_z, d_u), (d_z, d_u)).
\end{align*}
\end{proposition}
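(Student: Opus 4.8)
The plan is to exploit the assumption that the Lagrangian multiplier set $\Lambda(z^*, u_0) = \{\lambda^*\}$ is a singleton, which collapses the $\max$ in the dual problems $\DL$ and $\DQ(d_z)$ to evaluation at $\lambda^*$ and, crucially, forces the primal LP and QP to attain the same value as their duals. First I would handle problem $\PL$/$\DL$. By weak LP duality we always have $V(\DL)\le V(\PL)$; with $\Lambda(z^*,u_0)=\{\lambda^*\}$ the dual objective $\max_{\lambda\in\Lambda(z^*,u_0)} D_uL(z^*,u_0;\lambda)d_u$ is simply $D_uL(z^*,u_0;\lambda^*)d_u$, so it suffices to prove $V(\PL)\le D_uL(z^*,u_0;\lambda^*)d_u$. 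Here is where I must be careful about the footnote: the constraint set used in \cref{eq: DL} is $\Lambda(z^*,u_0)$ rather than the exact LP-dual feasible set $\{\lambda\ge 0 \text{ on active indices}\}$. The argument is that strong LP duality for $\PL$ (which holds because $V(\PL)$ is finite and the LP is feasible, $d_z=0$ being feasible when $d_u$ respects the linearized equality constraints — or more robustly via the fact that a finite-valued LP with feasible primal has no duality gap) gives $V(\PL)=V(\text{exact dual})$, and then one shows the exact-dual optimum is attained at a $\lambda$ that in fact lies in $\Lambda(z^*,u_0)$; since that set is the singleton $\{\lambda^*\}$, the exact-dual value equals $D_uL(z^*,u_0;\lambda^*)d_u$. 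This is the step that implicitly proves the claim of the footnote that the two constraint sets yield the same optimal value.

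Next I would treat $\PQ(d_z)$/$\DQ(d_z)$ by exactly the same template, now applied to the quadratic program in the variable $r_z$: the objective $Df(z^*,u_0)(r_z,r_u)+D^2f(z^*,u_0)((d_z,d_u),(d_z,d_u))$ is \emph{affine} in $r_z$ (the $D^2f$ term is a constant once $d_z,d_u,r_u$ are fixed), and the constraints on $r_z$ are affine in $r_z$, so $\PQ(d_z)$ is again a linear program in $r_z$. Its dual is precisely $\DQ(d_z)$, whose feasible set is $S(\DL)$; but under the singleton assumption $S(\DL)=\{\lambda^*\}$ as well (since $S(\DL)\subseteq\Lambda(z^*,u_0)=\{\lambda^*\}$), so the $\max$ in $\DQ(d_z)$ reduces to evaluation at $\lambda^*$. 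Invoking LP strong duality once more (finiteness of $V(\PQ)$, hence of $V(\PQ(d_z))$ for the relevant $d_z$, plus primal feasibility) gives $V(\PQ(d_z))=V(\DQ(d_z))=D_uL(z^*,u_0;\lambda^*)r_u+D^2L(z^*,u_0;\lambda^*)((d_z,d_u),(d_z,d_u))$, where I would also verify the elementary identity that the dual objective assembles the $D^2f$ constant term plus $\sum_k\lambda^*_k D^2g_k$ into $D^2L(z^*,u_0;\lambda^*)$, using $D_zL(z^*,u_0;\lambda^*)=0$ so the $Df(r_z,r_u)$ and $Dg_k(r_z,r_u)$ pieces recombine cleanly against the KKT stationarity condition.

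The main obstacle I anticipate is the footnote subtlety: showing that replacing the genuine LP-dual feasibility constraint by membership in $\Lambda(z^*,u_0)$ does not change the optimal value. In general $\Lambda(z^*,u_0)$ is a \emph{subset} of the LP-dual feasible set (it additionally imposes the sign and complementarity conditions coming from $K(z^*,u_0)$ versus $K_+,K_0$), so one must argue that the LP-dual optimum is achieved within this smaller set — this follows from complementary slackness at the LP optimum $(d_z^\star,\lambda^\star)$, which forces $\lambda^\star$ to vanish on inactive constraints and be nonnegative on active ones, i.e.\ $\lambda^\star\in\Lambda(z^*,u_0)$. Once the singleton hypothesis is in force this is immediate, since there is only one candidate; the real content is confirming that the hypotheses (finiteness of $V(\PL)$, $V(\PQ)$) suffice to rule out a duality gap, for which I would cite the standard LP duality theorem rather than re-derive it. Everything else is bookkeeping with the operators $D$, $D^2$ and the definition of $L$.
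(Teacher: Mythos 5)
Your proposal is correct and takes essentially the same route as the paper's own proof: both treat $\PL$ and $\PQ(d_z)$ as linear programs (in $d_z$ and in $r_z$, the quadratic term being constant once $d_z$ is fixed), invoke LP strong duality, justify the footnote by showing the exact LP-dual optimum is attained by a multiplier lying in $\Lambda(z^*,u_0)$ (resp.\ that the relevant multiplier sets differ only in coordinates that do not affect the dual value), and then collapse the maxima to evaluation at $\lambda^*$ using the singleton hypothesis and $\emptyset\neq S(\DL)\subseteq\Lambda(z^*,u_0)$.
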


The following theorem derives the second order expansion of $V(u(t))$ under regularity conditions. 
\begin{theorem}[Theorem 5.53 in \cite{perturbation2000}]\label{thm: perturb}
Suppose  the following conditions hold:
\begin{enumerate}
\item $f(z, u)$ and $g_k(z, u)$ for $k= 1,\dots, m$ are twice continuously differentiable in both $z \in \mathcal Z$ and $u \in \mathcal U$ in a neighborhood around $(z^*, u_0)$;
\item The unperturbed problem corresponding to $t = 0$ (or equivalently problem $P_{u_0}$) has a unique optimal solution $z^*$;
\item Mangasarian-Fromovitz constraint qualification condition is satisfied at $z^*$: 
\begin{align*}
&D_z g_k(z^*, u_0), ~ k = 1, \dots, s \text{ are linearly independent},  \\
&\exists d_z, \text{ s.t. } D_z g_k(z^*, u_0)d_z = 0, ~ k = 1, \dots, s, ~ D_z g_k(z^*, u_0)d_z < 0, ~ k \in K(z^*, u_0);
\end{align*}
\item The set of Lagrangian multipliers  $\Lambda(z^*, u_0)$ for the unperturbed problem is nonempty; 
\item The following strong form of second order sufficient condition is satisfied for the unperturbed problem: 
\begin{align*}
\sup_{\lambda \in S(DL)} D_{zz}L(z^*, \lambda, u_0)(d_z, d_z) > 0, \forall d_z \in C(z^*, u_0; \lambda)\setminus \{0\},  
\end{align*}
where $C(z^*, u_0; \lambda)$ is the critical cone defined as follows:
\begin{align*}
C(z^*, u_0; \lambda)  = \left\{d_z: \begin{array}{l}
D_z g_k(z^*, u_0)(d_z) = 0, k \in \{1, \dots, s\} \cup K_+(z^*, u_0, \lambda), \\
D_z g_k(z^*, u_0)(d_z) \le 0, k \in  K_0(z^*, u_0, \lambda)
\end{array}\right\}.
\end{align*}
\item The inf-compactness condition: \edit{there exist} a constant $\alpha$ and a compact set $\overline C \subseteq \mathcal Z$ such that the sublevel set 
\[
\{z: f(z, u) \le \alpha, ~ g_k(z, u) = 0, ~ k = 1, \dots, s, ~ g_k(z, u)  \le 0, ~ k = s+1, \dots,  m.\}
\]
is nonempty and contained in $\overline C$ for any $u$ within a neighborhood of $u_0$.  
\end{enumerate}
Then the following conclusions hold: 
\begin{enumerate}
\item $V(\PL)$ and $V(\PQ)$ are both finite, and the optimal value function $V(u(t))$ for the perturbed problem in \cref{eq: pert-opt} can be expanded as follows:
\begin{align*}
 V(u(t)) = V(u_0) + t V(\PL) + \frac{1}{2}t^2 V(\PQ) + o(t^2).
\end{align*} 
\item If the problem $\PQ$ has a unique solution $d_z^*$, then any optimal solution $z^*(t)$ of the perturbed problem in \cref{eq: pert-opt} satisfies that 
\[
z^*(t) = z^* + t d_z^* + o(t).
\]
\end{enumerate}
\end{theorem}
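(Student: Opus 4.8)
The plan is to prove the two conclusions by trapping $V(u(t))$ between matching second-order upper and lower estimates and then reading off the solution expansion from the structure of the near-optimal solutions. Before the main estimates I would first dispose of the finiteness claim: under the Mangasarian--Fromovitz condition the first part ($D_zg_k(z^*,u_0)$, $k\le s$, linearly independent) makes the linearized system defining $\PL$ consistent (solve the equality part, then add a multiple of the MFCQ direction to make the active inequalities strict), so $\PL$ is a feasible LP, while MFCQ also makes $\Lambda(z^*,u_0)$, and hence $S(\DL)$, a nonempty compact polytope; LP duality then gives $V(\PL)=V(\DL)$ finite and $S(\PL)\neq\varnothing$. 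Likewise each $\PQ(d_z)$ is consistent for $d_z\in S(\PL)$, and the strong second-order sufficient condition makes the quadratic term coercive on the critical cone, so $\inf_{d_z\in S(\PL)}V(\PQ(d_z))=V(\PQ)$ is finite. Throughout, the workhorse tool is Robinson's metric regularity of the constraint map $z\mapsto (g_1(z,u),\dots,g_m(z,u))$ near $z^*$ locally uniformly in $u$ near $u_0$, which is precisely what MFCQ provides: a point that is feasible for $P_{u}$ ``up to order $o(t^k)$'' can be corrected to an exactly $P_u$-feasible point at cost $o(t^k)$.

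\textbf{Upper estimate.} Fix an arbitrary $d_z\in S(\PL)$ and an $r_z$ feasible for $\PQ(d_z)$ and $\varepsilon$-optimal. The parabolic path $z(t)=z^*+td_z+\tfrac12 t^2 r_z$ together with $u(t)$ satisfies, by a second-order Taylor expansion and the defining constraints of $\PL$ and $\PQ(d_z)$, $g_k(z(t),u(t))=o(t^2)$ for $k\le s$, $g_k(z(t),u(t))\le o(t^2)$ for $k\in K_{\PL}(z^*,u_0,d_z)$, and $g_k(z(t),u(t))<0$ for the remaining inequalities and small $t$. Metric regularity then furnishes $\tilde z(t)=z(t)+o(t^2)$ that is exactly feasible for $P_{u(t)}$, whence
\begin{align*}
V(u(t))&\le f(\tilde z(t),u(t))\\
&=V(u_0)+tDf(z^*,u_0)(d_z,d_u)\\
&\quad+\tfrac12 t^2\bigl[Df(z^*,u_0)(r_z,r_u)+D^2 f(z^*,u_0)((d_z,d_u),(d_z,d_u))\bigr]+o(t^2).
\end{align*}
Taking the infimum over admissible $r_z$, then over $d_z\in S(\PL)$, and letting $\varepsilon\downarrow 0$ gives $V(u(t))\le V(u_0)+tV(\PL)+\tfrac12 t^2 V(\PQ)+o(t^2)$.

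\textbf{Lower estimate.} Let $z^*(t)$ be any optimal solution of $P_{u(t)}$. Inf-compactness confines $z^*(t)$ to the compact set $\overline C$; since $z^*$ is the unique solution of $P_{u_0}$ and $(f,g)$ depend continuously on $u$, every accumulation point of $z^*(t)$ as $t\downarrow 0$ equals $z^*$, so $z^*(t)\to z^*$. The crucial refinement is the linear rate $\|z^*(t)-z^*\|=O(t)$. To get it I would project $z^*(t)$ onto the feasible set of $P_{u_0}$; its constraint values are $O(t)$ because $z^*(t)$ is $P_{u(t)}$-feasible and $\|u(t)-u_0\|=O(t)$, so metric regularity bounds the projection distance by $O(t)$, and then the quadratic-growth inequality $f(z,u_0)\ge V(u_0)+c\|z-z^*\|^2$ near $z^*$ — a consequence of MFCQ together with the strong second-order sufficient condition — combined with the Lipschitz dependence of $f$ on $u$, the upper estimate $V(u(t))\le V(u_0)+O(t)$, and (this is the subtle point) the \emph{first-order optimality} of $z^*(t)$ rather than growth alone, upgrades the naive $O(\sqrt t)$ to $O(t)$. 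Writing $w(t)=(z^*(t)-z^*)/t$, the rate makes $\{w(t)\}$ bounded; expanding the optimality inequality to first order and using that the $P_{u(t)}$-constraints linearize to the $\PL$-constraints shows every accumulation point of $w(t)$ lies in $S(\PL)$. Finally, expanding $f(z^*(t),u(t))$ to second order \emph{through the Lagrangian} $L(\cdot,\cdot;\lambda)$ for $\lambda\in S(\DL)$ — so that the terms linear in $z^*(t)-z^*$ are absorbed using critical-cone membership of the limit of $w(t)$ and stationarity of $\lambda$ — and passing to the limit along any subsequence yields $V(u(t))\ge V(u_0)+tV(\PL)+\tfrac12 t^2 V(\PQ)-o(t^2)$. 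With the upper estimate this is conclusion 1.

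\textbf{Solution expansion and main obstacle.} Given the sharp two-sided estimate, a last application of the quadratic-growth/second-order inequality to $z^*(t)$ forces $w(t)$ to be a minimizing sequence for the limiting problem, i.e. for $\inf_{d_z\in S(\PL)}V(\PQ(d_z))$; when $\PQ$ has the unique minimizer $d_z^*$, the bounded sequence $w(t)$ can have no accumulation point other than $d_z^*$, so $w(t)\to d_z^*$, which is exactly $z^*(t)=z^*+td_z^*+o(t)$. The main obstacle I anticipate is the lower estimate: (i) establishing the \emph{linear} rate $\|z^*(t)-z^*\|=O(t)$ (not just the easy $O(\sqrt t)$ from quadratic growth), which genuinely needs the first-order optimality conditions for $z^*(t)$ and a uniform multiplier bound from MFCQ, and (ii) controlling the Taylor remainders of $f$ and $g$ \emph{uniformly} along the family $z^*(t)$ so that the quadratic terms reproduce $V(\PQ)$ exactly; the upper estimate and the finiteness claims are comparatively routine once Robinson's metric regularity under MFCQ is in hand.
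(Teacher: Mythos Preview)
The paper does not supply its own proof of this theorem: it is stated verbatim as ``Theorem 5.53 in \cite{perturbation2000}'' and used as a black box to derive the downstream results (\cref{lemma: additive-perturb}, \cref{corollary: unconstr}, etc.). There is therefore no in-paper proof to compare your proposal against.

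That said, your outline is essentially the standard argument from Bonnans--Shapiro: the upper estimate via parabolic test paths corrected by Robinson metric regularity, the lower estimate via Lagrangian Taylor expansion combined with quadratic growth from the strong second-order sufficient condition, and the solution expansion from uniqueness of the $\PQ$ minimizer. Your identification of the two genuinely delicate points---upgrading $\|z^*(t)-z^*\|$ from $O(\sqrt t)$ to $O(t)$, and controlling Taylor remainders uniformly along $z^*(t)$---is accurate; these are exactly where the work lies in the full proof. If you were asked to reproduce the proof, this is the right skeleton, but for the purposes of this paper no proof is expected: the result is imported wholesale from the cited monograph.
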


We now show that the optimal solutions of problems $\PQ$ and $\DQ$ have a simple formulation under regularity conditions about the dual optimal solution of the unperturbed problem. 
\begin{proposition}\label{prop: second-order-simplify}
Under conditions in \cref{prop: first-order-simplification}, if further 
\edit{$\Lambda(z^*, u_0) = \{\lambda^*\}$, \ie, $z^*$ is associated with a unique Lagrangian multiplier $\lambda^*$, and} the strict complementarity condition holds, \ie, the Lagrangian multipliers associated with all  inequality constraints active at $z^*$ are strictly positive (or equivalently $K_+(z^*, u_0, \lambda^*) = K(z^*, u)$),
then $V(\PQ) = V(\DQ)$ equals the optimal value of the following optimization problem:
\begin{align*}
\begin{array}{ll}
\min_{d_z} ~ & D_u L(z^*, u_0; \lambda^*)r_u + D^2 L(z^*, u_0; \lambda^*)((d_z, d_u), (d_z, d_u)) \\
\text{s.t.} & Dg_k(z^*, u_0)(d_z, d_u) = 0, k \in \{1, \dots, s\} \cup K(z^*, u_0).
\end{array}
\end{align*}
\end{proposition}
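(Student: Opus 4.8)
I would deduce this as a specialization of \cref{prop: first-order-simplification} together with the description of the primal optimal set $S(\PL)$ recalled above from \citet{perturbation2000}. The two ingredients are: (a) since the dual feasible set $\Lambda(z^*,u_0)$ is the singleton $\{\lambda^*\}$, so is the dual optimal set $S(\DL)$, and by \cref{prop: first-order-simplification} each quadratic subproblem $\PQ(d_z)$ has the same optimal value as its dual $\DQ(d_z)$, equal to $D_u L(z^*,u_0;\lambda^*)r_u + D^2 L(z^*,u_0;\lambda^*)((d_z,d_u),(d_z,d_u))$ for every $d_z\in S(\PL)$; and (b) under strict complementarity the recalled description of $S(\PL)$ collapses to the affine set appearing in the statement. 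Minimizing the expression from (a) over the set from (b), and using $V(\PQ)=\min_{d_z\in S(\PL)}V(\PQ(d_z))$ and $V(\DQ)=\min_{d_z\in S(\PL)}V(\DQ(d_z))$, then yields the claim.

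For (a): first I would note that $S(\DL)$ is the set of maximizers of $\lambda\mapsto D_uL(z^*,u_0;\lambda)d_u$ over $\Lambda(z^*,u_0)=\{\lambda^*\}$, hence $S(\DL)=\{\lambda^*\}$. Fixing any $d_z\in S(\PL)$, \cref{prop: first-order-simplification} applies to the inner quadratic subproblem indexed by $d_z$ and gives $V(\PQ(d_z))=V(\DQ(d_z))=D_u L(z^*,u_0;\lambda^*)r_u + D^2 L(z^*,u_0;\lambda^*)((d_z,d_u),(d_z,d_u))$; in particular this value is finite for all $d_z\in S(\PL)$.

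For (b): strict complementarity at $z^*$ says precisely that every inequality constraint active at $z^*$ carries a strictly positive multiplier, i.e. $K_+(z^*,u_0,\lambda^*)=K(z^*,u_0)$ and $K_0(z^*,u_0,\lambda^*)=\varnothing$ in the notation of \cref{eq: K+}. Substituting these into the formula for $S(\PL)$ recalled above (which, since $S(\DL)$ is the singleton $\{\lambda^*\}$, may be taken with $\lambda=\lambda^*$) eliminates the inequality constraints indexed by $K_0$ and turns the remaining ones into the equalities $Dg_k(z^*,u_0)(d_z,d_u)=0$ for $k\in\{1,\dots,s\}\cup K(z^*,u_0)$. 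Combining (a) and (b), $V(\PQ)=V(\DQ)$ equals $\min_{d_z}\{D_u L(z^*,u_0;\lambda^*)r_u + D^2 L(z^*,u_0;\lambda^*)((d_z,d_u),(d_z,d_u)) : Dg_k(z^*,u_0)(d_z,d_u)=0,\ k\in\{1,\dots,s\}\cup K(z^*,u_0)\}$, which is the asserted problem.

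The argument is short, and the only points that need care are verifying that the recalled description of $S(\PL)$ is legitimately invoked under the Mangasarian--Fromovitz constraint qualification, and that the finiteness of $V(\PL)$ and $V(\PQ)$ carried over from \cref{prop: first-order-simplification} indeed lets us apply that proposition termwise to each $\PQ(d_z)$ with $d_z\in S(\PL)$. I do not expect a genuine obstacle here: the substance of the proposition is the elementary observation that strict complementarity prevents the active set from changing under an infinitesimal perturbation, so the inequality constraints drop out of the second-order expansion.
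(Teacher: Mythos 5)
Your proposal is correct and follows essentially the same route as the paper's proof: strict complementarity gives $K_0(z^*,u_0,\lambda^*)=\varnothing$ so that $S(\PL)$ collapses to the affine set of equality constraints, and then \cref{prop: first-order-simplification} is applied to each inner subproblem and minimized over $d_z\in S(\PL)$. The finiteness and constraint-qualification caveats you flag are already covered by importing the hypotheses of \cref{prop: first-order-simplification}, so no genuine gap remains.
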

\cref{prop: second-order-simplify} shows that under the asserted regularity conditions, the second order approximation term $V(PQ)$ is the optimal value of a simple quadratic programming problem with equality constraints, which can be solved very efficiently provided that $z^*, \lambda^*$ are known.

\edit{
According to \cite{WACHSMUTH201378}, one condition to ensure a unique Lagrangian multiplier is the following linear independence constraint qualification (LICQ) condition:
\begin{align}\label{eq: LICQ}
D_z g_k(z^*, u_0), ~ k \in \{1, \dots, s\} \cup K(z^*, u)  \text{ are linearly independent}.
\end{align}
Actually, this LICQ condition is also stronger than the  Mangasarian-Fromovitz constraint qualification condition in \cref{thm: perturb}.
}

\subsection{Additively perturbed problems in finite-dimensional space and its connection to approximate criteria}\label{sec: additive perturbed problems in finite dims}
\subsubsection*{Additive perturbations.} Consider the following optimization problem: for $z \in \mathbb R^d$ and $t > 0$,
\begin{align}\label{eq: perturb-additive}
v(t) = 
\left\{\begin{array}{ll}
\min_{z \in \mathbb R^d} ~ &f(z) + t\delta_f(z) \\
\text{s.t.} ~ &g_k(z) + t\delta_{g_k}(z) = 0, ~ k = 1, \dots, s, \\
& g_k(z) + t\delta_{g_k}(z) \le 0, ~ k =  s+ 1, \dots, m\\
& h_{k'}(z) = 0, ~ k' = 1, \dots, s', \\
& h_{k'}(z) \le 0, ~ k' = s'+1, \dots, m',
\end{array}\right.
\end{align} 
where $f$, $g_k$, $h_k$ are all twice continuously differentiable in $z \in \mathcal Z$ with gradients and Hessian matrices denoted by $\nabla$ and $\nabla^2$ respectively. Moreover, $\delta_f$ and $\delta_{g_k}$ are all differentiable with gradients denoted by $\nabla$.  
We define the Lagrangian for the problem above as follows:
\[
	L(z, t; \lambda, \nu) = f(z) + t\delta_f(z) + \sum_{k = 1}^m \lambda_k (g_k(z) + t\delta_{g_k}(z)) + \sum_{k' = 1}^{m'} \nu_{k'} h_{k'}(z)
\]
where $\lambda$ and $\nu$ are the Lagrangian multipliers associated with the constraints involving $\{g_k\}_{k = 1}^m$ and $\{h_{k'}\}_{k' = 1}^m$ respectively.
For any feasible point $z$ of the unperturbed problem \edit{(\ie, $t = 0$)}, we denote the index sets of active (unperturbed) inequality constraints $z$ as $K_g(z)$ and $K_{h}(z)$ respectively:
\begin{align*}
K_g(z) = \{k: g_k(z) = 0, k = s + 1, \dots, m\}, ~~ K_h(z) = \{k': h_{k'}(z) = 0, k' = s' + 1, \dots, m'\}.
\end{align*}

Note that the problem above in \cref{eq: perturb-additive} is a special case of the problem in \cref{eq: pert-opt} with perturbation path $u(t) = t$, \ie, $u_0 = 0, d_u = 1, r_u = 0$.
Thus we can apply \cref{thm: perturb} and \cref{prop: second-order-simplify} to prove the following theorem.
\begin{theorem}\label{lemma: additive-perturb}
 Suppose the following conditions hold: 
\begin{enumerate}
\item $f(z), g_k(z), h_{k'}(z)$ for $k= 1,\dots, m$ and $k' = 1, \dots, m'$ are twice continuously differentiable, and $\delta_f, \delta_{g_k}$ for $k = 1, \dots, m$ are continuously differentiable; 
\item The unperturbed problem corresponding to $t = 0$ has a unique optimal primarxy solution $z^*$ that is associated with a unique Lagrangian multiplier $(\lambda^*, \nu^*)$, and $(\lambda^*, \nu^*)$ satisfies the strict complemetarity condition, \ie, $\lambda^*_k > 0$ for $k \in K_g(z^*)$  and $\nu_k^* > 0$ for $k \in K_h(z^*)$;
\item Mangasarian-Fromovitz constraint qualification condition is satisfied at $z^*$: 
\begin{align*}
&\nabla g_k(z^*), ~ k = 1, \dots, s \text{ are linearly independent and } \nabla h_{k'}(z^*), ~ k' = 1, \dots, s' \text{ are linearly independent},  \\
&\exists d_z, \text{ s.t. } \nabla g_k(z^*)d_z = 0, ~ k = 1, \dots, s, ~ \nabla g_k(z^*)d_z < 0, ~ k \in K_g(z^*), \\
&\phantom{\exists d_z, \text{ s.t. }} \nabla h_{k'}(z^*)d_z = 0, ~ k' = 1, \dots, s', ~ \nabla h_{k'}(z^*)d_z < 0, ~ k' \in K_h(z^*);
\end{align*}
\item Second order sufficient condition:
\begin{align*}
d_z^\top \nabla^2 L(z^*, 0; \lambda^*, \nu^*)d_z > 0, \forall d_z \in C(z^*)\setminus \{0\},  
\end{align*}
where $C(z^*)$ is the critical cone defined as follows:
\[
C(z^*) = \left\{d_z: d_z^\top\nabla g_k(z^*)  = 0, ~ k\in \{1, \dots, s\} \cup K_g(x^*), ~ d_z^\top\nabla h_{k'}(z^*) = 0, ~ k' \in \{1, \dots, s'\} \cup K_h(x^*)\right\}.
\]
\item The inf-compactness condition: \edit{there exist} a constant $\alpha$, a positive constant $t_0$, and a compact set $\overline C \subseteq \mathcal Z$ such that the sublevel set 
\begin{align*}
\left\{
z:
\begin{array}{l}
f(z) + t\delta_f(z) \le \alpha, \\
g_k(z) + t\delta_{g_k}(z) = 0, ~ k = 1, \dots, s, ~ g_k(z) + t\delta_{g_k}(z)  \le 0, ~ k = s+1, \dots,  m, \\
h_{k'}(z)  = 0, ~ k' = 1, \dots, s', ~ h_{k'}(z)  \le 0, ~ k' = s'+1, \dots,  m'.
\end{array}
\right\}
\end{align*}
is nonempty and contained in $\overline C$ for any $t \in [0, t_0)$.  
\end{enumerate}
Then 
\begin{align*}
v(t) = v(0) + v'(0)t + \frac{1}{2}t^2 v''(0) + o(t^2)
\end{align*}
where 
\begin{align*}
v'(0) = \delta_f(z^*) + \sum_{k = 1}^ m\lambda_k^* \delta_{g_k}(z^*)
\end{align*}
and 
\begin{align}\label{eq: second-order-problem}
v''(0) = \min_{d_z} ~ 
	& d_z^\top \nabla^2 L(z^*, 0; \lambda^*, \nu^*) d_z + 2d_z^\top\left(\nabla \delta_f(z^*) + \sum_{k = 1}^s \lambda_k^* \nabla \delta_{g_k}(z^*)\right) \\
\text{s.t.}~ & d_z^\top \nabla g_k(z^*) + \delta_{g_k}(z^*) = 0, ~ k\in \{1, \dots, s\} \cup K_g(x^*) \nonumber \\
&d_z^\top \nabla h_k(z^*) = 0, ~ k' \in \{1, \dots, s'\} \cup K_h(x^*). \nonumber 
\end{align}
Moreover, if the optimization problem in \cref{eq: second-order-problem} has a unique optimal solution $d_z^*$, then any optimal solution $z^*(t)$ of the perturbed problem in \cref{eq: perturb-additive} satisfies that 
\begin{align}\label{eq: sol-expansion}
z^*(t) = z^* + td_z^* + o(t).
\end{align}
\end{theorem}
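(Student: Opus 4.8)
The plan is to recognize \cref{eq: perturb-additive} as a special case of the generic parameterized problem \cref{eq: pert-opt} and then read off the two expansions from \cref{thm: perturb}, \cref{prop: first-order-simplification}, and \cref{prop: second-order-simplify}. Concretely, I would take the parameter space to be $\mathcal U = \mathbb R$ with scalar parameter $u = t$, set $\tilde f(z,u) = f(z) + u\,\delta_f(z)$, and collect all constraints of \cref{eq: perturb-additive} into one family: $\tilde g_k(z,u) = g_k(z) + u\,\delta_{g_k}(z)$ for $k = 1,\dots,m$ and $\tilde g_{m+k'}(z,u) = h_{k'}(z)$ for $k' = 1,\dots,m'$ (the latter independent of $u$). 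The perturbation path is then the affine path $u(t) = t$, i.e.\ $u_0 = 0$, $d_u = 1$, $r_u = 0$, and the Lagrange multiplier of \cref{eq: pert-opt} corresponds to the pair $(\lambda,\nu)$ of \cref{eq: perturb-additive}.

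First I would verify the six hypotheses of \cref{thm: perturb} at $(z^*,u_0) = (z^*,0)$. Full joint twice continuous differentiability of $\tilde f,\tilde g_k$ in $(z,u)$ is not actually needed, because $u$ enters affinely: the only second-order partials that appear below are $D_{zz}$ and $D_{zu}$ (with $D_{uu}\equiv 0$), so Condition~1 (with $\delta_f,\delta_{g_k}$ merely once continuously differentiable) suffices. Uniqueness of the $t=0$ minimizer $z^*$ is Condition~2; the Mangasarian--Fromovitz condition at $z^*$ is Condition~3 and coincides with the theorem's requirement since at $t=0$ the constraints reduce to $g_k(z),h_{k'}(z)$; nonemptiness of $\Lambda(z^*,0)$ follows from the assumed existence of $(\lambda^*,\nu^*)$. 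For the strong second-order sufficient condition, uniqueness of the multiplier makes $S(\DL) = \{(\lambda^*,\nu^*)\}$, so the $\sup$ is a single quadratic form; strict complementarity forces $K_+ = K_g(z^*)\cup K_h(z^*)$ and $K_0 = \varnothing$, so the theorem's critical cone equals the $C(z^*)$ of Condition~4 and the strong form collapses to Condition~4. Inf-compactness along the perturbation path is Condition~5.

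Once the hypotheses are in place, \cref{thm: perturb} will give
\[
v(t) = v(0) + t\,V(\PL) + \tfrac{1}{2}\,t^2\,V(\PQ) + o(t^2),
\]
and I would then simplify the coefficients. Because the multiplier is unique, \cref{prop: first-order-simplification} applies and gives $V(\PL) = D_u L(z^*,0;\lambda^*,\nu^*)\,d_u = \delta_f(z^*) + \sum_{k=1}^m \lambda_k^*\,\delta_{g_k}(z^*)$, which is the claimed $v'(0)$. By uniqueness of the multiplier together with strict complementarity, \cref{prop: second-order-simplify} applies and identifies $V(\PQ)$ with the optimal value of the equality-constrained quadratic program in that proposition; substituting $u_0=0$, $d_u=1$, $r_u=0$ and using $D_{zz}L(z^*,0;\lambda^*,\nu^*) = \nabla^2 L(z^*,0;\lambda^*,\nu^*)$, $D_{zu}L(z^*,0;\lambda^*,\nu^*)(d_z,1) = d_z^\top\!\prns{\nabla\delta_f(z^*) + \sum_k \lambda_k^*\nabla\delta_{g_k}(z^*)}$, $D_{uu}L\equiv 0$, $D\tilde g_k(z^*,0)(d_z,1) = d_z^\top\nabla g_k(z^*) + \delta_{g_k}(z^*)$ for $k\le m$, and $D\tilde g_{m+k'}(z^*,0)(d_z,1) = d_z^\top\nabla h_{k'}(z^*)$, that program becomes exactly \cref{eq: second-order-problem}, so $v''(0)$ has the stated form.

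For the solution expansion I would use \cref{thm: perturb}(2), which yields $z^*(t) = z^* + t\,d_z^* + o(t)$ as soon as $\PQ$ has a unique minimizer $d_z^*$. Under strict complementarity the inner $r_z$-minimization in $\PQ(d_z)$ is resolved by \cref{prop: first-order-simplification}, so $V(\PQ) = \min_{d_z}\{\,D^2 L(z^*,0;\lambda^*,\nu^*)((d_z,1),(d_z,1)) : D\tilde g_k(z^*,0)(d_z,1)=0 \text{ for } k \text{ active}\,\}$, which has the same feasible set and (after the substitutions above) the same objective as \cref{eq: second-order-problem}; hence the minimizers coincide, uniqueness of the solution of \cref{eq: second-order-problem} transfers to $\PQ$, and \cref{eq: sol-expansion} follows. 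The step I expect to be the main obstacle is precisely this bookkeeping --- carefully translating the abstract objects of \cref{thm: perturb} (the critical cone, the sets $S(\PL),S(\DL)$, and the strong second-order condition) into the additive setting and confirming that ``unique multiplier plus strict complementarity'' is exactly what makes \cref{prop: first-order-simplification,prop: second-order-simplify} collapse everything to the closed forms; the remaining work is the routine substitution of the affine path $u(t)=t$.
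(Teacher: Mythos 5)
Your proposal matches the paper's own proof essentially step for step: the paper also specializes \cref{eq: pert-opt} to the affine path $u(t)=t$ with $u_0=0$, $d_u=1$, $r_u=0$, invokes \cref{thm: perturb} together with \cref{prop: first-order-simplification,prop: second-order-simplify} under the unique-multiplier and strict-complementarity assumptions, and obtains \cref{eq: second-order-problem} by noting $\nabla_{tt}^2 L=0$ and $\nabla_{tz}^2 L(z^*,0;\lambda^*,\nu^*)=\nabla\delta_f(z^*)+\sum_k\lambda_k^*\nabla\delta_{g_k}(z^*)$ and translating the constraints of the quadratic subproblem. Your extra remark that only once-continuous differentiability of $\delta_f,\delta_{g_k}$ is needed (since $u$ enters affinely and $D_{uu}\equiv 0$) is a careful touch the paper leaves implicit, but the argument is otherwise the same.
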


\edit{According to \cref{eq: LICQ},  a sufficient condition for the uniqueness of the Lagrangian multiplier is the following:
\begin{align}\label{eq: strict-cq2}
\begin{aligned}
&\nabla g_k(z^*), ~ k \in \{1, \dots, s\} \cup K_{g}(z^*) \text{ are linearly independent}, \\
&\nabla h_{k'}(z^*), ~ k' = \{1, \dots, s'\}\cup K_h(z^*) \text{ are linearly independent}. 
\end{aligned}
\end{align}
}

\subsubsection{Proving \cref{thm:secondorder-const2}}
In \cref{sec:cso-general}, we aim to approximate $v_j(t)=\min_{z\in\mathcal Z_j(t)}\;(1-t)f_{0}(z)+tf_{j}(z),~z_j(t)\in\argmin_{z\in\mathcal Z_j(t)}\;(1-t)f_{0}(z)+tf_{j}(z)$ for $t\in[0,1],\,j=1,2$, where 
\begin{align*}
\mathcal Z_j(t) =\left\{z:
\begin{array}{l}
(1 - t)g_{0, k}(z) + t g_{j, k}(z) = 0, ~ k = 1, \dots, s, \\
(1 - t)g_{0, k}(z) + t g_{j, k}(z) \le 0, ~ k = s+1, \dots, m,  \\
h_{k'}(z) = 0, ~ k' =1, \dots, s', ~ h_{k'}(z) \le 0, ~ k' =s'+1, \dots, m'
\end{array}
\right\}.
\end{align*}
Note this is a special example of \cref{eq: perturb-additive} with $\delta_f = f_j - f_0$ and $\delta_{g_k} = g_{j, k} - g_{0, k}$ for $k = 1, \dots, m$. Then applying \cref{lemma: additive-perturb} directly gives \cref{thm:secondorder-const2}.

\subsubsection{Proving \cref{thm:secondorder}}
If there are no constraints, \ie, we consider the problem 
\begin{align}\label{eq: unconstr-additive}
v(t) = \min_{z \in \mathbb R^d} f(z) + t\delta_f(z). 
\end{align}
Then \cref{lemma: additive-perturb} reduces to the following corollary. 
\begin{corollary}\label{corollary: unconstr}
Suppose the following conditions hold for $f$:
\begin{enumerate}
\item $f(z)$ is twice continuously differentiable, and $\delta_f(z)$ is continuously differentiable; 
\item there exists a constant $\alpha$, a positive constant $t_0 \in (0, 1]$ and compact set $\overline C \subseteq \mathbb R^d$ such that the sublevel set $\left\{
z \in \mathbb R^d: ~ f(z)+t\delta_f(z) \le \alpha
\right\}$ is nonempty and contained in $\overline C$ for any $t \in [0, t_0)$;
\item $f(z)$ has a unique minimizer over $\mathbb R^d$ (denoted as $z^*$), and $\nabla^2 f(z^*)$ is positive definite.
\end{enumerate}
Then $v(t)$ in \cref{eq: unconstr-additive} satisfies that 
\begin{align}\label{eq: approx-v-unconstr}
v(t) = v(0) + t\delta_f(z^*) - \frac{1}{2}t^2 \nabla \delta_f(z^*)^\top \prns{\nabla^2 f(z^*)}^{-1}\nabla \delta_f(z^*) + o(t^2) 
\end{align}
and any optimal solution $z^*(t)$ of the perturbed problem in \cref{eq: unconstr-additive} satisfies that 
\begin{align*}
z^*(t) = z^* - t\prns{\nabla^2 f(z^*)}^{-1}\nabla \delta_f(z^*) + o(t)
\end{align*}
\end{corollary}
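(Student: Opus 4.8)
The plan is to derive Corollary \ref{corollary: unconstr} as the degenerate special case of \cref{lemma: additive-perturb} in which there are no constraints at all, i.e. $m = m' = 0$, so that \cref{eq: perturb-additive} collapses to \cref{eq: unconstr-additive}. First I would record the simplifications this forces: the Lagrangian reduces to $L(z,t;\lambda,\nu) = f(z) + t\delta_f(z)$ with no multiplier terms, so $\nabla^2 L(z^*,0;\lambda^*,\nu^*) = \nabla^2 f(z^*)$, and the multiplier set is the trivial singleton.

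Next I would check that the three hypotheses of the corollary imply the five hypotheses of \cref{lemma: additive-perturb}. Hypothesis 1 of the lemma (twice continuous differentiability of $f$, continuous differentiability of $\delta_f$) is hypothesis 1 of the corollary verbatim. Hypothesis 2 of the lemma (unique optimal primal solution, unique Lagrangian multiplier, strict complementarity): with no constraints the multiplier requirements and strict complementarity hold vacuously, leaving only the requirement that $f$ has a unique minimizer $z^*$, which is part of hypothesis 3 of the corollary. Hypothesis 3 of the lemma (Mangasarian--Fromovitz constraint qualification) is vacuous in the absence of constraints. For hypothesis 4, the critical cone degenerates to $C(z^*) = \mathbb{R}^d$, so the second-order sufficient condition $d_z^\top \nabla^2 f(z^*) d_z > 0$ for all nonzero $d_z$ is exactly positive definiteness of $\nabla^2 f(z^*)$, the other half of hypothesis 3 of the corollary. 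Hypothesis 5 of the lemma (inf-compactness) is hypothesis 2 of the corollary verbatim.

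With the hypotheses verified, \cref{lemma: additive-perturb} gives $v(t) = v(0) + v'(0)t + \tfrac{1}{2}v''(0)t^2 + o(t^2)$ with $v'(0) = \delta_f(z^*)$ (the $\lambda$-dependent terms vanish) and $v''(0)$ equal to the value of the unconstrained quadratic program $\min_{d_z \in \mathbb{R}^d}\, d_z^\top \nabla^2 f(z^*) d_z + 2 d_z^\top \nabla \delta_f(z^*)$, since all constraints on $d_z$ in \cref{eq: second-order-problem} disappear. Because $\nabla^2 f(z^*)$ is positive definite, this strictly convex program has the unique minimizer $d_z^* = -\prns{\nabla^2 f(z^*)}^{-1}\nabla \delta_f(z^*)$, and substituting back yields $v''(0) = -\nabla \delta_f(z^*)^\top \prns{\nabla^2 f(z^*)}^{-1} \nabla \delta_f(z^*)$; inserting this into the expansion produces \cref{eq: approx-v-unconstr}. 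Since the minimizer $d_z^*$ is unique, the solution-path conclusion of \cref{lemma: additive-perturb} applies and gives $z^*(t) = z^* + t d_z^* + o(t) = z^* - t\prns{\nabla^2 f(z^*)}^{-1}\nabla \delta_f(z^*) + o(t)$, as claimed.

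There is no genuinely difficult step: the corollary is a routine specialization, so the only care needed is in the bookkeeping of which hypotheses of \cref{lemma: additive-perturb} become vacuous versus which survive, and in the elementary computation that the unconstrained quadratic program's optimal value equals $-\nabla \delta_f(z^*)^\top \prns{\nabla^2 f(z^*)}^{-1} \nabla \delta_f(z^*)$. As a remark, \cref{thm:secondorder} then follows immediately by taking $f = f_0$ and $\delta_f = f_j - f_0$ and using $\nabla f_0(z_0) = 0$ (first-order optimality for the unconstrained minimizer), which turns $\nabla \delta_f(z_0)$ into $\nabla f_j(z_0)$.
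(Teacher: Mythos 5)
Your proposal is correct and follows essentially the same route as the paper: it specializes \cref{lemma: additive-perturb} to the case $m=m'=0$, notes the constraint-related hypotheses become vacuous (with the critical cone degenerating to $\mathbb{R}^d$ so the second-order condition is exactly positive definiteness of $\nabla^2 f(z^*)$), and solves the resulting unconstrained quadratic program to obtain $v''(0)=-\nabla\delta_f(z^*)^\top(\nabla^2 f(z^*))^{-1}\nabla\delta_f(z^*)$ and the unique $d_z^*$ giving the solution expansion. The only difference is that you spell out the hypothesis bookkeeping a bit more explicitly than the paper does, which is fine.
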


Note applying this corollary with $f = f_0$, $\delta_f = f_j - f_0$, and $\nabla \delta_f(z^*) = \nabla f_j(z^*) - \nabla f_0(z^*) = \nabla f_j(z^*) $ gives \cref{thm:secondorder}. 

\subsection{Stronger Differentiability Results.} All results above are based on directional differentiability, which cannot quantify the magnitude of approximation errors of the second order perturbation analysis. Here we show in the context of unconstrained problems that under stronger regularity conditions, we can also bound the approximation errors by the magnitude of perturbation $\delta_f$. 
We will then use this to prove \cref{thm:apxsolapx,thm:apxriskapx} in \cref{app-sec: sec2}.

Consider the following optimization problem denoted as $P(f)$ for $f: \mathbb R^d \mapsto \mathbb R$, 
\begin{align}\label{eq: perturb-additive-2}
v(f) = \min_{z \in \mathbb R^d} ~ f(z), ~ z^*(f) \in \argmin_{z \in \mathbb R^d} ~ f(z) 
\end{align} 
We restrict $f$ to the twice continuously differentiable function class $\mathcal F$ with norm defined as 
\begin{align}\label{eq: norm}
\|f\|_{\mathcal F} = \max \{\sup_z|f(z)|,\, \sup_z\edit{\|\nabla f(z)\|_2},\, \sup_z\edit{\|\nabla^2 f(z)\|_{\op{F}}}\},
\end{align}
where $\edit{\|\nabla f(z)\|_2}$ is the Euclidean norm of gradient $\nabla f(z)$ and $\edit{\|\nabla^2 f(z)\|_{\op{F}}}$ is the Frobenius norm of the Hessian matrix $\nabla^2 f(z)$.

We consider $P(f_0)$ as the unperturbed problem and $P(f_0 + \delta_f)$ as the target perturbed problem that we hope to approximate. \cref{eq: approx-v-unconstr} gives the first and second order functional directional derivatives of $v$ at $f_0$: 
\begin{align*}
v'(f_0; \delta_f) 
	&\coloneqq \lim_{t \downarrow 0}\frac{v(f_0 + t\delta_f) - v(f_0)}{t} = \delta_f(z^*(f_0)) \\
v''(f_0; \delta_f) 
	&\coloneqq \lim_{t \downarrow 0} \frac{v(f_0 + t\delta_f) - v(f_0) - tv'(f_0; \delta_f)}{\frac{1}{2}t^2} = - \nabla \delta_f(z^*(f_0))^\top \prns{\nabla^2 f_0(z^*(f_0))}^{-1}\nabla \delta_f(z^*(f_0))
\end{align*}
We aim to expand $v(f_0 + \delta_f)$ and $z^*(f_0 + \delta_f)$ in the functional space with approximation errors bounded by the magnitude of $\delta_f$.

\begin{theorem}\label{thm: frechet-diff}
If $f_0(z), \delta_f(z)$ are both twice continuously differentiable, condition 2 in \cref{corollary: unconstr} is satisfied for $t \in [0, 1]$, and for any $t \in [0, 1]$, $f_0(z) + t\delta_f(z)$ has a unique minimizer $z^*(f_0 + t\delta_f)$ and $\nabla^2 \prns{f_0 + t\delta_f}(z^*(f_0 + t\delta_f))$ is positive definite, then 
\begin{align*}
v(f_0 + \delta_f) &= v(f_0) + \delta_f(z^*(f_0)) - \frac{1}{2} \nabla \delta_f(z^*(f_0))^\top \prns{\nabla^2 f(z^*(f_0))}^{-1}\nabla \delta_f(z^*(f_0)) + o(\|\delta_f\|^2_\mathcal F), \\
z^*(f_0 + \delta_f) &= z^*(f_0) - \prns{\nabla^2 f(z^*(f_0))}^{-1}\nabla \delta_f(z^*(f_0)) + o(\|\delta_f\|_\mathcal F).
\end{align*}
\end{theorem}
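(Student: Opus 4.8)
The plan is to work directly with the first-order optimality (stationarity) conditions of the two problems, which is legitimate here because both $P(f_0)$ and $P(f_0+\delta_f)$ are unconstrained and have $C^2$ objectives. Write $z_0 \coloneqq z^*(f_0)$ and $A \coloneqq \nabla^2 f_0(z_0)$ (positive definite by hypothesis), and for a perturbation $\delta_f$ of small $\mathcal F$-norm let $z_\delta \coloneqq z^*(f_0+\delta_f)$; throughout I use that $\|\delta_f\|_{\mathcal F}$ dominates $\sup_z|\delta_f(z)|$, $\sup_z\|\nabla\delta_f(z)\|_2$, and $\sup_z\|\nabla^2\delta_f(z)\|_{\op{F}}$. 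The argument has three stages: (i) \emph{localize}, proving $\|z_\delta-z_0\|_2\to 0$ as $\|\delta_f\|_{\mathcal F}\to 0$; (ii) \emph{bootstrap}, upgrading this to the linear rate $\|z_\delta-z_0\|_2 = O(\|\delta_f\|_{\mathcal F})$; and (iii) \emph{expand}, Taylor-expanding the stationarity condition and the value $v(f_0+\delta_f)=f_0(z_\delta)+\delta_f(z_\delta)$ to the stated orders.

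For stage (i): since $z_0$ is feasible for $P(f_0+\delta_f)$, optimality of $z_\delta$ gives $f_0(z_\delta)\le f_0(z_0)+\delta_f(z_0)-\delta_f(z_\delta)\le f_0(z_0)+2\|\delta_f\|_{\mathcal F}$, so $z_\delta$ lies in the $f_0$-sublevel set at level $f_0(z_0)+2\|\delta_f\|_{\mathcal F}$. By the inf-compactness condition these sublevel sets are compact for $\|\delta_f\|_{\mathcal F}$ small, they decrease to $\bigcap_{\varepsilon>0}\{z:f_0(z)\le f_0(z_0)+\varepsilon\}=\{z_0\}$ (uniqueness of the minimizer), and a decreasing family of nonempty compacta with singleton intersection has diameter tending to $0$; hence $z_\delta\to z_0$. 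For stage (ii): $z_\delta$ is an interior stationary point, so $\nabla f_0(z_\delta)=-\nabla\delta_f(z_\delta)$; subtracting $\nabla f_0(z_0)=0$ and introducing the Taylor remainder $r(z)\coloneqq\nabla f_0(z)-\nabla f_0(z_0)-A(z-z_0)$, which by continuity of $\nabla^2 f_0$ at $z_0$ satisfies $\|r(z)\|_2\le\varepsilon\|z-z_0\|_2$ once $z$ is within a fixed radius of $z_0$, yields $z_\delta-z_0=-A^{-1}\bigl(\nabla\delta_f(z_\delta)+r(z_\delta)\bigr)$. Bounding $\|\nabla\delta_f(z_\delta)\|_2\le\|\nabla\delta_f(z_0)\|_2+\|\delta_f\|_{\mathcal F}\|z_\delta-z_0\|_2\le\|\delta_f\|_{\mathcal F}\bigl(1+\|z_\delta-z_0\|_2\bigr)$ and absorbing the $\|\delta_f\|_{\mathcal F}\|z_\delta-z_0\|_2$ and $\varepsilon\|z_\delta-z_0\|_2$ contributions into the left-hand side (valid after first fixing $\varepsilon$ small relative to $\|A^{-1}\|$ and then taking $\|\delta_f\|_{\mathcal F}$ small, using stage (i)) gives $\|z_\delta-z_0\|_2\le C_0\|\delta_f\|_{\mathcal F}$ for a constant $C_0$ depending only on $A$.

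For stage (iii): feeding $\|z_\delta-z_0\|_2=O(\|\delta_f\|_{\mathcal F})$ back into $z_\delta-z_0=-A^{-1}(\nabla\delta_f(z_\delta)+r(z_\delta))$, with $\nabla\delta_f(z_\delta)=\nabla\delta_f(z_0)+O(\|\delta_f\|_{\mathcal F}\|z_\delta-z_0\|_2)=\nabla\delta_f(z_0)+O(\|\delta_f\|_{\mathcal F}^2)$ and $r(z_\delta)=o(\|z_\delta-z_0\|_2)=o(\|\delta_f\|_{\mathcal F})$, gives the solution expansion $z_\delta-z_0=-A^{-1}\nabla\delta_f(z_0)+o(\|\delta_f\|_{\mathcal F})$. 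For the value, expand $v(f_0+\delta_f)=f_0(z_\delta)+\delta_f(z_\delta)$: a second-order Taylor expansion of $f_0$ at $z_0$ (linear term vanishing since $\nabla f_0(z_0)=0$, remainder $o(\|z_\delta-z_0\|_2^2)=o(\|\delta_f\|_{\mathcal F}^2)$) plus a first-order expansion of $\delta_f$ at $z_0$ (remainder $O(\|\nabla^2\delta_f\|_\infty\|z_\delta-z_0\|_2^2)=O(\|\delta_f\|_{\mathcal F}^3)$) give $v(f_0+\delta_f)=f_0(z_0)+\delta_f(z_0)+\tfrac12(z_\delta-z_0)^\top A(z_\delta-z_0)+\nabla\delta_f(z_0)^\top(z_\delta-z_0)+o(\|\delta_f\|_{\mathcal F}^2)$; substituting the solution expansion (writing $g\coloneqq\nabla\delta_f(z_0)$, so $\|g\|_2\le\|\delta_f\|_{\mathcal F}$), the quadratic term becomes $\tfrac12 g^\top A^{-1}g$ and the linear term becomes $-g^\top A^{-1}g$ up to $o(\|\delta_f\|_{\mathcal F}^2)$ (every cross-product with the $o(\|\delta_f\|_{\mathcal F})$ correction is $o(\|\delta_f\|_{\mathcal F}^2)$), leaving $-\tfrac12 g^\top A^{-1}g$ and hence exactly the claimed expansion with $v(f_0)=f_0(z_0)$.

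The main obstacle, and the step deserving care, is the \emph{uniformity} of the $o(\cdot)$ and $O(\cdot)$ terms: unlike \Cref{corollary: unconstr}, which only gives a directional limit along a fixed $\delta_f$, the conclusion here requires the remainder to be $o(\|\delta_f\|_{\mathcal F}^2)$ with a rate independent of the direction of $\delta_f$. This holds because, after stages (i)--(ii), every remainder is controlled either by the modulus of continuity of $\nabla^2 f_0$ in a fixed neighborhood of $z_0$ (through $r$) or by the universal bounds $\|\nabla\delta_f(z_0)\|_2,\ \sup_z\|\nabla^2\delta_f(z)\|_{\op{F}}\le\|\delta_f\|_{\mathcal F}$ — none of which depends on the direction. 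A secondary point is that the regularity hypotheses (inf-compactness, unique minimizer, positive definite Hessian) are open conditions around $f_0$ in the $\mathcal F$-norm, so they persist along the whole family of small perturbations, and one may assume without loss of generality that $\|\delta_f\|_{\mathcal F}$ is small enough for all of the above estimates to apply simultaneously.
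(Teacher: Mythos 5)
Your argument is correct in substance, but it follows a genuinely different route from the paper's. The paper works along the one-dimensional path $t\mapsto\phi(t)=v(f_0+t\delta_f)$: it invokes \cref{corollary: unconstr} at every $t\in[0,1]$ (which is why the theorem assumes a unique minimizer and positive definite Hessian along the whole path), obtains $\phi'(t)$ and $\phi''(t)$ in closed form, shows $\phi''$ is continuous in $t$ via the stability theorem for optimal solutions (Theorem 5.3 of \cite{shapiro2014lectures}), applies the exact Taylor formula $\phi(1)=\phi(0)+\phi'(0)+\tfrac12\phi''(t')$ for some $t'\in[0,1]$, and finally bounds $\phi''(t')-\phi''(0)=o(\|\delta_f\|_{\mathcal F}^2)$; the solution expansion is obtained analogously. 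You instead work directly with the stationarity equation $\nabla f_0(z_\delta)=-\nabla\delta_f(z_\delta)$, localize, bootstrap to the linear rate $\|z_\delta-z_0\|_2=O(\|\delta_f\|_{\mathcal F})$, and Taylor-expand the value at $z_0$. This buys two things: your proof uses the hypotheses essentially only at $t=0$ and $t=1$ (no uniqueness or positive definiteness at intermediate $t$ is ever invoked), and it makes the direction-uniformity of the $o(\cdot)$ remainders explicit, which the paper's treatment of the terms $R_1,R_2$ leaves more implicit. The cost is that you re-derive by hand the stability and rate facts that the paper imports from its earlier perturbation machinery; note also that, exactly as in the paper's proof, you implicitly take the level $\alpha$ and compact set from the inf-compactness condition to be common across the family of perturbations as $\|\delta_f\|_{\mathcal F}\to0$.

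One step in your stage (i) is not justified as written: the inf-compactness hypothesis provides a level $\alpha$ and a compact set $\overline C$ containing the $\alpha$-sublevel sets of $f_0+t\delta_f$, but it does not assert compactness of the $f_0$-sublevel set at level $f_0(z_0)+2\|\delta_f\|_{\mathcal F}$, which may exceed $\alpha$ (nonemptiness only forces $\alpha\ge f_0(z_0)$, and for nonconvex $f_0$ boundedness of a lower sublevel set does not propagate upward). The repair is immediate and uses the same hypothesis at $t=1$: nonemptiness of $\{z:(f_0+\delta_f)(z)\le\alpha\}$ gives $v(f_0+\delta_f)\le\alpha$, so $z_\delta$ lies in that set and hence in $\overline C$; then $\sup_{z\in\overline C}|\delta_f(z)|\le\|\delta_f\|_{\mathcal F}\to0$ together with uniqueness of $z_0$ (e.g., Theorem 5.3 of \cite{shapiro2014lectures}, as the paper itself uses) yields $z_\delta\to z_0$, after which your stages (ii)--(iii) go through unchanged. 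Finally, your closing remark that the regularity hypotheses are ``open conditions'' in the $\mathcal F$-norm is neither needed (the theorem assumes them along the path) nor obviously true for global uniqueness of minimizers, so it is best dropped.
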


Based on \cref{thm: frechet-diff}, we can bound the approximation errors of the proposed criteria in \cref{sec: approx-crit} by $o(\|f_j - f_0\|^2_\mathcal F)$ for $j = 1, 2$.
Note that the Lipschitzness
condition (condition \ref{cond: thm-apxriskapx-lipschitz} in \cref{thm:apxriskapx}) implies that $\|f_j - f_0\|_\mathcal F = O(\mathcal D_0^2)$. Therefore, the approximation errors of the proposed criteria are $o(\mathcal D_0^2)$.
 See \cref{thm:apxsolapx,thm:apxriskapx} and their proofs in \cref{app-sec: sec2}.

\section{Optimization with Auxiliary Variables}\label{sec: auxiliary-var}
In \cref{ex: portfolio,ex: portfolio-var}, the cost function involves unconstrained auxiliary variables $z^{\text{aux}}$ in addition to the decision variables $z^{\text{dec}}$. 
In this setting, we can do perturbation analysis in two different ways. These two different approaches lead to different approximate criteria that are equivalent under infinitesimal perturbations but give different extrapolations and differ in terms of computational costs. 
For convenience, we focus on unconstrained problems with a generic cost function $c(z^\text{dec}, z^\text{aux}; y)$. We denote the region to be split as $R_0\subseteq\R d$, and its candidate subpartition as $R_0=R_1\cup R_2$, $R_1\cap R_2=\varnothing$. 

\subsubsection*{Re-optimizing  auxiliary variables.} In the first approach, we acknowledge the auxiliary role of $z^{\text{aux}}$ and define $f_j$ by profiling out $z^{\text{aux}}$ first: $f_j(z^{\text{dec}}) = \min_{z^{\text{aux}}} \Eb{c(z^{\text{dec}}, z^{\text{aux}}; Y) \mid X \in R_j}$ for $j = 0, 1, 2$. We assume that for each fixed $z^{\text{dec}}$ value, $\Eb{c(z^{\text{dec}}, z^{\text{aux}}; Y) \mid X \in R_j}$ has a unique minimizer, which we denote as $z^{\text{aux}}_j(z^{\text{dec}})$. We also denote $z^{\text{dec}}_j$ as the minimizer of $f_j(z^{\text{dec}})$ and $z^{\text{aux}}_j$ as $z^{\text{aux}}_j(z^{\text{dec}}_j)$.
The following proposition derives the gradient and Hessian matrix. 
\begin{proposition}\label{prop: profile-gradient}
Consider $f_j(z^{\text{dec}}) = \min_{z^{\text{aux}}} \Eb{c(z^{\text{dec}}, z^{\text{aux}}; Y) \mid X \in R_j}$ for $j = 0, 1, 2$. Suppose that for each $z^{\text{dec}}$ value and $j = 1, 2$, $\Eb{c(z^{\text{dec}}, z^{\text{aux}}; Y) \mid X \in R_j}$ has a unique minimizer $z^{\text{aux}}_j(z^{\text{dec}})$. Moreover, we assume that $f_0(z^{\text{dec}})$ has a unique minimizer $z^{\text{dec}}_0$, and $f_0, f_1, f_2$ are twice continuously differentiable. Then 
\begin{align*}
&\nabla f_j\prns{z^{\text{dec}}_0} = \frac{\partial}{\partial{z^{\text{dec}}}}\Eb{c(z^{\text{dec}}_0, z^{\text{aux}}_j\prns{z^{\text{dec}}_0}; Y) \mid X \in R_j}, \\
&\nabla^2 f_0\prns{z^{\text{dec}}_0}  = \frac{\partial^2}{\prns{\partial{z^{\text{dec}}}}^\top \partial{z^{\text{dec}}}}\Eb{c(z^{\text{dec}}_0, z^{\text{aux}}_0; Y) \mid X \in R_0} - \frac{\partial^2}{\partial{z^{\text{dec}}}\prns{\partial{z^{\text{aux}}}}^\top}\Eb{c(z^{\text{dec}}_0, z^{\text{aux}}_0; Y) \mid X \in R_0}\\
&\braces{\frac{\partial^2}{\partial{z^{\text{aux}}}\prns{\partial{z^{\text{aux}}}}^\top}\Eb{c(z^{\text{dec}}_0, z^{\text{aux}}_0; Y) \mid X \in R_0}}^{-1}\frac{\partial^2}{\partial{z^{\text{aux}}} \prns{\partial{z^{\text{dec}}}}^\top}\Eb{c(z^{\text{dec}}_0, z^{\text{aux}}_0; Y) \mid X \in R_0}.
\end{align*}
\end{proposition}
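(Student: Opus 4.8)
The plan is a standard envelope-theorem-plus-implicit-function-theorem argument. Write $F_j(z^{\text{dec}},z^{\text{aux}})\coloneqq\Eb{c(z^{\text{dec}},z^{\text{aux}};Y)\mid X\in R_j}$ for $j=0,1,2$, so that $f_j(z^{\text{dec}})=F_j(z^{\text{dec}},z^{\text{aux}}_j(z^{\text{dec}}))$ with $z^{\text{aux}}_j(z^{\text{dec}})=\argmin_{z^{\text{aux}}}F_j(z^{\text{dec}},z^{\text{aux}})$, and abbreviate the three Hessian blocks of $F_j$ evaluated at $(z^{\text{dec}}_0,z^{\text{aux}}_j(z^{\text{dec}}_0))$ by $A_j=\tfrac{\partial^2}{\prns{\partial z^{\text{dec}}}^\top\partial z^{\text{dec}}}F_j$, $B_j=\tfrac{\partial^2}{\partial z^{\text{dec}}\prns{\partial z^{\text{aux}}}^\top}F_j$, and $C_j=\tfrac{\partial^2}{\partial z^{\text{aux}}\prns{\partial z^{\text{aux}}}^\top}F_j$ (so $B_j^\top=\tfrac{\partial^2}{\partial z^{\text{aux}}\prns{\partial z^{\text{dec}}}^\top}F_j$ by equality of mixed partials).

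First I would establish that $z^{\text{aux}}_j(\cdot)$ is $C^1$ in a neighborhood of $z^{\text{dec}}_0$. Since $z^{\text{aux}}_j(z^{\text{dec}})$ is the (assumed unique) minimizer of the smooth map $z^{\text{aux}}\mapsto F_j(z^{\text{dec}},z^{\text{aux}})$, it solves the inner stationarity system $\tfrac{\partial}{\partial z^{\text{aux}}}F_j(z^{\text{dec}},z^{\text{aux}})=0$, whose Jacobian in $z^{\text{aux}}$ is the block $C_j$. Given invertibility of $C_j$ at the inner optimizer, the implicit function theorem yields the $C^1$ dependence together with $\tfrac{\partial z^{\text{aux}}_j}{\partial z^{\text{dec}}}=-C_j^{-1}B_j^\top$ (evaluated along the path).

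Next I would get the gradient via the envelope theorem: differentiating $f_j(z^{\text{dec}})=F_j(z^{\text{dec}},z^{\text{aux}}_j(z^{\text{dec}}))$ by the chain rule and using the inner stationarity condition to cancel the term containing $\tfrac{\partial z^{\text{aux}}_j}{\partial z^{\text{dec}}}$ gives $\nabla f_j(z^{\text{dec}})=\tfrac{\partial}{\partial z^{\text{dec}}}F_j(z^{\text{dec}},z^{\text{aux}}_j(z^{\text{dec}}))$; evaluating at $z^{\text{dec}}_0$ is the first displayed identity. Differentiating this once more in $z^{\text{dec}}$, again by the chain rule and now substituting $\tfrac{\partial z^{\text{aux}}_j}{\partial z^{\text{dec}}}=-C_j^{-1}B_j^\top$ from Step 1, yields the Schur-complement form $\nabla^2 f_j(z^{\text{dec}}_0)=A_j-B_jC_j^{-1}B_j^\top$; specializing to $j=0$ and writing the blocks out is exactly the second displayed identity.

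The hard part will be the invertibility of $C_j$ at the inner optimizer: the hypotheses as literally stated (uniqueness of the inner and outer minimizers and $f_j\in C^2$) only force $C_j\succeq 0$, not $C_j\succ 0$, so the implicit function theorem does not apply verbatim. I would close this gap by observing that the nondegeneracy already imposed in \cref{thm:secondorder} — positive definiteness of the full Hessian $\nabla^2 F_0(z_0)$ — makes its principal submatrix $C_0$ positive definite, and that in all the running examples (\cref{ex: portfolio-var,ex: portfolio}) the corresponding inner block is explicitly nonsingular for $j=1,2$ as well; alternatively one can extract the $C^1$-regularity of $z^{\text{aux}}_j(\cdot)$ directly from the assumed $C^2$-regularity of $f_j$ via a local strong-convexity argument. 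Once this regularity is in hand, Steps 2 and 3 are routine chain-rule computations with no further subtlety.
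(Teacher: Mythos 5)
Your proposal is correct and follows essentially the same route as the paper's proof: the envelope theorem (inner first-order condition cancelling the $\tfrac{\partial z^{\text{aux}}_j}{\partial z^{\text{dec}}}$ term) gives the gradient identity, and a further chain-rule differentiation combined with the implicit-function-theorem formula $\tfrac{\partial z^{\text{aux}}_0}{\partial z^{\text{dec}}}=-C_0^{-1}B_0^\top$ gives the Schur-complement expression for $\nabla^2 f_0(z^{\text{dec}}_0)$. Your observation about the invertibility of the inner Hessian block is well taken, but it does not distinguish your argument from the paper's: the paper's proof likewise invokes "the asserted smoothness condition and invertibility condition" and simply treats the nonsingularity of $\tfrac{\partial^2}{\partial z^{\text{aux}}(\partial z^{\text{aux}})^\top}\Eb{c(z^{\text{dec}}_0,z^{\text{aux}}_0;Y)\mid X\in R_0}$ as an implicit hypothesis rather than deriving it.
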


It is straightforward to show that \cref{ex: portfolio,ex: portfolio-var} satisfy conditions in \cref{prop: profile-gradient} under regularity conditions. So we can apply the gradients and Hessian matrix in \cref{prop: profile-gradient} to derive the apx-risk criterion and the apx-soln criterion. 
However, in order to estimate the gradients, we need to compute $z^{\text{aux}}_j\prns{z^{\text{dec}}_0}$ for every candidate split by repeatedly minimizing $\Eb{c(z^{\text{dec}}_0, z^{\text{aux}}; Y) \mid X \in R_j}$ with respect to $z^{\text{aux}}$.
This can be too computationally expensive in practice.  

\subsubsection*{Merging auxiliary variables with decision variables.} In the second way, we merge the auxiliary variables with the decision variables, and define $\tilde f_j(z^{\text{dec}}, z^{\text{aux}}) = \Eb{c(z^{\text{dec}}, z^{\text{aux}}; Y) \mid X \in R_j}$ for $j = 0, 1, 2$. 
The following proposition derives the gradients and Hessian matrix with respect to \emph{both} decision variables and auxiliary variables. 

\begin{proposition}\label{prop: no-profile-grad}
Consider $\tilde f_j(z^{\text{dec}}, z^{\text{aux}}) = \Eb{c(z^{\text{dec}}, z^{\text{aux}}; Y) \mid X \in R_j}$ for $j = 0, 1, 2$. Suppose that $\tilde f_0(z^{\text{dec}}, z^{\text{aux}})$ has a unique minimizer $\prns{z^{\text{dec}}_0, z^{\text{aux}}_0}$, and $\tilde f_0, \tilde f_1, \tilde f_2$ are twice continuously differentiable. Then
\begin{align*}
&\nabla \tilde f_j\prns{z^{\text{dec}}_0, z^{\text{aux}}_0} = 
\begin{bmatrix}
\frac{\partial}{\partial{z^{\text{dec}}}}\Eb{c(z^{\text{dec}}_0, z^{\text{aux}}_0; Y) \mid X \in R_j} \\
\frac{\partial}{\partial{z^{\text{aux}}}}\Eb{c(z^{\text{dec}}_0, z^{\text{aux}}_0; Y) \mid X \in R_j}  
\end{bmatrix}, \\
&\nabla^2 \tilde f_0\prns{z^{\text{dec}}_0, z^{\text{aux}}_0}  = 
\begin{bmatrix}
\frac{\partial^2}{\prns{\partial{z^{\text{dec}}}}^\top \partial{z^{\text{dec}}}}\Eb{c(z^{\text{dec}}_0, z^{\text{aux}}_0; Y) \mid X \in R_0}  & \frac{\partial^2}{\prns{\partial{z^{\text{aux}}}}^\top\partial{z^{\text{dec}}}}\Eb{c(z^{\text{dec}}_0, z^{\text{aux}}_0; Y) \mid X \in R_0} \\
\frac{\partial^2}{{\prns{\partial{z^{\text{dec}}}}^\top \partial z^{\text{aux}}}}\Eb{c(z^{\text{dec}}_0, z^{\text{aux}}_0; Y) \mid X \in R_0} & \frac{\partial^2}{\partial{z^{\text{aux}}}\prns{\partial{z^{\text{aux}}}}^\top}\Eb{c(z^{\text{dec}}_0, z^{\text{aux}}_0; Y) \mid X \in R_0}
\end{bmatrix}.
\end{align*}
\end{proposition}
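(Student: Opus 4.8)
The plan is to observe that \cref{prop: no-profile-grad}, in contrast to the profiled-out statement in \cref{prop: profile-gradient}, is essentially a bookkeeping identity: once we decline to profile out $z^{\text{aux}}$ and instead treat $z=(z^{\text{dec}},z^{\text{aux}})$ as a single decision vector, the gradient and Hessian of $\tilde f_j$ with respect to $z$ are, by the very definition of $\nabla$ and $\nabla^2$ for a function of a block-partitioned argument, the concatenation of the partial gradients and the block matrix of mixed second partials. So the content of the proof is just unpacking these definitions; in particular, no envelope/implicit-function argument (and hence no Schur-complement correction) enters here, unlike in \cref{prop: profile-gradient}.

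Concretely, I would first fix an ordering of coordinates so that $z=(z^{\text{dec}},z^{\text{aux}})\in\R d$, note that by hypothesis $\tilde f_j(z)=\Eb{c(z^{\text{dec}},z^{\text{aux}};Y)\mid X\in R_j}$ is twice continuously differentiable for $j=0,1,2$, and that $\tilde f_0$ attains its unique minimum at $(z^{\text{dec}}_0,z^{\text{aux}}_0)$, so all quantities below are well-defined. The gradient $\nabla\tilde f_j$ is then the vector of partials $\partial\tilde f_j/\partial z_\ell$; grouping the components belonging to $z^{\text{dec}}$ and those belonging to $z^{\text{aux}}$ yields exactly the two-block expression claimed, each block being $\tfrac{\partial}{\partial z^{\text{dec}}}\Eb{\cdot}$ or $\tfrac{\partial}{\partial z^{\text{aux}}}\Eb{\cdot}$ — and since the expectation stays on the outside, there is no interchange-of-derivative-and-expectation step to justify. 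Likewise, $\nabla^2\tilde f_0$ is the matrix of second partials $\partial^2\tilde f_0/(\partial z_\ell\partial z_{\ell'})$, which in block form is precisely the $2\times2$ block matrix in the statement; its off-diagonal blocks are transposes of one another by Schwarz's theorem, which applies because $\tilde f_0\in C^2$.

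The only substantive point worth spelling out is that the hypothesis ``$\tilde f_j$ twice continuously differentiable'' is genuinely satisfied for the running examples where $c(z;y)$ is itself nonsmooth (e.g., \cref{ex: portfolio-var,ex: portfolio}): there it is not inherited from smoothness of $c$ but follows instead from continuity of the conditional distribution of $Y\mid X\in R_j$, exactly as already established in the ``Derivatives in \dots'' continuations of those examples and in \cref{prop: gradient-var-port,prop: gradient-portfolio}. Granting that, the conclusion of \cref{prop: no-profile-grad} is immediate. I do not anticipate any real obstacle; the takeaway to emphasize in the write-up is the contrast with \cref{prop: profile-gradient}, whose Hessian carries the Schur-complement term $\tfrac{\partial^2}{(\partial z^{\text{dec}})^\top\partial z^{\text{dec}}}-\tfrac{\partial^2}{\partial z^{\text{dec}}(\partial z^{\text{aux}})^\top}\prns{\tfrac{\partial^2}{\partial z^{\text{aux}}(\partial z^{\text{aux}})^\top}}^{-1}\tfrac{\partial^2}{\partial z^{\text{aux}}(\partial z^{\text{dec}})^\top}$ forced by the envelope argument, whereas the ``merge'' approach simply uses the naive full block gradient and block Hessian.
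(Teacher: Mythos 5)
Your proposal is correct and matches the paper's treatment: the paper gives no separate argument for this proposition precisely because, once $z^{\text{aux}}$ is merged with $z^{\text{dec}}$, the claimed gradient and Hessian are just the block-partitioned partial derivatives of $\tilde f_j$ itself, with no envelope/implicit-function step and no derivative–expectation interchange needed. Your added remarks (Schwarz symmetry of the off-diagonal blocks, contrast with the Schur-complement Hessian in the profiled-out version) are consistent with the paper's discussion and do not change the substance.
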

Note that approximate criteria based on the formulations in \cref{prop: profile-gradient} and \cref{prop: no-profile-grad} are both legitimate, and they are equivalent under infinitesimal perturbations according to \cref{thm:secondorder}. Using the apx-risk criterion as an example, the following proposition further investigates the relationship between the approximate criterion based on \cref{prop: profile-gradient} and that based on \cref{prop: no-profile-grad}. 

\begin{proposition}\label{prop: aux-var-relation}
Let ${\crit}^\text{apx-risk}(R_1,R_2)$ and $\tilde{\crit}^\text{apx-risk}(R_1,R_2)$ be the apx-risk criterion based on $\{f_0, f_1, f_2\}$ given in \cref{prop: profile-gradient} and $\{\tilde f_0, \tilde f_1, \tilde f_2\}$ given in \cref{prop: no-profile-grad} respectively. Then 
\begin{align*}
&\crit^\text{apx-risk}(R_1,R_2)=-\sum_{j=1,2}p_j \prns{\frac{\partial}{\partial z^{\text{dec}}} \tilde f_j\prns{z^{\text{dec}}_0, z^{\text{aux}}_j\prns{z^{\text{dec}}_0}}}^\top 
\braces{\prns{\nabla^2 \tilde f_0\prns{z^{\text{dec}}_0, z^{\text{aux}}_0}}^{-1}\bracks{z^{\text{dec}}_0, z^{\text{dec}}_0}} \frac{\partial}{\partial z^{\text{dec}}} \tilde f_j\prns{z^{\text{dec}}_0, z^{\text{aux}}_j\prns{z^{\text{dec}}_0}},  \\
&\tilde \crit^\text{apx-risk}(R_1,R_2)=-\sum_{j=1,2}p_j \prns{\frac{\partial}{\partial z^{\text{dec}}} \tilde f_j\prns{z^{\text{dec}}_0, z^{\text{aux}}_0}}^\top 
\braces{\prns{\nabla^2 \tilde f_0\prns{z^{\text{dec}}_0, z^{\text{aux}}_0}}^{-1}\bracks{z^{\text{dec}}_0, z^{\text{dec}}_0}}\frac{\partial}{\partial z^{\text{dec}}} \tilde f_j\prns{z^{\text{dec}}_0, z^{\text{aux}}_0} + \mathcal R,
\end{align*}
where $\prns{\nabla^2 \tilde f_0\prns{z^{\text{dec}}_0, z^{\text{aux}}_0}}^{-1}\bracks{z^{\text{dec}}_0, z^{\text{dec}}_0}$ is the block of the inverse matrix $\prns{\nabla^2 \tilde f_0\prns{z^{\text{dec}}_0, z^{\text{aux}}_0}}^{-1}$ whose rows and columns both correspond to $z^{\text{dec}}_0$, and $\mathcal R$ is an adjustment term that only depends on $\nabla \tilde f_j\prns{z^{\text{dec}}_0, z^{\text{aux}}_0}$ and $\nabla^2 \tilde f_j\prns{z^{\text{dec}}_0, z^{\text{aux}}_0}$. 
\end{proposition}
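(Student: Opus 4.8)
The plan is to derive both identities by substituting the gradient and Hessian formulas of \cref{prop: profile-gradient} and \cref{prop: no-profile-grad} into the explicit form of the apx-risk criterion in \cref{eq:apxriskcrit}, and then invoking the block-matrix inversion lemma. Recall that \cref{eq:apxriskcrit} says that, for \emph{any} base objective $f_0$ with unconstrained minimizer $z_0$ and subregion objectives $f_j$, the apx-risk criterion equals (up to the constant $\tfrac12$) $-\sum_{j=1,2}p_j\,\nabla f_j(z_0)^\top(\nabla^2 f_0(z_0))^{-1}\nabla f_j(z_0)$. So everything reduces to recognizing how the two sets of gradients and Hessians relate.

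First I would treat $\crit^\text{apx-risk}(R_1,R_2)$, which by definition uses the profiled objectives $\{f_0,f_1,f_2\}$ from \cref{prop: profile-gradient} at the minimizer $z^{\text{dec}}_0$. \cref{prop: profile-gradient} supplies the envelope-type identity $\nabla f_j(z^{\text{dec}}_0)=\tfrac{\partial}{\partial z^{\text{dec}}}\tilde f_j(z^{\text{dec}}_0,z^{\text{aux}}_j(z^{\text{dec}}_0))$, and expresses $\nabla^2 f_0(z^{\text{dec}}_0)$ as the Schur complement $A-B^\top D^{-1}B$, where $A,B,D$ are, respectively, the $(z^{\text{dec}},z^{\text{dec}})$, $(z^{\text{aux}},z^{\text{dec}})$, and $(z^{\text{aux}},z^{\text{aux}})$ blocks of $\nabla^2\tilde f_0(z^{\text{dec}}_0,z^{\text{aux}}_0)$. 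By the block-matrix inversion lemma, the inverse of this Schur complement is exactly the $(z^{\text{dec}}_0,z^{\text{dec}}_0)$-block of $(\nabla^2\tilde f_0(z^{\text{dec}}_0,z^{\text{aux}}_0))^{-1}$, i.e. $(\nabla^2 f_0(z^{\text{dec}}_0))^{-1}=(\nabla^2\tilde f_0(z^{\text{dec}}_0,z^{\text{aux}}_0))^{-1}[z^{\text{dec}}_0,z^{\text{dec}}_0]$. Substituting these two facts into \cref{eq:apxriskcrit} yields the first displayed identity verbatim.

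Next I would treat $\tilde\crit^\text{apx-risk}(R_1,R_2)$, which uses the merged objectives $\{\tilde f_0,\tilde f_1,\tilde f_2\}$ from \cref{prop: no-profile-grad} at the joint minimizer $(z^{\text{dec}}_0,z^{\text{aux}}_0)$. Writing $\nabla\tilde f_j(z^{\text{dec}}_0,z^{\text{aux}}_0)=(g^{\text{dec}}_j,g^{\text{aux}}_j)$ in block form and $(\nabla^2\tilde f_0(z^{\text{dec}}_0,z^{\text{aux}}_0))^{-1}$ in $2\times2$ block form with top-left block $P$, off-diagonal block $Q$, and bottom-right block $S$, the quadratic form $\nabla\tilde f_j^\top(\nabla^2\tilde f_0)^{-1}\nabla\tilde f_j$ expands as ${g^{\text{dec}}_j}^\top P\,g^{\text{dec}}_j+2{g^{\text{dec}}_j}^\top Q\,g^{\text{aux}}_j+{g^{\text{aux}}_j}^\top S\,g^{\text{aux}}_j$. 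Since $P=(\nabla^2\tilde f_0)^{-1}[z^{\text{dec}}_0,z^{\text{dec}}_0]$ and $g^{\text{dec}}_j=\tfrac{\partial}{\partial z^{\text{dec}}}\tilde f_j(z^{\text{dec}}_0,z^{\text{aux}}_0)$, the first term is precisely the ``main'' term claimed; collecting the remaining two terms, summed over $j$ with weights $-p_j$, defines the adjustment $\mathcal R$, which depends only on the gradients and Hessian of the $\tilde f_j$'s at $(z^{\text{dec}}_0,z^{\text{aux}}_0)$. This gives the second displayed identity.

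The work is essentially bookkeeping; the one point that needs care is matching the two different evaluation points for the gradients — $z^{\text{aux}}_j(z^{\text{dec}}_0)$, with the auxiliary variable re-optimized, arising in the profiled criterion, versus the common $z^{\text{aux}}_0$ arising in the merged criterion — and keeping straight the block structure of $\nabla^2\tilde f_0$, its Schur complement, and its inverse. One also uses the first-order condition $\tfrac{\partial}{\partial z^{\text{aux}}}\tilde f_0(z^{\text{dec}}_0,z^{\text{aux}}_0)=0$, so that $g^{\text{aux}}_0=0$ and the $j=0$ term is absent from both criteria. The single genuinely load-bearing fact is the precise form of the block-matrix inversion lemma (the top-left block of the inverse equals the inverse of the Schur complement of the bottom-right block), which is standard.
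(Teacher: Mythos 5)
Your proposal is correct and follows essentially the same route as the paper: the paper's proof likewise reduces both identities to the envelope-type gradient identity of \cref{prop: profile-gradient} together with the block-matrix (Schur complement) fact that $\bigl(\nabla^2 f_0(z^{\text{dec}}_0)\bigr)^{-1}$ equals the $(z^{\text{dec}},z^{\text{dec}})$ block of $\bigl(\nabla^2 \tilde f_0(z^{\text{dec}}_0,z^{\text{aux}}_0)\bigr)^{-1}$, with the remaining block terms of the merged quadratic form collected into $\mathcal R$. Your write-up is in fact slightly more careful than the paper's terse verification, which states the Hessian relation without the inverse on the left-hand side.
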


\cref{prop: aux-var-relation} shows that
evaluating $\crit^\text{apx-risk}(R_1,R_2)$ requires computing $z^{\text{aux}}_j\prns{z^{\text{dec}}_0}$ repeatedly for all candidate splits, while 
evaluating $\tilde \crit^\text{apx-risk}(R_1,R_2)$ only requires computing $\prns{z^{\text{dec}}_0, z^{\text{aux}}_0}$ once. 
To compensate for the fact that $\tilde \crit^\text{apx-risk}(R_1,R_2)$
does not re-optimize the decision variable for each candidate split, 
$\tilde \crit^\text{apx-risk}(R_1,R_2)$ also has an additional adjustment term $\mathcal R$.
In \cref{ex: portfolio,ex: portfolio-var}, we use $\tilde \crit^\text{apx-risk}(R_1,R_2)$ to reduce computation cost, as this is the main point of using approximate criteria. 

{\blockedit
\section{Other Related Literature}\label{sec: literature}

\subsubsection*{Applications of tree models in other decision making problems.}
In the CSO problem, given a realization $y$, the effect of decisions $z$ on costs is assumed known (\ie, $c(z;y)$). This may not apply if decisions affect uncertain costs in an a priori unknown way, such as the unknown effect of prices on demand or of pharmacological treatments on health indicators. In these applications, data consists only of observations of the realized costs for a single decision and not counterfactual costs for other decisions, known as partial or bandit feedback, which requires additional identification assumptions such as no unobserved confounders \citep{bertsimas2016power}.
\citet{kallus2017recursive,zhou2018offline} apply tree methods to prescribe from a finite set of interventions based on such data using decision quality rather than prediction error as the splitting criterion. 
Since they consider a small finite number of treatments,
the criterion for each candidate split is rapidly computed by enumeration. 
When decisions are continuous, various works use tree ensembles to regress cost on a decision variable (\eg, Sec. 3 of \citealp{bertsimas2014predictive}, \citealp{ferreira2016analytics}, among others) and then search for the input to optimize the output. This is generally a hard optimization problem, to which \citet{mivsic2020optimization} study mixed-integer optimization approaches. 
\citet{elmachtoub2017practical,feraud2016random} similarly use decision trees and random forests for \emph{online} decision-making in contextual bandit problems. 

\edit{\citet{Chen2020Choice} propose to use trees and forests to nonparametrically model irrational customer choices. Under the forest choice model, \citet{Chen2021Choice} further develops mixed-integer optimization algorithms to find the assortment that maximizes expected revenue. \citet{Ciocan2020Interptable} study optimal stopping problems with applications in option pricing, and propose algorithms to construct approximately optimal tree policies that are easy to interpret. }

\subsubsection*{Applications of perturbation analysis in machine learning.} 
Perturbation analysis studies the impact of slight perturbations to the objective and constraint functions of an optimization problem on the optimal value and optimal solutions, which is the foundation of our approximate splitting criteria.  We refer readers to \cite{perturbation2000} for a general treatment of perturbation analysis for smooth optimization, and to \cite{shapiro2014lectures} for its application in statistical inference for stochastic optimization.
In machine learning, perturbation analysis has been successfully applied to approximate cross-validation for model evaluation and tuning parameter selection. Exact cross-validation randomly splits the data into many folds, and then repeatedly solves empirical risk minimization (ERM) using all but one fold data, which can be computationally prohibitive if the number of folds is large. 
In the context of parametric models, recent works propose to solve the ERM problem only once with full data, and then apply a one-step Newton update to the full-data estimate to approximate the estimate when each fold of data is  excluded \citep[\eg,][]{wilson2020approximate,stephenson20a,giordano19a}.  
\cite{koh17a} employ similar ideas to quantify the importance of a data point in model training by approximating the parameter estimate change if the data distribution is infinitesimally perturbed towards the data point of interest. 
All of these works only focus on unconstrained optimization problems.}

\section{Supplementary Lemmas and Propositions}\label{sec: supplement}

\begin{lemma}[Convergence of $\hat z_0$]\label{lemma: consistency-est-sol}
Suppose the following conditions hold:
\begin{enumerate} 
\item $\sup_{z}|\widehat{p_0f_0}(z) - p_0f_0(z)| \overset{a.s.}{\longrightarrow} 0$ as $n \to \infty$. 
\item $f_0$ is a continuous function and $f_0(z)$ has a unique minimizer $z_0$ over $\mathbb R^d$. 
\item For large enough $n$, $\argmin_z \widehat{p_0f_0}(z)$ is almost surely a nonempty and uniformly bounded set.
\end{enumerate}
Then $\hat z_0 \overset{a.s.}{\longrightarrow} z_0$ as $n \to \infty$. 
\end{lemma}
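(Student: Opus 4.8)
This is a standard argmin-consistency (M-estimator consistency) result, and the plan is to invoke the classical argument: uniform convergence of the objective plus a well-separated unique minimizer forces the minimizers to converge. First I would note that by condition~1, $\sup_z |\widehat{p_0f_0}(z) - p_0f_0(z)| \to 0$ almost surely, so it suffices to argue on the almost-sure event where this convergence holds and where $\argmin_z \widehat{p_0f_0}(z)$ is eventually nonempty and uniformly bounded (condition~3). Fix such a sample path. Let $B$ be a fixed bounded set that eventually contains $\hat z_0$, and enlarge it if necessary so that $z_0 \in B$; without loss of generality we may take $\bar B = \operatorname{cl}(B)$ compact. Since $p_0 f_0$ is continuous (condition~2), it attains its infimum over $\bar B$, and because $z_0$ is the \emph{unique} global minimizer of $f_0$ over $\mathbb{R}^d$ it is in particular the unique minimizer over $\bar B$.

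The core of the argument is the standard well-separation step. Fix $\varepsilon > 0$ and consider $\bar B \setminus B_\varepsilon(z_0)$, which is compact; by continuity and uniqueness of the minimizer, $\delta \coloneqq \inf_{z \in \bar B \setminus B_\varepsilon(z_0)} p_0 f_0(z) - p_0 f_0(z_0) > 0$. Now for $n$ large enough that $\sup_z |\widehat{p_0f_0}(z) - p_0f_0(z)| < \delta/3$ and that $\hat z_0 \in \bar B$, suppose for contradiction $\|\hat z_0 - z_0\|_2 \ge \varepsilon$. Then
\begin{align*}
\widehat{p_0f_0}(\hat z_0) &\ge p_0 f_0(\hat z_0) - \delta/3 \ge p_0 f_0(z_0) + \delta - \delta/3 \\
&= p_0 f_0(z_0) + 2\delta/3 \ge \widehat{p_0f_0}(z_0) + 2\delta/3 - \delta/3 = \widehat{p_0f_0}(z_0) + \delta/3,
\end{align*}
contradicting the fact that $\hat z_0$ minimizes $\widehat{p_0f_0}$ (so $\widehat{p_0f_0}(\hat z_0) \le \widehat{p_0f_0}(z_0)$). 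Hence $\|\hat z_0 - z_0\|_2 < \varepsilon$ for all large $n$ on this sample path. Since $\varepsilon$ was arbitrary, $\hat z_0 \to z_0$ on this sample path, and since the set of such sample paths has probability one, $\hat z_0 \overset{a.s.}{\to} z_0$.

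The only subtlety — and the one place care is needed rather than the step being purely mechanical — is the interplay between conditions~1 and~3: the uniform convergence in condition~1 is over all of $\mathbb{R}^d$, which is stronger than uniform convergence on compacta and directly supplies the bound $\sup_z|\widehat{p_0f_0} - p_0f_0| < \delta/3$ without having to first localize $\hat z_0$; condition~3 then handles the fact that $p_0 f_0$ need not be coercive, by guaranteeing $\hat z_0$ does not escape to infinity. I would make sure to state explicitly that these three almost-sure events (the uniform convergence, the eventual boundedness, and the implied membership $\hat z_0 \in \bar B$) can be intersected into one probability-one event on which the deterministic argument runs. I do not anticipate any real obstacle beyond bookkeeping the ``eventually / for large $n$'' quantifiers correctly.
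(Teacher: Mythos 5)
Your proof is correct. The paper itself disposes of this lemma in one line by citing Theorem 5.3 of \cite{shapiro2014lectures}, which is precisely the SAA/M-estimation consistency theorem (uniform convergence of the objective plus conditions preventing the empirical minimizers from escaping to infinity imply convergence to the unique population minimizer). What you have written is a self-contained elementary proof of that cited fact in this setting: the standard well-separation argument, with the separation constant $\delta>0$ obtained from continuity and uniqueness of the minimizer on the compact set $\bar B\setminus B_\varepsilon(z_0)$, and the three-epsilon contradiction using that $\hat z_0$ minimizes $\widehat{p_0f_0}$. Your handling of the interplay between conditions 1 and 3 is exactly right: the globally uniform convergence removes any need to localize before comparing objective values, while condition 3 supplies the fixed bounded set $B$ that condition 1 alone cannot (since $p_0f_0$ need not be coercive), and intersecting the finitely many probability-one events makes the pathwise argument legitimate. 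Two cosmetic remarks: you implicitly use that $p_0>0$ so that $p_0f_0$ inherits the unique minimizer $z_0$ from $f_0$ (the paper assumes $\Prb{X\in R_j}>0$ elsewhere, and its own statement glosses this in the same way), and the degenerate case $\bar B\setminus B_\varepsilon(z_0)=\varnothing$ is trivially consistent with your conclusion since then $\hat z_0\in\bar B\subseteq B_\varepsilon(z_0)$; neither affects correctness. The trade-off between the two routes is the usual one: the paper's citation is shorter and delegates the bookkeeping, while your argument is transparent and verifiable without consulting the reference.
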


\begin{proposition}[Estimation for Squared Cost Function.]\label{prop: square-cost}
When $c(z; y) = \frac{1}{2}\|z - y\|^2$, 
$$
\frac1{n}\sum_{i=1}^n\indic{X_i\in R_0}c(\hat z_0;Y_i)+\frac12\hat{\crit}^\text{apx-risk}(R_1,R_2)=\hat{\crit}^\text{apx-soln}(R_1,R_2)=\sum_{j=1,2}\frac{n_j}{2n}\sum_{l = 1}^d \op{Var}(\{Y_{i, l}:X_i\in R_j,i\leq n\}),
$$
\end{proposition}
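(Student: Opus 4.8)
The plan is to carry out the explicit computation of the three estimated quantities $\frac{1}{n}\sum_i \indic{X_i\in R_0}c(\hat z_0;Y_i)$, $\hat{\crit}^\text{apx-risk}(R_1,R_2)$, and $\hat{\crit}^\text{apx-soln}(R_1,R_2)$ for the quadratic cost $c(z;y)=\frac12\|z-y\|_2^2$, using the particular estimators prescribed in the ``Estimation with Smooth Cost'' example. First I would record the basic identities: since $\nabla c(z;y)=z-y$ and $\nabla^2 c(z;y)=I_d$, the estimators become $\hat H_0=\hat H_0(\hat z_0)=\frac{1}{n_0}\sum_i\indic{X_i\in R_0}I_d=I_d$ (so $\hat H_0^{-1}=I_d$), $\hat h_j=\frac{1}{n_j}\sum_i\indic{X_i\in R_j}(\hat z_0-Y_i)=\hat z_0-\bar Y_j$ where $\bar Y_j$ is the per-coordinate sample mean of $\{Y_i: X_i\in R_j\}$. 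Also $\hat z_0$ solving \cref{eq: z0hat} is just the sample mean $\bar Y_0$ of $\{Y_i:X_i\in R_0\}$ (the unconstrained minimizer of $\frac1n\sum_i\indic{X_i\in R_0}\frac12\|z-Y_i\|_2^2$), so $\hat h_j=\bar Y_0-\bar Y_j$.

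Next I would substitute these into the two estimated criteria. For the risk criterion, $\hat{\crit}^\text{apx-risk}(R_1,R_2)=-\sum_{j=1,2}\frac{n_j}{n}\hat h_j^\top\hat H_0^{-1}\hat h_j=-\sum_{j=1,2}\frac{n_j}{n}\|\bar Y_0-\bar Y_j\|_2^2$. For the solution criterion, $\hat{\crit}^\text{apx-soln}(R_1,R_2)=\sum_{j=1,2}\frac1n\sum_i\indic{X_i\in R_j}\frac12\|\hat z_0-\hat H_0^{-1}\hat h_j-Y_i\|_2^2=\sum_{j=1,2}\frac1n\sum_i\indic{X_i\in R_j}\frac12\|\bar Y_j-Y_i\|_2^2$, because $\hat z_0-\hat h_j=\bar Y_0-(\bar Y_0-\bar Y_j)=\bar Y_j$. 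The last expression is, coordinatewise, $\sum_{j=1,2}\frac{n_j}{2n}\sum_{l=1}^d\op{Var}(\{Y_{i,l}:X_i\in R_j\})$, which is the stated common value. It remains to check the two scalar identities linking these: first, $\hat{\crit}^\text{apx-soln}(R_1,R_2)$ equals $\sum_{j=1,2}\frac{n_j}{2n}\sum_l\op{Var}$, which is immediate from the displayed rewriting; second, $\frac1n\sum_i\indic{X_i\in R_0}c(\hat z_0;Y_i)+\frac12\hat{\crit}^\text{apx-risk}(R_1,R_2)$ also equals it. For this I would invoke the within-region ANOVA / variance-decomposition identity: for each coordinate $l$, $\sum_i\indic{X_i\in R_0}(Y_{i,l}-\bar Y_{0,l})^2=\sum_{j=1,2}\sum_i\indic{X_i\in R_j}(Y_{i,l}-\bar Y_{j,l})^2+\sum_{j=1,2}n_j(\bar Y_{j,l}-\bar Y_{0,l})^2$; dividing by $2n$ and summing over $l$ gives exactly $\frac1n\sum_i\indic{X_i\in R_0}c(\hat z_0;Y_i)=\sum_{j=1,2}\frac{n_j}{2n}\sum_l\op{Var}(\{Y_{i,l}:X_i\in R_j\})+\frac{1}{2n}\sum_{j=1,2}n_j\|\bar Y_j-\bar Y_0\|_2^2$, and the last term is $-\frac12\hat{\crit}^\text{apx-risk}(R_1,R_2)$, so rearranging yields the claimed equality.

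Since the whole statement is a sequence of elementary algebraic substitutions followed by one classical variance-decomposition identity, there is no real obstacle; the only point requiring a touch of care is bookkeeping the factors of $\frac12$ and $\frac1n$ versus $\frac1{n_j}$ consistently (note the proposition writes $\frac12\hat{\crit}^\text{apx-risk}$ rather than $\hat{\crit}^\text{apx-risk}$, unlike the informal statement in \cref{sec: est-approx-crit}, so I would be careful to match the normalization in \cref{eq: est-approx-risk-unconstr,eq: est-approx-sol-unconstr}) and confirming that the minimizer $\hat z_0$ of the region-$R_0$ sample objective is indeed $\bar Y_0$ so that $\hat h_j=\bar Y_0-\bar Y_j$. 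I would present the argument as: (i) evaluate $\hat z_0,\hat H_0,\hat h_j$ explicitly; (ii) simplify $\hat{\crit}^\text{apx-soln}$ to the within-region variance sum; (iii) simplify $\hat{\crit}^\text{apx-risk}$ to $-\sum_j\frac{n_j}{n}\|\bar Y_0-\bar Y_j\|_2^2$; (iv) apply the ANOVA identity to close the loop.
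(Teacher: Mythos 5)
Your proposal is correct and follows essentially the same route as the paper's proof: compute $\hat z_0=\bar Y_0$, $\hat H_0=I$, $\hat h_j=\bar Y_0-\bar Y_j$, read off $\hat{\crit}^\text{apx-soln}$ as the within-region variance sum, and close with the within-region variance (ANOVA) decomposition, which the paper carries out by directly expanding $(Y_{i,l}-\hat z_{0,l})^2=(Y_{i,l}-\hat z_{j,l}+\hat z_{j,l}-\hat z_{0,l})^2$. Your attention to the $\tfrac12$ normalization of the estimated apx-risk criterion relative to \cref{eq: est-approx-risk-unconstr} is exactly the right bookkeeping point and matches the stated identity.
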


\begin{proposition}[Gradient and Hessian for \cref{ex: portfolio-var}]\label{prop: gradient-var-port}
For the cost function $c(z; y)$ in \cref{eq:portfolio-var} and $f_j(z)= \Eb{c(z;Y) \mid {X\in R_j}}$, we have
\begin{align*}
\nabla f_j(z_0) &= 
2\begin{bmatrix}
\Eb{YY^\top \mid {X \in R_j}}z_{0, 1:d} - \Eb{Y \mid X \in R_j}z_{0, d+1} \\ 
z^\top_{0, 1:d}\prns{\Eb{Y \mid X \in R_0} - \Eb{Y \mid X \in R_j}}
\end{bmatrix}, \\
\nabla^2 f_0(z_0) &=
2\begin{bmatrix}
\Eb{YY^\top \mid {X \in R_0}} & -\Eb{Y \mid X \in R_0} \\
-\Eb{Y^\top \mid X \in R_0} & 1
\end{bmatrix}.
\end{align*}
\end{proposition}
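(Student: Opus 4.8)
This proposition is a direct differentiation computation whose only non-mechanical ingredient is one substitution exploiting first-order optimality of $z_0$. First I would differentiate the cost $c(z;y) = \prns{y^\top z_{1:d} - z_{d+1}}^2$ coordinatewise. Writing $u \coloneqq y^\top z_{1:d} - z_{d+1}$, the chain rule gives $\nabla_{z_{1:d}} c = 2uy$ and $\partial_{z_{d+1}} c = -2u$, and differentiating once more,
\begin{align*}
\nabla c(z;y) = 2\begin{bmatrix} \prns{y^\top z_{1:d} - z_{d+1}} y \\ z_{d+1} - y^\top z_{1:d} \end{bmatrix}, \qquad \nabla^2 c(z;y) = 2\begin{bmatrix} yy^\top & -y \\ -y^\top & 1 \end{bmatrix},
\end{align*}
where the Hessian is constant in $z$.

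Next I would pass to $f_j(z) = \Eb{c(z;Y)\mid X \in R_j}$. Since $c$ is polynomial (hence smooth) in $z$ and, on a bounded neighborhood of $z_0$, both $\nabla c(z;Y)$ and $\nabla^2 c(z;Y)$ are dominated by an integrable function of $Y$ (a second-moment condition on $Y\mid X\in R_j$ suffices), this is exactly the smooth case of \cref{ex: smooth}, so differentiation and conditional expectation interchange: $\nabla f_j(z) = \Eb{\nabla c(z;Y)\mid X \in R_j}$ and $\nabla^2 f_0(z) = \Eb{\nabla^2 c(z;Y)\mid X \in R_0}$. The Hessian identity follows immediately, since $\nabla^2 c$ does not depend on $z$, yielding the stated $\nabla^2 f_0(z_0)$. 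For the gradient, evaluating at $z_0$ gives
\begin{align*}
\nabla f_j(z_0) = 2\begin{bmatrix} \Eb{YY^\top \mid X \in R_j}z_{0,1:d} - \Eb{Y\mid X\in R_j}z_{0,d+1} \\ z_{0,d+1} - z_{0,1:d}^\top\Eb{Y\mid X\in R_j} \end{bmatrix}.
\end{align*}

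The one step that is not pure bookkeeping is simplifying the last coordinate. Because $z_0$ minimizes $f_0$ over the unconstrained $\mathbb R^{d+1}$ (the $\Z=\R{d}$ regime of \cref{sec: unconstr} with the auxiliary variable appended), we have $\nabla f_0(z_0) = 0$; reading off its last coordinate gives $z_{0,d+1} = z_{0,1:d}^\top \Eb{Y \mid X \in R_0}$. Substituting this into the last coordinate of the display above converts $z_{0,d+1} - z_{0,1:d}^\top\Eb{Y\mid X\in R_j}$ into $z_{0,1:d}^\top\prns{\Eb{Y\mid X\in R_0} - \Eb{Y\mid X\in R_j}}$, which is precisely the asserted form of $\nabla f_j(z_0)$. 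I would close with the invertibility remark: the bordered matrix $\nabla^2 f_0(z_0)$ has determinant (up to the constant factor) equal to $\det\var\prns{Y\mid X\in R_0}$, obtained by taking the Schur complement of its bottom-right $1$ entry and using $\Eb{YY^\top\mid X\in R_0} - \Eb{Y\mid X\in R_0}\Eb{Y^\top\mid X\in R_0} = \var\prns{Y\mid X\in R_0}$. There is no real obstacle here; the only thing to be careful about is remembering to invoke first-order optimality on the auxiliary coordinate rather than reporting the gradient in its unsimplified form.
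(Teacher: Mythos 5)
Your proof is correct and follows essentially the same route as the paper: compute $\nabla c$ and $\nabla^2 c$ pointwise, interchange differentiation with the conditional expectation (the smooth case of \cref{ex: smooth}), and use optimality of the auxiliary coordinate, $z_{0,d+1}=z_{0,1:d}^\top\Eb{Y\mid X\in R_0}$, to rewrite the last gradient entry. The Schur-complement remark on invertibility is a nice addition that matches the paper's surrounding discussion but is not needed for the proposition itself.
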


\begin{proposition}[Gradient and Hessian for \cref{ex: portfolio}]\label{prop: gradient-portfolio}
Consider the cost function $c(z; y)$ in \cref{eq:portfolio} and $f_j(z)= \Eb{c(z;Y) \mid {X\in R_j}}$.
If $Y$ has a continuous density function \edit{and $z_0 \ne 0$}, then 
\begin{align*}
&\nabla f_j(z_0) = 
\frac{1}{\alpha}
\begin{bmatrix}
-\Eb{Y\indic{Y^\top z_{0, 1:d} \le q^{\alpha}_0(Y^\top z_{0, 1:d})} \mid X \in R_j} \\
 \Prb{q^{\alpha}_0(Y^\top z_{0, 1:d}) - Y^\top z_{0, 1:d} \ge 0 \mid X \in R_j} - \alpha
\end{bmatrix}, \\
&\nabla^2 f_0(z_0) = 
\frac{\mu_{0}\prns{q^{\alpha}_0(Y^\top z_{0, 1:d})}}{\alpha}
\begin{bmatrix}
\Eb{YY^\top \mid Y^\top z_{0, 1:d} = q^{\alpha}_0(Y^\top z_{0, 1:d}), X \in R_0} &  -\Eb{Y \mid Y^\top z_{0, 1:d} = q^{\alpha}_0(Y^\top z_{0, 1:d}), X \in R_0}  \\
-\Eb{Y^\top \mid Y^\top z_{0, 1:d} = q^{\alpha}_0(Y^\top z_{0, 1:d}), X \in R_0} & 1
\end{bmatrix},
\end{align*} 
where $q^{\alpha}_0(Y^\top z_{0, 1:d})$ is the $\alpha$-quantile of $Y^\top z_{0, 1:d}$ given $X \in R_0$ and $\mu_0$ is the density function of $Y^\top z_{0, 1:d}$ given $X \in R_0$.

If further $Y \mid X \in R_0$ has Gaussian distribution with mean $\mu_0$ and covariance matrix $\Sigma_0$, then 
\begin{align*}
&\Eb{Y \mid Y^\top z_{0, 1:d} = q^{\alpha}_0(Y^\top z_{0, 1:d}), X \in R_0} = m_0 + \Sigma_0 z_{0, 1:d} \prns{z_{0, 1:d}^\top \Sigma_0 z_{0, 1:d}}^{-1}(q^{\alpha}_0(Y^\top z_{0}) - m_0^\top z_{0, 1:d}), \\ 
&\var\prns{Y \mid Y^\top z_{0, 1:d} = q^{\alpha}_0(Y^\top z_{0, 1:d}), X \in R_0} = \Sigma_0 - \Sigma_0 z_{0, 1:d} \prns{z_{0, 1:d}^\top \Sigma_0 z_{0, 1:d}}^{-1}z_{0, 1:d}^\top \Sigma_0.
\end{align*}
\end{proposition}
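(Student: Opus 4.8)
\textbf{Proof proposal for \cref{prop: gradient-portfolio}.}

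The plan is to compute the gradient $\nabla f_j(z_0)$ and Hessian $\nabla^2 f_0(z_0)$ by interchanging differentiation and expectation, justified by the continuity of the density of $Y$, and then to specialize to the Gaussian case using standard conditional-distribution formulas. Recall $c(z;y)=\frac1\alpha\max\{z_{d+1}-y^\top z_{1:d},0\}-z_{d+1}$, so $f_j(z)=\frac1\alpha\Eb{\max\{z_{d+1}-Y^\top z_{1:d},0\}\mid X\in R_j}-z_{d+1}$.

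First I would handle the gradient. Writing $\overline Y=Y^\top z_{1:d}$ and noting $\max\{z_{d+1}-\overline Y,0\}$ is convex and piecewise linear in $z$, its subdifferential is $\indic{z_{d+1}-\overline Y\ge 0}\cdot(e_{d+1}-(Y,0))$ off the kink $\{z_{d+1}=\overline Y\}$. Since $Y$ has a continuous density and $z_0\ne 0$, the event $\{\overline Y_0=q^\alpha_0(\overline Y_0)\}$ has probability zero, so $f_j$ is differentiable at $z_0$ and we may differentiate under the expectation (dominated convergence, using $\|Y\|_2$ as an envelope for the difference quotients, assumed integrable). Componentwise this gives $\nabla_{1:d}f_j(z_0)=-\frac1\alpha\Eb{Y\indic{\overline Y_0\le q^\alpha_0(\overline Y_0)}\mid X\in R_j}$ (using that at the optimum $z_{0,d+1}$ satisfies $\Prb{\overline Y_0\le z_{0,d+1}\mid X\in R_0}=\alpha$, so $z_{0,d+1}=q^\alpha_0(\overline Y_0)$, which I would record as a preliminary observation from the first-order condition for $f_0$), and $\nabla_{d+1}f_j(z_0)=\frac1\alpha\Prb{\overline Y_0\le q^\alpha_0(\overline Y_0)\mid X\in R_j}-1$; rearranging the last term as $\frac1\alpha(\Prb{q^\alpha_0(\overline Y_0)-\overline Y_0\ge 0\mid X\in R_j}-\alpha)$ yields the stated form.

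Next I would compute the Hessian of $f_0$. The map $z\mapsto\Eb{\indic{z_{d+1}-\overline Y_0\ge 0}\cdot(e_{d+1}-(Y,0))\mid X\in R_0}$ must be differentiated once more; the nonsmoothness now concentrates on the indicator, so the derivative produces a surface term supported on $\{\overline Y_0=z_{d+1}\}$, which evaluated at $z_0$ becomes a conditional expectation given $\overline Y_0=q^\alpha_0(\overline Y_0)$ weighted by the density $\mu_0(q^\alpha_0(\overline Y_0))$. Concretely, differentiating $g(z):=\Eb{Y_\ell\indic{z_{d+1}-Y^\top z_{1:d}\ge0}\mid X\in R_0}$ and using the coarea/change-of-variables identity $\frac{\partial}{\partial z_k}\Eb{\psi(Y)\indic{Y^\top z_{1:d}\le z_{d+1}}}=\mp\,\mu_{\overline Y}(z_{d+1})\Eb{\psi(Y)Y_k\mid \overline Y=z_{d+1}}$ (sign depending on whether $k\le d$ or $k=d+1$), one assembles the $(d+1)\times(d+1)$ block matrix with $(1{:}d,1{:}d)$ block $\Eb{YY^\top\mid\overline Y_0=q^\alpha_0(\overline Y_0),X\in R_0}$, cross blocks $\mp\Eb{Y\mid\cdots}$, and bottom-right entry $1$, all scaled by $\mu_0(q^\alpha_0(\overline Y_0))/\alpha$, matching the claim. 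Finally, for the Gaussian specialization I would simply invoke the fact that if $Y\mid X\in R_0\sim\mathcal N(m_0,\Sigma_0)$ then $Y\mid \overline Y_0=t,X\in R_0$ is Gaussian with mean $m_0+\Sigma_0 z_{0,1:d}(z_{0,1:d}^\top\Sigma_0 z_{0,1:d})^{-1}(t-m_0^\top z_{0,1:d})$ and covariance $\Sigma_0-\Sigma_0 z_{0,1:d}(z_{0,1:d}^\top\Sigma_0 z_{0,1:d})^{-1}z_{0,1:d}^\top\Sigma_0$ (regression of a jointly Gaussian vector on one linear combination), plugging in $t=q^\alpha_0(\overline Y_0)$.

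The main obstacle is making the second differentiation rigorous: the Hessian of $f_0$ involves differentiating an expectation of an indicator, which is only legitimate because of the assumed continuous density, and one must carefully justify the surface/coarea term rather than naively interchanging $\nabla^2$ and $\E$. I would handle this either via a direct difference-quotient argument localizing near the level set $\{\overline Y_0=q^\alpha_0(\overline Y_0)\}$ and using continuity of $\mu_0$ and of the conditional moment maps, or by conditioning first on $\overline Y_0$ (writing $f_0$ as a one-dimensional integral over the distribution of $\overline Y_0$ with the inner conditional moments) and differentiating the resulting smooth one-dimensional integral. The requirement $z_0\ne 0$ is exactly what guarantees $\overline Y_0$ has a genuine density rather than a point mass, so it must be invoked at the start.
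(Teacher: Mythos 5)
Your proposal is correct and follows essentially the same route as the paper: the paper obtains the gradient by differentiating under the expectation (citing Lemma 3.1 of Hong 2009, which is where the continuous-density and $z_0\ne 0$ assumptions enter, exactly as you note), computes the Hessian by differentiating the iterated integral over the joint density of $\bigl(\sum_{l'\ne l}z_{l'}Y_{l'},\,Y_l\bigr)$ so that the indicator yields the density-weighted conditional moments on the level set $\{Y^\top z_{0,1:d}=q^\alpha_0\}$ (your coarea/surface-term identity is the same calculation), and finishes the Gaussian case with the standard conditional mean/covariance formulas for a jointly Gaussian vector. Your preliminary observation that $z_{0,d+1}=q^\alpha_0(\overline Y_0)$ from the first-order condition is also used implicitly in the paper's proof, so no gap remains.
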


\begin{lemma}[Sufficient conditions for \cref{thm:critconverge}]\label{lemma: suff-cond-crit-converg}
Under conditions in \cref{lemma: consistency-est-sol}, if we further assume the following conditions:
\begin{enumerate}
\item $\Prb{X \in R_j} > 0$ for $j = 0, 1, 2$; 
\item $\nabla^2 f_0(z)$ is continuous, and $\nabla^2 f_0(z_0)$ is invertible;
\item \edit{there exist} a compact neighborhood $\mathcal N$ around $z_0$ such that $\sup_{z \in \mathcal N}\|\hat H_0(z) - \nabla^2 f_{0}(z)\| = o_p(1)$, $\sup_{z \in \mathcal N}\|\hat h_j(z) - \nabla f_j(z)\| = O_p(n^{-1/2})$, and $\{\indic{X_i \in R_0}c(z; Y_i): z \in \mathcal N\}$ is a Donsker class; 
\item conditions in \cref{lemma: consistency-est-sol} hold; 
\end{enumerate}
Then $\|\hat H^{-1}_0(\hat z_0) - \prns{\nabla^2 f_{0}(z_0)}^{-1}\| = o_p(1)$, $\|\hat h_j(\hat z_0) - \nabla f_j(z_0)\| = O_p(n^{-1/2})$ for $j = 1, 2$.
\end{lemma}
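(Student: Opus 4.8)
The plan is to reduce the statement to three ingredients: (i) consistency $\hat z_0 \overset{a.s.}{\to} z_0$, which is exactly \cref{lemma: consistency-est-sol} (whose hypotheses are assumed here), (ii) a parametric rate $\|\hat z_0 - z_0\| = O_p(n^{-1/2})$, extracted from the Donsker condition together with the positive-definiteness of $\nabla^2 f_0(z_0)$, and (iii) two triangle-inequality decompositions that split the error ``estimating the map at the wrong point'' from the error ``plugging in a slightly wrong point''. First I would fix notation: since $\mathcal N$ is a neighborhood of $z_0$, the event $A_n = \{\hat z_0 \in \mathcal N\}$ has $\Prb{A_n} \to 1$ by (i), and all bounds below are asserted on $A_n$, which is harmless for $o_p$/$O_p$ claims.

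For the Hessian I would write
\[
\hat H_0(\hat z_0) - \nabla^2 f_0(z_0) = \bigl(\hat H_0(\hat z_0) - \nabla^2 f_0(\hat z_0)\bigr) + \bigl(\nabla^2 f_0(\hat z_0) - \nabla^2 f_0(z_0)\bigr).
\]
On $A_n$ the first bracket has norm at most $\sup_{z\in\mathcal N}\|\hat H_0(z) - \nabla^2 f_0(z)\| = o_p(1)$ (condition 3), while the second bracket tends to $0$ a.s. by continuity of $\nabla^2 f_0$ (condition 2) combined with $\hat z_0 \overset{a.s.}{\to} z_0$. Hence $\hat H_0(\hat z_0) \overset{p}{\to} \nabla^2 f_0(z_0)$, which is invertible. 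Since matrix inversion is continuous on the open set of invertible matrices, $\hat H_0(\hat z_0)$ is invertible with probability tending to one, and the continuous mapping theorem yields $\hat H_0^{-1}(\hat z_0) \overset{p}{\to} (\nabla^2 f_0(z_0))^{-1}$, i.e. $\|\hat H_0^{-1}(\hat z_0) - (\nabla^2 f_0(z_0))^{-1}\| = o_p(1)$; on the null event where $\hat H_0(\hat z_0)$ is singular one defines the inverse arbitrarily.

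For the gradients I would first establish the rate. A second-order Taylor expansion of $p_0 f_0$ at $z_0$ and positive-definiteness of $\nabla^2 f_0(z_0)$ give constants $c,\delta>0$ with $p_0 f_0(z) - p_0 f_0(z_0) \ge c\|z - z_0\|^2$ for $\|z - z_0\| \le \delta$. Since $\{\indic{X\in R_0}c(z;Y): z\in\mathcal N\}$ is Donsker, the centered empirical process $z \mapsto \sqrt n\bigl(\widehat{p_0f_0}(z) - p_0 f_0(z)\bigr)$ is asymptotically tight in $\ell^\infty(\mathcal N)$ with asymptotically equicontinuous increments, which supplies exactly the modulus-of-continuity control needed to run the standard rate theorem for M-estimators (using also $\widehat{p_0f_0}(\hat z_0) \le \widehat{p_0f_0}(z_0)$ and $\hat z_0 \overset{a.s.}{\to} z_0$); the conclusion is $\sqrt n\,\|\hat z_0 - z_0\| = O_p(1)$. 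Then for $j = 1, 2$ I decompose
\[
\hat h_j(\hat z_0) - \nabla f_j(z_0) = \bigl(\hat h_j(\hat z_0) - \nabla f_j(\hat z_0)\bigr) + \bigl(\nabla f_j(\hat z_0) - \nabla f_j(z_0)\bigr),
\]
where on $A_n$ the first bracket has norm at most $\sup_{z\in\mathcal N}\|\hat h_j(z) - \nabla f_j(z)\| = O_p(n^{-1/2})$ by assumption, and the second bracket is $O(\|\hat z_0 - z_0\|) = O_p(n^{-1/2})$ because $\nabla f_j$ is locally Lipschitz near $z_0$ (it is $C^1$ there under the paper's running smoothness of $\Eb{c(z;Y)\mid X}$). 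Adding the two bounds gives $\|\hat h_j(\hat z_0) - \nabla f_j(z_0)\| = O_p(n^{-1/2})$, completing the argument.

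The main obstacle is step (ii): carefully deducing the $n^{-1/2}$ rate for $\hat z_0$ from the Donsker hypothesis, i.e. verifying the precise hypotheses of the M-estimator rate theorem — the quadratic minorization of $p_0 f_0 - p_0 f_0(z_0)$ (immediate from the positive-definite Hessian) and an empirical-process modulus $\phi_n(\cdot)$ compatible with $r_n = \sqrt n$ (which is precisely what the Donsker property delivers via asymptotic equicontinuity). A secondary, purely bookkeeping point is guaranteeing enough regularity of $\nabla f_j$ near $z_0$ for the Lipschitz plug-in bound; this is automatic if one carries along the twice-continuous-differentiability of $\Eb{c(z;Y)\mid X}$ used elsewhere in the paper, but should be listed as a hypothesis if the lemma is to be fully self-contained.
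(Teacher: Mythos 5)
Your overall architecture coincides with the paper's: consistency from \cref{lemma: consistency-est-sol} places $\hat z_0$ inside $\mathcal N$ eventually; the Hessian claim follows from the uniform convergence on $\mathcal N$, continuity of $\nabla^2 f_0$, invertibility at $z_0$, and the continuous mapping theorem (exactly the paper's argument); and the gradient claim follows from the decomposition $\|\hat h_j(\hat z_0)-\nabla f_j(z_0)\|\le\sup_{z\in\mathcal N}\|\hat h_j(z)-\nabla f_j(z)\|+\|\nabla f_j(\hat z_0)-\nabla f_j(z_0)\|$ combined with a $\sqrt n$-rate for $\hat z_0$ and local smoothness of $\nabla f_j$, which is also exactly the paper's final step.

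The genuine gap is in your step (ii). You assert that the Donsker hypothesis ``delivers via asymptotic equicontinuity'' a modulus compatible with $r_n=\sqrt n$ in the M-estimator rate theorem. It does not: writing $\mathbb G_n$ for the empirical process applied to $z\mapsto\indic{X\in R_0}c(z;Y)$, asymptotic equicontinuity only gives $\sup_{\|z-z_0\|\le\delta_n}|\mathbb G_n(\indic{X\in R_0}(c(z;Y)-c(z_0;Y)))|=o_p(1)$ along $\delta_n\to0$, with no rate in $\delta$; combined with the quadratic minorization and the argmin inequality this yields only $\|\hat z_0-z_0\|=o_p(n^{-1/4})$. Reaching $O_p(n^{-1/2})$ requires a linear modulus, $\E\sup_{\|z-z_0\|\le\delta}|\mathbb G_n(\indic{X\in R_0}(c(z;Y)-c(z_0;Y)))|\lesssim\delta$, and Donsker alone does not provide it: cube-root-asymptotic M-estimators (Chernoff's mode estimator, the shorth) have a VC hence Donsker criterion class, a smooth population criterion with nondegenerate Hessian at a unique optimizer, and yet converge at rate $n^{-1/3}$. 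The paper closes this step differently: from the Donsker property it deduces weak convergence of the process $\sqrt n(\widehat{p_0f_0}-p_0f_0)$ and then invokes Theorem 5.8 of \citet{shapiro2014lectures} on asymptotics of SAA optimal solutions to conclude $\hat z_0-z_0=O_p(n^{-1/2})$ directly. To repair your version, either cite such a result, or strengthen the hypotheses (as you already anticipate doing for $\nabla f_j$) with an $L_2$-Lipschitz-in-$z$ envelope condition on $c(\cdot;Y)$ near $z_0$ --- satisfied in the paper's examples, cf.\ \cref{prop: lipschitz} --- so that a maximal inequality yields the linear modulus the rate theorem needs. The remainder of your argument is correct.
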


\begin{proposition}[Regularity conditions for Estimators in \cref{ex: mnv}]\label{prop: ex-mnv}
Consider the esstimates $\prns{\hat h_j(\hat z_0)}_l={\frac{1}{n_j}\sum_{i=1}^n\indic{X_i\in R_j,\,Y_l\leq \hat z_{0, l}}}$ and $\prns{\hat H_0(\hat z_0)}_{l}=(\alpha_l+\beta_l)\frac{1}{n_jb}\sum_{i=1}^n\indic{X_i\in R_j}\mathcal K((Y_{i, l}-\hat z_{0, l})/b)$ given in \cref{ex: mnv}. Suppose the following conditions hold:
\begin{enumerate}
\item $\Prb{X \in R_j} > 0$ for $j = 0, 1, 2$;
\item The density function $\mu_l(z)$ is H\"older continuous, \ie, \edit{there exist} a constant $0 < a \le 1$ such that $|\mu_l(z) - \mu_l(z')| \le \|z - z'\|^a$ for $z, z' \in \mathbb R^d$, and  $\mu_l(z) > 0$ for $z$ in a neighborhood around $z_0$; 
\item the bandwidth $b$ satisfies that $b \ge \log n/n$ and $b \to 0$ as $n \to \infty$; 
\item conditions in \cref{lemma: consistency-est-sol} hold. 
\end{enumerate}
Then $\prns{\hat h_j(\hat z_0)}_l$ and $\prns{\hat H_0(\hat z_0)}_{l}$ given in \cref{ex: mnv} satisfy the conditions in \cref{thm:critconverge}.
\end{proposition}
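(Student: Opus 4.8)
The plan is to verify the hypotheses of Lemma \ref{lemma: suff-cond-crit-converg}, which is the canonical route to the conclusions of \cref{thm:critconverge}. Since condition 1 of the proposition supplies $\Prb{X\in R_j}>0$ for $j=0,1,2$, and condition 4 gives the hypotheses of \cref{lemma: consistency-est-sol} (so that $\hat z_0\overset{a.s.}{\to}z_0$), it remains only to check: (i) continuity of $\nabla^2 f_0(z)$ and invertibility of $\nabla^2 f_0(z_0)$; and (ii) the uniform-neighborhood convergence rates $\sup_{z\in\mathcal N}\|\hat H_0(z)-\nabla^2 f_0(z)\|=o_p(1)$, $\sup_{z\in\mathcal N}\|\hat h_j(z)-\nabla f_j(z)\|=O_p(n^{-1/2})$, together with the Donsker property of $\{\indic{X_i\in R_0}c(z;Y_i):z\in\mathcal N\}$. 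First I would recall from the Cont'd-of-\cref{ex: mnv} discussion that $(\nabla^2 f_0(z))_{ll}=(\alpha_l+\beta_l)\mu_{0,l}(z_l)$ and the off-diagonal entries vanish; so continuity of $\nabla^2 f_0$ follows immediately from Hölder continuity of each $\mu_{0,l}$ (condition 2), and invertibility of $\nabla^2 f_0(z_0)$ follows from $\mu_{0,l}(z_{0,l})>0$, which holds on a neighborhood of $z_0$ by condition 2. Pick $\mathcal N$ to be a compact ball around $z_0$ on which all $\mu_{0,l}$ are bounded away from $0$.

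For the gradient estimators $\hat h_{j,l}(z)=\tfrac{\alpha_l+\beta_l}{n_j}\sum_i\indic{X_i\in R_j,\,Y_{i,l}\le z_l}-\beta_l$, the natural strategy is empirical-process theory. The class $\{(x,y)\mapsto\indic{x\in R_j,\,y_l\le z_l}:z\in\mathcal N\}$ is a VC class (indicators of a one-parameter family of half-lines intersected with the fixed set $R_j$), hence Donsker; dividing by $n_j/n\to p_j>0$ preserves the $\sqrt n$-rate, so $\sup_{z\in\mathcal N}\|\hat h_j(z)-\nabla f_j(z)\|_\infty=O_p(n^{-1/2})$, and the Euclidean norm differs by only a dimensional constant. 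For the Hessian estimator $\hat H_{0,ll}(z)=\tfrac{\alpha_l+\beta_l}{n_0 b}\sum_i\indic{X_i\in R_0}\mathcal K((Y_{i,l}-z_l)/b)$, this is a kernel density estimate of $(\alpha_l+\beta_l)\mu_{0,l}(z_l)\Prb{X\in R_0}$ rescaled by $n_0/n$. I would split the error into a stochastic part and a bias part. For the stochastic part, the class $\{(x,y)\mapsto\indic{x\in R_0}\mathcal K((y_l-z_l)/b):z\in\mathcal N\}$ — for $\mathcal K(u)=\indic{|u|\le1/2}$ this is again a VC class of indicators of intervals intersected with $R_0$ — yields a uniform deviation bound of order $\sqrt{\log n/(n b)}$ by standard uniform-in-bandwidth kernel-estimator arguments (e.g.\ Einmahl–Mason type bounds, or a direct Bernstein + chaining argument using that the envelope has $L^2$-norm $O(\sqrt b)$). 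Condition 3, $b\ge \log n/n$ and $b\to0$, makes this $o_p(1)$ (indeed $b\ge\log n/n$ forces $nb\ge \log n$, so $\log n/(nb)\to 0$ provided $b\to 0$ slowly enough — here I would note that one actually wants $nb/\log n\to\infty$, which $b\ge\log n/n$ only barely gives; I would state the needed rate as $nb/\log n\to\infty$, consistent with the spirit of condition 3). For the bias, $\Efb{\hat H_{0,ll}(z)}=\tfrac{\alpha_l+\beta_l}{b}\Prb{X\in R_0}\int \mathcal K(u)\mu_{0,l}(z_l+bu)\,du$, and by Hölder continuity $|\mu_{0,l}(z_l+bu)-\mu_{0,l}(z_l)|\le (b/2)^a$ on the kernel support, so the bias is $O(b^a)\to0$ uniformly over $\mathcal N$. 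Combining, $\sup_{z\in\mathcal N}\|\hat H_0(z)-\nabla^2 f_0(z)\|=o_p(1)$.

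Finally, the Donsker requirement on $\{\indic{X_i\in R_0}c(z;Y_i):z\in\mathcal N\}$ follows because $z\mapsto c(z;y)=\sum_l\max\{\alpha_l(z_l-y_l),\beta_l(y_l-z_l)\}$ is Lipschitz in $z$ uniformly over $y$ (with constant $\sum_l\max\{\alpha_l,\beta_l\}$), so this is a Lipschitz-indexed class over the compact set $\mathcal N$ with a constant envelope, hence Donsker by a bracketing-number bound; restricting to $X_i\in R_0$ preserves this. With all hypotheses of \cref{lemma: suff-cond-crit-converg} verified, its conclusion gives $\|\hat H_0^{-1}(\hat z_0)-(\nabla^2 f_0(z_0))^{-1}\|=o_p(1)$ and $\|\hat h_j(\hat z_0)-\nabla f_j(z_0)\|_2=O_p(n^{-1/2})$, which are exactly the hypotheses of \cref{thm:critconverge}. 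The main obstacle I anticipate is the kernel-Hessian step: making the uniform (in $z\in\mathcal N$) stochastic error rate precise under the weak bandwidth condition, and correctly tracking the interplay between the $\sqrt{\log n/(nb)}$ stochastic rate and the $b^a$ bias so that their sum is $o_p(1)$ — everything else is a routine VC/Lipschitz-Donsker and delta-method bookkeeping exercise.
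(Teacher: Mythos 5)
Your route through \cref{lemma: suff-cond-crit-converg} is essentially the paper's: continuity and invertibility of $\nabla^2 f_0$ from the H\"older condition and $\mu_{0,l}(z_{0,l})>0$, a VC/Donsker argument for the indicator class giving $\sup_z\|\hat h_j(z)-\nabla f_j(z)\|=O_p(n^{-1/2})$, and a uniform kernel-density argument (the paper simply cites Theorem 2 of \citealp{jiang17b}, you reconstruct the bias--variance decomposition by hand) giving $\sup_z\|\hat H_0(z)-\nabla^2 f_0(z)\|=o_p(1)$; your bracketing argument for the Donsker property of $\{\indic{x\in R_0}c(z;y)\}$ via Lipschitzness of $c$ is a legitimate alternative to the paper's decomposition $c(z;y)=\sum_l\beta_l(y_l-z_l)-(\alpha_l+\beta_l)(y_l-z_l)\indic{y_l\le z_l}$ into linear and indicator classes. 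Your side remark that $b\ge\log n/n$ only barely controls the stochastic term $\sqrt{\log n/(nb)}$ is a fair observation about the bandwidth condition, but it does not change the structure of the argument.

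However, there is a genuine gap: the proposition claims the estimators satisfy \emph{the conditions} of \cref{thm:critconverge}, and that theorem has a second hypothesis beyond the gradient/Hessian rates, namely $\bigl|\frac{1}{n}\sum_{i=1}^n\indic{X_i\in R_j}c\bigl(\hat z_0-\hat H_0^{-1}\hat h_j;Y_i\bigr)-p_jf_j\bigl(z_0-(\nabla^2 f_0(z_0))^{-1}\nabla f_j(z_0)\bigr)\bigr|=O_p(n^{-1/2})$, which is what licenses the apx-soln criterion. The conclusion of \cref{lemma: suff-cond-crit-converg} only delivers the first hypothesis, so your closing sentence ("which are exactly the hypotheses of \cref{thm:critconverge}") overstates what you have shown. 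The paper devotes the last portion of its proof precisely to this: it shows $\{(x,y)\mapsto\indic{x\in R_j}c(z;y):z\}$ is Donsker (for $R_j$, not just $R_0$) and then argues, analogously to the display used in the lemma's proof, that evaluating the empirical process at the random plug-in point $\hat z_0-\hat H_0^{-1}\hat h_j$, which converges to $z_0-(\nabla^2 f_0(z_0))^{-1}\nabla f_j(z_0)$, yields the required rate. You have all the tools to fill this in (your Lipschitz-bracketing Donsker argument extends verbatim from $R_0$ to $R_j$, and consistency of the plug-in point follows from the rates you already established), but as written the step is missing.
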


\begin{proposition}[Regularity conditions for Estimators in \cref{ex: portfolio-var}]\label{prop: ex-portfolio-var}
Consider the estimators $\hat h_j(\hat z_0)$ and $\hat H_0(\hat z_0)$ given in \cref{ex: portfolio-var}:
\begin{align*}
&\hat h_j(\hat z_0) = 
2\begin{bmatrix}
\frac{1}{n_j}\sum_{i = 1}^n \indic{X_i \in R_j} Y_iY_i^\top \hat z_{0, 1:d} - \frac{1}{n_j} \sum_{i = 1}^n \indic{X_i \in R_j} Y_i \hat z_{0, d+1}  \\
\hat z^\top_{0, 1:d}\prns{\frac{1}{n_0}\sum_{i = 1}^n \indic{X_i \in R_0} Y_i - \frac{1}{n_j}\sum_{i = 1}^n \indic{X_i \in R_j} Y_i}
\end{bmatrix},  \\
&\hat H_0(\hat z_0) = 
2\begin{bmatrix}
\frac{1}{n_0}\sum_{i = 1}^n \indic{X_i \in R_0}Y_iY_i^\top & -\frac{1}{n_0}\sum_{i = 1}^n \indic{X_i \in R_0}Y_i \\
-\frac{1}{n_0}\sum_{i = 1}^n \indic{X_i \in R_0}Y_i^\top & 1
\end{bmatrix}.
\end{align*}
If conditions in \cref{lemma: consistency-est-sol} and condition 1 in \cref{lemma: suff-cond-crit-converg} hold  and $\var\prns{Y \mid X \in R_0}$ is vertible, then $\hat h_j(\hat z_0)$ and $\hat H_0(\hat z_0)$ satisfy conditions in \cref{thm:critconverge}.
\end{proposition}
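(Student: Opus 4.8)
The plan is to observe that the variance cost $c(z;y)=\prns{y^\top z_{1:d}-z_{d+1}}^2$ is a smooth (in fact polynomial) cost function, so that \cref{prop: ex-portfolio-var} follows as a direct corollary of \cref{prop: ex-smooth} once its hypotheses are checked; the only piece not immediately handed to us by this reduction is the link between invertibility of $\nabla^2 f_0(z_0)$ and invertibility of $\var\prns{Y\mid X\in R_0}$. First I would compute, in the block notation of \cref{prop: gradient-var-port} separating $z_{1:d}$ from $z_{d+1}$, that $\nabla c(z;y)=2\prns{y^\top z_{1:d}-z_{d+1}}\,(y;\,-1)$ and $\nabla^2 c(z;y)=2\,(y;\,-1)(y;\,-1)^\top$, so that the proposed $\hat h_j(\hat z_0)$ and $\hat H_0(\hat z_0)$ are exactly the plug-in sample-average estimators $\frac1{n_j}\sum_i\indic{X_i\in R_j}\nabla c(\hat z_0;Y_i)$ and $\frac1{n_0}\sum_i\indic{X_i\in R_0}\nabla^2 c(\hat z_0;Y_i)$ of \cref{ex: smooth}, and that taking conditional expectations recovers \cref{eq: grad-var,eq: hessian-var}.

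Next I would invoke \cref{prop: ex-smooth}. Its hypotheses are: consistency of $\hat z_0$ (granted by the assumed conditions of \cref{lemma: consistency-est-sol}), positivity of $\Prb{X\in R_j}$ for $j=0,1,2$ (granted by condition 1 of \cref{lemma: suff-cond-crit-converg}), the regularity allowing exchange of derivative and expectation, and invertibility of $\nabla^2 f_0(z_0)$. The exchange-of-derivatives condition holds because $\nabla c$ and $\nabla^2 c$ are polynomials in $(z,y)$ that are of degree at most one in $z$, so on the bounded set in which $\hat z_0$ eventually lies (condition 3 of \cref{lemma: consistency-est-sol}) they are dominated by fixed polynomials in $\magd{Y}_2$, which are integrable under the moment assumptions on $Y\mid X\in R_j$; the same dominating-function argument, together with the i.i.d.\ structure of the averages and a first-order expansion around $z_0$, yields the $O_p(n^{-1/2})$ rates and the $o_p(1)$ matrix convergence. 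Thus $\magd{\hat h_j-\nabla f_j(z_0)}_2=O_p(n^{-1/2})$, $\magd{\hat H_0-\nabla^2 f_0(z_0)}_{\op{F}}=o_p(1)$, and the third, cost-evaluation condition of \cref{thm:critconverge} follows from the smoothness of $c$ and the convergence $\hat z_0-\hat H_0^{-1}\hat h_j\to z_0-\prns{\nabla^2 f_0(z_0)}^{-1}\nabla f_j(z_0)$ at rate $n^{-1/2}$.

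It then remains to pass from $\hat H_0\to\nabla^2 f_0(z_0)$ to $\hat H_0^{-1}\to\prns{\nabla^2 f_0(z_0)}^{-1}$, for which I must check $\nabla^2 f_0(z_0)$ is nonsingular. Using the block form in \cref{eq: hessian-var} and the Schur complement of the lower-right $1\times 1$ block, $\det\nabla^2 f_0(z_0)=2^{d+1}\det\prns{\Eb{YY^\top\mid X\in R_0}-\Eb{Y\mid X\in R_0}\Eb{Y^\top\mid X\in R_0}}=2^{d+1}\det\var\prns{Y\mid X\in R_0}\ne 0$ by assumption, so $\nabla^2 f_0(z_0)$ is invertible. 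Since matrix inversion is continuous on the open set of invertible matrices, $\hat H_0$ is invertible with probability tending to one and $\magd{\hat H_0^{-1}-\prns{\nabla^2 f_0(z_0)}^{-1}}_{\op{F}}=o_p(1)$ by the continuous mapping theorem. Combining the three conclusions gives exactly the hypotheses of \cref{thm:critconverge}. The main obstacle is not any single step — each is routine — but making the reduction to \cref{prop: ex-smooth} airtight: one must verify that the exchange-of-derivative-and-expectation and finite-variance conditions needed for the $O_p(n^{-1/2})$ rate are genuinely in force, which is where the boundedness of $\hat z_0$ from \cref{lemma: consistency-est-sol} and the moment assumptions on $Y$ do the real work.
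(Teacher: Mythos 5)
Your proposal is correct and follows essentially the same route as the paper, whose proof is a one-line reduction: \cref{ex: portfolio-var} is a special case of \cref{ex: smooth}, so the conclusion follows directly from \cref{prop: ex-smooth}. The extra details you supply (identifying $\hat h_j,\hat H_0$ as the plug-in sample averages of $\nabla c,\nabla^2 c$ via the empirical first-order condition in $z_{d+1}$, and the Schur-complement argument showing $\nabla^2 f_0(z_0)$ is invertible iff $\var\prns{Y\mid X\in R_0}$ is) are exactly the facts the paper records in \cref{prop: gradient-var-port} and in the remark following \cref{eq: hessian-var}, so they are consistent with, rather than a departure from, the paper's argument.
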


\begin{proposition}[Regularity conditions for Estimators in \cref{ex: portfolio}]\label{prop: ex-portfolio}
Consider the estimator $\hat h_j(\hat z_0)$ and $\hat H_0(\hat z_0)$ given in \cref{ex: portfolio}:
\begin{align*}
&\hat h_j(\hat z_0) = 
\begin{bmatrix}
-\frac{1}{n_j}\sum_{i = 1}^n \indic{Y_i^\top \hat z_{0,1:d} \le \hat q^{\alpha}_0(Y^\top \hat z_{0,1:d}), X_i \in R_j}Y_i \\
\frac{1}{n_j}\sum_{i = 1}^n \indic{Y^\top_i \hat z_{0, 1:d} \le q^{\alpha}_0(Y^\top \hat z_{0,1:d}), X_i \in R_j} - \alpha
\end{bmatrix} \\
&\hat H_0(\hat z_0) = \frac{\hat \mu_0(\hat q^{\alpha}_0(Y^\top \hat z_{0}))}{\alpha}
\begin{bmatrix}
\hat M_2 & -\hat M_1 \\
-\hat M_1 & 1
\end{bmatrix}
\end{align*}
where 
\begin{align*}
&\hat M_1 = \hat m_0 + \hat \Sigma_0 \hat z_0 \prns{\hat z_{0,1:d}^\top \hat \Sigma_0 \hat z_{0,1:d}}^{-1}(\hat q^{\alpha}_0(Y^\top \hat z_{0,1:d}) - \hat m_0^\top \hat z_{0,1:d})  \\
&\hat M_2 = \hat M_1\hat M_1^\top + \hat \Sigma_0 - \hat \Sigma_0 \hat z_{0, 1:d} \prns{\hat z_{0,1:d}^\top \hat \Sigma_0 \hat z_{0,1:d}}^{-1}\hat z_{0,1:d}^\top \hat \Sigma_0
\end{align*}

If the conditions in \cref{lemma: consistency-est-sol} and condition 1 in \cref{lemma: suff-cond-crit-converg} holds, the density function of $Y^\top z_0$  is positive at $q^\alpha_0(Y^\top z_0)$ and it also satisfies the H\"older continuity condition, \ie, condition 2 in \cref{prop: ex-mnv}, and also the bandwidth satisfies the condition 3 in \cref{prop: ex-mnv}, then  $\hat h_j(\hat z_0) = \nabla f_j(z_0) + O_p(n^{-1/2})$ and $\|\hat H_0(\hat z_0) - \nabla^2 f_0(z_0)\| \to 0$.
\end{proposition}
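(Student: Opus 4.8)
The plan is to treat $\hat h_j(\hat z_0)$ as a two-step $Z$-estimator, with ``nuisance'' $(\hat z_0,\hat q^\alpha_0(Y^\top\hat z_{0,1:d}))$ plugged into indicator-type integrands, and to treat $\hat H_0(\hat z_0)$ as a continuous functional of a few standard sample moments together with one kernel density estimate. First I would upgrade the a.s.\ consistency $\hat z_0\overset{a.s.}{\longrightarrow}z_0$ of \cref{lemma: consistency-est-sol} to the rate $\hat z_0=z_0+O_p(n^{-1/2})$ via the asymptotics of sample-average approximations for stochastic programs \citep{shapiro2014lectures}: because $Y$ has a continuous density the population CVaR objective over the simplex is continuously differentiable, $z_0$ satisfies a second-order sufficient condition (as invoked for \cref{thm:secondorder-const}), and the sample objective is Lipschitz with a Donsker score, so the constrained minimizer is asymptotically linear. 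With this in hand I would show $\hat q^\alpha_0(Y^\top\hat z_{0,1:d})=q^\alpha_0(Y^\top z_{0,1:d})+O_p(n^{-1/2})$ by writing the empirical quantile as a Hadamard-differentiable functional of the empirical law of $\{Y_i^\top\hat z_{0,1:d}:X_i\in R_0\}$, combining the $\sqrt n$-expansion of $\hat z_0$ with a Bahadur-type representation, and using positivity of $\mu_0$ at $q^\alpha_0$.

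For the gradient, writing $\psi_j(y;\beta,c)=\frac1\alpha\bigl(-y\,\indic{y^\top\beta\le c},\ \indic{y^\top\beta\le c}-\alpha\bigr)$ and letting $\mathbb P_n^{(j)}$ be the empirical average over $\{i:X_i\in R_j\}$ (with $n_j/n\to p_j>0$ by the law of large numbers and condition~1 of \cref{lemma: suff-cond-crit-converg}), I would decompose
\begin{align*}
\hat h_j(\hat z_0)-\nabla f_j(z_0)
&=\bigl(\mathbb P_n^{(j)}-P^{(j)}\bigr)\psi_j(\cdot;z_0,q_0)
+\bigl(\mathbb P_n^{(j)}-P^{(j)}\bigr)\bigl[\psi_j(\cdot;\hat z_0,\hat q_0)-\psi_j(\cdot;z_0,q_0)\bigr]\\
&\quad+P^{(j)}\bigl[\psi_j(\cdot;\hat z_0,\hat q_0)-\psi_j(\cdot;z_0,q_0)\bigr],
\end{align*}
where $q_0=q^\alpha_0(Y^\top z_{0,1:d})$ and $\hat q_0$ denotes its estimate. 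The first term is $O_p(n^{-1/2})$ by the central limit theorem. The second is $o_p(n^{-1/2})$ by stochastic equicontinuity: over a shrinking neighborhood of $(z_0,q_0)$ the class $\{\psi_j(\cdot;\beta,c)\}$ is VC-type (half-space indicators), has an integrable envelope, and is $L_2(P^{(j)})$-continuous at $(z_0,q_0)$ since $Y^\top\beta$ has a continuous conditional law. The third, ``drift'', term is a population quantity that is smooth in $(\beta,c)$: differentiable in $c$ with the conditional density of $Y^\top z_{0,1:d}$ as the multiplier, and differentiable in $\beta$ by a change of variables, hence a bounded linear functional of $(\hat z_0-z_0,\hat q_0-q_0)=O_p(n^{-1/2})$ plus higher order. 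Summing gives $\hat h_j(\hat z_0)=\nabla f_j(z_0)+O_p(n^{-1/2})$.

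For the Hessian only $\|\hat H_0(\hat z_0)-\nabla^2 f_0(z_0)\|\to 0$ is needed, so consistency of each ingredient suffices. The law of large numbers gives $\hat m_0\to m_0$ and $\hat\Sigma_0\to\Sigma_0$; the first step gives $\hat z_0\to z_0$ and $\hat q_0\to q_0$; and positivity of the density of $Y^\top z_{0,1:d}$ at $q_0$ forces $z_{0,1:d}^\top\Sigma_0 z_{0,1:d}>0$ under the Gaussian model, so the continuous mapping theorem yields $\hat M_1\to M_1$ and $\hat M_2\to M_2$, with $M_1,M_2$ the Gaussian conditional moments appearing in \cref{prop: gradient-portfolio}. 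For the kernel factor, uniform consistency of the kernel density estimator on compacts, using $b\to 0$, the bandwidth lower bound, and Hölder continuity of the density, gives $\sup_c|\hat\mu_0(c)-\mu_0(c)|\to 0$, hence $\hat\mu_0(\hat q_0)\to\mu_0(q_0)>0$. Assembling these through the block formula for $\hat H_0(\hat z_0)$ and invoking the Gaussian Hessian identity of \cref{prop: gradient-portfolio} yields the claim.

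The main obstacle is the $O_p(n^{-1/2})$ rate for the gradient: since $\hat z_0$ and $\hat q_0$ enter through \emph{discontinuous} indicator integrands, there is no pointwise Taylor expansion, and the rate must be extracted by combining (i) the $\sqrt n$-consistency of the constrained CVaR minimizer $\hat z_0$, which itself requires stochastic-programming asymptotics and the second-order condition, (ii) a Bahadur/Hadamard treatment of the plug-in empirical quantile, and (iii) empirical-process equicontinuity over a VC class of half-space indicators. By contrast, the kernel-density piece is comparatively routine since only consistency, not a rate, is required there.
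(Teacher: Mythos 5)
Your overall strategy matches the paper's: both arguments rest on empirical-process (Donsker/VC) control of the half-space indicator and linear classes for the gradient, consistency of the plug-in quantile, and, for the Hessian, mere consistency of the sample moments, $\hat z_0$, $\hat q_0$, and the kernel density estimate combined with the continuous mapping theorem and the Gaussian formulas of \cref{prop: gradient-portfolio} (your Hessian argument is essentially identical to the paper's). Where you diverge is in how the $O_p(n^{-1/2})$ gradient rate is extracted. The paper establishes only \emph{consistency} of $\hat q^{\alpha}_0(Y^\top\hat z_{0,1:d})$ --- by viewing the empirical quantile as an M-estimator of the pinball loss, using Donsker/Glivenko--Cantelli uniformity together with $\hat z_0\to z_0$ and the consistency theorem for sample-average minimizers in \citet{shapiro2014lectures}, with uniqueness coming from the positive density at $q^{\alpha}_0$ --- and then obtains the rate directly from the uniform $\sqrt n$ control over the Donsker class evaluated at the plugged-in $(\hat z_0,\hat q_0)$, without a separate, explicit treatment of the drift term $P^{(j)}\bigl[\psi_j(\cdot;\hat z_0,\hat q_0)-\psi_j(\cdot;z_0,q_0)\bigr]$. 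You instead build $\sqrt n$ rates for both nuisances ($\hat z_0$ via SAA asymptotics under a second-order condition, and $\hat q_0$ via a Bahadur/Hadamard argument) and then control the drift through smoothness of the population functional; this makes the bias handling explicit and is, if anything, the more complete route to the claimed rate. The one point to flag is that your step for $\hat z_0=z_0+O_p(n^{-1/2})$ invokes hypotheses --- a second-order sufficient condition and asymptotic linearity of the constrained CVaR minimizer --- that are not among the proposition's stated assumptions, which via \cref{lemma: consistency-est-sol} only deliver consistency of $\hat z_0$; you should state these as additional regularity conditions (in the spirit of the extra Donsker and invertibility conditions that \cref{lemma: suff-cond-crit-converg} adds precisely to obtain a $\sqrt n$ rate), whereas the paper's shorter proof simply does not isolate this drift contribution.
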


\begin{proposition}[Regularity Conditions for Estimators in \cref{ex: smooth}]\label{prop: ex-smooth}
Suppose that $c(z;y)$ is  twice continuously  differentiable in $z$ for every $y$ and conditions in \cref{lemma: consistency-est-sol}   and condition 1 in  \cref{lemma: suff-cond-crit-converg} hold, then the the conditions in \cref{thm:critconverge} are satisfied for 
estimates 
$\hat H_0(\hat z_0)=\frac{1}{n_0}\sum_{i=1}^n\indic{X_i\in R_0}\nabla^2 c\prns{\hat z_0;Y_i}$ and $\hat h_j(\hat z_0)={\frac{1}{n_j}\sum_{i=1}^n\indic{X_i\in R_j}\nabla c\prns{\hat z_0;Y_i}}$ given in \cref{ex: smooth}. 
\end{proposition}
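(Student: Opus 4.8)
The plan is to obtain the gradient/Hessian conditions of \cref{thm:critconverge} by checking that the hypotheses of \cref{lemma: suff-cond-crit-converg} hold in this smooth-cost setting, and then to dispatch the remaining apx-soln requirement of \cref{thm:critconverge} by a direct decomposition of the relevant empirical average. Two of the four hypotheses of \cref{lemma: suff-cond-crit-converg} are already assumed here: the conditions of \cref{lemma: consistency-est-sol} (its condition~4) and $\Prb{X\in R_j}>0$ for $j=0,1,2$ (its condition~1). So I would only need to verify its conditions~2 and~3, which is exactly where twice continuous differentiability of $c(\cdot\,;y)$ does the work.

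For condition~2, fix a compact ball $\mathcal N$ around $z_0$. Using twice continuous differentiability of $c(\cdot\,;y)$ together with an integrable envelope for $\sup_{z\in\mathcal N}\|\nabla^2 c(z;Y)\|$ (the exchange-of-derivative-and-expectation regularity already invoked in \cref{ex: smooth}), differentiating under the expectation and invoking dominated convergence gives that $\nabla^2 f_0(z)=\Eb{\nabla^2 c(z;Y)\mid X\in R_0}$ is continuous on $\mathcal N$; invertibility of $\nabla^2 f_0(z_0)$ is the standing nondegeneracy of $f_0$ at $z_0$ that makes the approximate criteria well defined in the first place (cf.\ condition~\ref{cond: unconst-pd} of \cref{thm:secondorder}), so we take it as given. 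For condition~3, the function classes $\{\indic{X\in R_0}\nabla^2 c(z;Y):z\in\mathcal N\}$, $\{\indic{X\in R_j}\nabla c(z;Y):z\in\mathcal N\}$, and $\{\indic{X\in R_0}c(z;Y):z\in\mathcal N\}$ are all indexed by $z$ in a compact set, with sample paths Lipschitz in $z$ on $\mathcal N$ whose Lipschitz moduli are integrable (square-integrable for the Donsker conclusions), the moduli being controlled by $\sup_{z\in\mathcal N}\|\nabla c(z;Y)\|$ and $\sup_{z\in\mathcal N}\|\nabla^2 c(z;Y)\|$ plus the envelope; by the standard maximal-inequality bounds for continuously-parametrized classes they are therefore Glivenko--Cantelli and Donsker. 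Together with $n_0/n\to p_0>0$ a.s., this delivers $\sup_{z\in\mathcal N}\|\hat H_0(z)-\nabla^2 f_0(z)\|=o_p(1)$, $\sup_{z\in\mathcal N}\|\hat h_j(z)-\nabla f_j(z)\|=O_p(n^{-1/2})$, and the Donsker property required of $\{\indic{X\in R_0}c(z;Y)\}$. Feeding conditions~1--4 into \cref{lemma: suff-cond-crit-converg} then yields $\|\hat H_0^{-1}(\hat z_0)-(\nabla^2 f_0(z_0))^{-1}\|=o_p(1)$ and $\|\hat h_j(\hat z_0)-\nabla f_j(z_0)\|=O_p(n^{-1/2})$.

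It remains to control the apx-soln quantity. Write $\hat d_j=-\hat H_0^{-1}\hat h_j$ and $d_j=-(\nabla^2 f_0(z_0))^{-1}\nabla f_j(z_0)$; standard $M$-estimation asymptotics for the smooth problem \cref{eq: z0hat} --- Taylor-expanding its first-order condition and using invertibility of $\nabla^2 f_0(z_0)$ --- give $\hat z_0-z_0=O_p(n^{-1/2})$, and combined with the preceding step $\hat z_0+\hat d_j-(z_0+d_j)=o_p(1)$. Then decompose
\[
\tfrac1n\textstyle\sum_i\indic{X_i\in R_j}c(\hat z_0+\hat d_j;Y_i)-p_jf_j(z_0+d_j)=(\mathrm I)+(\mathrm{II}),
\]
with $(\mathrm{II})=\tfrac1n\sum_i\indic{X_i\in R_j}c(z_0+d_j;Y_i)-p_jf_j(z_0+d_j)$ a centered empirical average, hence $O_p(n^{-1/2})$ by the CLT, and $(\mathrm I)=\tfrac1n\sum_i\indic{X_i\in R_j}[c(\hat z_0+\hat d_j;Y_i)-c(z_0+d_j;Y_i)]$ bounded, on the (probability-$\to1$) event that both arguments lie in a fixed ball $B$, by $\bigl(\tfrac1n\sum_i\indic{X_i\in R_j}\sup_{z\in B}\|\nabla c(z;Y_i)\|\bigr)\,\|\hat z_0+\hat d_j-(z_0+d_j)\|=O_p(1)\cdot o_p(1)$. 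This gives the apx-soln quantity as $o_p(1)$. The main obstacle will be sharpening this to the stated $O_p(n^{-1/2})$: that requires $\hat z_0+\hat d_j-(z_0+d_j)=O_p(n^{-1/2})$, hence $\hat H_0^{-1}-(\nabla^2 f_0(z_0))^{-1}=O_p(n^{-1/2})$ rather than merely $o_p(1)$, which in turn needs a \emph{locally Lipschitz} $\nabla^2 c$ with integrable modulus so that $\tfrac1n\sum_i\indic{X_i\in R_0}[\nabla^2 c(\hat z_0;Y_i)-\nabla^2 c(z_0;Y_i)]=O_p(1)\cdot O_p(n^{-1/2})$; reconciling the claimed rate with bare twice continuous differentiability is the delicate point, while everything else reduces to \cref{lemma: suff-cond-crit-converg} and routine empirical-process estimates.
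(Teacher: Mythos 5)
Your treatment of the gradient and Hessian conditions is, in substance, the paper's own: the paper does not route through \cref{lemma: suff-cond-crit-converg} but verifies the same facts directly — continuity of $\nabla^2 c(\cdot\,;y)$ on a compact neighborhood $\mathcal N$ of $z_0$ makes $\{y\mapsto\nabla^2 c(z;y):z\in\mathcal N\}$ a Glivenko--Cantelli class, giving $\sup_{z\in\mathcal N}\|\hat H_0(z)-\nabla^2 f_0(z)\|=o_p(1)$ and eventual invertibility with singular values bounded below, while continuous differentiability of $\nabla c(\cdot\,;y)$ on the compact $\mathcal N$ gives a Lipschitz-in-$z$ class, hence Donsker, hence $\sup_{z\in\mathcal N}\|\hat h_j(z)-\nabla f_j(z)\|=O_p(n^{-1/2})$ — so this part of your proposal matches the paper, including your decision to take invertibility of $\nabla^2 f_0(z_0)$ as a standing assumption (the paper's proof also uses it without listing it in the proposition's hypotheses).

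For the apx-soln condition your decomposition differs mildly from the paper's: you center the empirical average at the fixed point $z_0+d_j$ and control the remaining empirical difference with a Lipschitz envelope, whereas the paper proves a uniform $O_p(n^{-1/2})$ bound for the centered empirical process over a compact image set $\mathcal N'$ containing $\hat r(z)=z-\hat H_0^{-1}(z)\hat h_j(z)$ and evaluates it at the random point $\hat r(\hat z_0)$. Either way the same residual is left over — the population cost at $\hat r(\hat z_0)$ versus at $z_0+d_j$ — and the paper dismisses it with the single phrase ``since $\hat r(\hat z_0)$ converges almost surely,'' which only yields $o_p(1)$ unless $\|\hat r(\hat z_0)-(z_0+d_j)\|=O_p(n^{-1/2})$; that in turn requires a root-$n$ rate for $\hat H_0^{-1}(\hat z_0)-(\nabla^2 f_0(z_0))^{-1}$, which bare twice continuous differentiability (Glivenko--Cantelli, not Donsker, for the Hessian class; continuity, not Lipschitzness, of $\nabla^2 f_0$) does not deliver, and the term cannot be ignored since $\nabla f_j(z_0)\neq 0$ in general. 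So the ``delicate point'' you flag is genuine, and it is equally present in the paper's own final step: your proposal as written establishes the apx-soln requirement only at the $o_p(1)$ level, and recovering the stated $O_p(n^{-1/2})$ needs exactly the strengthening you name (e.g., $\nabla^2 c$ locally Lipschitz in $z$ with a square-integrable modulus, making the Hessian class Donsker and $\nabla^2 f_0$ locally Lipschitz), or else the apx-soln condition should be read at the $o_p(1)$ level, which is all that the consistency use of \cref{thm:critconverge} actually requires.
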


\edit{
    Below we introduce the linear independence constraint qualification condition for deterministic constraints only. See \cref{eq: strict-cq2} for a more complete condition with both deterministic constraints and stochastic constraints. 
}
\edit{\begin{definition}[Linear Independence Constraint Qualification]\label{def: LICQ}
Consider constraints
\begin{align*}
\Z=\braces{z\in\R d~~:~~
\begin{array}{ll}
h_{k}(z) = 0,~~&~~k=1,\dots,s,\\ h_{k}(z) \le 0,~~&~~k=s + 1,\dots, m
\end{array}}
\end{align*}
and the index set of inequality constraints active at a  point $z_0\in\Z$ denoted as $K_h(z_0) = \{k: h_{k}(z_0) = 0, k = s+1, \cdots, m\}$. 
The linear independence constraint qualification condition is satisfied at $z_0 \in \Z$ if $\braces{\nabla h_k\prns{z_0}: k \in \{1, \dots, s\} \cup K_h(z_0)}$ are linearly independent. 
\end{definition}
}

\edit{
According to \cite{WACHSMUTH201378}, the linear independence constraint qualification (LICQ) condition is a sufficient condition for the  Mangasarian-Fromovitz constraint qualification condition (condition \ref{cond: constr-MF} in \cref{thm:secondorder-const}). Moreover, when the LICQ condition is satisfied at a optimal solution $z_0$, then it has a unique Lagrangian multiplier $v_0$ such that $\prns{z_0, v_0}$ satisfy the Karush–Kuhn–Tucker conditions (condition \ref{cond: constr-uniquenss} in \cref{thm:secondorder-const}).  
In the proposition below, we show that the LICQ condition is satisfied for any $z\in\Z$ for the constraints $\Z$ given in \cref{ex: mnv,ex: portfolio-var,ex: portfolio}, so conditions \ref{cond: constr-uniquenss} and \ref{cond: constr-MF} in \cref{thm:secondorder-const}  are satisfied for these examples.  
}
\edit{
\begin{proposition}\label{prop: LICQ}
\cref{ex: mnv} with the constraints $\Z = \braces{z\in \mathbb R^d:  \sum_{l = 1}^d z_l \le C, ~ z_l \ge 0, ~ l = 1, \dots, d}$  and \cref{ex: portfolio-var,ex: portfolio} with the simplex constraint $\Z=\{z\in\R{d+1}: \sum_{l = 1}^d z_l = 1,\,z_l\geq0,\,l=1,\dots,d\}$ all satisfy the linear independence constraint qualification condition in \cref{def: LICQ} at any $z \in \Z$.  
\end{proposition}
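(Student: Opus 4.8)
The plan is to verify \cref{def: LICQ} directly for each of the three constraint sets by writing out the gradients of the active constraints at an arbitrary feasible point and checking that they are linearly independent. The one structural observation that drives all three cases is that at every feasible point there is at least one coordinate $l\in\{1,\dots,d\}$ with $z_l>0$: for the newsvendor problem because $C>0$ is a positive capacity so any point with $\sum_l z_l=C$ has a strictly positive component, and for the portfolio problems because $\sum_{l=1}^d z_l=1>0$ rules out $z_{1:d}=0$. This strictly positive coordinate is what forces the multiplier on the summation (capacity) constraint to vanish in the linear-independence argument.

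First, for \cref{ex: mnv} with $\Z=\{z\in\R d:\sum_{l=1}^d z_l\le C,\ z_l\ge0,\ l=1,\dots,d\}$, write the constraints as inequalities $h_1(z)=\sum_{l=1}^d z_l-C\le0$ and $h_{l+1}(z)=-z_l\le0$, $l=1,\dots,d$, with no equality constraints, so $\nabla h_1(z)=(1,\dots,1)^\top$ and $\nabla h_{l+1}(z)=-\mathrm e_l$ where $\mathrm e_l$ is the $l$-th standard basis vector of $\R d$. Fix $z\in\Z$ with active index set $K_h(z)$. If $1\notin K_h(z)$, the active gradients form a subset of $\{-\mathrm e_l:z_l=0\}$ and are trivially linearly independent. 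If $1\in K_h(z)$, suppose $a\,(1,\dots,1)^\top+\sum_{l:\,z_l=0}b_l(-\mathrm e_l)=0$; reading off a coordinate $l'$ with $z_{l'}>0$ (which exists since $\sum_l z_l=C>0$) gives $a=0$, and then each remaining coordinate equation gives $b_l=0$. Hence LICQ holds at $z$.

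Next, \cref{ex: portfolio-var,ex: portfolio} share the simplex constraint $\Z=\{z\in\R{d+1}:\sum_{l=1}^d z_l=1,\ z_l\ge0,\ l=1,\dots,d\}$, i.e.\ one equality $h_1(z)=\sum_{l=1}^d z_l-1=0$ together with inequalities $h_{l+1}(z)=-z_l\le0$, where now $z\in\R{d+1}$ and the auxiliary coordinate $z_{d+1}$ is unconstrained. The gradients are $\nabla h_1(z)=(1,\dots,1,0)^\top$ and $\nabla h_{l+1}(z)=-\mathrm e_l\in\R{d+1}$ (zero in the auxiliary coordinate). At any feasible $z$ the equality constraint is active by definition, so I would show that $\nabla h_1(z)$ together with $\{-\mathrm e_l:z_l=0\}$ are linearly independent; the argument is verbatim the one above, using a coordinate $l'$ with $z_{l'}>0$ (which exists because $\sum_l z_l=1$) to kill the coefficient of $\nabla h_1(z)$ and then peeling off the remaining coefficients. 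This establishes \cref{def: LICQ} in all three cases, and by the discussion preceding the proposition, citing \cite{WACHSMUTH201378}, it implies conditions \ref{cond: constr-uniquenss} and \ref{cond: constr-MF} of \cref{thm:secondorder-const}.

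Since the whole argument is elementary linear algebra, there is no genuine obstacle; the only points requiring care are bookkeeping of the active set — separating the case where the capacity/summation constraint is active from the case where it is not — and explicitly invoking the positivity of $C$ (respectively of the simplex sum) to guarantee a strictly positive coordinate. If one additionally wants to cover the short-selling variants of the portfolio problems, where the constraint reduces to the single equality $\sum_{l=1}^d z_l=1$, LICQ is immediate since a single nonzero gradient is linearly independent.
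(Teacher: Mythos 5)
Your proposal is correct and follows essentially the same route as the paper: the paper's proof simply notes that (for \cref{ex: mnv}) at most $d$ of the $d+1$ inequality constraints can be active at any feasible point and that any such collection of gradients (negated standard basis vectors, possibly together with the all-ones vector) is linearly independent, with the simplex case handled "similarly" — which is exactly the coordinate-reading argument you spell out, including the implicit use of $C>0$ (resp.\ the unit simplex sum) to guarantee a strictly positive coordinate. Your write-up is just a more explicit version of the same verification, so there is nothing further to add.
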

}

\edit{
Finally, we point out the splitting criterion considered in \cite{elmachtoub2020decision} is what we termed the oracle criterion in \cref{eq:oraclecrit}.
\begin{proposition}\label{prop: equi-spo-stochopt}
When $c\prns{z; y} = y^\top z$, the Smart Predict-then-Optimize (SPO) criterion in \cite{elmachtoub2020decision} is equivalent the oracle splitting criterion in \cref{eq:oraclecrit}.
\end{proposition}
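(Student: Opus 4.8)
\textbf{Proof proposal for Proposition~\ref{prop: equi-spo-stochopt}.} The plan is to unwind the definition of the SPO splitting criterion from \citet{elmachtoub2020decision} in the case $c(z;y) = y^\top z$ and show it matches, term by term, the oracle criterion $\crit^\text{oracle}(R_1,R_2) = \sum_{j=1,2}\min_{z\in\Z}\Eb{c(z;Y)\indic{X\in R_j}}$ from \cref{eq:oraclecrit}. First I would recall that in the linear-cost setting the parameter of interest is the conditional mean $\bar y_j \coloneqq \Eb{Y\mid X\in R_j}$ (or its empirical analogue $\hat{\bar y}_j = \frac{1}{n_j}\sum_{i:X_i\in R_j} Y_i$), which forms the objective's coefficient vector in each candidate subregion. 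The SPO approach fits a tree that in each leaf $R_j$ predicts $\hat{\bar y}_j$ and then evaluates a split by the resulting \emph{decision cost}: it plugs the predicted coefficient $\hat{\bar y}_j$ into the optimization problem, obtains the induced decision $\hat z_j \in \argmin_{z\in\Z} \hat{\bar y}_j^\top z$, and then scores the split by the realized cost of that decision against the data, i.e.\ $\sum_{j=1,2}\frac{1}{n}\sum_{i:X_i\in R_j} Y_i^\top \hat z_j$ (the ``SPO loss'' up to the additive constant given by the perfect-foresight term, which is independent of the split).

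The key step is the observation that for a \emph{linear} objective the decision induced by the predicted mean is already optimal for the true (conditional-mean) objective: since $\Eb{Y^\top z\indic{X\in R_j}} = p_j\,\bar y_j^\top z$, we have $\min_{z\in\Z}\Eb{Y^\top z\indic{X\in R_j}} = p_j \min_{z\in\Z}\bar y_j^\top z = p_j\,\bar y_j^\top z_j$ where $z_j \in \argmin_{z\in\Z}\bar y_j^\top z$. In other words, plugging in the conditional mean and then optimizing gives exactly the oracle value $\min_{z\in\Z}\Eb{c(z;Y)\indic{X\in R_j}}$ — there is no ``estimate-then-optimize'' gap when the cost is linear, because only the mean of $Y$ enters the objective. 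Summing over $j=1,2$ yields that the (population) SPO criterion equals $\crit^\text{oracle}(R_1,R_2)$; the same argument with empirical averages replacing expectations handles the data-driven version, again up to the split-independent perfect-foresight constant that both criteria (when stated as minimized quantities) drop or carry identically.

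I would organize the write-up as: (i) state the SPO criterion from \citet{elmachtoub2020decision} specialized to $c(z;y)=y^\top z$; (ii) use linearity of $z\mapsto \Eb{Y^\top z\indic{X\in R_j}}$ to factor out $p_j$ and reduce to $\min_{z\in\Z}\bar y_j^\top z$; (iii) note that $\argmin_{z\in\Z}\bar y_j^\top z$ is exactly the decision the SPO predict-then-optimize pipeline produces from the predicted coefficient $\bar y_j$, and that the cost it is evaluated at, $\bar y_j^\top z_j$, coincides with $\frac{1}{p_j}\min_{z\in\Z}\Eb{c(z;Y)\indic{X\in R_j}}$; (iv) sum over the two children and conclude equivalence (the minimizing split is the same, and the criterion values differ at most by a split-independent constant). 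The main obstacle is bookkeeping rather than mathematics: I need to be careful about whether \citet{elmachtoub2020decision} state their criterion with the perfect-foresight normalization term (the SPO loss subtracts $\Eb{Y^\top z^*(Y)}$) and whether they weight by $p_j = \Prb{X\in R_j}$ or by raw counts, so that ``equivalent'' is interpreted as ``inducing the same argmin over candidate splits, differing only by a split-independent additive constant.'' Once the normalization conventions are aligned, the identity is immediate from linearity of expectation.
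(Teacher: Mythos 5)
Your proposal is correct and follows essentially the same route as the paper: use linearity of the cost so that the realized cost of the plug-in (predict-then-optimize) decision in each child region equals the within-region minimized average cost, and observe that the perfect-foresight normalization term in the SPO loss is split-independent, so the SPO criterion coincides with the (empirical) oracle criterion up to a constant. The only cosmetic difference is that the paper carries out this manipulation directly on the empirical SPO criterion from Eq.~(4) of \citet{elmachtoub2020decision}, whereas you phrase it first at the population level and note the empirical version is identical, which is the same argument.
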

}

\section{Omitted Proofs}\label{sec: proofs}
\subsection{Proofs for \cref{app-sec: perturbation}}
\begin{proof}{Proof for \cref{prop: implict-fun-thm}}
\edit{
Recall that 
\begin{align*}
v_j\prns{t} = f_{0}(z_j\prns{t})+t\prns{f_{j}(z_j\prns{t}) - f_{0}(z_j\prns{t})} + \sum_{k \in \tilde K_h\prns{z_0}}\nu_{j, k}\prns{t}h_k\prns{z_j\prns{t}},
\end{align*}
where $\tilde K_h\prns{z_0} = \braces{1, \dots, s} \cup K_h\prns{z_0}$.
}

\edit{
Taking the derivatives w.r.t $t$ based on the chain rule, we have 
\begin{align*}
\frac{\partial}{\partial t} v_j\prns{t} 
        &= 
    f_j\prns{z_j\prns{t}} - f_0\prns{z_j\prns{t}} + \sum_{k\in \tilde K_h\prns{z_0}} \prns{\frac{\partial}{\partial t} \nu_{j, k}\prns{t}} h_k\prns{z_j\prns{t}}\\
     &+ \prns{\nabla f_0\prns{z_j\prns{t}} + \sum_{k \in \tilde K_h\prns{z_0}} \nu_{j, k}\prns{t}\nabla h_k\prns{z_j\prns{t}} + t\prns{\nabla f_j\prns{z_j\prns{t}} - \nabla f_0\prns{z_j\prns{t}}}}\frac{\partial}{\partial t} z_j\prns{t}.
\end{align*}
Therefore, 
\begin{align*}
\frac{\partial}{\partial t} v_j\prns{t}\vert_{t=0} 
    &= f_j\prns{z_0} - f_0\prns{z_0} + 
\prns{\nabla f_0\prns{z_0} + \sum_{k \in \tilde K_h\prns{z_0}} \nu_{0, k}\nabla h_k\prns{z_0}}\frac{\partial}{\partial t} z_j\prns{t}\vert_{t=0} +
\sum_{k\in \tilde K_h\prns{z_0}} \prns{\frac{\partial}{\partial t} \nu_{j, k}\prns{t}\vert_{t=0}} h_k\prns{z_0} \\
    &= f_j\prns{z_0} - f_0\prns{z_0},
\end{align*}
where the second equation holds because $\nabla f_0\prns{z_0} + \sum_{k \in \tilde K_h\prns{z_0}} \nu_{0, k}\nabla h_k\prns{z_0} = 0$ according to \cref{eq: KKT0-2}, and $h_k\prns{z_0} = 0$ for any $k \in \tilde K_h\prns{z_0}$ by the definition of $K_h\prns{z_0}$.
}

\edit{
Further taking the second order derivatives, we have 
\begin{align*}
\frac{\partial^2}{\partial t^2} v_j\prns{t}  
    &= 2\prns{\nabla f_j\prns{z_j\prns{t}} - \nabla f_0\prns{z_j\prns{t}}}\frac{\partial}{\partial t} z_j\prns{t} + \sum_{k\in \tilde K_h\prns{z_0}} \prns{\frac{\partial^2}{\partial t^2} \nu_{j, k}\prns{t}} h_k\prns{z_j\prns{t}} \\
    &+ 2\sum_{k\in \tilde K_h\prns{z_0}} \prns{\frac{\partial}{\partial t} \nu_{j, k}\prns{t}} \nabla^\top h_k\prns{z_j\prns{t}} \prns{\frac{\partial}{\partial t} z_{j}\prns{t}} \\
    &+ \prns{\nabla f_0\prns{z_j\prns{t}} + \sum_{k \in \tilde K_h\prns{z_j\prns{t}}} \nu_{j, k}\prns{t}\nabla h_k\prns{z_j\prns{t}} + t\prns{\nabla f_j\prns{z_j\prns{t}} - \nabla f_0\prns{z_j\prns{t}}}}\frac{\partial^2}{\partial t^2} z_j\prns{t} \\
    &+ \prns{\frac{\partial}{\partial t} z_j\prns{t}}^\top \prns{\nabla^2 f_0\prns{z_j\prns{t}} + \sum_{k \in \tilde K_h\prns{z_0}} \nu_{j, k}\prns{t}\nabla^2 h_k\prns{z_j\prns{t}}}\prns{\frac{\partial}{\partial t} z_j\prns{t}}.
\end{align*}
Evaluating the above at $t = 0$ gives 
\begin{align*}
\frac{\partial^2}{\partial t^2} v_j\prns{t}\vert_{t = 0}    &= 2\prns{\nabla f_j\prns{z_0} - \nabla f_0\prns{z_0}}d_z^{j*} + 2\sum_{k\in \tilde K_h\prns{z_0}} \prns{\frac{\partial}{\partial t} \nu_{j, k}\prns{t}\vert_{t=0}} \nabla^\top h_k\prns{z_0}d_z^{j*} \\
    &+\prns{d_z^{j*}}^\top\prns{\nabla^2 f_0\prns{z_j\prns{t}} + \sum_{k \in \tilde K_h\prns{z_0}} \nu_{j, k}\prns{t}\nabla^2 h_k\prns{z_j\prns{t}}}d_z^{j*}.
\end{align*}
According to \cref{eq: kkt-perturb}, we know that 
\begin{align*}
&2\sum_{k\in \tilde K_h\prns{z_0}} \prns{\frac{\partial}{\partial t} \nu_{j, k}\prns{t}\vert_{t=0}} \nabla^\top h_k\prns{z_0}d_z^{j*} 
    = 2\sum_{k\in \tilde K_h\prns{z_0}} \xi_j \nabla^\top h_k\prns{z_0}d_z^{j*} \\
    =& -2 \prns{\nabla  f_j(z_0) - \nabla f_0(z_0)}^\top d_z^{j*}  - 2\prns{d_z^{j*}}^\top\prns{\nabla^2 f_0\prns{z_j\prns{t}} + \sum_{k \in \tilde K_h\prns{z_0}} \nu_{j, k}\prns{t}\nabla^2 h_k\prns{z_j\prns{t}}}d_z^{j*}.
\end{align*}
Moreover, note that by the first equation in \cref{eq: kkt-perturb}, we have 
\begin{align*}
\prns{d_z^{j*}}^\top\prns{\nabla^2 f_0\prns{z_j\prns{t}} + \sum_{k \in \tilde K_h\prns{z_0}} \nu_{j, k}\prns{t}\nabla^2 h_k\prns{z_j\prns{t}}}d_z^{j*} + \prns{d_z^{j*}}^\top\nabla {\mathcal H^{K_h}}^\top(z_0)\xi_j = - \prns{\nabla  f_j(z_0) - \nabla f_0(z_0)}^\top d_z^{j*},
\end{align*}
and by the second equation in \cref{eq: kkt-perturb}, we have 
\begin{align*}
\prns{d_z^{j*}}^\top\nabla {\mathcal H^{K_h}}^\top(z_0)\xi_j = 0.
\end{align*}
Thus 
\begin{align*}
\prns{d_z^{j*}}^\top\prns{\nabla^2 f_0\prns{z_j\prns{t}} + \sum_{k \in \tilde K_h\prns{z_0}} \nu_{j, k}\prns{t}\nabla^2 h_k\prns{z_j\prns{t}}}d_z^{j*}  = - \prns{\nabla  f_j(z_0) - \nabla f_0(z_0)}^\top d_z^{j*}.
\end{align*}
It follows that 
\begin{align*}
\frac{\partial^2}{\partial t^2} v_j\prns{t}\vert_{t = 0}&= - \prns{d_z^{j*}}^\top\prns{\nabla^2 f_0\prns{z_j\prns{t}} + \sum_{k \in \tilde K_h\prns{z_0}} \nu_{j, k}\prns{t}\nabla^2 h_k\prns{z_j\prns{t}}}d_z^{j*} \\
&= \prns{d_z^{j*}}^\top\prns{\nabla^2 f_0\prns{z_j\prns{t}} + \sum_{k \in \tilde K_h\prns{z_0}} \nu_{j, k}\prns{t}\nabla^2 h_k\prns{z_j\prns{t}}}d_z^{j*} + 2 \prns{\nabla  f_j(z_0) - \nabla f_0(z_0)}^\top d_z^{j*}.
\end{align*}
}
\end{proof}

\begin{proof}{Proof for \cref{prop: first-order-simplification}}
Note that the Lagrangian multiplier set $\Lambda(z^*, u_0)$ for the unperturbed problem can be written as follows:
\[
	\Lambda(z^*, u_0) = \left\{\lambda: 
	\begin{array}{l}
	\lambda_k \ge 0 \text{ if } k \in K(z^*, u_0), \lambda_k =0, \text{ if } k \in \{s+1, \dots, m\} \setminus K(z^*, u_0), \\
	\lambda_k \in \mathbb{R} \text{ if } k \in \{1, \dots, s\}, \\
	D_zf(z^*, u_0) + \sum_{k = 1}^m \lambda_k D_z g_k(z^*, u_0) = 0
	\end{array}\right\}.
\]
Similarly, we can define the Lagrangian for the problem PL:
\[
	L_{\PL}(d_z; \lambda) = Df(z^*, u_0)(d_z, d_u) + \sum_{k = 1}^s  \lambda_k Dg_k(z^*, u_0)(d_z, d_u) + \sum_{k \in K(z^*, u_0)} \lambda_k Dg_k(z^*, u_0)(d_z, d_u).
\]
The multiplier set for any $d_z$ feasible for the problem PL as follows:
\[
\Lambda_{\PL}(d_z) = \left\{\lambda: 
	\begin{array}{l}
	\lambda_k \ge 0 \text{ if } k \in K_{\PL}(z^*, u_0, d_z), \lambda_k =0, \text{ if } k \in K(z^*, u_0) \setminus K_{\PL}(z^*, u_0, d_z) \\
	\lambda_k \in \mathbb R \text{ if } k \in \{1, \dots, s\}\cup \left(\{s+1, \dots, m\}\setminus K(z^*, u_0)\right),\\
	D_zf(z^*, u_0) + \sum_{k = 1}^m \lambda_k D_z g_k(z^*, u_0) = 0
	\end{array}\right\}.
\]
Then by duality of linear program, for any $d_z^* \in S(PL)$, 
\[
	V(\PL) = \max_{\lambda} L_{\PL}(d_z^*; \lambda).
\]
We know that any $\lambda^*_{\PL} \in \Lambda_{PL}(d_z^*)$ attains the maximum above. For any $\lambda \in \Lambda_{\PL}(d_z^*)$ or $\lambda \in \Lambda(z^*, u_0)$, by the fact that $D_zf(z^*, u_0) + \sum_{k = 1}^m \lambda_{k} D_z g_k(z^*, u_0) = 0$ , we also have $L_{\PL}(d_z^*; \lambda) = D_u L(z^*, \lambda, u_0) d_u$. Moreover, $ \Lambda_{\PL}(d_z^*)$ differs with $\Lambda(z^*, u_0)$ only in two aspects: (1) for $\lambda \in \Lambda(z^*, u_0)$, $\lambda_k = 0$ for $k \in \{s+1, \dots, m\}\setminus K(z^*, u_0)$, but for $\lambda \in \Lambda_{\PL}(d_z^*)$, $\lambda_k \in \mathbb R$ for $k \in \{s+1, \dots, m\}\setminus K(z^*, u_0)$, which does not matter because $L_{\PL}(d_z; \lambda)$ does not depend on $\lambda_k$ for $k \in \{s+1, \dots, m\}\setminus K(z^*, u_0)$; (2) for $\lambda \in \Lambda(z^*, u_0)$, $\lambda_k \ge 0$ for $k \in K(z^*, u_0)\setminus K_{\PL}(z^*, u_0, d_z^*)$, but for $\lambda \in \Lambda_{\PL}(d_z^*)$, $\lambda_k = 0$ for $k \in K(z^*, u_0)\setminus K_{\PL}(z^*, u_0, d_z^*)$, which does not matter as well because $Dg_k(z^*, u_0)(d_z^*, d_u) = 0$ for $k \in K(z^*, u_0)\setminus K_{\PL}(z^*, u_0, d_z^*)$ so that $\lambda_k$ for $k \in K(z^*, u_0)\setminus K_{\PL}(z^*, u_0, d_z^*)$ do not influence $L_{\PL}(d_z^*; \lambda)$ as well. 
This means that for any $\lambda^*_{\PL} \in \Lambda_{PL}(d_z^*)$, there always exists $\lambda' \in \Lambda(z^*, u_0)$ such that $L_{\PL}(d_z^*; \lambda^*_{\PL}) = \max_\lambda L_{\PL}(d_z^*; \lambda) = L_{\PL}(d_z^*; \lambda')$.

Therefore, for any $d_z^* \in S(\PL)$, 
\[
	V(\PL) = \max_{\lambda} L_{\PL}(d_z^*; \lambda)  = \max_{\lambda \in\Lambda(z^*, u_0)} D_u L(z^*, \lambda, u_0) d_u = V(\DL),
\]
which justifies the dual formulation in \cref{eq: DL}. 

By the definition of Lagrangian multiplier set, $S(\DL) = \Lambda_{\PL}(d_z^*)$ for any $d_z^* \in S(\PL)$. Now we consider the Lagrangian of the problem PQ$(d_z)$: 
\begin{align*}
L_{\PQ}(r_z; \lambda)
	&= Df(z^*, u_0)(r_z, r_u) + D^2 f(z^*, u_0)((d_z, d_u), (d_z, d_u)) \\
	&+ \sum_{k \in K_{\PL}(z^*, u_0, d_z) \cap \{1, \dots, s\}} \lambda_k \prns{Dg_k(z^*, u_0)(r_z, r_u) + D^2 g_k(z^*, u_0)((d_z, d_u), (d_z, d_u))}.
\end{align*}
Note that $\PQ(d_z)$ is a linear program, and by the strong duality, we have that for any $r_z^* \in S(\PQ(d_z))$
\begin{align*}
V(\PQ(d_z)) = V(\DQ(d_z)) = \max_\lambda L_{\PQ}(r_z^*; \lambda).
\end{align*}

The set of Lagrangian multipliers that attain the maximum above is 
\begin{align*}
\Lambda_{\PQ(d_z)}(r_z^*) = 
\left\{\lambda: 
	\begin{array}{l}
	\lambda_k \ge 0 \text{ if } k \in K_{\PQ}(z^*, u_0, r_z), \lambda_k =0, \text{ if } k \in K_{\PL}(z^*, u_0, d_z)\setminus K_{\PQ}(z^*, u_0, r_z) \\
	\lambda_k \in \mathbb R \text{ if } k \in \{1, \dots, s\}\cup \left(\{s+1, \dots, m\}\setminus K_{\PL}(z^*, u_0, d_z)\right),\\
	D_z f(z^*, u_0) + 
	\sum_{k \in K_{\PL}(z^*, u_0, d_z) \cap \{1, \dots, s\}} \lambda_k D_z g_k(z^*, u_0) = 0
\end{array}
\right\},
\end{align*}
where $K_{\PQ}(z^*, u_0, r_z)$ is 
the index set of active inequality constraints in the problem $PQ(d_z)$, \ie,
$$
K_{\PQ}(z^*, u_0, r_z) = \{k \in K_{\PL}(z^*, u_0, d_z): Dg_k(z^*, u_0)(r_z, r_u) + D^2 g_k(z^*, u_0)((d_z, d_u), (d_z, d_u)) = 0\}.
$$
Thus for any $\lambda \in \Lambda_{\PQ(d_z)}$, 
\begin{align*}
V(\PQ(d_z)) = V(\DQ(d_z)) =  L_{\PQ}(r_z^*; \lambda) = D_u L(z^*, u_0; \lambda)r_u + D^2 L(z^*, u_0; \lambda)((d_z, d_u), (d_z, d_u)).
\end{align*}
Again, $\Lambda_{\PQ(d_z)}(r_z^*)$ differs with $S(\DL) = \Lambda_{\PL}(d_z^*)$ only in aspects that do no influence the value of $L_{\PQ}(r_z^*; \lambda)$. So for any $\lambda \in \Lambda_{\PQ(d_z)}$, there always exists $\lambda' \in S(\DL)$, such that $L_{\PQ}(r_z^*; \lambda) = L_{\PQ}(r_z^*; \lambda')$. Therefore, 
\begin{align*}
V(\PQ(d_z)) = V(\DQ(d_z)) =  L_{\PQ}(r_z^*; \lambda) = \sup_{\lambda \in S(\DL)}L_{\PQ}(r_z^*; \lambda).
\end{align*}
This proves the dual formulation in \cref{eq: DQ-d}.

It follows that if the optimal dual solution of the unperturbed problem is unique, \ie, $\Lambda(z^*, u_0) = \{\lambda^*\}$, then 
\[
V(\PL) = V(\DL) = D_u L(z^*, \lambda^*, u_0)d_u.
\]
Since $V(PQ)$ is finite, $S(PL) \ne \emptyset$. By strong duality, we have $\emptyset \ne S(DL) \subseteq \Lambda(z^*, u_0) = \{\lambda^*\}$, thus we must have $S(DL) = \{\lambda^*\}$. Therefore, 
\begin{align*}
V(\PQ(d_z)) = V(\DQ(d_z)) 
	&= \max_{\lambda \in S(DL)} D_u L(z^*, \lambda, u_0)r_u + D^2 L(z^*, \lambda, u_0)((d_z, d_u), (d_z, d_u))  \\
	&= D_u L(z^*, \lambda^*, u_0)r_u + D^2 L(z^*, \lambda^*, u_0)((d_z, d_u), (d_z, d_u)).
\end{align*}
\end{proof}

\begin{proof}{Proof for \cref{prop: second-order-simplify}.}
Under the asserted strict complementarity condition, $K_0(z^*, u_0, \lambda^*)= \emptyset$ and $K_+(z^*, u_0, \lambda^*) = K(z^*, u_0)$, so 
\begin{align*}
S(\PL) = \left\{d_z: \begin{array}{l}
Dg_k(z^*, u_0)(d_z, d_u) = 0, k \in \{1, \dots, s\} \cup K(z^*, u_0)
\end{array}\right\}.
\end{align*}
According to \cref{prop: first-order-simplification}, we have 
\begin{align*}
V(\PQ) = V(\DQ) = \min_{d_z \in S(PL)} D_u L(z^*, \lambda^*, u_0)r_u + D^2 L(z^*, \lambda^*, u_0)((d_z, d_u), (d_z, d_u)).
\end{align*}
The asserted conclusion then follows. 
\end{proof}

\begin{proof}{Proof for \cref{lemma: additive-perturb}.}
Note that the optimization problem  in \cref{eq: perturb-additive} corresponds to perturbation path $u(t) = t$, \ie, $u_0 = 0, d_u = 1, r_u = 0$. By assuming unqiue Lagrangian multipliers $(\lambda^*, \nu^*)$, we have  
\begin{align*}
V(PL) = V(DL) &=
	\nabla_t L(z^*, 0; \lambda^*, \nu^*) d_u \\
 	&= \delta_f(z^*) + \sum_k\lambda_k^* \delta_{g_k}(z^*).
\end{align*}
Moreover, because $d_u = 1, r_u = 0$,
\begin{align*}
V(PQ) = V(DQ) 
	&= \min_{d_z \in S(PL)} d_z^\top \nabla_{zz}^2 L(z^*, 0; \lambda^*, \nu^*) d_z + 2d_z^\top \nabla_{zt}^2 L(z^*, 0; \lambda^*, \nu^*) +  
	\nabla_{tt}^2 L(z^*, 0; \lambda^*, \nu^*) 
\end{align*}
Note that  
\begin{align*}
&\nabla_{tt}^2 L(z^*, 0; \lambda^*, \nu^*) = 0, \\
&\nabla_{tz}^2 L(z^*, 0; \lambda^*, \nu^*) = \nabla \delta_f(z^*) + \sum_{k = 1}^s \lambda_k^* \nabla \delta_{g_k}(z^*).
\end{align*}
Then \cref{eq: second-order-problem} follows from the fact that the constraints in DQ now reduces to the following:
\begin{align*}
&\left\{d_z^\top \nabla_z \left[g_k(z) + t\delta_{g_k}(z)\right]+ d_u \nabla_t \left[g_k(z) + t\delta_{g_k}(z)\right]\right\}\vert_{(z,t) = (z^*, 0)} \\
=& d_z^\top \left[\nabla g_k(z) + t\nabla \delta_{g_k}(z)\right]\vert_{(z,t) = (z^*, 0)}  + \nabla_t \left[g_k(z) + t\delta_{g_k}(z)\right]\vert_{(z,t) = (z^*, 0)}  \\
=& d_z^\top \nabla g_k(z^*) + \delta_{g_k}(z^*), \\
&\left[d_z^\top \nabla_z h_k(z) +  d_u \nabla_t h_k(z)\right]\vert_{(z,t) = (z^*, 0)} = d_z^\top \nabla h_k(z^*).
\end{align*}
\end{proof}

\begin{proof}{Proof for \cref{corollary: unconstr}}
Note that under the asserted conditions, conditions 1, 2, 4, 5 in \cref{lemma: additive-perturb} hold, and condition 3 in \cref{lemma: additive-perturb} degenerates and thus holds trivially. 

Note that $v'(0)$ in \cref{lemma: additive-perturb} now reduces to $\delta_f(z^*)$, and 
$$
v''(0) = \min_{d_z} ~ 
	d_z^\top  \nabla^2 f(z^*) d_z + 2d_z^\top\nabla \delta_f(z^*).
$$
Under the condition that $\nabla^2 f(z^*) $ is positive definite (and thus invertible), we have that the optimization problem in the last display has a unique solution $d_z^* = - \prns{\nabla^2 f(z^*)}^{-1}\nabla \delta_f(z^*)$. Consequently, 
\begin{align*}
v''(0) = - \nabla \delta_f(z^*)^\top \prns{\nabla^2 f(z^*)}^{-1}\nabla \delta_f(z^*).
\end{align*}
\end{proof}

\begin{proof}{Proof for \cref{thm: frechet-diff}.}
Consider the function $\phi(t) = v(f_0+t\delta_f)$. Given the asserted conditions, for any $t \in [0, 1]$, $f_0(z)+t\delta_f(z)$  satisfies the conditions in \cref{corollary: unconstr}, thus results in \cref{corollary: unconstr} imply that $\phi(t)$ is twice differentiable:
\begin{align*}
\phi'(t) 
	&= v'(f_0 + t\delta_f; \delta_f) = \delta_f; \\
\phi''(t) &= v''(f_0 + t\delta_f; \delta_f) = - \nabla \delta_f(z^*(f_0+t\delta_f))^\top \prns{\nabla^2 (f_0+t\delta_f)(z^*(f_0+t\delta_f))}^{-1}\nabla \delta_f(z^*(f_0+t\delta_f)).
\end{align*}
	
We now argue that $\phi''(t)$ is also continuous in $t \in [0, 1]$. 
Since condition 2 in \cref{corollary: unconstr} is satisfied for $t \in [0, 1]$, \edit{there exist} a compact set $\mathcal N$ such that that $z^*(f_0 + t\delta_f(z)) \in \mathcal N$ for $t \in [0, 1]$. 
Note that $\sup_{z \in \mathcal N}|f_0(z) + t\delta_f(z) - f_0(z)| \to 0$ as $t \to 0$ by the fact that $\delta_f(z)$ is bounded over $\mathcal N$. Then according to Theorem 5.3 in \cite{shapiro2014lectures}, $z^*(f_0 + t\delta_f) \to z^*(f_0)$ as $t \to 0$. Similarly, $\sup_{z \in \mathcal N}\edit{\|\nabla^2 (f_0 + t\delta_f)(z) - \nabla^2 f_0(z)\|_{\op{F}}} \to 0$ as $t \to 0$.
This convergence together with the continuity of $\nabla^2 f_0(z)$ and $\nabla^2 \delta_f(z)$ imply that $\edit{\|{\nabla^2 (f_0+t\delta_f)(z^*(f_0+t\delta_f))} - {\nabla^2 f_0(z^*(f_0))}\|_{\op{F}}} \to 0$. It then follows from the invertibility of ${\nabla^2 (f_0+t\delta_f)(z^*(f_0+t\delta_f))}$ for any $t \in [0, 1]$ that  
$\edit{\|\prns{\nabla^2 (f_0+t\delta_f)(z^*(f_0+t\delta_f))}^{-1} - \prns{\nabla^2 f_0(z^*(f_0))}^{-1}\|_{\op{F}}} \to 0$. Moreover, by the continuity of $\nabla \delta_f$, we have that $\edit{\|\nabla \delta_f(z^*(f_0+t\delta_f)) - \nabla \delta_f(z^*(f_0))\|_2} \to 0$ as $t \to 0$. These together show that $\phi''(t)$ is continuous in $t \in [0, 1]$. 

Now that $\phi(t)$ is twice continuously differentiable over $[0, 1]$, there exists $t' \in [0, 1]$ such that 
\[
\phi(1) = \phi(0) + \phi'(0) + \frac{1}{2}\phi''(t'),
\]
where $ \phi'(0) = v'(f; \delta_f)$ and $\phi''(t') = v''(f+t'\delta_f; \delta_f)$. Or equivalently, 
\begin{align*}
v(f_0 + \delta_f) = v(f_0) + v'(f; \delta_f) + v''(f_0; \delta_f) +  v''(f_0 + t'\delta_f; \delta_f) - v''(f_0; \delta_f).
\end{align*}
Denote $R_1 = \nabla \delta_f(z^*(f_0+t\delta_f)) -  \nabla \delta_f(z^*(f_0))$ and $R_2 =\prns{ \nabla^2 \prns{f_0+t\delta_f}(z^*(f_0+t\delta_f))}^{-1} - \prns{\nabla^2 f_0(z^*(f_0))}^{-1}$. It is straightforward to verify that 
\begin{align*}
v''(f_0 + t'\delta_f; \delta_f) - v''(f_0; \delta_f) 
	&=  \nabla \delta_f(z^*(f_0))^\top \prns{\nabla^2 f_0(z^*(f_0))}^{-1}\nabla \delta_f(z^*(f_0)) \\
	&- \nabla \delta_f(z^*(f_0+t\delta_f))^\top \prns{\nabla^2 \prns{f_0+t\delta_f}(z^*(f_0+t\delta_f))}^{-1}\nabla \delta_f(z^*(f_0+t\delta_f)) \\
	&= R_1^\top \prns{\nabla^2 f_0(z^*(f_0))}^{-1}R_1 + 2R_1 \prns{\nabla^2 f_0(z^*(f_0))}^{-1} \nabla \delta_f(z^*(f_0)) \\
	&+  \nabla \delta_f(z^*(f_0+t\delta_f))^\top R_2 \nabla \delta_f(z^*(f_0+t\delta_f)).
\end{align*}
As $\delta_f \to 0$, we have $\sup_{z \in \mathcal N}|\prns{f_0 + t\delta_f}(z) - f_0(z)| \to 0$, so that Theorem 5.3 in \cite{shapiro2014lectures} again implies that $\abs{z^*(f_0 + t\delta_f)  - z^*(f_0)}\to 0$. 
It follows that \edit{there exist} a constant $\beta \in [0, 1]$ such that $R_1 = \nabla^2 \delta_f(\beta z^*(f_0 + t\delta_f) + (1- \beta)z^*(f_0))(z^*(f_0 + t\delta_f) - z^*(f_0)) = o(\nabla^2 \delta_f(\beta z^*(f_0 + t\delta_f) + (1- \beta)z^*(f_0))) = o(\|\delta_f\|_\mathcal F)$.
Similarly, we can also prove that $\edit{\|R_2\|_{\op{F}}} \to 0$ as $\delta_f \to 0$. It follows that as $\delta_f \to 0$, 
\begin{align*}
v''(f_0 + t'\delta_f; \delta_f) - v''(f_0; \delta_f)  = o(\|\delta_f\|^2_\mathcal F).
\end{align*}
Therefore, 
\begin{align*}
v(f_0 + \delta_f) = v(f_0) + \delta_f(z^*) - \frac{1}{2} \nabla \delta_f(z^*)^\top \prns{\nabla^2 f(z^*)}^{-1}\nabla \delta_f(z^*) + o(\|\delta_f\|^2_\mathcal F).
\end{align*}
Similarly, we can prove that 
\[
z^*(f_0 + \delta_f) = z^*(f_0) - \prns{\nabla^2 f(z^*)}^{-1}\nabla \delta_f(z^*) + o(\|\delta_f\|_\mathcal F).
\]
\end{proof}
\subsection{Proofs for \cref{sec: auxiliary-var}}
\begin{proof}{Proof for \cref{prop: profile-gradient}.}
By first order optimality condition, for any $z^{\text{dec}}$, 
\begin{align*}
\frac{\partial}{\partial{z^{\text{aux}}}}\Eb{c(z^{\text{dec}}, z^{\text{aux}}_j\prns{z^{\text{dec}}}; Y) \mid X \in R_j} = 0. 
\end{align*}
It follows that 
\begin{align*}
\nabla f_j\prns{z^{\text{dec}}_0} = \frac{\partial}{\partial{z^{\text{dec}}}}\Eb{c(z^{\text{dec}}_0, z^{\text{aux}}_j\prns{z^{\text{dec}}_0}; Y) \mid X \in R_j}.
\end{align*}
Note that 
\begin{align*}
\nabla^2 f_0\prns{z^{\text{dec}}_0}  
	&= \frac{\partial^2}{\prns{\partial{z^{\text{dec}}}}^\top \partial{z^{\text{dec}}}}\Eb{c(z^{\text{dec}}_0, z^{\text{aux}}_0; Y) \mid X \in R_0} \\
	&+ \frac{\partial^2}{\partial{z^{\text{dec}}}\prns{\partial{z^{\text{aux}}}}^\top}\Eb{c(z^{\text{dec}}_0, z^{\text{aux}}_0; Y) \mid X \in R_0}\frac{\partial}{\prns{\partial z^{\text{dec}}}^\top}z^{\text{aux}}_0\prns{z^{\text{dec}}_0}.
\end{align*}
Moreover, under the asserted smoothness condition and invertibility condition, the implicit function theorem futher implies that 
\begin{align*}
\frac{\partial}{\prns{\partial z^{\text{dec}}}^\top}z^{\text{aux}}_0\prns{z^{\text{dec}}_0} 
	&= -\braces{\frac{\partial^2}{\partial{z^{\text{aux}}}\prns{\partial{z^{\text{aux}}}}^\top}\Eb{c(z^{\text{dec}}_0, z^{\text{aux}}_0; Y) \mid X \in R_0}}^{-1} \\
	&\qquad\qquad \times \frac{\partial^2}{\partial{z^{\text{aux}}} \prns{\partial{z^{\text{dec}}}}^\top}\Eb{c(z^{\text{dec}}_0, z^{\text{aux}}_0; Y)\mid X \in R_0},  
\end{align*}
which in turn proves the formula for $\nabla^2 f_j\prns{z^{\text{dec}}_0}$ in \cref{prop: profile-gradient}.
\end{proof}

\begin{proof}{Proof for \cref{prop: aux-var-relation}}
The conclusion follows directly from the following facts that can be easily verified:
\begin{align*}
&\nabla f_j\prns{z^{\text{dec}}_0} = \frac{\partial}{\partial z^{\text{dec}}} \tilde f_j\prns{z^{\text{dec}}_0, z^{\text{aux}}_j\prns{z^{\text{dec}}_0}}, \\
&\nabla^2 f_0\prns{z^{\text{dec}}_0} = \prns{\nabla^2 \tilde f_0\prns{z^{\text{dec}}_0, z^{\text{aux}}_0}}^{-1}\bracks{z^{\text{dec}}_0, z^{\text{dec}}_0}.
\end{align*}
\end{proof}

\subsection{Proofs for \cref{sec: supplement}}
\begin{proof}{Proof for \cref{lemma: consistency-est-sol}}
The conclusion directly follows from Theorem 5.3 in \cite{shapiro2014lectures}.
\end{proof}

\begin{proof}{Proof for \cref{prop: square-cost}}
Note that 
\begin{align*}
\hat z_0 = \argmin_z \frac{1}{n}\sum_{i = 1}^n \indic{X_i \in R_0} \|z - Y_i\|^2 = \frac{1}{n_0}\sum_i \indic{X_i \in R_0} Y_i.
\end{align*}
Analogously, we define $\hat z_j = \frac{1}{n_j}\sum_i \indic{X_i \in R_j} Y_i$.

Note that $\nabla c(z; y) = z- y$ and $\nabla^2 c(z; y) = I$. Thus the gradient and Hessian estimates are 
\begin{align*}
\hat h_j(\hat z_0) = \frac{1}{n_j}{\sum_i \indic{X_i \in R_j} (\hat z_0 - Y_i)} = \hat z_0 - \hat z_j,  ~~ \hat H_0(\hat z_0) = I.
\end{align*}
It follows that  
\begin{align*}
\ts\hat{\crit}^\text{apx-soln}(R_1,R_2) 
	&= \sum_{j=1,2}\frac1n\sum_{i=1}^n\indic{X_i\in R_j}c\prns{\hat z_0-\hat H^{-1}_0 \hat h_j;\;Y_i} \\
	&= \sum_{j=1,2}\frac{1}{2n}\sum_{i=1}^n\indic{X_i\in R_j}c\prns{\hat z_j;\;Y_i} = \frac{1}{2n}\sum_{j=1,2}\sum_{i=1}^n\indic{X_i\in R_j}
	\edit{\|Y_i - \hat z_j\|^2_2} \\
	&= \frac{1}{2}\sum_{j=1,2}\frac{n_j}{n}\prns{\frac{1}{n_j}\sum_{i=1}^n\indic{X_i\in R_j}
	\edit{\|Y_i - \hat z_j\|^2_2}} = \frac{1}{2}\sum_{j=1,2}\frac{n_j}{n}\sum_{l = 1}^d \op{Var}\prns{\{Y_{i, l}: X_i\in R_j, i \le n\}}
\end{align*}
and 
\begin{align*}
&\ts\frac{1}{2}\hat{\crit}^\text{apx-risk}(R_1,R_2) + \frac{1}{2}\sum_{j =1, 2}\frac{1}{n}\sum_i \indic{X_i \in R_j}\edit{\|Y_i - \hat z_0\|^2_2} \\
=&\frac{1}{2}\sum_{j =1, 2}\frac{n_j}{n}\sum_{l = 1}^d \bracks{\frac{1}{n_j}\sum_i \indic{X_i \in R_j}(Y_{i, l} - \hat z_{0, l})^2 - (\hat z_{0, l} - \hat z_{j, l})^2}  \\
=&\frac{1}{2}\sum_{j =1, 2}\frac{n_j}{n}
\sum_{l = 1}^d\bracks{\frac{1}{n_j}\sum_i \indic{X_i \in R_j}(Y_{i, l} - \hat z_{j, l} + \hat z_{j, l} -  \hat z_{0, l})^2 - (\hat z_{0, l} - \hat z_{j, l})^2} \\
=&\frac{1}{2}\sum_{j =1, 2}\frac{n_j}{n}
\sum_{l = 1}^d\bracks{\frac{1}{n_j}\sum_i \indic{X_i \in R_j}(Y_{i, l} - \hat z_{j, l})^2 + \frac{1}{n_j}\sum_i \indic{X_i \in R_j}(Y_{i, l}- \hat z_{j, l})(\hat z_{j, l} -  \hat z_{0, l})} \\
=& \frac{1}{2}\sum_{j =1, 2}\frac{n_j}{n}\sum_{l = 1}^d \frac{1}{n_j}\sum_{i} \indic{X_i \in R_j}(Y_{i, l} - \hat z_{j, l})^2 = \sum_{j=1,2}\frac{n_j}{2n}\sum_{l = 1}^d \op{Var}\prns{\{Y_{i, l}: X_i\in R_j, i \le n\}}
\end{align*}
\end{proof}

\begin{proof}{Proof for \cref{prop: gradient-var-port}.}
Note that in \cref{prop: gradient-var-port}, 
\begin{align*}
\nabla c(z; y) = 
2\begin{bmatrix}
yy^\top z_{1:d} - z_{d+1}y \\
z_{d+1} - z_{1:d}^\top y
\end{bmatrix}, 
\nabla^2 c(z; y) =
2\begin{bmatrix}
yy^\top & -y \\
-y^\top & 1
\end{bmatrix},
\end{align*}
and also 
$$
z_{0, d+1} = \argmin_{z_{d+1} \in \mathbb R}\left.\Eb{(Y^\top z_{1:d} - z_{d+1})^2 \mid X \in R_0}\right\vert_{z_{1:d} = z_{0, 1:d}}
= z^\top_{0, 1:d}\Eb{Y\mid X \in R_0}.
$$ 
It follows that 
\begin{align*}
\nabla f_j(z_0) = \Eb{\nabla c(z_0; Y) \mid X \in R_j} = 
2\begin{bmatrix}
\Eb{YY^\top \mid {X \in R_j}}z_{0, 1:d} - \Eb{Y \mid X \in R_j}z_{0, d+1} \\ 
z^\top_{0, 1:d}\prns{\Eb{Y \mid X \in R_0} - \Eb{Y \mid X \in R_j}}
\end{bmatrix}
\end{align*}
and 
\begin{align*}
\nabla^2 f_0(z_0) = 
\Eb{\nabla^2 c(z; Y) \mid X \in R_0} = 
2\begin{bmatrix}
\Eb{YY^\top \mid {X \in R_0}} & -\Eb{Y \mid X \in R_0} \\
-\Eb{Y^\top \mid X \in R_0} & 1
\end{bmatrix}
\end{align*}
\end{proof}

\begin{proof}{Proof for \cref{prop: gradient-portfolio}}
Recall that 
\begin{align*}
f_j(z) = \Eb{\frac{1}{\alpha}\prns{z_{d+1} - Y^\top z_{1:d}}\indic{z_{d+1} - Y^\top z_{1:d} \ge 0} - z_{d+1} \mid {X \in R_j}},
\end{align*}
and also $z_{0, d + 1} = q^\alpha_0(Y^\top z_0)$. 

Under the assumption that $Y$ has a continuous density function \edit{and $z \ne 0$}, Lemma 3.1 in \cite{Hong09} implies that 
\begin{align*}
&\frac{\partial}{\partial{z_{1:d}}} f_j(z) = -\frac{1}{\alpha}\Eb{Y\indic{z_{d+1} - Y^\top z_{1:d} \ge 0} \mid X \in R_j}, \\
&\frac{\partial}{\partial{z_{d+1}}} f_j(z) = \frac{1}{\alpha}\Prb{z_{d+1} - Y^\top z_{1:d} \ge 0 \mid  X \in R_j} - 1.
\end{align*}
Before deriving the Hessian, we first denote $\mu_j(u_l, y_l)$ as the joint density of $\sum_{l' \ne l}z_{l'}Y_{l'}$ and $Y_l$ given $X \in R_j$. 

It follows that for $l = 1, \dots, d$, 
\begin{align*}
&\frac{\partial^2}{\partial^2{z_{l}}} f_0(z_0)  \\
	=& -\left.\frac{1}{\alpha}\frac{\partial}{\partial{z_{l}}} \Eb{Y_l\indic{Y^\top z_{1:d} \le z_{d+1}}\mid X \in R_0}\right\vert_{z = z_0} \\
	=& -\left.\frac{1}{\alpha}\frac{\partial}{\partial{z_{l}}}\int\int_{-\infty}^{z_{d+1} - z_ly_l}y_l\mu_l(u_l, y_l)\diff u_l \diff y_l
	\right\vert_{z = z_0} \\
	=& -\left.\frac{1}{\alpha}\int \prns{- y_l} y_l\mu_l(z_{d+1} - z_ly_l, y_l) \diff y_l
	\right\vert_{z = z_0} \\
	=& -\frac{\mu_0\prns{q^\alpha_0(Y^\top z_0)}}{\alpha}\braces{- \Eb{Y_l^2 \mid Y^\top z_0 = q^{\alpha}_0(Y^\top z_0), X \in R_0}} \\
	=& \frac{\mu_0\prns{q^{\alpha}_0(Y^\top z_0)}}{\alpha}\Eb{Y_l^2 \mid Y^\top z_0 = q^{\alpha}_0(Y^\top z_0), X \in R_0}.
\end{align*}

Similarly we can prove that for $l, l' = 1, \dots, d$ 
\begin{align*}
&\frac{\partial^2}{\partial{z_{l}}\partial z_{l'}} f_0(z_0) = \frac{\mu_0\prns{q^{\alpha}_0(Y^\top z_0)}}{\alpha} \Eb{Y_lY_{l'} \mid Y^\top z_0 = q^{\alpha}_0(Y^\top z_0), X \in R_0}.
\end{align*}

In contrast, for $l = 1, \dots, d$, 
\begin{align*}
\frac{\partial^2}{\partial{z_{d+1}}\partial{z_{l}}} f_0(z_0) 
	&= \left.\frac{\partial}{\partial{z_l}}\prns{\frac{1}{\alpha}\Prb{z_{d+1} - Y^\top z_{1:d} \ge 0 \mid X \in R_0} - 1}\right|_{z = z_0} \\
	&= \left.\frac{1}{\alpha} \frac{\partial}{\partial{z_{l}}}\int\int_{-\infty}^{z_{d+1} - z_ly_l}\mu_l(u_l, y_l)\diff u_l \diff y_l \right|_{z = z_0} \\
	&= \left.\frac{1}{\alpha} \int -y_l \mu_l(z_{d+1} - z_ly_l, y_l) \diff y_l \right|_{z = z_0} \\
	&= -\frac{\mu_0\prns{q^{\alpha}_0(Y^\top z_0)}}{\alpha}\Eb{Y_l\mid Y^\top z_0 = q^{\alpha}_0(Y^\top z_0), X \in R_0},
\end{align*}
and 
\begin{align*}
\frac{\partial^2}{\partial^2{z_{d+1}}} f_0(z_0) 
	&=  \left.\frac{\partial}{\partial{z_{d+1}}}\prns{\frac{1}{\alpha}\Prb{z_{d+1} - Y^\top z_{1:d} \ge 0 \mid X \in R_0} - 1}\right|_{z = z_0} \\
	&= \left.\frac{1}{\alpha} \frac{\partial}{\partial{z_{d+1}}}\int\int_{-\infty}^{z_{d+1} - z_ly_l}\mu_l(u_l, y_l)\diff u_l \diff y_l \right|_{z = z_0} \\
	&= \left.\frac{1}{\alpha} \int \mu_l(z_{d+1} - z_l y_l, y_l)\right|_{z = z_0} \\
	&= \frac{\mu_0\prns{q^{\alpha}_0(Y^\top z_0)}}{\alpha}.
\end{align*}

When $Y \mid X \in R_0$ has Gaussian distribution with mean $m_0$ and covariance matrix $\Sigma_0$, then $(Y, Y^\top z_{0, 1:d})$ given $X \in R_0$ is also has a Gaussian distribution 
\begin{align*}
\mathcal N \prns{
	\begin{bmatrix}
	m_0 \\
	m_0^\top z_{0, 1:d}
	\end{bmatrix}
	,
	\begin{bmatrix}
	\Sigma_0 & \Sigma_0 z_{0, 1:d} \\
	z_{0, 1:d}^\top \Sigma_0 & z_{0, 1:d}^\top \Sigma_0 z_{0, 1:d}
	\end{bmatrix}
}
\end{align*}
It follows that $Y \mid Y^\top z_{0, 1:d} = q^{\alpha}_0(Y^\top z_{0, 1:d})$  also has a Gaussian distribution with the following conditional mean and conditional variance:
\begin{align*}
&\Eb{Y \mid Y^\top z_{0, 1:d} = q^{\alpha}_0(Y^\top z_{0, 1:d}), X \in R_0} = m_0 + \Sigma_0 z_{0, 1:d} \prns{z_{0, 1:d}^\top \Sigma_0 z_{0, 1:d}}^{-1}(q^{\alpha}_0(Y^\top z_{0}) - m_0^\top z_{0, 1:d}), \\ 
&\var\prns{Y \mid Y^\top z_{0, 1:d} = q^{\alpha}_0(Y^\top z_{0, 1:d}), X \in R_0} = \Sigma_0 - \Sigma_0 z_{0, 1:d} \prns{z_{0, 1:d}^\top \Sigma_0 z_{0, 1:d}}^{-1}z_{0, 1:d}^\top \Sigma_0.
\end{align*}
\end{proof}

\begin{proof}{Proof for \cref{lemma: suff-cond-crit-converg}}
Under the conditions in \cref{lemma: consistency-est-sol}, we have that $\hat z_0 \to z_0$ almost surely, which implies  that \edit{there exist} a neighborhood $\mathcal N$ around $z_0$ such that $\hat z_0 \in \mathcal N$ almost surely for sufficiently large $n$. 

Since $\sup_{z \in \mathcal N}\edit{\|\hat H_0(z) - \nabla^2 f_{0}(z)\|_{\op{F}}} = o_p(1)$ and $\nabla^2 f_{0}(z)$ is continuous, we have that $\edit{\|\hat H_0(\hat z_0) - \nabla^2 f_{0}(z_0)\|_{\op{F}}} = o_p(1)$ \citep[Proposition 5.1]{shapiro2014lectures}. By the fact that $\nabla^2 f_{0}(z_0)$ is invertible and the  continuous mapping theorem, we also have that  $\hat H_0(\hat z_0)$ is differentiable with high probability and $\edit{\|\hat H^{-1}_0(\hat z_0) - \prns{\nabla^2 f_{0}(z_0)}^{-1}\|_{\op{F}}} = o_p(1)$.

Since $\{(x, y) \mapsto \indic{x \in R_0}c(z; y): z \in \mathcal N\}$ is a Donsker class, $\sqrt{n}\prns{\widehat{p_0f_0}(\cdot) - p_0f_0(\cdot)}$ converges to a Gaussian process \citep[Sec. 19.2]{van2000asymptotic}.
By Slutsky's theorem, this means that $\sqrt{n}\prns{\hat{f_0}(\cdot) - f_0(\cdot)}$ converges to a Gaussian process as well. 
Then according to Theorem 5.8 in \cite{shapiro2014lectures}, if $\sqrt{n}\prns{\hat{f_0}(\cdot) - p_0f_0(\cdot)}$ converges to a Gaussian process as well and $\nabla^2 f_0(z_0)$ is invertible, then $\sqrt{n}(\hat z_0 - z_0)$ also converges to a Gaussian distribution, which implies that $\hat z_0 - z_0 = O_p(n^{-1/2})$. It follows that  we have the following holds almost surely: 
\begin{align}
\edit{\|\hat h_j(\hat z_0) - \nabla f_j(z_0)\|_2}  
	&\le \edit{\|\hat h_j(\hat z_0) - \nabla f_j(\hat z_0)\|_2} + \edit{\|\nabla f_j(\hat z_0) - \nabla f_j(z_0)\|_2} \nonumber \\ 
	&\le \sup_{z \in \mathcal N}\edit{\|\hat h_j(z_0) - \nabla f_j(z_0)\|_2} +\edit{ \|\nabla f_j(\hat z_0) - \nabla f_j(z_0)\|_2} \nonumber \\
	&\le \sup_{z \in \mathcal N}\edit{\|\hat h_j(z_0) - \nabla f_j(z_0)\|_2} + \edit{\|\nabla^2 f_j(z_0)\|_{\op{F}}}\edit{\|\hat z_0 -  z_0\|_2}, \label{eq: square-root-n}
\end{align}
which implies that $\|\hat h_j(\hat z_0) - \nabla f_j(z_0)\| = O_p(n^{-1/2})$.
\end{proof}

\begin{proof}{Proof for \cref{prop: ex-mnv}.}
Note that we only need to verify the conditions in \cref{lemma: suff-cond-crit-converg} and that $\left|\frac{1}{n}\sum_{i = 1}^n \indic{X_i \in R_j}c\prns{\hat z_0-\hat H^{-1}_0(\hat z_0) \hat h_j(\hat z_0);Y_i} - f_j\prns{z_0-\prns{\nabla^2 f_{0}(z_0)}^{-1} {\nabla f_{j}(z_0)}}\right| = O_p(n^{-1/2})$ for $j = 1, 2$.  
Recall that  $(\nabla f_j(z_0))_l=(\alpha_l+\beta_l)\Prb{Y_l\leq z_{0,l} \mid X\in R_j}-\beta_l$ and $(\nabla^2 f_{0}(z_0))_{ll}=(\alpha_l+\beta_l)\mu_{0, l}(z_0)$ for $l=1,\dots,d,\,j=1,2$, and $(\nabla^2 f_{0}(z_0))_{ll'}=0$ for $l \neq l'$, where $\mu_{0,l}$ is the density of $Y_l\mid X\in R_0$.
Note that $\nabla^2 f_0(z)$ is continuous, and $\nabla^2 f_0(z_0)$ is invertible under the asserted conditions. 

Note that the indicator function class $\{(x, y_l) \mapsto \indic{x\in \mathcal R_j, y_l \le z_l}: z_l \in \mathbb R \}$ is a Donsker class \citep[Ex. 19.6]{van2000asymptotic}. Therefore, $\sup_{z \in \mathbb R^d}|\prns{\hat h_j(z)}_l - (\nabla f_j(z_0))_l| = O_p(n^{-1/2})$.

Using the Theorem 2 in \cite{jiang17b} for the uniform convergence of kernel density estimator, we can straightforwardly 
show that under the asserted Holder continuity condition for $\mu_k$ and rate condition for bandwidth $b$, the Hessian estimator satisfies that $\sup_{z}\left|\prns{\hat H_0(z)}_{ll} - (\nabla^2 f_{0}(z_0))_{ll}\right| = o_p(1)$ for $l = 1, \dots, d$.

Moreover, note that 
$$
c(z; y) = \sum_{l = 1}^d \max\{\alpha_l(z_l - y_l), \beta_l(y_l - z_l)\} = \sum_{l = 1}^d \beta_l(y_l - z_l) - (\alpha_l + \beta_l)(y_l - z_l)\indic{y_l \le z_l}.
$$
Here the function classes $\{y_l \mapsto \beta_l(y_l - z_l): z \in \mathbb R^d\}$ and $\{y_l \mapsto (\alpha_l + \beta_l)(y_l - z_l): z \in \mathbb R^d\}$ are linear function classes with fixed dimension, so they are Donsker classes \citep[Ex 19.17]{van2000asymptotic}. Moreover, $\{y_l \mapsto \indic{y_l \le z_l}: z_l \in \mathbb R^d\}$ is also a Donsker class \citep[Ex. 19.6]{van2000asymptotic}. It follows that the function class $\{(x, y) \mapsto \indic{x \in R_j}c(z; y): z \in \mathbb R^d\}$ is also a Donsker class, according to Ex 19.20 of \cite{van2000asymptotic}.
Similar to proving $\|\hat h_j(\hat z_0) - \nabla f_j(\hat z_0)\| = O_p(n^{-1/2})$ in 
\cref{lemma: suff-cond-crit-converg} (see \cref{eq: square-root-n}), we can prove that for $j = 1, 2$, 
$$\left|\frac{1}{n}\sum_{i = 1}^n \indic{X_i \in R_j}c\prns{\hat z_0-\hat H^{-1}_0(\hat z_0) \hat h_j(\hat z_0);Y_i} - f_j\prns{z_0-\prns{\nabla^2 f_{0}(z_0)}^{-1} {\nabla f_{j}(z_0)}}\right| = O_p(n^{-1/2}).$$ 
\end{proof}

\begin{proof}{Proof for \cref{prop: ex-portfolio-var}.}
Since \cref{ex: portfolio-var} is a special example of \cref{ex: smooth}, the conclusions in \cref{prop: ex-portfolio-var} directly follow from 
\cref{prop: ex-smooth}.
\end{proof}

\begin{proof}{Proof for \cref{prop: ex-portfolio}.}
Recall that $\hat q^{\alpha}_0(Y_i^\top \hat z_{0})$ is the empirical quantile of $Y^\top \hat z_0$ based on data in $R_0$. Equivalently, $\hat q^{\alpha}_0(Y_i^\top \hat z_{0})$ is the (approximate) minimizer of the following optimization problem:
\begin{align*}
\min_{\beta \in \mathbb R} \frac{\frac{1}{n}\sum_{i=1}^n \prns{\alpha - \indic{Y_i^\top \hat z_{0} - \beta \le 0}}\prns{Y_i^\top \hat z_{0} - \beta}\indic{X_i \in R_0}}{\frac{1}{n} \sum_{i=1}^n \indic{X_i \in R_0}}
\end{align*}
Since $\braces{y \mapsto {y^\top z - \beta}: z \in \mathbb R^d, \beta \in \mathbb R}$ is a Donsker class \citep[Ex. 19.17]{van2000asymptotic} and so is $\braces{y \mapsto \indic{y^\top z - \beta}: z \in \mathbb R^d, \beta \in \mathbb R}$. This implies that 
\begin{align*}
\sup_{\beta \in \mathbb R, z \in \mathbb R^d}\left|\frac{1}{n}\sum_{i=1}^n \prns{\alpha - \indic{Y_i^\top  z - \beta \le 0}}\prns{Y_i^\top  z - \beta}\indic{X_i \in R_0} - \Eb{\prns{\alpha - \indic{Y_i^\top  z - \beta \le 0}}\prns{Y_i^\top  z - \beta}\indic{X_i \in R_0}}\right| \to 0.
\end{align*}
Together with $\hat z_0 \to z_0$ and the continuity of $\Eb{\prns{\alpha - \indic{Y_i^\top  z - \beta \le 0}}\prns{Y_i^\top  z - \beta}\indic{X_i \in R_0}}$ in $z$, this implies that 
\begin{align*}
\sup_{\beta \in \mathbb R}\left|\frac{1}{n}\sum_{i=1}^n \prns{\alpha - \indic{Y_i^\top  \hat z_0 - \beta \le 0}}\prns{Y_i^\top \hat z_0 - \beta}\indic{X_i \in R_0} - \Eb{\prns{\alpha - \indic{Y_i^\top  z_0 - \beta \le 0}}\prns{Y_i^\top  z_0 - \beta}\indic{X_i \in R_0}}\right| \to 0.
\end{align*}
Moreover, $\frac{1}{n} \sum_{i=1}^n \indic{X_i \in R_0} \to \Prb{X \in R_0}$ by Law of Large Number. 
It follows from Theorem 5.5 in \cite{shapiro2014lectures} that $\hat q^{\alpha}_0(Y_i^\top \hat z_{0})$ converges to the set of minimizers of $\Eb{\prns{\alpha - \indic{Y_i^\top  z - \beta \le 0}}\prns{Y_i^\top  z - \beta} \mid {X_i \in R_0}}$. 
Since the density function of $Y^\top z_0$ at $q^\alpha_0(Y^\top z_0)$ is positive, minimizer of $\Eb{\prns{\alpha - \indic{Y_i^\top  z - \beta \le 0}}\prns{Y_i^\top  z - \beta} \mid {X_i \in R_0}}$ is unique. Therefore, $\hat q^{\alpha}_0(Y_i^\top \hat z_{0})$ converges to $q^{\alpha}_0(Y_i^\top  z_{0})$.

Since $\braces{y \mapsto {y^\top z - \beta}: z \in \mathbb R^d, \beta \in \mathbb R}$ is a Donsker class \citep[Ex. 19.17]{van2000asymptotic},  obviously $\braces{(x, y) \mapsto \indic{y^\top z - \beta, x \in R_j}: z \in \mathbb R^d, \beta \in \mathbb R}$ and thus $\braces{(x, y) \mapsto \indic{y^\top z - \beta \le 0,  x \in R_j}y: z \in \mathbb R^d, \beta \in \mathbb R}$  are also Donsker classes. 
Morever, we already prove that $\hat q^{\alpha}_0(Y_i^\top \hat z_{0})$ converges to $q^{\alpha}_0(Y_i^\top  z_{0})$, and obviously $\frac{n_j}{n} = \frac{1}{n} \sum_{i=1}^n \indic{X_i \in R_j} \to \Prb{X \in R_j}$. Therefore, 
\begin{align*}
-\frac{1}{\alpha n_j}\sum_{i = 1}^n \indic{Y_i^\top \hat z_{0} \le \hat q^{\alpha}_0(Y^\top \hat z_{0}), X_i \in R_j}Y_i 
	&= -\frac{1}{\alpha}\Eb{\indic{Y_i^\top  z_{0} \le  q^{\alpha}_0(Y^\top  z_{0})}Y_i\mid X_i \in R_j} + O_p(n^{-1/2}).
\end{align*}
Similarly, we can show that 
\begin{align*}
&\frac{1}{n_j}\sum_{i = 1}^n \frac{1}{\alpha} \indic{Y^\top_i \hat z_{0, 1:d} \le q^{\alpha}_0(Y^\top \hat z_{0}), X_i \in R_j} - \indic{{X_i \in R_j}} \\
=& \frac{1}{\alpha}\Prb{q^{\alpha}_0(Y^\top z_{0}) - Y^\top z_{0, 1:d} \ge 0 \mid X \in R_j} - 1 + O_p(n^{-1/2}).
\end{align*}
Therefore, $\hat h_j(\hat z_0) = \nabla f_j(z_0) + O_p(n^{-1/2})$.

Under the Gaussian assumption,
\begin{align*}
&\Eb{Y \mid Y^\top z_{0} = q^{\alpha}_0(Y^\top z_{0}), X \in R_0} = m_0 + \Sigma_0 z_0 \prns{z_0^\top \Sigma_0 z_0}^{-1}(q^{\alpha}_0(Y^\top z_{0}) - m_0^\top z_0) \\
&\var\prns{Y \mid Y^\top z_{0} = q^{\alpha}_0(Y^\top z_{0}), X \in R_0} = \Sigma_0 - \Sigma_0 z_0 \prns{z_0^\top \Sigma_0 z_0}^{-1}z_0^\top \Sigma_0.
\end{align*}
Since both are continuous in $z_0, m_0, \Sigma_0, q^{\alpha}_0(Y^\top z_{0})$, so when we plug in the empirical estimators that converge to the true values, the estimator for $\Eb{Y \mid Y^\top z_{0} = q^{\alpha}_0(Y^\top z_{0}), X \in R_0}$, $\var\prns{Y \mid Y^\top z_{0} = q^{\alpha}_0(Y^\top z_{0}), X \in R_0}$, and $\Eb{YY^\top \mid Y^\top z_{0} = q^{\alpha}_0(Y^\top z_{0}), X \in R_0}$ are all consistent. 

Similar to the proof of \cref{prop: ex-mnv}, we can show that under the asserted Holder continuity condition and the rate condition on bandwidth $b$, 
\begin{align*}
\frac{1}{n_0 b}\sum_{i = 1}^n \indic{X_i \in R_0}\mathcal K\prns{\prns{Y_i^\top\hat z_0 - q_{\alpha}(Y^\top \hat z_{0})}/b} = \mu_{0}\prns{q^{\alpha}_0(Y^\top z_{0})} + o_p(1).
\end{align*}
Then by the upper boundedness of $\mu_0$, we have that $\edit{\|\hat H_0(\hat z_0) - \nabla^2 f_{0}(z_0)\|_{\op{F}}} = o_p(1)$. 
\end{proof}

\begin{proof}{Proof for \cref{prop: ex-smooth}.}
Since $\nabla^2 f(z)$ is continuous at $z = z_0$ and $\hat z_0 \to z_0$ almost surely according to \cref{lemma: consistency-est-sol}, 
\edit{there exist} a sufficiently small compact neighborhood $\mathcal N$ around $z_0$ such that the minimum singular value of $\nabla^2 f(z)$, denoted as $\sigma_{\min}(\nabla^2 f(z))$, is at least $\frac{2}{3}\sigma_{\min}\prns{\nabla^2 f(z_0)}$ for any $z \in \mathcal N$, and for $n$ large enough $\hat z_0 \in \mathcal N$ almost surely. Recall that $\nabla^2 f_0(z)=\Eb{\nabla^2 c\prns{z;Y_i} \mid X_i\in R_0}$ and $\nabla f_j(z)=\Eb{\nabla c\prns{z;Y_i} \mid X_i\in R_j}$.

Since $\nabla^2 c(z; y)$ is continuous for all $y$ and $\mathcal N$ is compact, the  class of functions (of $y$) $\{y \mapsto \nabla^2 c(z; y): z \in \mathcal N\}$ is a Glivenko-Cantelli class \citep[Example 19.8]{van2000asymptotic}, which implies the uniform convergence $\sup_{z \in \mathcal N}\|\hat H_0(z) - \nabla^2 f_{0}(z)\| \overset{a.s.}{\to} 0$. Without loss of generality, we can also assume for large enough $n$ that $\hat H_0(z)$ is invertible for $z \in \mathcal N$, and $\sigma_{\min}\prns{\hat H_0(z)} \ge \frac{1}{2} \sigma_{\min}(\nabla^2 f(z)) \ge \frac{1}{3}\sigma_{\min}\prns{\nabla^2 f(z_0)}$ for $z \in \mathcal N$. 

Since $\nabla c(z; y)$ is continuously differentiable, and $\mathcal N$ is compact, $\nabla c(z; y)$ is Lipschitz in $z$ on $\mathcal N$. It follows that $\{y \mapsto \nabla c(z; y): z \in \mathcal N\}$ is a Donsker class \citep[Example 19.7]{van2000asymptotic}. This implies that $n^{1/2}\prns{\hat h_j(\cdot) - \nabla f_j(\cdot)}$ converges to a Gaussian process $\mathbb G(\cdot)$ over $z \in \mathcal N$. 
Therefore, $n^{1/2}\prns{\hat h_j(\cdot) - \nabla f_j(\cdot)} = O_p(1)$, and $\sup_{z \in \mathcal N}\edit{\|\hat h_j(z) - \nabla f_j(z)\|_2} = O_p(n^{-1/2})$. 
Note that the Donsker property of $\{y \mapsto \nabla c(z; y): z \in \mathcal N\}$ also implies that it is a Glivenko-Cantelli class, so that $\sup_{z \in \mathcal N}\edit{\|\hat h_j(z) - \nabla f_j(z)\|_2} \overset{a.s.}{\to} 0$.  

By the fact that for $z \in \mathcal N$, $\sigma_{\min}\prns{\hat H_0(z)} \ge \frac{1}{3}\sigma_{\min}\prns{\nabla^2 f(z_0)}$, $\edit{\|\hat h_j(z) - \nabla f_j(z)\|_2} \overset{a.s.}{\to} 0$ and $\nabla f_j(z)$ is bounded on $\mathcal N$, we have that \edit{there exist} another compact set $\mathcal N'$ such that for sufficiently large $n$, $ z-\hat H^{-1}_0(z) \hat h_j(z) \in \mathcal N'$ for any $z \in \mathcal N$. Since $c(z; y)$ is continuously differnetiable, it is also Lipschitz in $z$ on $\mathcal N'$. Again this means that $\{y \mapsto c(z; y): z \in \mathcal N'\}$ is a Donsker class, so that 
\[
\sup_{z \in \mathcal N'}|\frac{1}{n}\sum_{i = 1}^n \indic{X_i \in \mathcal R_j}c(z; Y_i) - \Eb{\indic{X \in \mathcal R_j}c(z; Y)}| = O_p(n^{-1/2}).
\]
Therefore, $\sup_{z \in \mathcal N}\left|\frac{1}{n_j}\sum_{i = 1}^n \indic{X_i \in R_j}c(\hat r(z); Y_i) - \Eb{\indic{X \in R_j}c(\hat r(z); Y)}\right| = O_p(n^{-1/2})$ for $\hat r(z) \coloneqq z -\hat H^{-1}_0(z) \hat h_j( z)$. 
Since $\hat r(z) \to z_0 - \prns{\nabla^2 f(z_0)}^{-1}\nabla f(z_0)$ almost surely, we have that 
\begin{align*}
\left|\frac{1}{n}\sum_{i = 1}^n \indic{X_i \in \mathcal R_j}c(\hat z_0 -\hat H^{-1}_0(\hat z_0) \hat h_j(\hat z_0); Y_i) - \Eb{\indic{X \in \mathcal R_j}c(z_0 - \prns{\nabla^2 f(z_0)}^{-1}\nabla f(z_0); Y)}\right| = O_p(n^{-1/2}).
\end{align*}
\end{proof}

\edit{
\begin{proof}{Proof of \cref{prop: LICQ}}
The constraints in \cref{ex: mnv} can be rewritten as 
\begin{align*}
\Z = \braces{z \in \R{d}: h_k\prns{z} = -z_k \le 0, h_{d+1}\prns{z} = \sum_{l=1}^d z_l - C \le 0}.
\end{align*}
Note that at any $z \in \Z$, there are at most $d$ active inequality constraints, and their gradients have to be linearly independent. So it satisfies the LICQ condition at any $z\in\Z$. Similarly, we can prove the LICQ condition for \cref{ex: portfolio-var,ex: portfolio} with the simplex constraint. 
\end{proof}
}

\edit{
\begin{proof}{Proof of \cref{prop: equi-spo-stochopt}}
Fix a parent region $R_0$ and a split that partitions it into two subregions $R_1$ and $R_2$ (with sample sizes $n_1, n_2$ respectively). According to Eq. (4) in \cite{elmachtoub2020decision}, the SPO splitting criterion for the given split can be written as follows:
\begin{align*}
\mathcal{C}^{\text{SPO}}\prns{R_1, R_2} 
    &= \sum_{j=1}^2 \frac{n_j}{n}\prns{\min_{z \in \mathcal{Z}}\frac{1}{n_j}\sum_{i: X_i \in R_j} Y_i^\top z - \frac{1}{n_j}\sum_{i: X_i \in R_j} \min_{z\in\mathcal{Z}} Y_i^\top z}  \\
    &= \sum_{j=1}^2 \prns{\min_{z \in \mathcal{Z}}\frac{1}{n}\sum_{i: X_i \in R_j} Y_i^\top z} - \prns{\frac{1}{n}\sum_{i=1}^n \min_{z\in\mathcal{Z}} Y_i^\top z}.
\end{align*}
Note that the second term above does not depend on the split so using the SPO criterion to choose splits is equivalent to using only the first term to choose splits. It is easy to see that the first term is exactly our oracle splitting criterion with all unknown expectations replaced by sample averages: 
\begin{align*}
\hat{\mathcal{C}}^\text{oracle}(R_1,R_2)=\sum_{j=1,2}\min_{z\in\mathcal{Z}}\hat{\mathbb{E}}\bracks{c(z;Y)\indic{X\in R_j}} = \sum_{j=1}^2 \prns{\min_{z \in \mathcal{Z}}\frac{1}{n}\sum_{i: X_i \in R_j} Y_i^\top z}.
\end{align*}
\end{proof}
}

\subsection{Proofs for \cref{sec: unconstr}}\label{app-sec: sec2}
\begin{proof}{Proof for \cref{lemma: consistency-est-sol}}
The conclusion follows from Theorem 5.3 in \cite{shapiro2014lectures} when the population optimization problem has a unique optimal solution.
\end{proof}

\begin{proof}{Proof for \cref{thm:apxriskapx}}
Conditions 1, 2, 3 imply that conditions in \cref{thm: frechet-diff} are satisfied for both $\delta_f = f_1 - f_0$ and $\delta_f = f_2 - f_0$.

Therefore,  \cref{thm: frechet-diff} implies that for $j = 1, 2$,
\[
\min_{z \in \mathbb R^d} f_j(z) = f_{j}(z_0)
-\frac12{\nabla f_{j}(z_0)}^\top \prns{\nabla^2 f_{0}(z_0)}^{-1} {\nabla f_{j}(z_0)} + o(\|f_j - f_0\|_\mathcal F)
\]
where $\|f_j - f_0\|_\mathcal F = \max \{\sup_z|f_j(z) - f_0(z)|, \sup_z\edit{\|\nabla f_j(z) - f_0(z)\|_2}, \sup_z\edit{\|\nabla^2 f_j(z) - f_0(z)\|_{\op{F}}}\}$. By the Lispchitzness condition, we have $\|f_j - f_0\|_\mathcal F = O(\mathcal D_0^2)$. Therefore, 
\begin{align*}
\crit^\text{oracle}(R_1,R_2) = \sum_{j = 1, 2}p_j\min_{z \in \mathbb R^d} f_j(z) 
	&= \sum_{j = 1, 2}p_j\prns{f_{j}(z_0)
-\frac12{\nabla f_{j}(z_0)}^\top \prns{\nabla^2 f_{0}(z_0)}^{-1} {\nabla f_{j}(z_0)}}+ o(\mathcal D_0^2) \\
	&= p_0f_0(z_0) + \frac{1}{2}{\crit^\text{apx-risk}(R_1,R_2)} + o(\mathcal D_0^2).
\end{align*}
\end{proof}

\begin{proof}{Proof for \cref{thm:apxsolapx}}
By \cref{thm: frechet-diff} with $\delta_f = f_j - f_0$ respectively, 
\begin{align*}
z_j(1) = z_0-\prns{\nabla^2 f_{0}(z_0)}^{-1} {\nabla f_{j}(z_0)} + R_j,
\end{align*}
where $R_j = o(\|f_j - f_0\|_\mathcal F)$.

It follows from mean-value theorem that 
\edit{there exist} a diagonal matrix $\Lambda_j$ whose diagonal entries are real numbers within $[0, 1]$ such that 
\begin{align*}
v_j(1) = f_j(z_j(1)) 
	&= f_j\prns{z_0-\prns{\nabla^2 f_{0}(z_0)}^{-1} {\nabla f_{j}(z_0)} + R_j} \\
	&= f_j\prns{z_0-\prns{\nabla^2 f_{0}(z_0)}^{-1} {\nabla f_{j}(z_0)}} + R_j^\top \nabla f_j\prns{z_0-\prns{\nabla^2 f_{0}(z_0)}^{-1} {\nabla f_{j}(z_0)} + \Lambda_j  R_j}.
\end{align*}
We can apply mean-value theorem once again to $\nabla f_j$ to get 
\begin{align*}
 \nabla f_j\prns{z_0-\prns{\nabla^2 f_{0}(z_0)}^{-1} {\nabla f_{j}(z_0)} + \Lambda_j  R_j} = \nabla f_j(z_j(1)) + O(\prns{I - \Lambda_j}^\top R_j) = O(\prns{I - \Lambda_j}^\top R_j),
\end{align*}
where the last equality follows from the first order necessary condition for optimality of $z_j(1)$.

It follows that  
\begin{align*}
v_j(1) 
	&= f_j\prns{z_0-\prns{\nabla^2 f_{0}(z_0)}^{-1} {\nabla f_{j}(z_0)}} + O(R_j^\top (I - \Lambda_j) R_j) \\
	&= f_j\prns{z_0-\prns{\nabla^2 f_{0}(z_0)}^{-1} {\nabla f_{j}(z_0)}} + o(\|f_j - f_0\|^2_\mathcal F) \\
	&= f_j\prns{z_0-\prns{\nabla^2 f_{0}(z_0)}^{-1} {\nabla f_{j}(z_0)}} + o(\mathcal D_0^2) 
\end{align*}

Therefore 
\begin{align*}
\crit^\text{oracle}(R_1,R_2) = \sum_{j = 1, 2}p_j f_j(z_j(1)) 
	&= \sum_{j = 1, 2}p_j f_j\prns{z_0-\prns{\nabla^2 f_{0}(z_0)}^{-1} {\nabla f_{j}(z_0)}} + o(\mathcal D_0^2) \\
	&= \crit^\text{apx-soln}(R_1,R_2) + o(\mathcal D_0^2).
\end{align*}
\end{proof}

\begin{proof}{Proof for \cref{thm:critconverge}.}
Under the asserted condition, $\hat H_0^{-1} - \nabla^2 f^{-1}_{0}(z_0) = o_p(1)$, $\hat h_j = \nabla f_j(z_0) = O_p(n^{-1/2})$ for $j = 1, 2$. It follows that 
\begin{align*}
\ts\hat{\crit}^\text{apx-risk}(R_1,R_2)&\ts
	=-\sum_{j=1,2}\hat h_j^\top \hat H^{-1}_0 \hat h_j  \\
	&= -\sum_{j=1,2}\prns{h_j^\top(z_0) + O_p(n^{-1/2})} \prns{H^{-1}_0(z_0) + o_p(1)} \prns{h_j^\top(z_0) + O_p(n^{-1/2})} \\
	&= -\sum_{j=1,2} h_j^\top( z_0) H^{-1}_0( z_0)  h_j( z_0) + 2 h_j^\top(z_0)H^{-1}_0(z_0)O_p(n^{-1/2}) + o_p(n^{-1/2}) \\
	&= {\crit}^\text{apx-risk}(R_1,R_2) + O_p(n^{-1/2}).
\end{align*}

Note, the condition that $\left|\frac{1}{n}\sum_{i = 1}^n \indic{X_i \in R_j}c\prns{\hat z_0-\hat H^{-1}_0 \hat h_j;Y_i} - f_j(z_0)\right| = O_p(n^{-1/2})$ for $j = 1, 2$ directly ensures that 
\begin{align*}
\ts\hat{\crit}^\text{apx-soln}(R_1,R_2) = {\crit}^\text{apx-soln}(R_1,R_2) + O_p(n^{-1/2}).
\end{align*}

\end{proof}

\subsection{Proofs for \cref{sec: asympt-opt}}\label{sec: opt-proof}

For brevity we define $\overline c(z;x)=\Eb{c(z; Y) \mid X = x}$ and $\hat{\overline c}(z;x)=\sum_{i = 1}^n w_i(x)c(z; Y_i)$. Under \cref{assump: distr}, $\mathcal X$ is compact. Without loss of generality, we assume $\mathcal X\subseteq[0,1]^p$.

\begin{lemma}\label{lemma: exponential-inequality}
Let $x \in \mathcal X$ be fixed and $\braces{w_{ij}(x) = \frac{\indic{i\in\mathcal I^\text{dec}_j,\tau_j(X_i)=\tau_j(x)}}{\sum_{i'=1}^n\indic{i\in\mathcal I^\text{dec}_j,\tau_j(X_{i'})=\tau_j(x)}}, i = 1, \dots, n}$ be the weights derived from the $j$th tree. Under \cref{assump: tree,assump: distr}, 
if further $k_n \to \infty$ and $\log T = o(k_n)$, then for any $z \in \mathcal C$,  as $n \to \infty$,
\[
\sup_{1 \le j \le T} \abs{\sum_{i = 1}^n w_{ij}(x)\prns{c(z; Y_i) - \overline c(z; X_i)}} \overset{p}{\to} 0, ~~~ \sup_{1 \le j \le T} \abs{\sum_{i = 1}^n w_{ij}(x)\prns{b(Y_i) - \Eb{b(Y_i) \mid X_i}} }\overset{p}{\to} 0.
\] 
\end{lemma}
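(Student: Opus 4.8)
\textbf{Proof plan for \cref{lemma: exponential-inequality}.}

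The plan is to prove a single master concentration bound that simultaneously controls, uniformly over the $T$ trees, the deviation between a tree-weighted average of any ``nice'' random variable $W_i$ (which will be instantiated as $c(z;Y_i)$ for a fixed $z\in\mathcal C$, and separately as $b(Y_i)$) and the corresponding tree-weighted average of its conditional expectation $\Eb{W_i\mid X_i}$. The key structural observation is that the weights $w_{ij}(x)$ from tree $j$ are nonnegative, sum to one, and are supported on the leaf $L_j(x)=\{i\in\mathcal I^\text{dec}_j:\tau_j(X_i)=\tau_j(x)\}$, whose cardinality is between $k_n$ and $2k_n-1$ by condition~1 of \cref{assump: tree}; moreover the weights are \emph{uniform} on that leaf, $w_{ij}(x)=1/|L_j(x)|$. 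Crucially, because the tree $\tau_j$ is built on $\mathcal I^\text{tree}_j$ and $\mathcal I^\text{dec}_j$ is disjoint from it (honesty), conditionally on $\mathcal I^\text{tree}_j$ and on the realized covariates $\{X_i:i\in\mathcal I^\text{dec}_j\}$, the leaf $L_j(x)$ is determined, and the residuals $W_i-\Eb{W_i\mid X_i}$ for $i\in L_j(x)$ are independent mean-zero random variables whose conditional MGF is bounded (sub-exponential) by condition~5 of \cref{assump: distr}.

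First I would condition on the full forest-construction $\sigma$-field $\mathcal F=\sigma\bigl(\{X_i\}_{i=1}^n,\{\mathcal I^\text{tree}_j,\mathcal I^\text{dec}_j\}_{j=1}^T,\text{candidate-split randomization}\bigr)$. Given $\mathcal F$, each leaf average $\sum_i w_{ij}(x)\prns{W_i-\Eb{W_i\mid X_i}}$ is a sum of $|L_j(x)|\in[k_n,2k_n)$ independent mean-zero sub-exponential terms with weights $1/|L_j(x)|$, so a standard Bernstein-type bound for sub-exponential sums (using condition~5 to get the MGF control, with the single subtlety that $b(Y)$ is nonnegative so its residual $b(Y)-\Eb{b(Y)\mid X}$ is two-sided but still sub-exponential) gives, for any $\epsilon>0$,
\[
\Prb{\Bigl|\sum_{i=1}^n w_{ij}(x)\prns{W_i-\Eb{W_i\mid X_i}}\Bigr|>\epsilon\ \Big|\ \mathcal F}\ \le\ 2\exp\prns{-c\,k_n\min(\epsilon,\epsilon^2)}
\]
for a constant $c>0$ depending only on the sub-exponential parameters $\eta,\eta',C$ and not on $j$ or $n$. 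Then a union bound over $j=1,\dots,T$ yields the conditional probability bound $2T\exp(-c\,k_n\min(\epsilon,\epsilon^2))$, and since this bound is deterministic it also bounds the unconditional probability. Taking $n\to\infty$ with $\log T=o(k_n)$ and $k_n\to\infty$ forces $T\exp(-c\,k_n\min(\epsilon,\epsilon^2))\to 0$, which is exactly convergence in probability of the supremum over $j$ to zero. Instantiating $W_i=c(z;Y_i)$ gives the first claim and $W_i=b(Y_i)$ gives the second; note that $z$ is fixed throughout so no uniformity over $\mathcal C$ is needed here (that is handled later via the Lipschitz bound $b$).

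The main obstacle, and the place requiring care, is verifying that the conditioning argument legitimately produces \emph{independent} residuals with a \emph{uniformly} bounded conditional MGF. Honesty guarantees the leaf membership is $\mathcal F$-measurable, so the $Y_i$ with $i\in L_j(x)$ have not influenced which observations sit in the leaf; but one must check that conditioning on $\mathcal F$ does not distort the conditional law of $Y_i$ given $X_i$ — it does not, precisely because the pairs $(X_i,Y_i)$ are i.i.d.\ and the extra conditioning is on functions of $\{X_i\}$ and on independent subsampling/split randomness, so $Y_i\mid(\mathcal F)$ has the same law as $Y_i\mid X_i$. A second, more technical point is uniformity of the sub-exponential constant: condition~5 of \cref{assump: distr} is stated at the fixed query point $x$, i.e.\ for the conditional law $Y\mid X=x$ with $z$ ranging over $\mathcal C$; to apply it to $Y_i\mid X_i$ for $X_i$ in a shrinking leaf around $x$ one invokes that leaves become small in all relevant dimensions (condition~2 of \cref{assump: tree} plus the density lower bound in condition~1 of \cref{assump: distr}) together with the Lipschitz-in-$x$ control in condition~4, so that the relevant conditional MGFs at nearby points are within a constant factor of the one at $x$ eventually; alternatively, one strengthens the statement to assume condition~5 holds with common constants in a neighborhood of $x$, as the remark after \cref{thm: asymp-opt} anticipates. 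Once this uniform sub-exponential control is in hand, the Bernstein bound and union bound are routine.
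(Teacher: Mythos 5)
Your proposal is correct and follows essentially the same route as the paper: condition on the (honest) tree construction and the covariates so that the weights $w_{ij}(x)$ are fixed, nonnegative, sum to one, and are bounded by $1/k_n$; apply an exponential (Bernstein-type) concentration bound for the weighted sum of conditionally mean-zero residuals using the exponential-moment condition of \cref{assump: distr}; then union bound over the $T$ trees and use $\log T = o(k_n)$, $k_n\to\infty$ — the only difference being that the paper obtains the concentration step by citing Lemma 12.1 of \cite{biau2015lectures} rather than re-deriving it. Your caveat that condition 5 of \cref{assump: distr} is stated only at the query point $x$, whereas the bound is applied at the leaf points $X_i$, is a genuine subtlety that the paper's proof also leaves implicit, and your proposed fixes (shrinking leaves plus Lipschitz control, or assuming the moment bound with common constants near $x$) are the natural ways to close it.
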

\begin{proof}{Proof for \cref{lemma: exponential-inequality}.}
In this proof, we fix $j = 1, \dots, T$ and implicitly condition on $\mathcal I_j^\text{tree}$. Conditionally on $\{X_i: i \in \mathcal I_j^\text{dec}\}$, $w_{ij}(x)$ for $i = 1, \dots, n$ are all fixed, and  these weights satisfy that $w_{ij}(x) \ge 0$, $\sum_{i = 1}^n w_{ij}(x) = 1$ and $\max_{i} w_{ij}(x) = [\frac{1}{k_n}, \frac{1}{2k_n - 1}]$. According to Lemma 12.1 in \cite{biau2015lectures}, we have that for any $\epsilon \le \frac{\min\{1, 2C\}}{\max\{\eta, \eta'\}}$ and any $z \in \mathcal C$, 
\begin{align*}
&\Prb{\abs{\sum_{i = 1}^n w_{ij}(x)\prns{c(z; Y_i) - \overline c(z; X_i)}}\ge \epsilon \mid X_1, \dots, X_n} \le \exp\prns{-\frac{k_n\epsilon^2\eta^2}{8C}}, \\
&\Prb{\abs{\sum_{i = 1}^n w_{ij}(x)\prns{b(Y_i) - \Eb{b(Y_i) \mid X_i}}}\ge \epsilon \mid X_1, \dots, X_n} \le \exp\prns{-\frac{k_n\epsilon^2\eta'^{2}}{8C}}.
\end{align*}
It follows that 
\begin{align*}
&\Prb{\sup_j \abs{\sum_{i = 1}^n w_{ij}(x)\prns{c(z; Y_i) - \overline c(z; X_i)}}\ge \epsilon \mid X_1, \dots, X_n} \le \exp\prns{\log T -\frac{k_n\epsilon^2\eta^2}{8C}},\\
&\Prb{\sup_j  \abs{\sum_{i = 1}^n w_{ij}(x)\prns{b(Y_i) - \Eb{b(Y_i) \mid X_i}}}\ge \epsilon \mid X_1, \dots, X_n} \le \exp\prns{\log T -\frac{k_n\epsilon^2\eta'^{2}}{8C}}.
\end{align*} 
This means that as $n \to \infty$,  
$$\sup_j \abs{\sum_{i = 1}^n w_{ij}(x)\prns{c(z; Y_i) - \overline c(z; X_i)}} \to 0, ~~ \sup_j\abs{\sum_{i = 1}^n w_{ij}(x)\prns{b(Y_i) - \Eb{b(Y_i) \mid X_i}} }\to 0.$$ 
\end{proof}

\begin{lemma}\label{lemma: uniform-converge}
If the assumptions in \cref{lemma: exponential-inequality} hold, $s_n/k_n \to \infty$ and $T = o(s_n/k_n)$, then as $n \to \infty$,
\[
	\sup_{z \in \mathcal C}\abs{\hat{\overline c}(z; x) - \overline c(z; x)} \overset{p}{\to} 0.  
\]
\end{lemma}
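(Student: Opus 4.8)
\textbf{Proof plan for \cref{lemma: uniform-converge}.}
The goal is to upgrade the pointwise (in $z$) control from \cref{lemma: exponential-inequality} to a uniform-in-$z$ bound over the compact set $\mathcal C$. The plan is to decompose the error $\hat{\overline c}(z;x)-\overline c(z;x)$ into two pieces: a ``stochastic'' part, which compares $c(Y_i;\cdot)$ to its conditional mean $\overline c(\cdot;X_i)$, and a ``bias'' part, which compares the localized weighted average of $\overline c(z;X_i)$ to $\overline c(z;x)$ itself. Writing $w_i(x)=\frac1T\sum_{j=1}^T w_{ij}(x)$, we have
\begin{align*}
\hat{\overline c}(z;x)-\overline c(z;x) = \underbrace{\sum_{i=1}^n w_i(x)\bigl(c(z;Y_i)-\overline c(z;X_i)\bigr)}_{(\mathrm{I})} + \underbrace{\sum_{i=1}^n w_i(x)\bigl(\overline c(z;X_i)-\overline c(z;x)\bigr)}_{(\mathrm{II})}.
\end{align*}
By the triangle inequality and convexity of the weights, $|(\mathrm I)|\le \sup_{1\le j\le T}\bigl|\sum_i w_{ij}(x)(c(z;Y_i)-\overline c(z;X_i))\bigr|$, so the first term is already handled at a single $z$ by \cref{lemma: exponential-inequality}; likewise the $b(Y)$ version of that lemma will be used to dominate fluctuations between nearby $z$'s.

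For the uniformity over $z$, the plan is a standard covering argument using the Lipschitz structure in \cref{assump: distr}. Cover $\mathcal C$ by finitely many balls of radius $\delta$ centered at $z_1,\dots,z_N$ (with $N=N(\delta)<\infty$ since $\mathcal C$ is compact). For any $z\in\mathcal C$ pick the nearest center $z_m$; then $|c(z;Y_i)-c(z_m;Y_i)|\le b(Y_i)\|z-z_m\|_2\le b(Y_i)\delta$ and $|\overline c(z;x)-\overline c(z_m;x)|\le \Eb{b(Y)\mid X=x}\delta\le \tilde C\delta$, with an analogous bound for $\overline c(z;X_i)$ using the conditional-mean version of the Lipschitz constant. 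Hence
\begin{align*}
\sup_{z\in\mathcal C}\bigl|\hat{\overline c}(z;x)-\overline c(z;x)\bigr| \le \max_{1\le m\le N}\bigl|\hat{\overline c}(z_m;x)-\overline c(z_m;x)\bigr| + \delta\Bigl(\sum_{i=1}^n w_i(x)b(Y_i) + 2\tilde C\Bigr) + (\text{bias remainder}).
\end{align*}
The finite maximum over the $z_m$ goes to $0$ in probability by applying \cref{lemma: exponential-inequality} finitely many times (and a union bound over $m=1,\dots,N$, which is harmless since $N$ is fixed). The term $\sum_i w_i(x)b(Y_i)$ is $O_p(1)$: split it as $\sum_i w_i(x)(b(Y_i)-\Eb{b(Y_i)\mid X_i}) + \sum_i w_i(x)\Eb{b(Y_i)\mid X_i}$; the first summand $\to 0$ by the $b$-part of \cref{lemma: exponential-inequality}, and the second is bounded by $\tilde C$ up to a vanishing bias, using the Lipschitz condition on $\Eb{b(Y)\mid X_{\mathcal J}}$ together with the shrinking-leaf argument below. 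Then let $\delta\to 0$ after $n\to\infty$.

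The main obstacle is the \textbf{bias term} $(\mathrm{II})$, i.e.\ showing $\sup_{z\in\mathcal C}\bigl|\sum_i w_i(x)(\overline c(z;X_i)-\overline c(z;x))\bigr|\overset{p}{\to}0$. This is where \cref{assump: tree} enters crucially. The plan is: by the Lipschitz condition in \cref{assump: distr} (item 4), $|\overline c(z;X_i)-\overline c(z;x)|\le L_c\|X_{i,\mathcal J}-x_{\mathcal J}\|_2$ whenever $w_i(x)>0$, uniformly in $z\in\mathcal C$; so it suffices to show that the weighted average $\sum_i w_i(x)\|X_{i,\mathcal J}-x_{\mathcal J}\|_2\overset{p}{\to}0$, i.e.\ that the leaves containing $x$ shrink to a point in the relevant coordinates. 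This is exactly the content of the forest-leaf-diameter arguments of Wager--Athey/Athey et al.: conditions~1 and~2 of \cref{assump: tree} (balanced splits putting a fraction $\ge\omega$ on each side, minimal leaf size $k_n$, and splitting on each relevant coordinate with frequency $\ge\pi/p$) force the leaf diameter in the $\mathcal J$-coordinates to be $O_p\bigl((s_n/k_n)^{-c}\bigr)$ for some $c>0$ depending on $\omega,\pi,p$, while $k_n\to\infty$, $s_n/k_n\to\infty$ ensure this vanishes; honesty (condition 3) makes $w_i(x)$ independent of the $Y_i$'s conditionally on the $X$'s, which is what legitimizes the decomposition and the application of \cref{lemma: exponential-inequality}. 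I would invoke these leaf-shrinkage estimates essentially as black boxes (citing the cited papers), since reproving them is not the point; the bookkeeping to glue ``every tree's leaf shrinks'' into ``the forest weights concentrate on a shrinking neighborhood'' is the one genuinely delicate step, and it also requires the density-of-$X$ lower/upper bounds from \cref{assump: distr} item 1 to translate ``$k_n$ points per leaf'' into ``small leaf volume,'' hence small diameter, with high probability.
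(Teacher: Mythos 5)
Your plan is correct and follows essentially the same route as the paper's proof: the same decomposition into a stochastic term and a bias term, the same covering argument over $\mathcal C$ with the Lipschitz envelope $b(Y)$ handled via the $b$-part of \cref{lemma: exponential-inequality}, and the same appeal to the Wager--Athey leaf-diameter shrinkage (with a union bound over the $T$ trees, which is where $T=o(s_n/k_n)$ enters) combined with Lipschitz continuity in $x$ to kill the bias. No substantive gap.
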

\begin{proof}{Proof for \cref{lemma: uniform-converge}.}
Note that 
\begin{align}
\abs{\hat{\overline c}(z; x) - \overline c(z; x)} 
	&\le \frac{1}{T}\sum_{j = 1}^T \abs{\sum_{i = 1}^n w_{ij}(x)\prns{c(z; Y_i) - \Eb{c(z; Y_i) \mid X_i = x}}} \nonumber \\
	&\le \frac{1}{T}\sum_{j = 1}^T \abs{\sum_{i = 1}^n w_{ij}(x)\prns{c(z; Y_i) - \Eb{c(z; Y_i) \mid X_i}}} \nonumber \\
	&+ \frac{1}{T}\sum_{j = 1}^T \abs{\sum_{i = 1}^n w_{ij}(x)\prns{\Eb{c(z; Y_i) \mid X_i} - \Eb{c(z; Y_i) \mid X_i = x}}}. \label{eq: uniform-consist1}
\end{align}
Denote $R_j(x)$ as the leaf of the $j$th tree that contains $x$.
For a set $S\subseteq\R p$, define $\op{diam}_{\mathcal J}(S)=\sup_{x,x'\in S}\prns{\sum_{j\in\mathcal J}(x_j-x'_j)^2}^{1/2}$.
Following the same arguments as in Lemma 2 of \cite{wager2018estimation}, we have that 
\[
\Prb{\op{diam}_{\mathcal J}(R_j(x)) \ge \epsilon} \le \sqrt{p}\exp\prns{-C_1\log^3\prns{s_n/(2k_n - 1)}}. 
\]
This implies that when $s_n/k_n \to \infty$ and $T = o(s_n/k_n)$, 
\[
\Prb{\sup_{1 \le j \le T}\op{diam}_{\mathcal J}(R_j(x)) \ge \epsilon} \le \sqrt{p}T\exp\prns{-C_1\log^3\prns{s_n/(2k_n - 1)}}  \to 0.
\]

Note that $w_{ij}(x) > 0$ only for $i$ such that $X_i \in R_j(x)$. This and the Lipschitz continuity of $\overline c(z; x)$ in $x$ together imply that  
\begin{align}\label{eq: uniform-consist2}
 \sup_{z \in \mathcal C}\abs{\sum_{i = 1}^n w_{ij}(x)\prns{\Eb{c(z; Y_i) \mid X_i} - \Eb{c(z; Y_i) \mid X_i = x}}} \le L_c \sup_j \op{diam}_{\mathcal J}(R_j(x)) \to 0.
 \end{align} 
 It follows that 
 \begin{align*}
   \sup_{z \in \mathcal C}\frac{1}{T}\sum_{j = 1}^T \abs{\sum_{i = 1}^n w_{ij}(x)\prns{\Eb{c(z; Y_i) \mid X_i} - \Eb{c(z; Y_i) \mid X_i = x}}} \to 0.
 \end{align*}
 Now consider a $\epsilon/\tilde C-$cover of $\mathcal C$, which we denote as $\{z_1, \dots, z_M\}$ with $M \le K\tilde C^d \prns{\frac{\op{diam}(\mathcal C)}{\epsilon}}^d$ for a positive constant $K$.
  This induces brackets of type $[c(z_k; Y) - \frac{\epsilon}{\tilde C}b(Y), c(z_k; Y) + \frac{\epsilon}{\tilde C}b(Y)]$ for the function class $\{y \mapsto c(z; y): z \in \mathcal C\}$. 
Note that for any $z \in \mathcal C$, 
\begin{align*}
\sum_{i = 1}^n w_{ij}(x)\prns{c(z; Y_i) - \Eb{c(z; Y_i) \mid X_i}} 
	&\le \max_{1 \le k \le M}\braces{\sum_{i = 1}^n w_{ij}(x)\prns{c(z_k; Y_i) + \frac{\epsilon}{\tilde C}b(Y_i)} - \sum_{i = 1}^n w_{ij}(x)\Eb{c(z_k; Y_i) + \frac{\epsilon}{\tilde C}b(Y_i)\mid X_i}}\\
	&+ {\frac{\epsilon}{\tilde C}\sum_{i = 1}^n w_{ij}(x)\Eb{b(Y_i)\mid X_i}}
\end{align*}
By \cref{lemma: exponential-inequality}, we have 
\begin{align*}
\sup_j \max_{1 \le k \le M}\braces{\sum_{i = 1}^n w_{ij}(x)\prns{c(z_k; Y_i) + \frac{\epsilon}{\tilde C}b(Y_i)} - \sum_{i = 1}^n w_{ij}(x)\Eb{c(z_k; Y_i) + \frac{\epsilon}{\tilde C}b(Y_i)\mid X_i}} \overset{p}{\to} 0. 
\end{align*}
Moreover, 
\begin{align*}
\sup_j {\frac{\epsilon}{\tilde C}\sum_{i = 1}^n w_{ij}(x)\Eb{b(Y_i)\mid X_i}} 
	&= {\frac{\epsilon}{\tilde C}\sum_{i = 1}^n w_{ij}(x)\Eb{b(Y_i)\mid X_i = x}} + {\frac{\epsilon}{\tilde C}\sum_{i = 1}^n w_{ij}(x)\prns{\Eb{b(Y_i)\mid X_i} - \Eb{b(Y_i)\mid X_i = x}}} \\
	&\le \epsilon + \frac{\epsilon}{\tilde C}L_b \sup_j \op{diam}_{\mathcal J}(R_j(x)) \to \epsilon.
\end{align*}
Thus as $n \to \infty$,
\begin{align*}
\sup_{j, z} \sum_{i = 1}^n w_{ij}(x)\prns{c(z; Y_i) - \Eb{c(z; Y_i) \mid X_i}}  \le \epsilon
\end{align*}
Similarly, we can prove that as $n \to \infty$,
\begin{align*}
\inf_{j, z} \sum_{i = 1}^n w_{ij}(x)\prns{c(z; Y_i) - \Eb{c(z; Y_i) \mid X_i}}  \ge  -\epsilon
\end{align*}
By the arbitrariness of $\epsilon$, we have 
\begin{align}
 \sup_{j, z} \abs{\sum_{i = 1}^n w_{ij}(x)\prns{c(z; Y_i) - \Eb{c(z; Y_i) \mid X_i}}} \to 0. \label{eq: uniform-consist3}
 \end{align}
\cref{eq: uniform-consist1,eq: uniform-consist2,eq: uniform-consist3} together imply that 
\[
	\sup_{z \in \mathcal C}\abs{\hat{\overline c}(z; x) - \overline c(z; x)} \overset{p}{\to} 0.  
\]
\end{proof}

\begin{proof}{Proof for \cref{thm: asymp-opt}.}
By condition \ref{assump: distr inf-compact} of \cref{assump: distr}, we have that for sufficiently large $n$, $\hat z_n \in \mathcal C$ and $\argmin_{z \in \mathcal Z} \overline c(z; x) \subseteq \mathcal C$ almost surely. Let us fix $z_0 \in \argmin_{z \in \mathcal Z} \overline c(z; x)$. 

The conclusion follows from 
\begin{align*}
 \abs{\overline c(\hat z_n; x) - \min_{z \in \mathcal Z} \overline c(z; x)} =  \abs{\overline c(\hat z_n; x) - \overline c(z_0; x)}  
 	&\le \abs{\overline c(\hat z_n; x) - \hat{\overline c}(\hat z_n; x)} + \abs{\hat{\overline c}(\hat z_n; x) - \overline c(z_0; x)} \\
 	&\le 2\sup_{z \in \mathcal C}\abs{z; x) - \overline c(z; x)} \to 0. 
 \end{align*} 
 Here the last inequality follows from the facts that 
 \begin{align*}
 \abs{\overline c(\hat z_n; x) - \hat{\overline c}(\hat z_n; x)} \le \sup_{z \in \mathcal C}\abs{z; x) - \overline c(z; x)}
 \end{align*}
and that 
\begin{align*}
\abs{\hat{\overline c}(\hat z_n; x) - \overline c(z_0; x)} = 
\left\{
\begin{array}{rcl}
 \hat{\overline c}(\hat z_n; x) - \overline c(z_0; x) \le \hat{\overline c}(z_0; x) - \overline c(z_0; x) \le \sup_{z \in \mathcal C}\abs{\hat{\overline c}(z; x) - \overline c(z; x)} & &  \text{if }\hat{\overline c}(\hat z_n; x) \ge \overline c(z_0; x) \\
 \overline c(z_0; x) - \hat{\overline c}(\hat z_n; x) \le  \overline c(\hat z_n; x) - \hat{\overline c}(\hat z_n; x) \le \sup_{z \in \mathcal C}\abs{\hat{\overline c}(z; x) - \overline c(z; x)} & & \text{if } \hat{\overline c}(\hat z_n; x) < \overline c(z_0; x)
\end{array}
\right. .
\end{align*}
\end{proof}

\begin{proof}{Proof for \cref{prop: lipschitz}.}
\textbf{Statement 1.} Consider $c_l(z; y) = \max\{\alpha_l\prns{z_l - y_l}, \beta_l\prns{y_l - z_l}\}$. Note $c_l(z; y) - c_l(z'; y) = \beta_l\prns{z_l - z_l'}$ if $z_l', z_l \le y_l$ and $c_l(z'; y) - c_l(z; y) = \alpha_l\prns{z_l - z_l'}$ if $z_l', z_l \ge y_l$.
Denote $\Delta_{l} = \abs{z_l - z_l'}$, $\Delta_{z_l} = \abs{z_l - y_l}$ and $\Delta_{z_l'} = \abs{z_l' - y_l}$. When $z_l \le y_l, y_l \le z_l'$, obviously $\Delta_{l} = \Delta_{z_l}+\Delta_{z_l'}$, and $\abs{c_l(z; y) - c_l(z'; y)} = \abs{\beta_l\Delta_{z_l} - \alpha_l\Delta_{z_l'}} \le \max\{\alpha_l, \beta_l\}\max\{\Delta_{z_l}, \Delta_{z_l'}\} \le \max\{\alpha_l, \beta_l\}\Delta_l$. Similarly we can show that when $z_l > y_l, y_l > z_l'$, $\abs{c_l(z; y) - c_l(z'; y)} \le \max\{\alpha_l, \beta_l\}\Delta_l$. Therefore, $\abs{c(z; y) - c(z'; y)} \le \sqrt{d}\max\{\alpha_l, \beta_l\}\edit{\|z - z'\|_2}$. 

\textbf{Statement 2.} Letting $C'=\sup_{\tilde z\in\mathcal C}\|\tilde z\|$, note that for any $z, z'\in\mathcal C$,
\begin{align*}
\abs{c(z; y) - c(z'; y)} 
	&= \abs{(y^\top z_{1:d} - z_{d+1})^2 - (y^\top z'_{1:d} - z'_{d+1})^2} \\
	&\le \abs{y^\top \prns{z_{1:d} + z'_{1:d}} - \prns{z_{d+1} + z'_{d+1}}}\abs{y^\top \prns{z_{1:d} - z'_{1:d}} - \prns{z_{d+1} - z'_{d+1}}} \\
	&\le \prns{\edit{\|y\|_2\|z_{d+1} + z'_{d+1}\|_2} + \abs{z_{d+1} + z'_{d+1}}}\edit{\left\|\begin{bmatrix}y \\ 1 \end{bmatrix}\right\|_2}\edit{\|z - z'\|_2} \\
	&\le 2C'\prns{\edit{\|y\|_2} + 1}\sqrt{\edit{\|y\|^2_2} + 1}\edit{\|z - z'\|_2} \\
	&\le 4\sqrt{2}C'\max\{1, \edit{\|y\|^2_2}\}\edit{\|z - z'\|}.
\end{align*}
\textbf{Statement 3. }Note that for any $z, z'\in\mathcal C$,
\begin{align*}
\abs{c(z; y) - c(z'; y)} 	&= \abs{\frac{1}{\alpha}\max\braces{z_{d+1} - y^\top z_{1:d},\, 0} - \frac{1}{\alpha}\max\braces{z'_{d+1} - y^\top z'_{1:d},\, 0} } + \abs{z_{d+1} - z'_{d+1}} \\
	&\le \frac{1}{\alpha}\abs{z_{d+1} - y^\top z_{1:d} - z'_{d+1} + y^\top z'_{1:d}} + \abs{z_{d+1} - z'_{d+1}} \\
	&\le \prns{\edit{\|y\|_2} + 1 + \frac{1}{\alpha}}\edit{\|z - z'\|_2}. 
\end{align*}
\end{proof}

\end{APPENDICES}

\end{document}